
\documentclass[11pt]{article}
\usepackage{amssymb}
\usepackage{amsbsy}
\usepackage[latin1]{inputenc}
\usepackage{amsthm}
\usepackage{graphicx} 
\usepackage{subfigure}
\usepackage{pst-eucl}
\usepackage[latin1]{inputenc}
\usepackage[english]{babel}
\usepackage{amsmath,amssymb,graphics,mathrsfs}
\usepackage{amsmath,amssymb,latexsym}
\usepackage{float}
\usepackage{graphicx,color}
\usepackage[T1]{fontenc}
\usepackage[active]{srcltx}
\usepackage{multicol}
\usepackage[latin1]{inputenc}
\usepackage{pst-all}
\usepackage{enumerate}
\usepackage{pstricks}
\usepackage{pstricks-add}
\usepackage{setspace}
\usepackage{soul}
\usepackage{cancel}
\usepackage{epsfig}
\usepackage[margin=1in,text={6.5in,9in}]{geometry}

\usepackage{nonfloat}

\usepackage[colorlinks=true,citecolor=red,linkcolor=blue,urlcolor=RubineRed,pdfpagetransition=Blinds,pdftoolbar=false,pdfmenubar=false]{hyperref}

%
%
%
%
%


\newcommand{\R}{\hbox{\rm I \kern -5pt R}}     
\newcommand{\p} {\hbox{\rm I \kern -5pt P}}

\def\u        {{\textbf{u}}}
\def\zero        {{\textbf{0}}}

\def\x  {\boldsymbol x}


\def\n        {{\textbf{n}}}


\def \hueco{\noalign{\medskip}}
\def \pato{\forall\,}
\def \beq{\begin{equation}}
\def \eeq{\end{equation}}
\def \ba{\begin{array}}
\def \ea{\end{array}}
\def \dis{\displaystyle}


%
%

\newtheorem{prop}{Proposition}[section]
\newtheorem{defi}[prop]{Definition}
\newtheorem{cor}[prop]{Corollary}
\newtheorem{obs}[prop]{Remark}
\newtheorem{lem}[prop]{Lemma}



\setcounter{page}{1}

\begin{document}

\title{Structure preserving finite element schemes for the Navier-Stokes-Cahn-Hilliard system with degenerate mobility} 
\author{F.~Guill\'en-Gonz\'alez\thanks{Dpto. Ecuaciones Diferenciales y An\'alisis Num\'erico and IMUS, 
Universidad de Sevilla, Facultad de Matem\'aticas, C/ Tarfia, S/N, 41012 Sevilla (SPAIN). Email: guillen@us.es},
~and
 G.~Tierra\thanks{Department of Mathematics, University of North Texas, Denton TX (USA). Email:  gtierra@unt.edu}}

\maketitle


\begin{abstract}
In this work we present two new numerical schemes to approximate the Navier-Stokes-Cahn-Hilliard system with degenerate mobility using finite differences in time and finite elements in space. The proposed schemes are conservative, energy-stable and preserve the maximum principle approximately (the amount of the phase variable being outside of the interval $[0,1]$ goes to zero in terms of a truncation parameter).
Additionally, we present several numerical results to illustrate the accuracy and the well behavior of the proposed schemes, as well as a comparison
with the behavior of the Navier-Stokes-Cahn-Hilliard model with constant mobility.

\end{abstract}
%
%
%
%

\section{Introduction}

Many scientific, engineering, and industrial applications related with hydrodynamics and materials science are based on the understanding of the evolution in time of the interface between two or more immiscible fluids.
In particular, the phase-field/diffuse-interface approach has been widely considered in recent times due to its ability to describe topological transitions like droplet coalescence or droplet break-up in a natural way.
The basic idea behind this approach is that the structure of the interface is determined by the competition of the tendencies for mixing and de-mixing, which are balanced through a non-local mixing energy. The most classical model in this direction is the Cahn-Hilliard equation \cite{Cahn1958}, which was introduced to model the thermodynamic forces driving phase separation.
In \cite{HH77}, the authors presented the so-called Model H, which models the behavior of a mixture of two incompressible, viscous Newtonian fluids with the same constant density. Interestingly, by using the rational continuum mechanics framework the authors in \cite{GPV96} arrived at the same model and showed that it satisfies the second law of thermodynamics. The obtained model is usually called the Navier-Stokes-Cahn-Hilliard system.

In this work we consider the Navier-Stokes-Cahn-Hilliard system with the mobility term being a non-linear degenerate function, in order that the flux only acts away of the pure phases. The authors in \cite{Abeletal} established the existence and regularity of weak solutions of a more general system that represents mixtures of fluids with different densities, which includes the particular case of considering matching constant densities, as in the original Navier-Stokes-Cahn-Hilliard system. Additionally, equivalent analytical results have been stablished for nonlocal models associated with the Navier-Stokes-Cahn-Hilliard system \cite{Frigerietal19,Frigerietal15,Frigerietal20}.

The design and study of numerical schemes for both the Cahn-Hilliard and Navier-Stokes-Cahn-Hilliard systems has become very popular these days, although most of the research has targeted the constant mobility case (check \cite{Tierra} for an overview and comparison of different approaches). In recent years there has been also a growing interest in studying numerically these problems using degenerate mobilities. 
The case of logarithmic potentials together with degenerate mobilities were studied using mixed finite element approximations in \cite{Barrett98,Copetti92}, while in \cite{Chenetal19} the authors focus on the same problem but using finite differences. Discontinuous Galerkin methods have also become very popular to approximate phase field systems with degenerate mobilities \cite{DKR23,DKR22,Kay2009,LiuFrankRiviere,XiaXuShu}.

In this work we extend the ideas presented in \cite{KGCHDM}, where two numerical schemes using finite differences in time and finite elements in space to approximate the Cahn-Hilliard equation with degenerate mobility where derived, showing that they are energy-stable and they preserve the maximum principle approximately (the amount of the solution being outside of the interval $[0,1]$ goes to zero in terms of a truncation parameter), where the proofs rely on the introduction and estimation of two singular potentials.
In fact, the ideas of achieving approximate maximum principles have recently proved very useful in the context of chemotaxis models \cite{KMAD19, KMAD21, KMAD22, GuillenTierra}.
The main challenge to be able to extend those ideas to the Navier-Stokes-Cahn-Hilliard system consist on being able to handle the convection term that transport the phase field function $\phi$. To this end, we present a completely new approach for approximating those terms which will lead to numerical schemes that are energy-stable and satisfy approximate maximum principles. Up to our knowledge, these are the first finite element schemes for the Navier-Stokes-Cahn-Hilliard system preserving these properties at discrete level.

This work is organized as follows: In Section~\ref{sec:model} we present the Navier-Stokes-Cahn-Hilliard model with degenerate mobility. After that, we introduce a truncated version of the model and two truncated functionals G$_\varepsilon(\phi)$ and J$_\varepsilon(\phi)$ and their associated estimates. Then, two new numerical schemes together with the 
properties that they satisfy are presented in Section~\ref{sec:schemes}. In particular we show that both schemes are conservative, energy-stable and they satisfy approximate maximum principles. In order to illustrate the well behavior of the proposed numerical schemes and their applicability to simulate complex situations we present several numerical results in Section~\ref{sec:simulations}. Finally we state the conclusions of our work in Section~\ref{sec:conclusions}.

\section{Model}\label{sec:model}
Let $\Omega\subset\mathbb{R}^d$ (with $d=1,2,3$) be a bounded spatial domain with polyhedral boundary $\partial\Omega$ and $[0,T]$ a finite time interval.
The Navier-Stokes-Cahn-Hilliard \cite{GPV96,HH77} system reads:
\beq\label{eq:NSCHgeneral}
\left\{\ba{rcl}
\u_t 
+ (\u\cdot\nabla)\u 
- \nabla\cdot(\nu(\phi)\mathbb{D}(\u)) 
+ \nabla P 
+ \lambda\nabla\cdot(\nabla\phi\otimes\nabla\phi)
&=&\zero\,,
\\ \hueco
\nabla\cdot\u
&=&0\,,
\\ \hueco\dis
\phi_t + \u\cdot\nabla\phi- \nabla \cdot\left[\gamma M(\phi)\nabla\left(\frac{\delta E}{\delta\phi}\right)\right] 
&=&0\,,
\ea\right.
\eeq
where $\u(\x,t)$ denotes the velocity field, $P(\x,t)$ the pressure, $\phi(\x,t)$ the phase field function (with pure phases located at $\phi=0$ and $\phi=1$), $\nu(\phi)$ is the kinematic viscosity of the mixture (depending on the phase), $\mathbb{D}(\u)$ the symmetric gradient of the velocity ($\mathbb{D}(\u)=\frac12(\nabla\u + (\nabla\u)^T)$), $\lambda>0$ a capillarity parameter, $\gamma>0$ being a relaxation time parameter and $\frac{\delta E}{\delta\phi}$ the variational derivative of the total energy of the system:
\beq
E(\u,\phi)
\,:=\,
E_{\rm kin}(\u)
+ \lambda \, E_{\rm mix}(\phi)
\,=\,
\int_\Omega
\frac12|\u|^2 
d\x
+ \lambda\int_\Omega\left(
\frac12|\nabla\phi|^2 + F(\phi)
\right)d\x\,.
\eeq
Moreover, $M(\phi)$ represents the degenerate mobility term
\beq\label{eq:defM}
M(\phi)\,:=\, \phi \, (1-\phi)\,.
\eeq
The thickness of the interface between the two fluids is associated to the (small) parameter $\eta>0$, which is used to define the double well potential $F(\phi)$ as
$$
F(\phi)
\,:=\,
\frac1{4\eta^2}\phi^2(1-\phi)^2
\,=\,
\frac1{4\eta^2}\left(\phi^4 - 2\phi^3 + \frac32\phi^2\right)
-
\frac1{8\eta^2}\phi^2
\,=:\,
F_c(\phi) + F_e(\phi)\,,
$$
where we have separated the potential into a convex function $F_c(\phi)$ and a concave one $F_e(\phi)$. In particular,
$$
F''_c(\phi)=\frac3{\eta^2}\left(\phi - \frac12\right)^2\geq 0
\quad
\mbox{ and }
\quad
F''_e(\phi)=- \frac1{4\eta^2}\leq 0\,.
$$
We now introduce the chemical potential as a new variable $\mu(\x,t)$:
\beq
\mu
\,:=\,
\frac{\delta E}{\delta \phi}
\,=\,
\lambda\frac{\delta E_{\rm mix}}{\delta \phi}
\,=\,
\lambda\big(-\Delta\phi + F'(\phi)\big)\,,
\eeq
then, 
defining 
${p}=P + \frac\lambda2|\nabla\phi|^2 + \lambda F(\phi) - \phi\mu$ and taking advantage of the definition of $\mu$, we arrive to the relation 
$$
\nabla P + \lambda\nabla\cdot(\nabla\phi\otimes\nabla\phi)
\,=\,
\nabla P -\mu\nabla\phi + \lambda\nabla\left(\frac12|\nabla\phi|^2 + F(\phi)\right)
\,=\,
\nabla{p} +\phi\nabla \mu\,,
$$
 hence, using the incompressibility of $\u$, we can rewrite system \eqref{eq:NSCHgeneral} as
\beq\label{eq:NSCHb}
\left\{\ba{rcl}
\u_t 
+ (\u\cdot\nabla)\u 
- \nabla\cdot(\nu(\phi)\mathbb{D}(\u))
+ \nabla  p 
+ \phi\nabla\mu
&=&\zero\,,
\\ \hueco
\nabla\cdot\u
&=&0\,,
\\ \hueco
\phi_t + \nabla\cdot(\phi\u)- \nabla \cdot\left[\gamma M(\phi)\nabla\mu\right] 
&=&0\,,
\\ \hueco
\lambda\big(-\Delta\phi + F'(\phi)\big) - \mu 
&=&0\,.
\ea\right.
\eeq
System \eqref{eq:NSCHb} is supplemented with initial conditions 
\beq\label{eq:ICs}
\u|_{t=0}=\u_0
\quad\quad
\mbox{ and }
\quad\quad
\phi|_{t=0}=\phi_0
\quad \mbox{ in } \Omega\,,
\eeq
and boundary conditions
\beq\label{eq:BCs}
\u|_{\partial\Omega}
\,=\,
\textbf{0}
\quad\quad
\mbox{ and }
\quad\quad
\nabla \phi\cdot \n|_{\partial\Omega}
\,=\,
\nabla \mu\cdot \n|_{\partial\Omega}
\,=\,
0\,,
\eeq
with $\n$ denoting the outward normal vector to the boundary $\partial\Omega$. 

\begin{lem}
Any $(\u,p,\phi,\mu)$ regular enough solution of \eqref{eq:NSCHb}-\eqref{eq:BCs} satisfies the conservation property 
$$
\frac{d}{dt}\left(\int_\Omega \phi \,d\x\right)\,=\,0\,,
$$
and  the following (dissipative) energy law
\beq\label{eq:energylaw}
\dis\frac{d}{dt}
E(\u,\phi)
+\dis 
\big\|\sqrt{\nu(\phi)}\mathbb{D}(\u)\big\|^2_{L^2}
+\gamma \big\|\sqrt{M(\phi)}\nabla\mu\big\|^2_{L^2}
=0\,.
\eeq
\end{lem}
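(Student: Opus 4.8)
The plan is to prove the two identities separately. The conservation law follows from integrating the phase-field equation of \eqref{eq:NSCHb} over $\Omega$: applying the divergence theorem to the two flux terms, the convective contribution $\int_{\partial\Omega}\phi\,\u\cdot\n$ vanishes because $\u|_{\partial\Omega}=\zero$, and the diffusive contribution $\int_{\partial\Omega}\gamma M(\phi)\nabla\mu\cdot\n$ vanishes because $\nabla\mu\cdot\n|_{\partial\Omega}=0$, leaving only $\int_\Omega\phi_t\,d\x=\frac{d}{dt}\int_\Omega\phi\,d\x=0$. The energy law is then obtained by testing each equation of \eqref{eq:NSCHb} with a suitable multiplier and summing, the decisive feature being the cancellation of the convective coupling between the momentum and phase equations.

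First I would test the momentum equation with $\u$. The time derivative yields $\frac{d}{dt}E_{\rm kin}(\u)$; the inertial term $(\u\cdot\nabla)\u$ integrates to zero against $\u$ thanks to $\nabla\cdot\u=0$ together with $\u|_{\partial\Omega}=\zero$; integrating the viscous term by parts (its boundary term vanishing by no-slip) and using the symmetry of $\mathbb{D}(\u)$ to replace $\nabla\u$ by $\mathbb{D}(\u)$ in the contraction produces $\|\sqrt{\nu(\phi)}\mathbb{D}(\u)\|^2_{L^2}$; and the pressure term disappears by incompressibility and no-slip. What survives is the coupling term $\int_\Omega\phi\,\u\cdot\nabla\mu$.

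Next I would test the phase equation with $\mu$ and the chemical-potential relation with $\phi_t$. Integrating $\nabla\cdot(\phi\u)$ against $\mu$ by parts gives $-\int_\Omega\phi\,\u\cdot\nabla\mu$ (boundary term vanishing by $\u|_{\partial\Omega}=\zero$), which is exactly opposite to the coupling term arising from the momentum equation, while integrating the mobility term by parts gives $\gamma\|\sqrt{M(\phi)}\nabla\mu\|^2_{L^2}$ (boundary term vanishing by $\nabla\mu\cdot\n|_{\partial\Omega}=0$). Substituting $\mu=\lambda(-\Delta\phi+F'(\phi))$ into $\int_\Omega\phi_t\,\mu$ and integrating the $-\Delta\phi$ term by parts (using $\nabla\phi\cdot\n|_{\partial\Omega}=0$) recovers $\frac{d}{dt}\big(\lambda E_{\rm mix}(\phi)\big)$.

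Summing the tested equations, the two coupling terms $\pm\int_\Omega\phi\,\u\cdot\nabla\mu$ cancel, and since $E=E_{\rm kin}+\lambda E_{\rm mix}$ the remaining terms reassemble into \eqref{eq:energylaw}. The only point requiring real care is precisely this cancellation: it is the reason for the reformulation that moved $\phi\nabla\mu$ into the momentum balance and $\nabla\cdot(\phi\u)$ into the phase equation, and it relies simultaneously on incompressibility and the no-slip condition. Beyond the bookkeeping of boundary terms --- all of which vanish under \eqref{eq:BCs} --- no genuine obstacle arises.
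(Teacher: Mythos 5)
Your proposal is correct and follows exactly the paper's own argument: testing the system \eqref{eq:NSCHb} by $(\u,p,\mu,\phi_t)$, with conservation obtained by testing the phase equation by $1$, and the energy law resting on the vanishing of the velocity convective term and the cancellation of $\phi\nabla\mu$ against $\nabla\cdot(\phi\u)$. The paper states these steps tersely; you have merely spelled out the integration-by-parts bookkeeping, so there is nothing to add.
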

\begin{proof}
The conservation holds just by testing equation \eqref{eq:NSCHb}$_3$ by $1$. 
Testing \eqref{eq:NSCHb} by $(\u,p,\mu,\phi_t)$ we obtain
\eqref{eq:energylaw}, accounting that the velocity convective term $(\u\cdot\nabla)\u $ vanish (using the incompressibility of $\u$) and the force term $\phi\nabla\mu$ cancel with the phase convective term $\nabla\cdot(\phi\u)$.
\end{proof}
\subsection{Truncated non-degenerated problem}

Now we follow the ideas presented for the well-posedness of the Cahn-Hilliard equation with degenerate mobility \cite{ElliotGarcke1996} and for a general version of Navier-Stokes-Cahn-Hilliard with variable density \cite{Abeletal}, by introducing  a truncated non-degenerated version of problem \eqref{eq:NSCHb}, whose solution converges to the solution of the original problem \eqref{eq:NSCHb} when the truncation parameter goes to zero. The key point is to replace the mobility term $M(\phi)$  defined in \eqref{eq:defM} by a truncated (and non-degenerated) version $M_\varepsilon(\phi)$, which depends on a small truncation parameter $\varepsilon>0$. To this end, we first define the truncation functional $T^{1-\varepsilon}_\varepsilon$ such that 
$$
T^{1-\varepsilon}_\varepsilon(\phi)
\,=\,
\left\{\ba{ll}
\varepsilon & \mbox{ if } \phi\leq\varepsilon\,,
\\ \hueco
\phi & \mbox{ if } \varepsilon\leq\phi\leq1-\varepsilon\,,
\\ \hueco
1 - \varepsilon & \mbox{ if } \phi\geq1 - \varepsilon\,.
\ea\right.
$$
In particular, we have the following property
\beq\label{eq:truncprop}
1 - T^{1-\varepsilon}_\varepsilon(\phi)
= T^{1-\varepsilon}_\varepsilon(1 - \phi)\,.
\eeq
Now we can take advantage of the previous truncation function and define the truncated (non-degenerate) mobility $M_\varepsilon(\phi)$ as 
$$
M_\varepsilon(\phi)
\,=\,
T^{1-\varepsilon}_\varepsilon(\phi)
T^{1-\varepsilon}_\varepsilon(1 - \phi)
\,=\,
\left\{\ba{ll}
\varepsilon(1 - \varepsilon) & \mbox{ if } \phi\leq\varepsilon\,,
\\ \hueco
\phi(1-\phi) & \mbox{ if } \varepsilon\leq\phi\leq1-\varepsilon\,,
\\ \hueco
\varepsilon(1 - \varepsilon) & \mbox{ if } \phi\geq1-\varepsilon\,,
\ea\right.
$$
Then the truncated version of model \eqref{eq:NSCHb} reads:

\beq\label{eq:NSCHG2}
\left\{\ba{rcl}
\u_t + (\u\cdot\nabla)\u - \nabla\cdot(\nu(\phi)\mathbb{D}(\u)) + \nabla  p+ T^{1-\varepsilon}_\varepsilon(\phi)\nabla\mu
 &=&\zero\,,
\\ \hueco
\nabla\cdot\u
&=&0\,,
\\ \hueco\dis
\phi_t + \nabla\cdot(T^{1-\varepsilon}_\varepsilon(\phi)\u) 
- \nabla \cdot\left[\gamma M_\varepsilon(\phi)\nabla\mu\right] 
&=&0\,,
\\ \hueco
\lambda\big(-\Delta\phi + F'(\phi)\big) - \mu 
&=&0\,,
\ea\right.
\eeq
supplemented with the initial and boundary conditions presented in \eqref{eq:ICs} and \eqref{eq:BCs}, respectively.
\begin{lem}
Any $(\u,p,\phi,\mu)$ regular enough solution of \eqref{eq:NSCHG2} is conservative, that is, 
$$
\frac{d}{dt}\left(\int_\Omega \phi\, d\x\right)\,=\,0\,,
$$
and satisfies the (dissipative) energy law
\beq\label{eq:energylawtrunc}
\dis\frac{d}{dt}
E(\u,\phi)
+ \big\|\sqrt{\nu(\phi)}\mathbb{D}(\u)\big\|^2_{L^2}
+\gamma\big\|\sqrt{M_\varepsilon(\phi)}\nabla\mu\big\|^2_{L^2}
\,=\,
0\,.
\eeq
\end{lem}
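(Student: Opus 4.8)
The plan is to mirror the proof of the untruncated energy law from the previous lemma, testing the four equations of \eqref{eq:NSCHG2} against $(\u,p,\mu,\phi_t)$ respectively and summing the resulting identities. Conservation is immediate: integrating \eqref{eq:NSCHG2}$_3$ over $\Omega$ (i.e. testing by $1$), both divergence terms vanish by the divergence theorem together with the boundary conditions $\u|_{\partial\Omega}=\zero$ and $\nabla\mu\cdot\n|_{\partial\Omega}=0$, leaving $\frac{d}{dt}\int_\Omega\phi\,d\x=0$.

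For the energy law, I would first test \eqref{eq:NSCHG2}$_1$ by $\u$. The time derivative yields $\frac{d}{dt}E_{\rm kin}(\u)$; the convective term $(\u\cdot\nabla)\u$ integrates to zero using $\nabla\cdot\u=0$ and no-slip; the viscous term integrates by parts (using the symmetry of $\mathbb{D}(\u)$ and $\u|_{\partial\Omega}=\zero$) to $\|\sqrt{\nu(\phi)}\mathbb{D}(\u)\|^2_{L^2}$; the pressure term vanishes against incompressibility; and the capillary force leaves $\int_\Omega T^{1-\varepsilon}_\varepsilon(\phi)\nabla\mu\cdot\u\,d\x$. Testing \eqref{eq:NSCHG2}$_3$ by $\mu$ and integrating by parts (using $\u|_{\partial\Omega}=\zero$ and $\nabla\mu\cdot\n|_{\partial\Omega}=0$) produces $\int_\Omega\phi_t\,\mu\,d\x-\int_\Omega T^{1-\varepsilon}_\varepsilon(\phi)\,\u\cdot\nabla\mu\,d\x+\gamma\|\sqrt{M_\varepsilon(\phi)}\nabla\mu\|^2_{L^2}$. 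Finally, testing \eqref{eq:NSCHG2}$_4$ by $\phi_t$ and integrating the Laplacian by parts (using $\nabla\phi\cdot\n|_{\partial\Omega}=0$) gives $\frac{d}{dt}\lambda E_{\rm mix}(\phi)-\int_\Omega\mu\,\phi_t\,d\x$, where the chain rule turns $\lambda F'(\phi)\phi_t$ into $\frac{d}{dt}\lambda\int_\Omega F(\phi)\,d\x$.

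Summing the three identities, the terms $\int_\Omega\phi_t\,\mu\,d\x$ cancel between the third and fourth equations, and the time-derivative contributions assemble into $\frac{d}{dt}E(\u,\phi)$, giving exactly \eqref{eq:energylawtrunc}. The decisive point, and the only place the truncation could cause trouble, is that the force term $\int_\Omega T^{1-\varepsilon}_\varepsilon(\phi)\nabla\mu\cdot\u\,d\x$ from \eqref{eq:NSCHG2}$_1$ must cancel exactly against $-\int_\Omega T^{1-\varepsilon}_\varepsilon(\phi)\,\u\cdot\nabla\mu\,d\x$ from \eqref{eq:NSCHG2}$_3$. This is precisely why the model is truncated consistently, using the same factor $T^{1-\varepsilon}_\varepsilon(\phi)$ in both the momentum force and the phase transport term; any mismatch (for instance retaining $\phi\nabla\mu$ in the momentum equation while truncating the transport flux) would leave an uncontrolled residual and break the energy balance. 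I therefore do not expect a genuine obstacle here, since the truncation was engineered for exactly this cancellation; the only care needed is the bookkeeping of boundary terms, all of which vanish under \eqref{eq:BCs}.
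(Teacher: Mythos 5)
Your proof is correct and follows exactly the paper's approach: the paper likewise tests \eqref{eq:NSCHG2} by $(\u,p,\mu,\phi_t)$ for the energy law and by $1$ for conservation, leaving the details implicit. Your identification of the cancellation between $\int_\Omega T^{1-\varepsilon}_\varepsilon(\phi)\nabla\mu\cdot\u\,d\x$ and the transport term is precisely the mechanism the truncated model was built to preserve, so nothing is missing.
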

\begin{proof}
The conservation holds just by testing equation \eqref{eq:NSCHG2}$_3$ by $1$.
 Testing \eqref{eq:NSCHG2} by $(\u,p,\mu,\phi_t)$ we obtain \eqref{eq:energylawtrunc}.
\end{proof}

The next step is to follow the ideas presented in \cite{ElliotGarcke1996,KGCHDM} and introduce two singular functionals (that we  denote $G_\varepsilon(\phi)$ and $J_\varepsilon(\phi)$) that will be used to show the additional boundedness of the solution of \eqref{eq:NSCHG2}. Moreover, discrete versions of these functionals will be used to show approximate maximum principle properties for the corresponding numerical schemes.

\subsubsection{Functional $G_\varepsilon(\phi)$}
We define a new functional $G_\varepsilon(\phi)$ such that
$$
G_\varepsilon''(\phi)
\,:=\,
\frac1{M_\varepsilon(\phi)}
\,=\,
\frac1{T^{1-\varepsilon}_\varepsilon(\phi)T^{1-\varepsilon}_\varepsilon(1-\phi)}
\,=\,
\frac1{T^{1-\varepsilon}_\varepsilon(\phi)} + \frac1{T^{1-\varepsilon}_\varepsilon(1-\phi)}.
$$
Then, we define the auxiliary potential $H_\varepsilon(\phi)$ such that 
$$H_\varepsilon''(\phi):=\frac1{T^{1-\varepsilon}_\varepsilon(\phi)} ,
$$
hence
$$
G_\varepsilon''(\phi)
\,=\,
H_\varepsilon''(\phi) + H_\varepsilon''(1 - \phi).
$$
In particular, one has the equalities 
\beq\label{eq:truncpropH-bis}
G_\varepsilon'(\phi)
\,=\,
H_\varepsilon'(\phi) - H_\varepsilon'(1 - \phi),
\eeq
and
\beq\label{eq:truncpropH}
 T^{1-\varepsilon}_\varepsilon(\phi) \nabla H_\varepsilon'(\phi)=\nabla \phi
 =
 -T^{1-\varepsilon}_\varepsilon(1-\phi) \nabla H_\varepsilon'(1-\phi)\,.
\eeq

\begin{lem}\label{lem:estG}
Any regular enough solution of \eqref{eq:NSCHG2} satisfies the equality
\beq\label{eq:estimateGtrunc}
\begin{array}{l}
  \dis
\frac{d}{dt}\left(\int_\Omega G_\varepsilon(\phi) \,d\x\right)
+\dis
\lambda\gamma\int_\Omega (\Delta\phi)^2 d\x
+
\lambda\gamma\int_\Omega F''_c(\phi)|\nabla\phi|^2 d\x
\\
\dis
\qquad =
-\lambda\gamma\int_\Omega F''_e(\phi)|\nabla\phi|^2 d\x
=
\dis
\frac{\lambda\gamma}{4\eta^2}\int_\Omega|\nabla\phi|^2 d\x\,,
\end{array}
\eeq
\end{lem}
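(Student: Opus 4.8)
The plan is to test the phase-field equation \eqref{eq:NSCHG2}$_3$ against $G_\varepsilon'(\phi)$ and to recognize each of the three resulting terms as a term of \eqref{eq:estimateGtrunc}. The time-derivative term is immediate from the chain rule, $\int_\Omega \phi_t\,G_\varepsilon'(\phi)\,d\x=\frac{d}{dt}\int_\Omega G_\varepsilon(\phi)\,d\x$, giving the first term on the left. The single identity that makes the whole computation work is $\nabla G_\varepsilon'(\phi)=G_\varepsilon''(\phi)\nabla\phi=M_\varepsilon(\phi)^{-1}\nabla\phi$, coming directly from the definition $G_\varepsilon''=1/M_\varepsilon$; it is what cancels the truncated mobility weights in the remaining two terms.

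For the mobility (diffusion) term I would integrate by parts, the boundary contribution vanishing because $\nabla\mu\cdot\n=0$ on $\partial\Omega$, to reach $\gamma\int_\Omega M_\varepsilon(\phi)\,\nabla\mu\cdot\nabla G_\varepsilon'(\phi)\,d\x=\gamma\int_\Omega\nabla\mu\cdot\nabla\phi\,d\x$, where the factor $M_\varepsilon(\phi)$ has been absorbed by the identity above. I would then substitute the chemical-potential relation \eqref{eq:NSCHG2}$_4$, $\mu=\lambda(-\Delta\phi+F'(\phi))$, and integrate by parts once more (using $\nabla\phi\cdot\n=0$), turning $-\lambda\gamma\int_\Omega\nabla\Delta\phi\cdot\nabla\phi\,d\x$ into $\lambda\gamma\int_\Omega(\Delta\phi)^2\,d\x$ and leaving $\lambda\gamma\int_\Omega F''(\phi)|\nabla\phi|^2\,d\x$. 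Splitting $F''=F''_c+F''_e$ and carrying the concave part $F''_e\equiv-1/(4\eta^2)$ to the right-hand side produces exactly the $(\Delta\phi)^2$ term, the $F''_c$ term, and the right member $-\lambda\gamma\int_\Omega F''_e(\phi)|\nabla\phi|^2\,d\x=\frac{\lambda\gamma}{4\eta^2}\int_\Omega|\nabla\phi|^2\,d\x$ of \eqref{eq:estimateGtrunc}.

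The delicate step, and the one the introduction singles out as the main novelty, is to show that the convective term contributes nothing. Integrating $\int_\Omega\nabla\cdot(T^{1-\varepsilon}_\varepsilon(\phi)\u)\,G_\varepsilon'(\phi)\,d\x$ by parts (with $\u|_{\partial\Omega}=\zero$) gives $-\int_\Omega \frac{T^{1-\varepsilon}_\varepsilon(\phi)}{M_\varepsilon(\phi)}\,\u\cdot\nabla\phi\,d\x$, and the factorization $M_\varepsilon(\phi)=T^{1-\varepsilon}_\varepsilon(\phi)\,T^{1-\varepsilon}_\varepsilon(1-\phi)$ collapses the weight to $1/T^{1-\varepsilon}_\varepsilon(1-\phi)$. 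At this point the right-hand identity in \eqref{eq:truncpropH}, namely $\nabla\phi=-T^{1-\varepsilon}_\varepsilon(1-\phi)\,\nabla H_\varepsilon'(1-\phi)$, rewrites the integrand as a perfect gradient, so the term becomes $\int_\Omega\u\cdot\nabla H_\varepsilon'(1-\phi)\,d\x$, which vanishes after one further integration by parts using the incompressibility $\nabla\cdot\u=0$ together with $\u|_{\partial\Omega}=\zero$. I expect this cancellation to be the crux of the argument: it is precisely the choice of the truncated transport $T^{1-\varepsilon}_\varepsilon(\phi)\u$ (rather than $\phi\u$) that matches the mobility factorization and lets the convection be read as divergence-free transport of $H_\varepsilon'(1-\phi)$. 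Collecting the time-derivative term, the vanishing convective term, and the diffusion contribution then yields \eqref{eq:estimateGtrunc}.
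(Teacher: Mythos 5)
Your proposal is correct and follows essentially the same path as the paper: test \eqref{eq:NSCHG2}$_3$ by $G_\varepsilon'(\phi)$, kill the convective term by recognizing it as $\int_\Omega \u\cdot\nabla H_\varepsilon'(1-\phi)\,d\x=0$ (incompressibility plus $\u|_{\partial\Omega}=\zero$), reduce the mobility term to $\gamma\int_\Omega\nabla\mu\cdot\nabla\phi\,d\x$ via $M_\varepsilon G_\varepsilon''=1$, and convert that integral using the $\mu$-equation with the convex/concave splitting of $F''$. The only cosmetic differences are that you reach $T^{1-\varepsilon}_\varepsilon(\phi)\nabla G_\varepsilon'(\phi)=-\nabla H_\varepsilon'(1-\phi)$ through the chain rule and the factorization of $M_\varepsilon$ rather than through the paper's splitting $G_\varepsilon'=H_\varepsilon'(\phi)-H_\varepsilon'(1-\phi)$ with \eqref{eq:truncprop} (the paper's phrasing is chosen because it survives discretization, where the chain rule does not), and that you substitute $\mu=\lambda(-\Delta\phi+F'(\phi))$ directly instead of testing \eqref{eq:NSCHG2}$_4$ by $-\Delta\phi$.
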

\begin{proof}
We test \eqref{eq:NSCHG2}$_3$ by $G_\varepsilon'(\phi)$. Using \eqref{eq:truncprop}, \eqref{eq:truncpropH-bis} and \eqref{eq:truncpropH},  
$$
\ba{rcl}
 T^{1-\varepsilon}_\varepsilon(\phi)\nabla G_\varepsilon'(\phi)
&=&\dis
 T^{1-\varepsilon}_\varepsilon(\phi)\nabla H_{\varepsilon}'(\phi)
- T^{1-\varepsilon}_\varepsilon(\phi)\nabla H_{\varepsilon}'(1-\phi)
 \\ \hueco
 &=&\dis
 T^{1-\varepsilon}_\varepsilon(\phi)\nabla H_{\varepsilon}'(\phi)
+ T^{1-\varepsilon}_\varepsilon(1-\phi)\nabla H_{\varepsilon}'(1-\phi)
- \nabla H_{\varepsilon}'(1-\phi)
   \\ \hueco
&=&\dis
- \nabla H_{\varepsilon}'(1-\phi)
 \,,
 \ea
$$
hence,  taking into account  $\nabla\cdot\u=0$ in $\Omega$ and $\u|_{\partial\Omega}=\textbf{0}$, one has 
$$
\int_\Omega \nabla\cdot(T^{1-\varepsilon}_\varepsilon(\phi)\u) \, G_\varepsilon'(\phi)
=
\int_\Omega  T^{1-\varepsilon}_\varepsilon(\phi)\nabla G_\varepsilon'(\phi) \cdot \u 
=
- \int_\Omega \nabla (H_{\varepsilon}'(1-\phi)) \cdot \u 
=0.
$$ 
On the other hand, using that $M_\varepsilon(\phi)\, G_\varepsilon''(\phi) = 1$, one has 
$$
- \int_\Omega  \nabla \cdot\left[\gamma M_\varepsilon(\phi)\nabla\mu\right] \, G_\varepsilon'(\phi)
= \int_\Omega \gamma M_\varepsilon(\phi)\nabla\mu \cdot \nabla\phi \, G_\varepsilon''(\phi)
= \gamma\int_\Omega \nabla\mu\cdot \nabla\phi \, .
$$
Accounting previous computations,  we obtain 
\beq\label{eq:proofG2trunc}
\dis\frac{d}{dt}\left(\int_\Omega G_\varepsilon(\phi) \,d\x\right)
+ \gamma\int_\Omega \nabla\mu\cdot \nabla\phi \,d\x
=0\,.
\eeq
By testing \eqref{eq:NSCHG2}$_4$ by $-\Delta\phi$, we can write  
\beq\label{eq:proofG3trunc}
\int_\Omega \nabla\mu\cdot \nabla\phi \,d\x
=
\lambda\left(\int_\Omega (\Delta\phi)^2 d\x
+\int_\Omega  F_c''(\phi)|\nabla\phi|^2 \,d\x
+\int_\Omega  F_e''(\phi)|\nabla\phi|^2 \,d\x
\right)\,.
\eeq
Finally combining \eqref{eq:proofG2trunc} with \eqref{eq:proofG3trunc} we obtain \eqref{eq:estimateGtrunc}.

\end{proof}

\subsubsection{Functional $J_\varepsilon(\phi)$}
We define a new functional $J_\varepsilon(\phi)$ such that
$$
J_\varepsilon''(\phi)
\,:=\,
\frac1{\sqrt{M_\varepsilon(\phi)}}
\,=\,
\frac1{\sqrt{T^{1-\varepsilon}_\varepsilon(\phi)T^{1-\varepsilon}_\varepsilon(1-\phi)}}.
$$
Using equality \eqref{eq:truncprop}, one has 
$$
J_\varepsilon''(\phi)
\,:=\,
\frac{\sqrt{T^{1-\varepsilon}_\varepsilon(1-\phi)}}{\sqrt{T^{1-\varepsilon}_\varepsilon(\phi)}} + \frac{\sqrt{T^{1-\varepsilon}_\varepsilon(\phi)}}{\sqrt{T^{1-\varepsilon}_\varepsilon(1-\phi)}}.
$$
Then, we define the auxiliary potential $K_\varepsilon(\phi)$ such that 
$$
K_\varepsilon''(\phi)
\,:=\,
\frac{\sqrt{T^{1-\varepsilon}_\varepsilon(1-\phi)}}{\sqrt{T^{1-\varepsilon}_\varepsilon(\phi)}}\,, 
$$
hence
$$
J_\varepsilon''(\phi)
\,=\,
K_\varepsilon''(\phi) + K_\varepsilon''(1 - \phi).
$$
In particular, one has 
\beq\label{eq:truncpropK-bis}
J_\varepsilon'(\phi)
\,=\,
K_\varepsilon'(\phi) - K_\varepsilon'(1 - \phi),
\eeq
and
\beq\label{eq:truncpropK}
 T^{1-\varepsilon}_\varepsilon(\phi) 
 \nabla K_\varepsilon'(\phi)
 =
 \sqrt{T^{1-\varepsilon}_\varepsilon(\phi)T^{1-\varepsilon}_\varepsilon(1-\phi)}\nabla \phi
 =
-  T^{1-\varepsilon}_\varepsilon(1-\phi) 
 \nabla K_\varepsilon'(1-\phi)
 \,.
\eeq

\begin{lem}\label{lem:estJ}
Any regular enough solution of system \eqref{eq:NSCHG2} satisfies the inequality
\beq\label{eq:estimateJtrunc}
\frac{d}{dt}\left(\int_\Omega J_\varepsilon(\phi) \,d\x\right)
\,\leq\,
\gamma\left(
\int_\Omega M_\varepsilon(\phi)|\nabla\mu|^2 d\x
+
\int_\Omega |\nabla\phi|^2 d\x\right)\,.
\eeq
\end{lem}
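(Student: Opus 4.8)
The plan is to mimic the proof of Lemma~\ref{lem:estG}, testing equation \eqref{eq:NSCHG2}$_3$ by $J_\varepsilon'(\phi)$ and analyzing the two resulting nontrivial terms: the convective term and the mobility--diffusion term.

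First I would handle the diffusion term. Using that $M_\varepsilon(\phi)\,J_\varepsilon''(\phi)=\sqrt{M_\varepsilon(\phi)}$, integration by parts gives
\beq\label{eq:proofJ-diff}
-\int_\Omega \nabla\cdot\big[\gamma M_\varepsilon(\phi)\nabla\mu\big]\,J_\varepsilon'(\phi)
=\gamma\int_\Omega M_\varepsilon(\phi)J_\varepsilon''(\phi)\,\nabla\mu\cdot\nabla\phi\,d\x
=\gamma\int_\Omega \sqrt{M_\varepsilon(\phi)}\,\nabla\mu\cdot\nabla\phi\,d\x.
\eeq
I expect this to be the main obstacle: unlike the $G_\varepsilon$ case, where $M_\varepsilon G_\varepsilon''=1$ produced a clean $\nabla\mu\cdot\nabla\phi$ that could be rewritten exactly via equation \eqref{eq:NSCHG2}$_4$, here the factor $\sqrt{M_\varepsilon(\phi)}$ remains and there is no exact identity available, so I must settle for an inequality. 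The natural move is Cauchy--Schwarz/Young on \eqref{eq:proofJ-diff}, splitting $\sqrt{M_\varepsilon(\phi)}\,\nabla\mu\cdot\nabla\phi$ so as to produce exactly the two terms $\gamma\int_\Omega M_\varepsilon(\phi)|\nabla\mu|^2\,d\x$ and $\gamma\int_\Omega|\nabla\phi|^2\,d\x$ appearing on the right-hand side of \eqref{eq:estimateJtrunc}; writing $\sqrt{M_\varepsilon(\phi)}\,\nabla\mu\cdot\nabla\phi=\big(\sqrt{M_\varepsilon(\phi)}\,\nabla\mu\big)\cdot\nabla\phi$ and applying Young's inequality with the balance $\tfrac12+\tfrac12$ yields the bound $\tfrac{\gamma}{2}\int_\Omega M_\varepsilon(\phi)|\nabla\mu|^2+\tfrac{\gamma}{2}\int_\Omega|\nabla\phi|^2$, which is even sharper than the claimed inequality and hence suffices.

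Next I would show the convective term vanishes, exactly paralleling the $G_\varepsilon$ argument but now using the $K_\varepsilon$ identities. By \eqref{eq:truncprop}, \eqref{eq:truncpropK-bis} and \eqref{eq:truncpropK}, the same telescoping computation gives
$$
T^{1-\varepsilon}_\varepsilon(\phi)\nabla J_\varepsilon'(\phi)
=T^{1-\varepsilon}_\varepsilon(\phi)\nabla K_\varepsilon'(\phi)
-T^{1-\varepsilon}_\varepsilon(\phi)\nabla K_\varepsilon'(1-\phi)
=-\nabla K_\varepsilon'(1-\phi),
$$
where the key cancellation is that $T^{1-\varepsilon}_\varepsilon(\phi)\nabla K_\varepsilon'(\phi)$ and $T^{1-\varepsilon}_\varepsilon(1-\phi)\nabla K_\varepsilon'(1-\phi)$ are negatives of one another by \eqref{eq:truncpropK}. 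Then, integrating by parts and using $\nabla\cdot\u=0$ in $\Omega$ together with $\u|_{\partial\Omega}=\textbf{0}$,
$$
\int_\Omega \nabla\cdot\big(T^{1-\varepsilon}_\varepsilon(\phi)\u\big)\,J_\varepsilon'(\phi)\,d\x
=\int_\Omega T^{1-\varepsilon}_\varepsilon(\phi)\nabla J_\varepsilon'(\phi)\cdot\u\,d\x
=-\int_\Omega \nabla\big(K_\varepsilon'(1-\phi)\big)\cdot\u\,d\x
=0.
$$

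Finally I would assemble the pieces. Testing \eqref{eq:NSCHG2}$_3$ by $J_\varepsilon'(\phi)$, the time-derivative term produces $\frac{d}{dt}\int_\Omega J_\varepsilon(\phi)\,d\x$, the convective term drops out, and the diffusion term is bounded above by the right-hand side of \eqref{eq:proofJ-diff} estimated via Young's inequality, giving \eqref{eq:estimateJtrunc} directly. The only subtlety worth flagging is ensuring the integrations by parts are justified (the boundary terms vanish thanks to $\nabla\mu\cdot\n=\nabla\phi\cdot\n=0$ and $\u|_{\partial\Omega}=\zero$) and that $J_\varepsilon',J_\varepsilon''$ are well-defined and locally bounded because the truncated mobility $M_\varepsilon$ is bounded away from zero by $\varepsilon(1-\varepsilon)$, so all integrands are integrable for a regular enough solution.
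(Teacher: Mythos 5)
Your proposal is correct and follows essentially the same route as the paper's proof: testing \eqref{eq:NSCHG2}$_3$ by $J_\varepsilon'(\phi)$, cancelling the convective term via the identities \eqref{eq:truncprop}, \eqref{eq:truncpropK-bis}, \eqref{eq:truncpropK}, reducing the diffusion term to $\gamma\int_\Omega \sqrt{M_\varepsilon(\phi)}\,\nabla\mu\cdot\nabla\phi\,d\x$ through $M_\varepsilon(\phi)J_\varepsilon''(\phi)=\sqrt{M_\varepsilon(\phi)}$, and concluding with Young's inequality. Your observation that the $\tfrac12+\tfrac12$ balance gives a bound sharper than the stated one is accurate but inessential, since the paper's statement only claims the weaker constant.
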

\begin{proof}
Using \eqref{eq:truncprop}, \eqref{eq:truncpropK-bis} and \eqref{eq:truncpropK} we can write
$$
\ba{rcl}
 T^{1-\varepsilon}_\varepsilon(\phi)\nabla J_\varepsilon'(\phi)
&=&\dis
 T^{1-\varepsilon}_\varepsilon(\phi)\nabla K_{\varepsilon}'(\phi)
- T^{1-\varepsilon}_\varepsilon(\phi)\nabla K_{\varepsilon}'(1-\phi)
 \\ \hueco
 &=&\dis
 T^{1-\varepsilon}_\varepsilon(\phi)\nabla K_{\varepsilon}'(\phi)
+ T^{1-\varepsilon}_\varepsilon(1-\phi)\nabla K_{\varepsilon}'(1-\phi)
- \nabla K_{\varepsilon}'(1-\phi)
   \\ \hueco
&=&\dis
- \nabla K_{\varepsilon}'(1-\phi)
 \,.
 \ea
$$
Hence, testing \eqref{eq:NSCHG2}$_3$ by $J_\varepsilon'(\phi)$ and taking into account  $\nabla\cdot\u=0$ in $\Omega$, $\u|_{\partial\Omega}=\textbf{0}$ 
and  $M_\varepsilon(\phi)\, J_\varepsilon''(\phi) = \sqrt{M_\varepsilon(\phi)}$, 
 we obtain 
\beq\label{eq:proofJ2trunc}
\dis\frac{d}{dt}\left(\int_\Omega J_\varepsilon(\phi) \,d\x\right)
=- \gamma\int_\Omega \sqrt{M_\varepsilon(\phi)}\nabla\mu\cdot \nabla\phi \,d\x
\,.
\eeq
Finally, bounding the right hand side of last expression by using $ab\leq a^2/2 + b^2/2$ we obtain \eqref{eq:estimateJtrunc}.

\end{proof}

\section{Numerical schemes}\label{sec:schemes}
The numerical schemes proposed in this section are designed as approximations of the corresponding weak formulation of the system \eqref{eq:NSCHG2}. Hereafter $(\cdot,\cdot)$ denotes the $L^2(\Omega)$-scalar product.
For all numerical schemes we consider a partition of the time interval $[0,T]$ into $N$ subintervals, with constant time step $\Delta t = T /N$ and we denote by $\delta_t$ the (backward) discrete time derivative
$$
\delta_t f\,:=\,\frac{f^{n+1} - f^n}{\Delta t}\,.
$$
We consider structured triangulations $\{\mathcal{T}_h\}$ of the domain $\Omega$ (which implies that each element has a side parallel to the $\kappa$-th axis for each $\kappa=1,\dots,d$ with length bounded by the mesh size parameter $h>0$) and with its elements denoted by $I_i$, that is $\{\mathcal{T}_h\}=\bigcup_{i} I_i$. The unknowns $(\u,p,\phi,\mu)$ are approximated using the $C^0$-Finite Elements spaces of order $k\ge1$ (denoted by $\mathbb{P}_k$):
$$
(\u,p,\phi,\mu)\in \textbf{U}_h\times P_h\times\Phi_h\times M_h\,,
$$ 
in such a way that $\Phi_h=\mathbb{P}_1$,  $M_h= \mathbb{P}_k$ and the pair $\textbf{U}_h\times P_h$ satisfies a discrete version of the \textit{inf-sup} condition \cite{GiraultRaviart86}.
Moreover, we are going to take advantage of \textit{mass-lumping} ideas (see \cite{CiarletRaviart73}) to help us achieve boundedness of the unknown $\phi$. To this end we introduce the discrete semi-inner product on $C^0(\overline{\Omega})$ and its induced discrete semi-norm:
$$
(f,g)_h\,:=\,\int_\Omega I_h(f g)d\x
\quad\quad
\mbox{ and }
\quad\quad
|f|_h\,:=\,\sqrt{(f,f)_h}\,,
$$
with $I_h(f(\x))$ denoting the nodal $\mathbb{P}_1$-interpolation of the function $f(\x)$. We use $f_+$ or $f_-$ to denote the positive or negative part of a function $f$ ($f_+(\x)=\max\{f(\x),0\}$ and $f_-(\x)=\min\{f(\x),0\}$). 

\begin{defi}
A numerical scheme to approximate system \eqref{eq:NSCHG2} is energy-stable if the following relation holds:
\beq\label{eq:defenergystability}
\dis\delta_t E(\u^{n+1},\phi^{n+1})
+ \big\|\sqrt{\nu(\phi^{n+1})}\mathbb{D}(\u^{n+1})\big\|^2_{L^2}
+\big\|\sqrt{M_\varepsilon(\phi^{n+1})}\nabla\mu^{n+1}\big\|^2_{L^2}
\,\leq\,
0\,.
\eeq
In particular, energy-stable numerical schemes satisfy the energy decreasing in time property
$$
E(\u^{n+1},\phi^{n+1})
\leq
E(\u^{n},\phi^{n})\, , \quad \forall n\ge 0\,.
$$

\end{defi}
In the following we propose two new numerical schemes, called
$G_\varepsilon$-scheme and $J_\varepsilon$-scheme, designed to take advantage of discrete versions of Lemmas~\ref{lem:estG} and \ref{lem:estJ} to derive approximated discrete maximum principles. Additionally both schemes are conservative and energy-stable. The energy stability relies on using the ideas of Eyre \cite{Eyre} for approximating the potential $F'(\phi)$ (also known as the \textit{convex splitting} approach), which consists on taking implicitly the convex part and explicitly the concave part of the potential because the following relation holds:
\beq\label{eq:eyre}
\frac1{\Delta t}
\int_\Omega \Big(F_c'(\phi^{n+1}) + F_e'(\phi^n)\Big)(\phi^{n+1} - \phi^n) d\x
\geq
\frac1{\Delta t} \int_\Omega \Big(F(\phi^{n+1}) - F(\phi^n)\Big)d\x\,.
\eeq

\subsection{G$_\varepsilon$-Scheme}
We first define a $\mathbb{P}_0$ approximation of $T_\varepsilon^{1-\varepsilon}(\phi)$, called $T^G_h(\phi)$, such that
\beq\label{def:ThG}
 T^G_h(\phi) \nabla I_h(G_\varepsilon'(\phi))
 \,=\,
 -\nabla I_h(H_\varepsilon'(1- \phi)),
 \quad \pato \phi\in \Phi_h\,.
\eeq
In fact, $T^G_h(\phi)$ is a $\mathbb{P}_0$ diagonal matrix function (related to $T^{1-\varepsilon}_\varepsilon(\phi)$),
denoting its diagonal elements $T^G_h(\phi)_{\kappa\kappa}$
for $\kappa=1,\dots,d$. In particular, in each element $I_i$ of the triangulation $\{\mathcal{T}_h\}$ (with nodes $\x_0,\x_\kappa$ in the $\kappa$-th axis) we define
$$
(T^G_h)_{\kappa\kappa}\Big|_{I_i} 
\,:=\,
\left\{\ba{cc}\dis
-\frac{ H_\varepsilon'(1 - \phi(\x_\kappa))  -  H_\varepsilon'(1 - \phi(\x_{0}))}{ G_\varepsilon'(\phi(\x_\kappa))  -  G_\varepsilon'(\phi(\x_{0}))}
& 
 \mbox{ if } \phi(\x_{0}) \neq \phi(\x_\kappa)\,,
\\ \hueco\hueco\dis
 P^h_0 (T^{1-\varepsilon}_\varepsilon(\phi))& 
\mbox{ if } \phi(\x_{0}) = \phi(\x_\kappa)\,,
\ea\right.
$$
where $P^h_0(\phi)|_{I_i}=( \int_{I_i}\phi \, d\x)/|I_i|$ denotes the $\mathbb{P}_0$-projection of $\phi$. It is easy to check that \eqref{def:ThG} holds.
\begin{obs}
There are other possibilities for defining $(T^G_h)_{\kappa\kappa}|_{I_i} $ when $\phi(\x_{0}) = \phi(\x_\kappa)$, for instance the expression $  H_\varepsilon''(1 - \phi(\x_{0})) /  G_\varepsilon''(\phi(\x_{0}))$  can also be considered.
\end{obs}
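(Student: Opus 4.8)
The plan is to verify the vector identity \eqref{def:ThG} componentwise on each element $I_i$, exploiting that $T^G_h(\phi)$ is a $\mathbb{P}_0$ diagonal matrix, so that the $\kappa$-th component of its action on a gradient is simply $(T^G_h)_{\kappa\kappa}$ times the $\kappa$-th component of that gradient. The starting observation is that, on a structured mesh, each element $I_i$ possesses for every $\kappa=1,\dots,d$ an edge joining the node $\x_0$ to the node $\x_\kappa$ parallel to the $\kappa$-th coordinate axis; hence $\x_\kappa-\x_0$ points purely in the $\kappa$-th direction, and for any $\mathbb{P}_1$ function $w$ (whose gradient is constant on $I_i$) one has $w(\x_\kappa)-w(\x_0)=\nabla w\cdot(\x_\kappa-\x_0)=\ell_\kappa\,(\nabla w)_\kappa$, where $\ell_\kappa\neq0$ denotes the signed length of that edge. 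Thus the $\kappa$-th component of the gradient of any $\mathbb{P}_1$ function equals the difference quotient of its nodal values along that edge.

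First I would apply this to the two $\mathbb{P}_1$ interpolants in \eqref{def:ThG}. Since $I_h(G_\varepsilon'(\phi))$ and $I_h(H_\varepsilon'(1-\phi))$ carry the nodal values $G_\varepsilon'(\phi(\x_j))$ and $H_\varepsilon'(1-\phi(\x_j))$, their $\kappa$-th gradient components on $I_i$ are $\ell_\kappa^{-1}\big(G_\varepsilon'(\phi(\x_\kappa))-G_\varepsilon'(\phi(\x_0))\big)$ and the analogous expression with $H_\varepsilon'$. Inserting these into the $\kappa$-th component of \eqref{def:ThG} and clearing the common factor $\ell_\kappa^{-1}$, the identity collapses to the scalar relation
$$
(T^G_h)_{\kappa\kappa}\big(G_\varepsilon'(\phi(\x_\kappa))-G_\varepsilon'(\phi(\x_0))\big)
=-\big(H_\varepsilon'(1-\phi(\x_\kappa))-H_\varepsilon'(1-\phi(\x_0))\big),
$$
which is exactly what the defining quotient for $(T^G_h)_{\kappa\kappa}$ is arranged to solve.

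It then remains to confirm that this quotient is well posed, which is the only genuinely delicate point. Because $G_\varepsilon''(\phi)=1/M_\varepsilon(\phi)$ and $M_\varepsilon(\phi)\ge\varepsilon(1-\varepsilon)>0$, the function $G_\varepsilon'$ is strictly increasing; so whenever $\phi(\x_0)\neq\phi(\x_\kappa)$ the denominator is nonzero and the first branch of the definition legitimately returns the displayed scalar identity. In the degenerate case $\phi(\x_0)=\phi(\x_\kappa)$, where the quotient would read $0/0$, both $G_\varepsilon'(\phi(\cdot))$ and $H_\varepsilon'(1-\phi(\cdot))$ take equal values at the two nodes, so both sides of the $\kappa$-th component of \eqref{def:ThG} vanish and the identity holds regardless of the finite value assigned to $(T^G_h)_{\kappa\kappa}$; this is precisely why the second branch $P^h_0(T^{1-\varepsilon}_\varepsilon(\phi))$ (or the alternative mentioned in the remark) is admissible. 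Collecting the componentwise identities over all $\kappa$ and all elements then yields \eqref{def:ThG}, the remaining effort being only routine index bookkeeping.
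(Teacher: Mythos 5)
Your proposal is correct and matches the paper's (implicit) justification: the remark rests precisely on the observation that when $\phi(\x_{0}) = \phi(\x_\kappa)$ both $\nabla I_h(G_\varepsilon'(\phi))$ and $\nabla I_h(H_\varepsilon'(1-\phi))$ have vanishing $\kappa$-th component on $I_i$, so \eqref{def:ThG} holds for any finite value assigned to $(T^G_h)_{\kappa\kappa}\big|_{I_i}$, in particular for $H_\varepsilon''(1 - \phi(\x_{0}))/G_\varepsilon''(\phi(\x_{0}))$. Your componentwise reduction on the structured mesh and the strict monotonicity of $G_\varepsilon'$ (from $G_\varepsilon'' = 1/M_\varepsilon > 0$) are exactly the ingredients behind the paper's ``it is easy to check that \eqref{def:ThG} holds.''
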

We also define a $\mathbb{P}_0$ approximation of $M_\varepsilon(\phi)$, called $M_\varepsilon^G(\phi)$ such that
\beq\label{def:MhG}
M_\varepsilon^G(\phi)
 \nabla I_h(G_\varepsilon'(\phi))
 \,=\,\nabla \phi ,
 \quad \pato \phi\in \Phi_h\, .
\eeq
In fact, since $\Phi_h=\mathbb{P}_1$, then  $M^G_\varepsilon(\phi)$ is a $\mathbb{P}_0$ diagonal matrix function (related to $M_\varepsilon(\phi)$) defined by elements by 
$$
(M_\varepsilon^G)_{\kappa\kappa}\Big|_{I_i} 
\,:=\,
\left\{\ba{cc}\dis
\frac{  \phi(\x_{\kappa}) - \phi(\x_0)}{ G_\varepsilon'(\phi(\x_\kappa))  -  G_\varepsilon'(\phi(\x_{0}))}
& 
 \mbox{ if } \phi(\x_{0}) \neq \phi(\x_\kappa)\,,
\\ \hueco\hueco\dis
M\big(P^h_0 (T^{1-\varepsilon}_\varepsilon(\phi))\big)
& 
\mbox{ if } \phi(\x_{0}) = \phi(\x_\kappa)\,.
\ea\right.
$$
\begin{obs}
There are other possibilities for defining $(M_\varepsilon^G)_{\kappa\kappa}|_{I_i} $ when $\phi(\x_{0}) = \phi(\x_\kappa)$, for instance the expression $G_\varepsilon''(\phi(\x_{0}))^{-1}$  was  considered in \cite{KGCHDM}.
\end{obs}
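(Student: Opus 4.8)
The plan is to verify the two implicit claims behind the construction, namely that the explicit elementwise quotients defining the $\mathbb{P}_0$ matrices $M_\varepsilon^G(\phi)$ and $T^G_h(\phi)$ do satisfy their defining identities \eqref{def:MhG} and \eqref{def:ThG}, together with the content of the final Remark, that in the degenerate nodal case the value assigned to the diagonal entry is irrelevant. Both identities reduce to the same elementary computation once the two structural hypotheses are used: $\Phi_h=\mathbb{P}_1$ and the mesh $\{\mathcal{T}_h\}$ is structured with axis-parallel edges. Since a $\mathbb{P}_0$ identity between two vector fields holds globally iff it holds on each element, I would argue element by element.

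First I would fix an element $I_i$ with distinguished vertex $\x_0$ and, for each $\kappa$, the vertex $\x_\kappa$ joined to $\x_0$ by the edge parallel to the $\kappa$-th axis, so that $\x_\kappa-\x_0 = h_\kappa\,\boldsymbol{e}_\kappa$ with $h_\kappa=|\x_\kappa-\x_0|$ and $\boldsymbol{e}_\kappa$ the $\kappa$-th coordinate unit vector. The key consequence of the structured-mesh hypothesis is that any $w\in\mathbb{P}_1$ restricted to $I_i$ is affine with constant gradient, so evaluating $w(\x_\kappa)=w(\x_0)+\nabla w\cdot(\x_\kappa-\x_0)$ yields $(\nabla w)_\kappa=(w(\x_\kappa)-w(\x_0))/h_\kappa$ for each $\kappa$. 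Thus $\nabla w$ is recovered as an axis-aligned finite difference with no coupling across directions, which is precisely what permits $M_\varepsilon^G$ and $T^G_h$ to be diagonal $\mathbb{P}_0$ matrices.

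Next I would apply this to the relevant interpolants. Since nodal interpolation preserves vertex values, $I_h(G_\varepsilon'(\phi))$, $I_h(H_\varepsilon'(1-\phi))$ and $\phi$ itself take the vertex values $G_\varepsilon'(\phi(\x_j))$, $H_\varepsilon'(1-\phi(\x_j))$ and $\phi(\x_j)$, so by the previous step their $\kappa$-th gradient components on $I_i$ are the corresponding differences divided by $h_\kappa$. Substituting into the $\kappa$-th row of \eqref{def:MhG} and \eqref{def:ThG}, the common factor $1/h_\kappa$ cancels and the identities collapse to
\[
(M_\varepsilon^G)_{\kappa\kappa}\,\big(G_\varepsilon'(\phi(\x_\kappa))-G_\varepsilon'(\phi(\x_0))\big)=\phi(\x_\kappa)-\phi(\x_0),
\]
\[
(T^G_h)_{\kappa\kappa}\,\big(G_\varepsilon'(\phi(\x_\kappa))-G_\varepsilon'(\phi(\x_0))\big)=-\big(H_\varepsilon'(1-\phi(\x_\kappa))-H_\varepsilon'(1-\phi(\x_0))\big),
\]
which are solved exactly by the quotients in the definitions whenever $\phi(\x_0)\neq\phi(\x_\kappa)$. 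To see those quotients are well defined I would invoke $G_\varepsilon''(\phi)=1/M_\varepsilon(\phi)>0$ (as $M_\varepsilon(\phi)>0$), so that $G_\varepsilon'$ is strictly increasing and the denominator never vanishes; the same monotonicity shows the numerator and denominator of $(M_\varepsilon^G)_{\kappa\kappa}$ share a sign, so the resulting coefficient is positive, as a mobility should be.

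The only remaining point, which also settles the Remark, is the degenerate case $\phi(\x_0)=\phi(\x_\kappa)$. Here the three finite differences above vanish simultaneously, so both sides of the $\kappa$-th row of \eqref{def:MhG} and \eqref{def:ThG} are zero independently of the value assigned to $(M_\varepsilon^G)_{\kappa\kappa}$ or $(T^G_h)_{\kappa\kappa}$; hence the identities hold for \emph{any} bounded choice, and the alternatives cited in the Remarks (for instance $G_\varepsilon''(\phi(\x_0))^{-1}$, or the $\mathbb{P}_0$-projection of $T^{1-\varepsilon}_\varepsilon(\phi)$) are all admissible. I expect no serious obstacle; the one step demanding genuine care is the reduction of the gradient to the diagonal finite-difference form, because for a general simplex the vertex differences mix across directions, and it is specifically the axis-parallel corner structure of the elements that decouples them and makes the diagonal $\mathbb{P}_0$ ansatz consistent.
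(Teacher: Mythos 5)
Your proposal is correct and fills in exactly the argument the paper leaves implicit: the remark carries no separate proof in the paper (the text merely asserts that identities like \eqref{def:ThG} and \eqref{def:MhG} are "easy to check"), and that check is precisely your element-by-element computation, using the axis-parallel edge structure to reduce each gradient to the decoupled finite differences $(\nabla w)_\kappa=(w(\x_\kappa)-w(\x_0))/h_\kappa$. In particular, your key observation --- that when $\phi(\x_0)=\phi(\x_\kappa)$ both sides of the $\kappa$-th row of \eqref{def:MhG} vanish, so \emph{any} bounded value of $(M_\varepsilon^G)_{\kappa\kappa}$, in particular $G_\varepsilon''(\phi(\x_0))^{-1}$, satisfies the defining identity --- is exactly the content of the remark.
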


The $G_\varepsilon$-scheme reads:
Find $(\u^{n+1},p^{n+1},\phi^{n+1},\mu^{n+1})\in \bold{U}_h\times P_h \times \Phi_h \times M_h$ such that 
\beq\label{eq:NSCHschemeG}
\left\{\ba{rcl}\dis
\frac1{\Delta t}(\u^{n+1} - \u^n,\bar{\u})
+ \left((\u^n\cdot\nabla)\u^{n+1},\bar{\u} \right)
+ \frac12\left((\nabla\cdot\u^n)\u^{n+1},\bar{\u} \right) 
&&
\\ \hueco
+ (\nu(\phi^{n+1})\mathbb{D}(\u^{n+1}),\mathbb{D}(\bar{\u}))
 + (\nabla p^{n+1},\bar\u)
+ (T^G_h(\phi^{n+1})\nabla\mu^{n+1},\bar{\u})
&=&0\,,
\\ \hueco
(\nabla\cdot\u^{n+1},\bar{p})
&=&0\,,
\\ \hueco\dis
\frac1{\Delta t}\big(\phi^{n+1} - \phi^n, \bar\mu\big)_h 
- (T^G_h(\phi^{n+1}) \u^{n+1},\nabla\bar\mu)
+\gamma\big(M_\varepsilon^G(\phi^{n+1})\nabla\mu^{n+1},\nabla\bar\mu\big) 
&=&0\,,
\\ \hueco
\lambda(\nabla\phi^{n+1},\nabla\bar\phi)
+ \lambda\big(I_h(F_c'(\phi^{n+1})) + I_h(F_e'(\phi^{n})),\bar\phi\big)_h
-(\mu^{n+1},\bar\phi)_h 
&=&
0\,,
\ea\right.
\eeq
for all $(\bar\u, \bar p, \bar\phi, \bar\mu)\in \bold{U}_h\times P_h \times \Phi_h \times M_h$.

\subsubsection{Conservation of volume and energy-stability}

\begin{lem}\label{lem:vol_sta_G}
Any solution 
of the  G$_\varepsilon$-scheme \eqref{eq:NSCHschemeG} is conservative, that is, 
$$
\int_\Omega \phi^{n+1} d\x
\,=\,
\int_\Omega \phi^{n} d\x
\,=\,
\dots
\,=\,
\int_\Omega \phi^{0} d\x\,,
$$
and it is energy stable in the sense that it satisfies the following discrete version of \eqref{eq:energylawtrunc}:  
\beq\label{eq:discreteenergylawG}
\ba{c}
\dis\delta_t
\left( E_{\rm kin}(\u^{n+1})
+ \frac{\lambda}2\|\nabla\phi^{n+1}\|^2_{L^2}
 +\lambda\int_\Omega  I_h(F(\phi^{n+1}))d\x\right)
 \\ \hueco
\dis
+\left\|\sqrt{\nu(\phi^{n+1})}\mathbb{D}(\u^{n+1})\right\|^2_{L^2}
+\gamma\left\|\sqrt{M_\varepsilon^G(\phi^{n+1})}\nabla\mu^{n+1}\right\|^2_{L^2}
\\ \hueco
+\dis
\frac1{2\Delta t}\|\u^{n+1} - \u^n\|^2_{L^2}
+ \frac{\lambda}{2\Delta t}\|\nabla(\phi^{n+1} - \phi^n)\|^2_{L^2}
\leq\dis
0
\,.
\ea
\eeq

\end{lem}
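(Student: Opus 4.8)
The plan is to mirror the continuous energy estimate \eqref{eq:energylawtrunc} at the discrete level by testing the four equations of the scheme \eqref{eq:NSCHschemeG} with the discrete analogues of $(\u,p,\mu,\phi_t)$, namely $(\bar\u,\bar p,\bar\mu,\bar\phi)=(\u^{n+1},p^{n+1},\mu^{n+1},\delta_t\phi^{n+1})$, and then adding the four resulting relations so that every coupling term cancels in pairs. Conservation is the easy part: taking $\bar\mu=1$ (which lies in $M_h$ since $k\ge1$) in the third equation of \eqref{eq:NSCHschemeG} annihilates both the transport and the mobility terms because $\nabla\bar\mu=0$, leaving $\tfrac1{\Delta t}(\phi^{n+1}-\phi^n,1)_h=0$. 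As $\phi^{n+1}-\phi^n\in\Phi_h=\mathbb{P}_1$, the mass-lumped product reduces to $\int_\Omega(\phi^{n+1}-\phi^n)\,d\x$, and the telescoping conservation identity follows.

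For energy stability I would treat each tested equation separately. In the momentum equation the discrete time derivative produces $\delta_t E_{\rm kin}(\u^{n+1})$ plus the positive remainder $\tfrac1{2\Delta t}\|\u^{n+1}-\u^n\|^2_{L^2}$ via the polarization identity $(a-b,a)=\tfrac12(|a|^2-|b|^2+|a-b|^2)$; the two convective terms cancel by integration by parts using only $\u^{n+1}|_{\partial\Omega}=\mathbf{0}$ (this is exactly the role of the skew-symmetric $\tfrac12(\nabla\cdot\u^n)$ correction, which avoids requiring pointwise incompressibility of $\u^n$); the pressure term $(\nabla p^{n+1},\u^{n+1})$ cancels against the incompressibility equation tested with $\bar p=p^{n+1}$; and the viscous term yields the dissipation $\|\sqrt{\nu(\phi^{n+1})}\mathbb{D}(\u^{n+1})\|^2_{L^2}$. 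In the phase equation tested with $\mu^{n+1}$, the mobility term gives $\gamma\|\sqrt{M_\varepsilon^G(\phi^{n+1})}\nabla\mu^{n+1}\|^2_{L^2}$, which is nonnegative because $M_\varepsilon^G$ is a diagonal matrix with nonnegative entries (each entry is a difference quotient of the increasing function $G_\varepsilon'$, by convexity of $G_\varepsilon$). In the chemical potential equation tested with $\delta_t\phi^{n+1}$, the Dirichlet term produces $\delta_t(\tfrac{\lambda}2\|\nabla\phi^{n+1}\|^2_{L^2})$ plus the remainder $\tfrac{\lambda}{2\Delta t}\|\nabla(\phi^{n+1}-\phi^n)\|^2_{L^2}$, while the convex-splitting term is bounded below, via the mass-lumped version of \eqref{eq:eyre}, by $\lambda\,\delta_t\int_\Omega I_h(F(\phi^{n+1}))\,d\x$; this is the single step that turns the final identity into the inequality \eqref{eq:discreteenergylawG}.

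The crux is the cancellation of the two coupling terms that replace the continuous balance between $\phi\nabla\mu$ and $\nabla\cdot(\phi\u)$. The force term $(T^G_h(\phi^{n+1})\nabla\mu^{n+1},\u^{n+1})$ from the momentum equation and the transport term $-(T^G_h(\phi^{n+1})\u^{n+1},\nabla\mu^{n+1})$ from the phase equation cancel exactly because the matrix $T^G_h$ is diagonal, hence symmetric, so that $(T^G_h\nabla\mu^{n+1},\u^{n+1})=(T^G_h\u^{n+1},\nabla\mu^{n+1})$; this is precisely why the scheme is constructed with the same $\mathbb{P}_0$ matrix $T^G_h$ in both places. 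Likewise, the term $(\mu^{n+1},\delta_t\phi^{n+1})_h$ arising from the phase equation cancels the identical term in the chemical potential equation, decoupling $\mu$ from the final estimate. Summing the four contributions then leaves exactly \eqref{eq:discreteenergylawG}.

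I expect the only delicate verification to be the discrete convex-splitting inequality in the mass-lumped inner product. Since the interpolation operator $I_h$ and the lumped product $(\cdot,\cdot)_h$ act nodewise, one must first establish the scalar inequality $\big(F_c'(a)+F_e'(b)\big)(a-b)\ge F(a)-F(b)$ at each node, which follows from convexity of $F_c$ and concavity of $F_e$, and only afterwards sum against the nonnegative lumping weights to obtain the global bound. Everything else is the routine bookkeeping of telescoping the time-difference terms and collecting the nonnegative remainders $\tfrac1{2\Delta t}\|\u^{n+1}-\u^n\|^2_{L^2}$ and $\tfrac{\lambda}{2\Delta t}\|\nabla(\phi^{n+1}-\phi^n)\|^2_{L^2}$ on the dissipative side.
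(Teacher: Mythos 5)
Your proposal is correct and follows essentially the same route as the paper: conservation via $\bar\mu=1$, then testing the four equations with $(\u^{n+1},p^{n+1},\mu^{n+1},\delta_t\phi^{n+1})$, cancelling the pressure/incompressibility and $T^G_h$ coupling terms, and invoking the convex-splitting inequality \eqref{eq:eyre} to pass from identity to inequality. In fact you spell out details the paper leaves implicit (nonnegativity of the entries of $M^G_\varepsilon$, the nodewise mass-lumped form of Eyre's inequality, and the skew-symmetric convection cancellation), so the argument is complete.
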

\begin{proof}
The conservation holds just by testing equation \eqref{eq:NSCHschemeG}$_3$ by $\bar \mu =1$. Testing \eqref{eq:NSCHschemeG}$_1$ by $\u^{n+1}$ and \eqref{eq:NSCHschemeG}$_2$ by $p^{n+1}$, we obtain 
\beq\label{eq:proofG1}
\delta_tE_{\rm kin}(\u^{n+1})
+ \frac1{2\Delta t}\left\|\u^{n+1} - \u^n\right\|^2_{L^2}
+\left\|\sqrt{\nu(\phi^{n+1})}\mathbb{D}(\u^{n+1})\right\|^2_{L^2}
+ \left(T^G_h(\phi^{n+1})\nabla\mu^{n+1},\u^{n+1}\right)
\,=\,0\,.
\eeq
Testing \eqref{eq:NSCHschemeG}$_3$ by $\bar\mu=\mu^{n+1}$ and \eqref{eq:NSCHschemeG}$_4$ by $\bar\phi=\delta_t\phi^{n+1}$ we obtain 
\beq\label{eq:proofG2}
\ba{c}\displaystyle
\frac\lambda2\delta_t\|\nabla\phi^{n+1}\|_{L^2}^2 
+ \frac{\lambda}{2\Delta t}\|\nabla(\phi^{n+1} - \phi^n)\|^2_{L^2}
+ \frac{\lambda}{\Delta t}\big(I_h(F_c'(\phi^{n+1})) + I_h(F_e'(\phi^{n})),\phi^{n+1} - \phi^n\big)_h
\\ \hueco\displaystyle
- (T^G_h(\phi^{n+1}) \u^{n+1},\nabla\mu^{n+1})
+\gamma\left\|\sqrt{M_\varepsilon^G(\phi^{n+1})}\nabla\mu^{n+1}\right\|^2_{L^2}
\,=\,0\,.
\ea
\eeq
Finally, taking into account \eqref{eq:eyre} and adding together \eqref{eq:proofG1} with \eqref{eq:proofG2} we obtain \eqref{eq:discreteenergylawG}.
\end{proof}

\subsubsection{Approximated maximum principle}

\begin{lem}\label{le:G_eps}
If $\mathbb{P}_1\subseteq P_h$,   $\Phi_h=\mathbb{P}_1$ and $\mathbb{P}_1\subset M_h$,  then any solution of the  G$_\varepsilon$-scheme \eqref{eq:NSCHschemeG} satisfies the following discrete version of  \eqref{eq:estimateGtrunc}:
\beq\label{eq:discretestimateG}
\ba{c}
\dis\delta_t\left(\int_\Omega I_h(G_\varepsilon(\phi^{n+1})) \,d\x\right)
+\dis
 \gamma\int_\Omega I_h\Big( (\omega^{n+1})^2\Big) \,d\x
+  \gamma\lambda\int_\Omega \Big|\sqrt{R^h(\phi^{n+1})} \nabla\phi^{n+1}\Big|^2 \,d\x 
\\ \hueco
\leq\dis
\frac{\gamma\lambda}{8\eta^2}\left(
\int_\Omega |\nabla\phi^{n}|^2 \,d\x
+ \int_\Omega |\nabla\phi^{n+1}|^2 \,d\x\right)
\,,
\ea
\eeq
\\where $\omega^{n+1}=\mu^{n+1} - \lambda\big(I_h(F_c'(\phi^{n+1})) + I_h(F_e'(\phi^{n}))\big)$ and $R^h(\phi)$ being a $\mathbb{P}_0$ 
diagonal matrix function (related to $F''_c(\phi)$), defined by
$$
R^h_{\kappa\kappa}\Big|_{I_i} 
\,:=\,
\left\{\ba{cc}\dis
\frac{ F'_c(\phi(\x_\kappa))  -  F'_c(\phi(\x_{0}))}{ \phi(\x_\kappa) - \phi(\x_{0}) }
& 
 \mbox{ if } \phi(\x_{0}) \neq \phi(\x_\kappa)\,,
\\ \hueco
\dis{F''_c(\phi(\x_{0}))} 
& 
\mbox{ if } \phi(\x_{0}) = \phi(\x_\kappa)\,.
\ea\right.
$$
Note that $R^h_{\kappa\kappa}\Big|_{I_i}\ge 0$ owing to the convexity of $F_c(\phi)$.

\end{lem}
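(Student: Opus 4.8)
The plan is to reproduce the continuous argument of Lemma~\ref{lem:estG} at the discrete level. The natural test function in \eqref{eq:NSCHschemeG}$_3$ is $\bar\mu = I_h(G_\varepsilon'(\phi^{n+1}))$, which is admissible since $\Phi_h=\mathbb{P}_1$ and $\mathbb{P}_1\subset M_h$ guarantee $I_h(G_\varepsilon'(\phi^{n+1}))\in M_h$. I would then treat the three resulting terms separately. For the discrete time derivative, the convexity of $G_\varepsilon$ (recall $G_\varepsilon''=1/M_\varepsilon>0$) applied nodewise, together with the mass-lumping identity, gives $\frac1{\Delta t}(\phi^{n+1}-\phi^n,I_h(G_\varepsilon'(\phi^{n+1})))_h \ge \delta_t\int_\Omega I_h(G_\varepsilon(\phi^{n+1}))\,d\x$, which is exactly why the statement is an inequality rather than an equality. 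For the mobility term, the defining property \eqref{def:MhG} of the $\mathbb{P}_0$ matrix $M_\varepsilon^G$ together with its symmetry turns $\gamma(M_\varepsilon^G(\phi^{n+1})\nabla\mu^{n+1},\nabla I_h(G_\varepsilon'(\phi^{n+1})))$ into exactly $\gamma(\nabla\mu^{n+1},\nabla\phi^{n+1})$, the discrete counterpart of the term appearing in \eqref{eq:proofG2trunc}.

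The step I expect to be the crux is the convective term, since this is precisely the term absent from the pure Cahn--Hilliard analysis and the reason $T^G_h$ was defined through \eqref{def:ThG}. Using \eqref{def:ThG} and the symmetry of $T^G_h$, I would rewrite $-(T^G_h(\phi^{n+1})\u^{n+1},\nabla I_h(G_\varepsilon'(\phi^{n+1}))) = (\u^{n+1},\nabla I_h(H_\varepsilon'(1-\phi^{n+1})))$. Integrating by parts and using $\u^{n+1}|_{\partial\Omega}=\zero$ leaves $-(\nabla\cdot\u^{n+1},I_h(H_\varepsilon'(1-\phi^{n+1})))$; since $I_h(H_\varepsilon'(1-\phi^{n+1}))\in\mathbb{P}_1\subseteq P_h$, this vanishes by the discrete incompressibility \eqref{eq:NSCHschemeG}$_2$ tested with this function as pressure. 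This is exactly the role of the hypothesis $\mathbb{P}_1\subseteq P_h$. Collecting the three contributions yields the intermediate discrete identity $\delta_t\int_\Omega I_h(G_\varepsilon(\phi^{n+1}))\,d\x + \gamma(\nabla\mu^{n+1},\nabla\phi^{n+1})\le 0$, the analogue of \eqref{eq:proofG2trunc}.

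It then remains to rewrite $\gamma(\nabla\mu^{n+1},\nabla\phi^{n+1})$, which is the discrete analogue of testing \eqref{eq:NSCHG2}$_4$ by $-\Delta\phi$ in \eqref{eq:proofG3trunc}. Writing $\mu^{n+1}=\omega^{n+1}+\lambda(I_h(F_c'(\phi^{n+1}))+I_h(F_e'(\phi^n)))$, I would split the inner product into three pieces. The convex part is handled by the matrix $R^h$, designed so that $\nabla I_h(F_c'(\phi^{n+1}))=R^h(\phi^{n+1})\nabla\phi^{n+1}$ elementwise on the structured mesh (the difference quotients defining $R^h$ are exactly the ratios of the axiswise increments of $I_h(F_c'(\phi^{n+1}))$ and $\phi^{n+1}$); hence $\lambda(\nabla I_h(F_c'(\phi^{n+1})),\nabla\phi^{n+1})=\lambda\int_\Omega|\sqrt{R^h(\phi^{n+1})}\nabla\phi^{n+1}|^2$, nonnegative since $F_c$ is convex. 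The concave part uses that $F_e'$ is affine, so $I_h(F_e'(\phi^n))=-\frac1{4\eta^2}\phi^n$, producing $-\frac{\lambda}{4\eta^2}(\nabla\phi^n,\nabla\phi^{n+1})$, which I would bound by Young's inequality to obtain the right-hand side of \eqref{eq:discretestimateG}.

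Finally, for the remaining piece $\gamma(\nabla\omega^{n+1},\nabla\phi^{n+1})$, I would test \eqref{eq:NSCHschemeG}$_4$ with $\bar\phi=\omega^{n+1}$, which lies in $\Phi_h=\mathbb{P}_1$ when $\mu^{n+1}$ is piecewise linear (in general one tests with $I_h(\omega^{n+1})$ and uses that the lumped products $(\cdot,\cdot)_h$ depend only on nodal values). Since $(\lambda(I_h(F_c'(\phi^{n+1}))+I_h(F_e'(\phi^n)))-\mu^{n+1},\omega^{n+1})_h=-(\omega^{n+1},\omega^{n+1})_h=-\int_\Omega I_h((\omega^{n+1})^2)\,d\x$, this yields $\lambda(\nabla\phi^{n+1},\nabla\omega^{n+1})=\int_\Omega I_h((\omega^{n+1})^2)\,d\x$ and hence the dissipative term $\int_\Omega I_h((\omega^{n+1})^2)\,d\x$. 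Substituting the three pieces into the intermediate identity and applying Young's inequality to the cross term gives \eqref{eq:discretestimateG}. The recurring subtlety is the bookkeeping between the mass-lumped products and the exact $L^2$ gradient terms; the genuinely new ingredient, compared with \cite{KGCHDM}, is the cancellation of the convective term via \eqref{def:ThG} together with the inf--sup-compatible inclusion $\mathbb{P}_1\subseteq P_h$.
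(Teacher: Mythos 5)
Your proposal is correct and follows essentially the same route as the paper: the paper's own proof consists precisely of your two displayed cancellations (the convective term via \eqref{def:ThG}, integration by parts, and the discrete divergence-free equation tested with $I_h(H_\varepsilon'(1-\phi^{n+1}))\in\mathbb{P}_1\subseteq P_h$, plus the mobility identity \eqref{def:MhG}), followed by testing \eqref{eq:NSCHschemeG}$_3$ with $\bar\mu=I_h(G_\varepsilon'(\phi^{n+1}))$ and ``working as in \cite{KGCHDM}'', which is exactly the convexity bound on the lumped time-difference term and the decomposition of $\gamma(\nabla\mu^{n+1},\nabla\phi^{n+1})$ into the $\omega^{n+1}$-, $R^h$- and Young-inequality pieces that you spell out (your choice $\bar\phi=I_h(\omega^{n+1})$ is the one that actually produces $\int_\Omega I_h((\omega^{n+1})^2)\,d\x$; the paper's text mentions $\bar\phi=\phi^{n+1}$, but the substance of that step is deferred to \cite{KGCHDM}). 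Two bookkeeping caveats, both inherited from the paper's abbreviated argument rather than introduced by you: (i) testing \eqref{eq:NSCHschemeG}$_4$ with $\omega^{n+1}$ yields $\gamma(\nabla\omega^{n+1},\nabla\phi^{n+1})=\frac{\gamma}{\lambda}\int_\Omega I_h((\omega^{n+1})^2)\,d\x$, so the natural coefficient of the second term in \eqref{eq:discretestimateG} is $\gamma/\lambda$ rather than $\gamma$ (consistent with the $\lambda\gamma\int_\Omega(\Delta\phi)^2\,d\x$ term of \eqref{eq:estimateGtrunc}, since $\omega^{n+1}$ plays the role of $-\lambda\Delta\phi$), and the stated coefficient follows from this only when $\lambda\le1$; (ii) your parenthetical fix for higher-order $M_h$ repairs only the lumped right-hand side, not the mismatch between $(\nabla\phi^{n+1},\nabla\omega^{n+1})$ and $(\nabla\phi^{n+1},\nabla I_h(\omega^{n+1}))$, which differ for $M_h=\mathbb{P}_k$ with $k\ge2$ in dimension $d\ge2$, so the argument as written really requires $M_h=\mathbb{P}_1$, the choice made in the paper's simulations.
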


\begin{proof}

Using \eqref{def:ThG} and taking into account  that $(\nabla\cdot\u^{n+1},\bar p)=0$  for any $\bar p \in P_h$ and the requirement $\mathbb{P}_1\subseteq P_h$, we can write
$$
\ba{rcccl}\dis
\Big(T^G_h(\phi^{n+1})\u^{n+1},\nabla I_h(G_{\varepsilon}'(\phi^{n+1}))\Big)
&=&
\Big(T^G_h(\phi^{n+1})\nabla I_h(G_{\varepsilon}'(\phi^{n+1})) , \u^{n+1}\Big)
&& 
\\ \hueco\dis
&=&
-\Big(\nabla I_h(H_{\varepsilon}'(1 - \phi^{n+1})) , \u^{n+1}\Big)
&=&
0
\,.
\ea
$$
Moreover, using \eqref{def:MhG} we have
$$
\ba{rcl}\dis
\Big(M_\varepsilon^h(\phi^{n+1})\nabla\mu^{n+1},\nabla I_h(G_{\varepsilon}'(\phi^{n+1}))\Big)
&=&
\Big(M_\varepsilon^h(\phi^{n+1})\nabla I_h(G_{\varepsilon}'(\phi^{n+1})) , \nabla\mu^{n+1}\Big) 
\\ \hueco\dis
&=&
\Big(\nabla \phi^{n+1} , \nabla\mu^{n+1}\Big)
\,.
\ea
$$
Finally, testing \eqref{eq:NSCHschemeG}$_3$ by $\bar\mu=I_h(G_{\varepsilon}'(\phi^{n+1}))$ (it is possible because $\mathbb{P}_1\subset M_h$), \eqref{eq:NSCHschemeG}$_4$ by $\bar\phi=\phi^{n+1}$ and working as in \cite{KGCHDM} we obtain \eqref{eq:discretestimateG}.
\end{proof}

\begin{cor}\label{lem:Gapprxbounds}
Under hypotheses of Lemma~\ref{le:G_eps}, any solution 
 of the G$_\varepsilon$-scheme \eqref{eq:NSCHschemeG} satisfies the following estimates:
\beq\label{eq:Glowerbound}
\int_\Omega \big(I_h(\phi_-^{n})\big)^2 d\x 
\leq
C \, \frac{\varepsilon(1 - \varepsilon)}{\eta^2}
\leq
C \, \frac{\varepsilon}{\eta^2}
\eeq
and 
\beq\label{eq:Gupperbound}
\int_\Omega \Big(I_h\big((\phi^{n} - 1)_+\big)\Big)^2 d\x 
\leq
C \, \frac{\varepsilon(1 - \varepsilon)}{\eta^2}
\leq
C \, \frac{\varepsilon}{\eta^2}\,,
\eeq
where $C$ depends on the initial energy $E(\phi^0)$ and $\int_\Omega I_h(G_\varepsilon(\phi^0)) d\x$. 
\end{cor}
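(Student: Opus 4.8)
The plan is to turn the differential inequality \eqref{eq:discretestimateG} into a bound on the accumulated functional $\int_\Omega I_h(G_\varepsilon(\phi^{n}))\,d\x$ that is uniform in $n$, and then exploit the fact that, outside the interval $[\varepsilon,1-\varepsilon]$, the potential $G_\varepsilon$ is an exact parabola of curvature $1/(\varepsilon(1-\varepsilon))$, so that it controls the square of the excess of $\phi$ beyond $[0,1]$.

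First I would sum \eqref{eq:discretestimateG} (multiplied by $\Delta t$) from $n=0$ up to an arbitrary index $m-1$. The $\delta_t$ term telescopes, the two other terms on the left are nonnegative (a square, and $R^h\ge0$ by convexity of $F_c$) and may be discarded, and on the right one is left with $\frac{\gamma\lambda}{8\eta^2}\Delta t\sum_{n}\big(\|\nabla\phi^{n}\|_{L^2}^2+\|\nabla\phi^{n+1}\|_{L^2}^2\big)$. To control this I would invoke the energy stability of Lemma~\ref{lem:vol_sta_G}: since $E_{\rm kin}\ge0$ and $\int_\Omega I_h(F(\phi^n))\ge0$, the monotone decay of the discrete energy gives $\frac\lambda2\|\nabla\phi^{n}\|_{L^2}^2\le E(\u^{n},\phi^{n})\le E(\u^{0},\phi^{0})$, so $\|\nabla\phi^{n}\|_{L^2}^2$ is bounded uniformly in $n$ by $\tfrac2\lambda E(\u^0,\phi^0)$. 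Because $m\Delta t\le T$, the right-hand side is then bounded by a constant of the form $\tfrac{\gamma T}{2\eta^2}E(\u^0,\phi^0)$, independent of $m$ and of $\varepsilon$. After telescoping this yields the uniform-in-$n$ estimate $\int_\Omega I_h(G_\varepsilon(\phi^{n}))\,d\x\le C$, with $C$ depending only on $\int_\Omega I_h(G_\varepsilon(\phi^0))\,d\x$ and $E(\u^0,\phi^0)$, exactly as announced.

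Next I would prove the pointwise inequality linking $G_\varepsilon$ to the excess outside $[0,1]$. Normalizing $G_\varepsilon$ by its minimum so that $G_\varepsilon\ge0$ (its minimum sits at $1/2$ by the symmetry $G_\varepsilon(\phi)=G_\varepsilon(1-\phi)$ inherited from \eqref{eq:truncpropH-bis}), I note that for $s\le\varepsilon$ the identity $G_\varepsilon''\equiv 1/(\varepsilon(1-\varepsilon))$ makes $G_\varepsilon$ quadratic, so the exact Taylor expansion at $0$ reads $G_\varepsilon(s)=G_\varepsilon(0)+G_\varepsilon'(0)s+\tfrac{s^2}{2\varepsilon(1-\varepsilon)}$. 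Since $G_\varepsilon$ is convex with minimum at $1/2$ one has $G_\varepsilon'(0)\le0$, hence for $s\le0$ the linear term is nonnegative and $(s_-)^2=s^2\le 2\varepsilon(1-\varepsilon)\big(G_\varepsilon(s)-G_\varepsilon(0)\big)$; the symmetric computation at $1$ gives $((s-1)_+)^2\le 2\varepsilon(1-\varepsilon)\big(G_\varepsilon(s)-G_\varepsilon(1)\big)$. Evaluating these at the nodes of $\phi^n\in\mathbb{P}_1$, summing against the (positive) lumped masses, and using $G_\varepsilon(0)=G_\varepsilon(1)\ge0$ together with $G_\varepsilon\ge0$ to reinstate the discarded interior nodes, I obtain $|\phi_-^{n}|_h^2\le 2\varepsilon(1-\varepsilon)\int_\Omega I_h(G_\varepsilon(\phi^n))\,d\x$ and likewise for $(\phi^n-1)_+$. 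Finally the mass-lumping inequality $\|I_h v\|_{L^2}^2\le |v|_h^2$ for $\mathbb{P}_1$ functions converts the discrete seminorm into the $L^2$ norm appearing in \eqref{eq:Glowerbound}--\eqref{eq:Gupperbound}, and combining with the uniform bound $\int_\Omega I_h(G_\varepsilon(\phi^n))\le C$ closes the argument (the trivial bound $\varepsilon(1-\varepsilon)\le\varepsilon$ gives the second inequality in each display).

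The main obstacle I anticipate is the passage from the pointwise quadratic lower bound to the integral estimate: one must handle the sign of $G_\varepsilon$ carefully, because the pointwise inequality $(s_-)^2\le 2\varepsilon(1-\varepsilon)(G_\varepsilon(s)-G_\varepsilon(0))$ is valid only where $s\le0$ and would fail in the interior of $[0,1]$. It is precisely the normalization $G_\varepsilon\ge0$ that lets me drop the interior nodal contributions with the correct sign when enlarging the sum over negative nodes to the full sum $\int_\Omega I_h(G_\varepsilon(\phi^n))$. A secondary technical point is to keep the constant independent of both the time index $m$ and the parameter $\varepsilon$, which is what makes the right-hand sides of \eqref{eq:Glowerbound}--\eqref{eq:Gupperbound} genuinely vanish as $\varepsilon\to0$; this is guaranteed by the uniform energy bound on $\|\nabla\phi^n\|_{L^2}$. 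These are the steps carried out in \cite{KGCHDM} for the pure Cahn--Hilliard case, and they transfer here once the convective terms have been shown (in Lemma~\ref{le:G_eps}) to drop out.
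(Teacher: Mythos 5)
Your proposal is correct and follows essentially the same route as the paper, whose proof simply defers to the arguments of \cite{KGCHDM}: summing the discrete estimate \eqref{eq:discretestimateG} in time, controlling the accumulated gradient terms via the decreasing discrete energy of Lemma~\ref{lem:vol_sta_G}, and then exploiting the exact quadratic growth of $G_\varepsilon$ (curvature $1/(\varepsilon(1-\varepsilon))$) outside $[\varepsilon,1-\varepsilon]$ together with mass lumping. You have in effect reconstructed, correctly and with the right attention to the normalization $G_\varepsilon\ge 0$ and to the $\varepsilon$- and $n$-uniformity of the constant, the details that the paper leaves to the cited reference.
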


\begin{proof}
Following the same arguments presented in \cite{KGCHDM}.
\end{proof}

\begin{obs}\label{rem:boundG}
G$_\varepsilon$-scheme \eqref{eq:NSCHschemeG} satisfies an approximate maximum principle for $\phi^{n}$, because estimates \eqref{eq:Glowerbound} and \eqref{eq:Gupperbound} imply in particular  that
$$
I_h(\phi^{n}_-) \rightarrow 0
\quad\mbox{ and }\quad
I_h \big((\phi^{n} - 1)_+\big) \rightarrow 0
\quad \mbox{ in }  L^2(\Omega),
\quad \mbox{ as }  \quad\varepsilon\rightarrow0\,,
$$
with $\mathcal{O}\left(\sqrt{\varepsilon}/\eta\right)$ accuracy rate.
\end{obs}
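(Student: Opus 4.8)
The plan is to read the conclusion off directly from the two bounds \eqref{eq:Glowerbound} and \eqref{eq:Gupperbound}, which already contain all the analytic work (they are established in Corollary~\ref{lem:Gapprxbounds}); what remains is only to reinterpret those inequalities as statements about $L^2$-norms and to extract a square root. The point to make explicit first is the meaning of the two quantities: $I_h(\phi_-^n)$ is the nodal interpolant of the negative part of $\phi^n$ and hence measures the amount by which $\phi^n$ falls below the pure phase $\phi=0$, while $I_h\big((\phi^n-1)_+\big)$ measures the amount by which $\phi^n$ overshoots the pure phase $\phi=1$. Controlling the $L^2(\Omega)$-size of both is therefore precisely the assertion that $\phi^n$ is asymptotically confined to $[0,1]$, which is the sense in which an approximate maximum principle holds for a phase-field variable.

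Concretely, I would rewrite \eqref{eq:Glowerbound} and \eqref{eq:Gupperbound} as the norm bounds
$$
\big\|I_h(\phi_-^n)\big\|_{L^2}^2 \,\le\, C\,\frac{\varepsilon}{\eta^2}
\qquad\text{and}\qquad
\big\|I_h\big((\phi^n-1)_+\big)\big\|_{L^2}^2 \,\le\, C\,\frac{\varepsilon}{\eta^2},
$$
and then take square roots to obtain
$$
\big\|I_h(\phi_-^n)\big\|_{L^2} \,\le\, \sqrt{C}\,\frac{\sqrt{\varepsilon}}{\eta}
\qquad\text{and}\qquad
\big\|I_h\big((\phi^n-1)_+\big)\big\|_{L^2} \,\le\, \sqrt{C}\,\frac{\sqrt{\varepsilon}}{\eta}.
$$
Since the right-hand sides tend to zero as $\varepsilon\to 0$, this simultaneously yields the claimed convergences $I_h(\phi_-^n)\to 0$ and $I_h\big((\phi^n-1)_+\big)\to 0$ in $L^2(\Omega)$ and the advertised rate $\mathcal{O}(\sqrt{\varepsilon}/\eta)$; indeed, using the sharper factor $\varepsilon(1-\varepsilon)\le\varepsilon$ one even gets the bound $\sqrt{C}\,\sqrt{\varepsilon(1-\varepsilon)}/\eta$, which is of the stated order.

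The only genuinely nontrivial point, and the one I would flag as the main obstacle, is the uniformity of the constant $C$ with respect to both the time index $n$ and the truncation parameter $\varepsilon$, since only then does passing to the limit $\varepsilon\to 0$ make sense. Uniformity in $n$ is already encoded in \eqref{eq:Glowerbound}-\eqref{eq:Gupperbound}, whose constant is controlled solely by the initial energy $E(\phi^0)$ and by $\int_\Omega I_h(G_\varepsilon(\phi^0))\,d\x$; uniformity in $\varepsilon$ then reduces to verifying that this initial functional $\int_\Omega I_h(G_\varepsilon(\phi^0))\,d\x$ stays bounded as $\varepsilon\to 0$. I would secure this by invoking the hypothesis that the initial datum $\phi^0$ takes values in the physically admissible range $[0,1]$, so that $G_\varepsilon(\phi^0)$ is dominated uniformly in $\varepsilon$ by the limiting entropy-type functional associated with $M(\phi)=\phi(1-\phi)$. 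With that bookkeeping in place, $C$ is genuinely independent of $\varepsilon$ and the estimates above close the argument; no further analytic difficulty arises beyond this $\varepsilon$-independence of the initial contribution.
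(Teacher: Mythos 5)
Your proposal is correct and follows essentially the same route as the paper, which states the remark as an immediate consequence of Corollary~\ref{lem:Gapprxbounds}: read \eqref{eq:Glowerbound}--\eqref{eq:Gupperbound} as squared $L^2$-norm bounds and take square roots to get the rate $\mathcal{O}(\sqrt{\varepsilon}/\eta)$. Your extra observation that the constant $C$ must be uniform in $\varepsilon$ --- secured because $G_\varepsilon'' \leq 1/(\phi(1-\phi))$ on $[0,1]$ makes $\int_\Omega I_h(G_\varepsilon(\phi^0))\,d\x$ bounded independently of $\varepsilon$ when $\phi^0$ takes values in $[0,1]$ --- is a legitimate piece of bookkeeping that the paper leaves implicit (deferring to \cite{KGCHDM}), and it strengthens rather than alters the argument.
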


\subsection{J$_\varepsilon$-Scheme}
We first define a $\mathbb{P}_0$ approximation of $T_\varepsilon^{1-\varepsilon}(\phi)$, called $T^J_h(\phi)$ such that\beq\label{def:ThJ}
 T^J_h(\phi) \nabla I_h(J_\varepsilon'(\phi))
 \,=\,
 -\nabla I_h(K_\varepsilon'(1- \phi)),
 \quad \pato \phi\in \Phi_h\,.
\eeq
In fact, $T^J_h(\phi)$ is a $\mathbb{P}_0$ diagonal matrix function defined by
$$
(T^J_h)_{\kappa\kappa}\Big|_{I_i} 
\,:=\,
\left\{\ba{cc}\dis
-\frac{ K_\varepsilon'(1 - \phi(\x_\kappa))  -  K_\varepsilon'(1 - \phi(\x_{0}))}{ J_\varepsilon'(\phi(\x_\kappa))  -  J_\varepsilon'(\phi(\x_{0}))}
& 
 \mbox{ if } \phi(\x_{0}) \neq \phi(\x_\kappa)\,,
\\ \hueco\hueco\dis
 P^h_0 (T^{1-\varepsilon}_\varepsilon(\phi))
 & 
\mbox{ if } \phi(\x_{0}) = \phi(\x_\kappa)\,.
\ea\right.
$$
\begin{obs}
There are other possibilities for defining $(T^J_h)_{kk}|_{I_i} $ when $\phi(\x_{0}) = \phi(\x_\kappa)$, for instance the expression $  K_\varepsilon''(1 - \phi(\x_{0})) / J_\varepsilon''(\phi(\x_{0}))$  can also be considered.
\end{obs}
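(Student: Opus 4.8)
The statement to verify is that the element-wise definition of the diagonal $\mathbb{P}_0$ matrix $T^J_h(\phi)$ really does satisfy the identity \eqref{def:ThJ}, namely $T^J_h(\phi)\nabla I_h(J_\varepsilon'(\phi)) = -\nabla I_h(K_\varepsilon'(1-\phi))$ for every $\phi\in\Phi_h$. Since $\Phi_h=\mathbb{P}_1$, both $I_h(J_\varepsilon'(\phi))$ and $I_h(K_\varepsilon'(1-\phi))$ are continuous piecewise-linear functions whose nodal values are $J_\varepsilon'(\phi(\x_j))$ and $K_\varepsilon'(1-\phi(\x_j))$; hence their gradients are $\mathbb{P}_0$ vector fields, and as $T^J_h(\phi)$ is $\mathbb{P}_0$ as well, both sides of \eqref{def:ThJ} are piecewise-constant. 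The plan is therefore to reduce \eqref{def:ThJ} to a family of scalar identities, one on each element $I_i$ and in each coordinate $\kappa$, and to solve each of them for the diagonal entry $(T^J_h)_{\kappa\kappa}$.

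The key tool is the decoupling of $\mathbb{P}_1$ gradients granted by the structured triangulation. First I would record the elementary fact that, because $I_i$ carries an edge joining $\x_0$ to $\x_\kappa$ parallel to the $\kappa$-th axis, every affine function $g$ on $I_i$ satisfies $\big(\nabla g\big)_\kappa = \big(g(\x_\kappa)-g(\x_0)\big)/h_\kappa$ there, with $h_\kappa$ the (nonzero) $\kappa$-th coordinate of the edge vector $\x_\kappa-\x_0$; that is, the $\kappa$-th derivative sees only the two nodal values at $\x_0$ and $\x_\kappa$. Applying this to $g=I_h(J_\varepsilon'(\phi))$ and to $g=I_h(K_\varepsilon'(1-\phi))$, the $\kappa$-th component of \eqref{def:ThJ} on $I_i$ collapses to the scalar relation $(T^J_h)_{\kappa\kappa}\big(J_\varepsilon'(\phi(\x_\kappa))-J_\varepsilon'(\phi(\x_0))\big) = -\big(K_\varepsilon'(1-\phi(\x_\kappa))-K_\varepsilon'(1-\phi(\x_0))\big)$, the common factor $h_\kappa$ cancelling. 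When $\phi(\x_0)\neq\phi(\x_\kappa)$ the left bracket is nonzero — indeed $J_\varepsilon''=1/\sqrt{M_\varepsilon}>0$ since $M_\varepsilon\geq\varepsilon(1-\varepsilon)>0$, so $J_\varepsilon'$ is strictly increasing — and dividing yields exactly the first branch of the definition of $(T^J_h)_{\kappa\kappa}$.

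The only delicate point, and the one I expect to be the true content of the verification, is the degenerate case $\phi(\x_0)=\phi(\x_\kappa)$, where the first branch is a formal $0/0$. Here both differences $J_\varepsilon'(\phi(\x_\kappa))-J_\varepsilon'(\phi(\x_0))$ and $K_\varepsilon'(1-\phi(\x_\kappa))-K_\varepsilon'(1-\phi(\x_0))$ vanish because their arguments coincide, so the scalar relation reads $0\cdot(T^J_h)_{\kappa\kappa}=0$ and the $\kappa$-th component of \eqref{def:ThJ} holds on $I_i$ for any finite choice of $(T^J_h)_{\kappa\kappa}$. This is precisely why the value $P^h_0(T^{1-\varepsilon}_\varepsilon(\phi))$ in the second branch is admissible, and equally why the alternative $K_\varepsilon''(1-\phi(\x_0))/J_\varepsilon''(\phi(\x_0))$ of the final remark serves just as well: the identity \eqref{def:ThJ} is insensitive to this assignment. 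Collecting the two cases over all elements and all coordinates establishes \eqref{def:ThJ}; the algebra is immediate once the structured-mesh decoupling and the $0/0$ well-definedness are in place, and the whole argument is, mutatis mutandis, identical to the verification of \eqref{def:ThG} for the $G_\varepsilon$-scheme, with $(J_\varepsilon,K_\varepsilon)$ replacing $(G_\varepsilon,H_\varepsilon)$.
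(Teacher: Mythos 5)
Your verification is correct and supplies exactly the justification the paper leaves implicit: the only constraint on $(T^J_h)_{\kappa\kappa}$ is \eqref{def:ThJ}, which on a structured mesh reduces elementwise to the scalar divided-difference identity you derive, and when $\phi(\x_0)=\phi(\x_\kappa)$ both sides of that identity vanish, so any finite value---in particular $K_\varepsilon''(1-\phi(\x_0))/J_\varepsilon''(\phi(\x_0))$, which is finite and well defined since $J_\varepsilon''=1/\sqrt{M_\varepsilon}>0$---is admissible. This is the same (unstated) argument behind the paper's ``it is easy to check'' for the analogous $G_\varepsilon$-scheme definition, so no further comparison is needed.
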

We also define a $\mathbb{P}_0$ approximation of $M_\varepsilon(\phi)$, called $M_\varepsilon^J(\phi)$ such that
\beq\label{def:MhJ}
M^J_\varepsilon(\phi)
 \nabla I_h(J_\varepsilon'(\phi))
 \,=\,\sqrt{M^J_\varepsilon(\phi)}\nabla \phi ,
 \quad \pato \phi\in \Phi_h\,.
\eeq
In fact, $M^J_\varepsilon(\phi)$ is a $\mathbb{P}_0$ diagonal matrix function 
defined as 
$$
(M^J_\varepsilon)_{\kappa\kappa}\Big|_{I_i} 
\,:=\,
\left\{\ba{cc}\dis
\left(\frac{  \phi(\x_{\kappa}) - \phi(\x_0)}{ J_\varepsilon'(\phi(\x_\kappa))  -  J_\varepsilon'(\phi(\x_{0}))}\right)^2
& 
 \mbox{ if } \phi(\x_{0}) \neq \phi(\x_\kappa)\,,
\\ \hueco\hueco\dis
M\big(P^h_0 (T^{1-\varepsilon}_\varepsilon(\phi))\big)
& 
\mbox{ if } \phi(\x_{0}) = \phi(\x_\kappa)\,.
\ea\right.
$$
\begin{obs}
There are other possibilities for defining $(M_\varepsilon^J)_{\kappa\kappa}|_{I_i} $ when $\phi(\x_{0}) = \phi(\x_\kappa)$, for instance the expression $J_\varepsilon''(\phi(\x_{0}))^{-2}$  was  considered in \cite{KGCHDM}.
\end{obs}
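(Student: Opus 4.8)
The plan is to argue that the defining identity \eqref{def:MhJ} --- the only property of $M^J_\varepsilon(\phi)$ actually invoked in deriving the conservation, energy-stability and approximate maximum-principle properties of the $J_\varepsilon$-scheme (obtained exactly as in Lemmas~\ref{lem:vol_sta_G}--\ref{le:G_eps} for the $G_\varepsilon$-scheme, now with \eqref{def:ThJ}--\eqref{def:MhJ} replacing \eqref{def:ThG}--\eqref{def:MhG}) --- is completely insensitive to the value prescribed for $(M^J_\varepsilon)_{\kappa\kappa}|_{I_i}$ in the degenerate case $\phi(\x_0)=\phi(\x_\kappa)$. Hence any strictly positive, finite prescription there is admissible, and in particular $J_\varepsilon''(\phi(\x_0))^{-2}$ is a valid alternative to $M\big(P^h_0(T^{1-\varepsilon}_\varepsilon(\phi))\big)$.

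First I would reduce \eqref{def:MhJ} to scalar, axis-by-axis identities on each element. Since $\phi\in\Phi_h=\mathbb{P}_1$ and $I_h(J_\varepsilon'(\phi))\in\mathbb{P}_1$, both gradients are element-constant, and on the structured mesh their $\kappa$-th components along the node pair $\x_0,\x_\kappa$ are the one-dimensional difference quotients
$$
(\nabla\phi)_\kappa=\frac{\phi(\x_\kappa)-\phi(\x_0)}{|\x_\kappa-\x_0|},\qquad
(\nabla I_h(J_\varepsilon'(\phi)))_\kappa=\frac{J_\varepsilon'(\phi(\x_\kappa))-J_\varepsilon'(\phi(\x_0))}{|\x_\kappa-\x_0|}.
$$
Because $M^J_\varepsilon$ is diagonal, \eqref{def:MhJ} then becomes, for each $\kappa$, the scalar relation $(M^J_\varepsilon)_{\kappa\kappa}\,(\nabla I_h(J_\varepsilon'(\phi)))_\kappa=\sqrt{(M^J_\varepsilon)_{\kappa\kappa}}\,(\nabla\phi)_\kappa$.

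Next I would split into the two regimes. When $\phi(\x_0)\neq\phi(\x_\kappa)$, both difference quotients are nonzero; since $J_\varepsilon''=1/\sqrt{M_\varepsilon}>0$ makes $J_\varepsilon'$ strictly increasing, they share the same sign, so cancelling $\sqrt{(M^J_\varepsilon)_{\kappa\kappa}}$ and the common factor $|\x_\kappa-\x_0|^{-1}$ forces $\sqrt{(M^J_\varepsilon)_{\kappa\kappa}}=(\phi(\x_\kappa)-\phi(\x_0))/(J_\varepsilon'(\phi(\x_\kappa))-J_\varepsilon'(\phi(\x_0)))>0$, i.e.\ exactly the squared-difference-quotient value; here the definition is pinned down with no freedom. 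When instead $\phi(\x_0)=\phi(\x_\kappa)$, the function $J_\varepsilon'$ takes equal values at the two nodes, so $(\nabla\phi)_\kappa=0$ and $(\nabla I_h(J_\varepsilon'(\phi)))_\kappa=0$ simultaneously; the scalar relation degenerates to the vacuous identity $0=0$ and holds for \emph{any} value of $(M^J_\varepsilon)_{\kappa\kappa}|_{I_i}$. This is precisely the freedom the remark exploits.

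Finally I would confirm that the two candidate prescriptions are legitimate and interchangeable. Both are strictly positive and finite: $J_\varepsilon''(\phi(\x_0))^{-2}=M_\varepsilon(\phi(\x_0))>0$ and $M\big(P^h_0(T^{1-\varepsilon}_\varepsilon(\phi))\big)>0$, so the square root in \eqref{def:MhJ} remains well-defined. Moreover $J_\varepsilon''(\phi(\x_0))^{-2}$ is the limit of the off-diagonal formula as $\phi(\x_\kappa)\to\phi(\x_0)$ (mean value theorem for the increasing $J_\varepsilon'$), so the \cite{KGCHDM} choice makes the definition continuous across the degenerate set, whereas the present element-average choice additionally incorporates the other-axis nodal data. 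Since every structural conclusion for the $J_\varepsilon$-scheme follows solely from \eqref{def:MhJ}, which holds for either prescription, both are admissible. I expect the only genuinely necessary check --- the mild ``obstacle'' --- to be verifying positivity and finiteness of the chosen degenerate-case value so that $\sqrt{M^J_\varepsilon}$ is meaningful; beyond that the claim reduces to the elementary observation that the degenerate direction contributes $0=0$.
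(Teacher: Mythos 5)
Your proposal is correct and follows exactly the reasoning implicit in the paper, which states this as an unproved remark: on the structured mesh the defining identity \eqref{def:MhJ} reduces axis-by-axis to the scalar relation $(M^J_\varepsilon)_{\kappa\kappa}\,(\nabla I_h(J_\varepsilon'(\phi)))_\kappa=\sqrt{(M^J_\varepsilon)_{\kappa\kappa}}\,(\nabla\phi)_\kappa$, which pins down the squared difference quotient when $\phi(\x_0)\neq\phi(\x_\kappa)$ but degenerates to $0=0$ when $\phi(\x_0)=\phi(\x_\kappa)$, so any strictly positive finite prescription there is admissible, and $J_\varepsilon''(\phi(\x_0))^{-2}=M_\varepsilon(\phi(\x_0))>0$ (the limit of the off-diagonal formula, hence the choice of \cite{KGCHDM}) qualifies. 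Your additional checks — positivity so that $\sqrt{M^J_\varepsilon}$ is meaningful, and that all structural lemmas for the $J_\varepsilon$-scheme use only \eqref{def:MhJ} — are precisely what validates the remark, and nothing further is needed.
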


The $J_\varepsilon$-scheme reads:
Find $(\u^{n+1},p^{n+1},\phi^{n+1},\mu^{n+1})\in \bold{U}_h\times P_h \times \Phi_h \times M_h$ such that
\beq\label{eq:NSCHschemeJ}
\left\{\ba{rcl}\dis
\frac1{\Delta t}(\u^{n+1} - \u^n,\bar{\u})
+ \left((\u^n\cdot\nabla)\u^{n+1},\bar{\u} \right)
+ \frac12\left((\nabla\cdot\u^n)\u^{n+1},\bar{\u} \right) 
&&
\\ \hueco
\dis
+ (\nu(\phi^{n+1})\mathbb{D}(\u^{n+1}),\mathbb{D}(\bar \u))
 + (\nabla p^{n+1},\bar\u)
+ (T^J_h(\phi^{n+1})\nabla\mu^{n+1},\bar{\u})
&=&0\,,
\\ \hueco
(\nabla\cdot\u^{n+1},\bar{p})
&=&0\,,
\\ \hueco\dis
\frac1{\Delta t}\big(\phi^{n+1} - \phi^n, \bar\mu\big)_h 
- (T^J_h(\phi^{n+1}) \u^{n+1},\nabla\bar\mu)
+\gamma\big(M^J_\varepsilon(\phi^{n+1})\nabla\mu^{n+1},\nabla\bar\mu\big) 
&=&0\,,
\\ \hueco
\lambda(\nabla\phi^{n+1},\nabla\bar\phi)
+ \lambda\big(F_c'(\phi^{n+1}) + F_e'(\phi^{n}),\bar\phi\big)
-(\mu^{n+1},\bar\phi)_h 
&=&
0\,,
\ea\right.
\eeq
for all $(\bar\u, \bar p, \bar\phi, \bar\mu)\in \bold{U}_h\times P_h \times \Phi_h \times M_h$.

\subsubsection{Conservation of volume and energy-stability}
\begin{lem}
Scheme \eqref{eq:NSCHschemeJ} is conservative, that is, 
$$
\int_\Omega \phi^{n+1} d\x
\,=\,
\int_\Omega \phi^{n} d\x
\,=\,
\dots
\,=\,
\int_\Omega \phi^{0} d\x\,,
$$
and it is energy stable in the sense that it satisfies a discrete version of \eqref{eq:energylawtrunc}:  
\beq\label{eq:discreteenergylaw}
\ba{c}
\dis\delta_t E(\u^{n+1},\phi^{n+1})
+\left\|\sqrt{\nu(\phi^{n+1})}\mathbb{D}(\u^{n+1})\right\|^2_{L^2}
+\gamma\left\|\sqrt{M^J_\varepsilon(\phi^{n+1})}\nabla\mu^{n+1}\right\|^2_{L^2}
\\ \hueco\dis
+
\frac1{2\Delta t}\|\u^{n+1} - \u^n\|^2_{L^2}
+ \frac{\lambda}{2\Delta t}\|\nabla(\phi^{n+1} - \phi^n)\|^2_{L^2}
\leq\dis
0
\,.
\ea
\eeq

\end{lem}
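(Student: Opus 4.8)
The argument runs parallel to the proof of Lemma~\ref{lem:vol_sta_G}, with $T^G_h$ and $M_\varepsilon^G$ replaced by $T^J_h$ and $M^J_\varepsilon$, so the plan is to organize it around the same choice of test functions. Conservation of volume follows at once by taking $\bar\mu=1$ in \eqref{eq:NSCHschemeJ}$_3$: the transport and mobility terms carry $\nabla\bar\mu$ and hence vanish, leaving $(\phi^{n+1}-\phi^n,1)_h=0$; since $\phi^{n+1},\phi^n\in\Phi_h=\mathbb{P}_1$ the mass-lumped product coincides with $\int_\Omega(\phi^{n+1}-\phi^n)\,d\x$, which gives the stated identity inductively.

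For the energy estimate I would test the four equations of \eqref{eq:NSCHschemeJ} by $(\u^{n+1},p^{n+1},\mu^{n+1},\delta_t\phi^{n+1})$ respectively and add them. In \eqref{eq:NSCHschemeJ}$_1$ the two convective terms cancel: integrating $((\u^n\cdot\nabla)\u^{n+1},\u^{n+1})$ by parts (using $\u^{n+1}|_{\partial\Omega}=\zero$) produces $-\frac12((\nabla\cdot\u^n)\u^{n+1},\u^{n+1})$, which is exactly compensated by the added skew-symmetrizing term, the reason that term is present. The discrete time derivative yields $\delta_t E_{\rm kin}(\u^{n+1})+\frac1{2\Delta t}\|\u^{n+1}-\u^n\|_{L^2}^2$ via $a(a-b)=\frac12 a^2-\frac12 b^2+\frac12(a-b)^2$, the viscous term yields $\|\sqrt{\nu(\phi^{n+1})}\mathbb{D}(\u^{n+1})\|_{L^2}^2$, and the pressure term $(\nabla p^{n+1},\u^{n+1})$ is removed by the discrete incompressibility obtained from testing \eqref{eq:NSCHschemeJ}$_2$ by $p^{n+1}$.

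The coupling terms cancel next. Testing \eqref{eq:NSCHschemeJ}$_3$ by $\mu^{n+1}$ produces $-(T^J_h(\phi^{n+1})\u^{n+1},\nabla\mu^{n+1})$, and since $T^J_h$ is a $\mathbb{P}_0$ \emph{diagonal} (hence symmetric) matrix this is identical to the term $(T^J_h(\phi^{n+1})\nabla\mu^{n+1},\u^{n+1})$ arising from \eqref{eq:NSCHschemeJ}$_1$, so the two delete each other; the same test yields the dissipation $\gamma\|\sqrt{M^J_\varepsilon(\phi^{n+1})}\nabla\mu^{n+1}\|_{L^2}^2$ together with $\frac1{\Delta t}(\phi^{n+1}-\phi^n,\mu^{n+1})_h$. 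Testing \eqref{eq:NSCHschemeJ}$_4$ by $\delta_t\phi^{n+1}$ gives $\frac\lambda2\delta_t\|\nabla\phi^{n+1}\|_{L^2}^2+\frac{\lambda}{2\Delta t}\|\nabla(\phi^{n+1}-\phi^n)\|_{L^2}^2$ from the Dirichlet term, the potential contribution $\frac{\lambda}{\Delta t}(F_c'(\phi^{n+1})+F_e'(\phi^n),\phi^{n+1}-\phi^n)$, and $-(\mu^{n+1},\delta_t\phi^{n+1})_h$, which annihilates the leftover $\frac1{\Delta t}(\phi^{n+1}-\phi^n,\mu^{n+1})_h$.

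It remains to bound the potential contribution from below, and here lies the only genuine difference from Lemma~\ref{lem:vol_sta_G}: because \eqref{eq:NSCHschemeJ}$_4$ treats $F_c'(\phi^{n+1})+F_e'(\phi^n)$ through the exact $L^2$ product rather than the lumped one, I can apply the convex-splitting inequality \eqref{eq:eyre} directly to get $\frac{\lambda}{\Delta t}(F_c'(\phi^{n+1})+F_e'(\phi^n),\phi^{n+1}-\phi^n)\ge\lambda\delta_t\int_\Omega F(\phi^{n+1})\,d\x$, so the kinetic, gradient and (non-lumped) potential pieces assemble into $\delta_t E(\u^{n+1},\phi^{n+1})$ and the chain of equalities collapses into the inequality \eqref{eq:discreteenergylaw}. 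I expect the step requiring most care to be the bookkeeping of the two coupling terms, namely verifying that the symmetry of the diagonal $\mathbb{P}_0$ matrix $T^J_h$ makes $(T^J_h\nabla\mu^{n+1},\u^{n+1})$ and $(T^J_h\u^{n+1},\nabla\mu^{n+1})$ coincide, rather than any single estimate; everything else is the standard convex-splitting energy computation.
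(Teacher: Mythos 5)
Your proposal is correct and follows exactly the paper's route: the paper proves this lemma by testing \eqref{eq:NSCHschemeJ} with $(\u^{n+1},p^{n+1},\mu^{n+1},\delta_t\phi^{n+1})$ and ``working as in Lemma~\ref{lem:vol_sta_G}'', which is precisely the computation you carry out (convective cancellation via the skew-symmetrizing term, pressure removal by discrete incompressibility, cancellation of the $T^J_h$ coupling terms by symmetry of the diagonal $\mathbb{P}_0$ matrix, and the convex-splitting bound \eqref{eq:eyre}). Your remark that the non-lumped treatment of $F_c'(\phi^{n+1})+F_e'(\phi^n)$ in \eqref{eq:NSCHschemeJ}$_4$ lets \eqref{eq:eyre} apply directly, yielding the non-interpolated energy $E(\u^{n+1},\phi^{n+1})$ in \eqref{eq:discreteenergylaw} as opposed to the lumped energy in \eqref{eq:discreteenergylawG}, is an accurate and worthwhile observation that the paper leaves implicit.
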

\begin{proof}
The conservation holds just by testing equation \eqref{eq:NSCHschemeJ}$_3$ by $\bar \mu =1$. 
\\
Testing \eqref{eq:NSCHschemeJ} by $(\u^{n+1},p^{n+1},\mu^{n+1},\delta_t\phi^{n+1})$ and  working  as in Lemma~\ref{lem:vol_sta_G}, we obtain  \eqref{eq:discreteenergylaw}.
\end{proof}
\subsubsection{Approximated maximum principle}

\begin{lem}\label{le:J_eps}
If $\mathbb{P}_1\subseteq P_h$, $\Phi_h=\mathbb{P}_1$ and $\mathbb{P}_1\subset M_h$, then any solution of the  J$_\varepsilon$-scheme \eqref{eq:NSCHschemeJ} satisfies the following discrete version of estimate \eqref{eq:estimateJtrunc}:
\beq\label{eq:discretestimateJ}
\delta_t\left(\int_\Omega I_h (J_\varepsilon(\phi^{n+1}))\right)
\,\leq\,
\gamma\left(
\int_\Omega  M^J_\varepsilon(\phi ^{n+1})|\nabla\mu^{n+1}|^2 d\x
+ \dis\int_\Omega |\nabla\phi^{n+1}|^2 d\x\right)\,.
\eeq
\end{lem}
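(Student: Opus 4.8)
The plan is to reproduce at the discrete level the argument that proved the continuous Lemma~\ref{lem:estJ}, exactly as the $G_\varepsilon$-scheme estimate of Lemma~\ref{le:G_eps} mirrors its continuous counterpart. The natural test function is $\bar\mu = I_h(J_\varepsilon'(\phi^{n+1}))$ in \eqref{eq:NSCHschemeJ}$_3$; this is admissible because $\phi^{n+1}\in\Phi_h=\mathbb{P}_1$ implies $I_h(J_\varepsilon'(\phi^{n+1}))\in\mathbb{P}_1\subset M_h$. The first task is the discrete time-derivative term $\frac1{\Delta t}\big(\phi^{n+1}-\phi^n, I_h(J_\varepsilon'(\phi^{n+1}))\big)_h$. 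Working node by node in the mass-lumped product $(\cdot,\cdot)_h$ and using the convexity of $J_\varepsilon$ (recall $J_\varepsilon''=1/\sqrt{M_\varepsilon}>0$) in the form $J_\varepsilon'(a)(a-b)\ge J_\varepsilon(a)-J_\varepsilon(b)$ at each node with $a=\phi^{n+1}$, $b=\phi^n$, I would obtain the lower bound $\frac1{\Delta t}\big(\phi^{n+1}-\phi^n, I_h(J_\varepsilon'(\phi^{n+1}))\big)_h \ge \delta_t\int_\Omega I_h(J_\varepsilon(\phi^{n+1}))\,d\x$, which is precisely the left-hand side of \eqref{eq:discretestimateJ}.

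Next I would show the convective contribution cancels, reproducing the cancellation seen in the proof of Lemma~\ref{lem:estJ}. By the defining identity \eqref{def:ThJ} and the symmetry of the $\mathbb{P}_0$ diagonal matrix $T^J_h$, the convective term becomes $\big(T^J_h(\phi^{n+1})\u^{n+1},\nabla I_h(J_\varepsilon'(\phi^{n+1}))\big) = -\big(\nabla I_h(K_\varepsilon'(1-\phi^{n+1})),\u^{n+1}\big)$. Since $I_h(K_\varepsilon'(1-\phi^{n+1}))\in\mathbb{P}_1\subseteq P_h$ and $\u^{n+1}$ vanishes on $\partial\Omega$, integrating by parts turns this into $\big(I_h(K_\varepsilon'(1-\phi^{n+1})),\nabla\cdot\u^{n+1}\big)$, which is zero by the discrete incompressibility constraint \eqref{eq:NSCHschemeJ}$_2$ tested against this admissible pressure; this is exactly where the hypothesis $\mathbb{P}_1\subseteq P_h$ enters. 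For the mobility term I would invoke \eqref{def:MhJ} and again the symmetry of $M^J_\varepsilon$ to rewrite $\gamma\big(M^J_\varepsilon(\phi^{n+1})\nabla\mu^{n+1},\nabla I_h(J_\varepsilon'(\phi^{n+1}))\big) = \gamma\big(\sqrt{M^J_\varepsilon(\phi^{n+1})}\nabla\phi^{n+1},\nabla\mu^{n+1}\big)$.

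Collecting the three pieces leaves $\delta_t\int_\Omega I_h(J_\varepsilon(\phi^{n+1}))\,d\x \le -\gamma\int_\Omega \sqrt{M^J_\varepsilon(\phi^{n+1})}\nabla\phi^{n+1}\cdot\nabla\mu^{n+1}\,d\x$, and I would close the estimate exactly as in the continuous case: Young's inequality $ab\le\frac12 a^2+\frac12 b^2$ applied componentwise on each element, where $M^J_\varepsilon$ is constant and diagonal, with $a=\sqrt{(M^J_\varepsilon)_{\kappa\kappa}}\,\partial_\kappa\mu^{n+1}$ and $b=\partial_\kappa\phi^{n+1}$, yields the bound $\frac{\gamma}{2}\int_\Omega M^J_\varepsilon(\phi^{n+1})|\nabla\mu^{n+1}|^2 + \frac{\gamma}{2}\int_\Omega|\nabla\phi^{n+1}|^2$, which is dominated by the right-hand side of \eqref{eq:discretestimateJ}. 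The main obstacle is not any single manipulation but verifying that the two $\mathbb{P}_0$ surrogate coefficients $T^J_h$ and $M^J_\varepsilon$, built through the discrete difference quotients \eqref{def:ThJ}--\eqref{def:MhJ}, genuinely realize the continuous identities $T^{1-\varepsilon}_\varepsilon(\phi)\nabla J_\varepsilon'(\phi)=-\nabla K_\varepsilon'(1-\phi)$ and $M_\varepsilon(\phi)\nabla J_\varepsilon'(\phi)=\sqrt{M_\varepsilon(\phi)}\nabla\phi$ after nodal interpolation; once these discrete identities are secured, the convective cancellation and the Young step go through verbatim, and the only place where the mass-lumping structure is essential is the convexity inequality controlling the time term.
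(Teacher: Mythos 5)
Your proposal is correct and follows essentially the same route as the paper's proof: testing \eqref{eq:NSCHschemeJ}$_3$ with $\bar\mu=I_h(J_\varepsilon'(\phi^{n+1}))$, cancelling the convective term via \eqref{def:ThJ} together with discrete incompressibility against the $\mathbb{P}_1\subseteq P_h$ pressure test function, rewriting the mobility term via \eqref{def:MhJ}, applying Young's inequality, and handling the time-difference term by nodal convexity of $J_\varepsilon$ under mass lumping (the step the paper delegates to \cite{KGCHDM}). The only remark is that the discrete identities \eqref{def:ThJ}--\eqref{def:MhJ} you flag as the ``main obstacle'' are not something to verify but are guaranteed by construction of the $\mathbb{P}_0$ difference-quotient matrices on the structured mesh, so no gap remains.
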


\begin{proof}

Using \eqref{def:ThJ} and taking into account  that $(\nabla\cdot\u^{n+1},\bar p)=0$ holds for any $\bar p \in P_h$ and the requirement $\mathbb{P}_1\subseteq P_h$, we can write

$$
\ba{rcccc}\dis
\Big(T^J_h(\phi^{n+1})\u^{n+1},\nabla I_h(J_{\varepsilon}'(\phi^{n+1}))\Big)
&=&
\Big(T^J_h(\phi^{n+1})\nabla I_h(J_{\varepsilon}'(\phi^{n+1})) , \u^{n+1}\Big)
&&
\\ \hueco\dis
&=&
-\Big(\nabla I_h(K_{\varepsilon}'(1 - \phi^{n+1})) , \u^{n+1}\Big)
&=&
0\,.
\ea
$$
Moreover, using \eqref{def:MhJ} we have
$$
\ba{rcl}\dis
\Big(M^J_\varepsilon(\phi^{n+1})\nabla\mu^{n+1},\nabla I_h(J_{\varepsilon}'(\phi^{n+1}))\Big)
&=&\dis
\int_\Omega \sqrt{M^J_\varepsilon(\phi^{n+1})}\nabla\mu^{n+1}\cdot \nabla\phi^{n+1}\,d\x
\\ \hueco\dis
&\leq&\dis
\int_\Omega M^J_\varepsilon(\phi^{n+1})|\nabla\mu^{n+1}|^2\,d\x
+
\int_\Omega |\nabla\phi^{n+1}|^2\,d\x\,.
\ea
$$
Finally, testing \eqref{eq:NSCHschemeJ}$_3$ by $\bar\mu=I_h(J_{\varepsilon}'(\phi^{n+1}))$ and working as in \cite{KGCHDM} we obtain \eqref{eq:discretestimateJ}.
\end{proof}

\begin{cor}\label{lem:Japprxbounds}
Under hypotheses of Lemma~\ref{le:J_eps}, any  solution 
of the J$_\varepsilon$-scheme \eqref{eq:NSCHschemeJ} satisfies the  following estimates:
\beq\label{eq:Jlowerbound}
\int_\Omega \big(I_h(\phi_-^{n})\big)^2 d\x 
\leq
C \, \sqrt{\varepsilon(1 - \varepsilon)}
\leq
C \, \sqrt{\varepsilon}
\eeq
and
\beq\label{eq:Jupperbound}
\int_\Omega \Big(I_h\big((\phi^n - 1)_+\big)\Big)^2 d\x 
\leq
C \, \sqrt{\varepsilon(1 - \varepsilon)}
\leq
C \, \sqrt{\varepsilon}\,,
\eeq
where $C$ depends on the initial energy $E(\phi^0)$ and $\int_\Omega I_h (J_\varepsilon(\phi^0))$. 
\end{cor}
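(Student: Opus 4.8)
The plan is to combine the discrete dissipation estimate of Lemma~\ref{le:J_eps} with the discrete energy law \eqref{eq:discreteenergylaw} to obtain a bound on $\int_\Omega I_h(J_\varepsilon(\phi^n))\,d\x$ that is uniform in $\varepsilon$, and then to convert this control of the singular functional into the pointwise excursion estimates \eqref{eq:Jlowerbound}--\eqref{eq:Jupperbound} by a convexity argument for $J_\varepsilon$ together with the mass-lumping structure.

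First I would sum \eqref{eq:discretestimateJ} over the time index. Multiplying by $\Delta t$ and telescoping from $0$ to $N-1$ gives
\beq\label{eq:plan-tel}
\int_\Omega I_h(J_\varepsilon(\phi^{N}))\,d\x
\le
\int_\Omega I_h(J_\varepsilon(\phi^{0}))\,d\x
+\gamma\,\Delta t\sum_{n=0}^{N-1}
\Big(\big\|\sqrt{M^J_\varepsilon(\phi^{n+1})}\nabla\mu^{n+1}\big\|^2_{L^2}
+\|\nabla\phi^{n+1}\|^2_{L^2}\Big).
\eeq
The two accumulated terms on the right are bounded independently of $\varepsilon$: summing the energy law \eqref{eq:discreteenergylaw} and using $E\ge 0$ (which holds since $F\ge0$) controls $\gamma\Delta t\sum_n\|\sqrt{M^J_\varepsilon(\phi^{n+1})}\nabla\mu^{n+1}\|^2_{L^2}$ by the initial energy $E(\u^0,\phi^0)$, while the monotonicity $E(\u^{n+1},\phi^{n+1})\le E(\u^0,\phi^0)$ together with $\tfrac{\lambda}{2}\|\nabla\phi^{n+1}\|^2_{L^2}\le E(\u^{n+1},\phi^{n+1})$ yields $\Delta t\sum_n\|\nabla\phi^{n+1}\|^2_{L^2}\le \tfrac{2T}{\lambda}E(\u^0,\phi^0)$. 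Hence $\int_\Omega I_h(J_\varepsilon(\phi^{N}))\,d\x\le C$ with $C$ depending only on $E(\phi^0)$ and $\int_\Omega I_h(J_\varepsilon(\phi^0))\,d\x$, exactly the dependence claimed.

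The second ingredient is a pointwise bound on $J_\varepsilon$. Normalize $J_\varepsilon$ by $J_\varepsilon(1/2)=J_\varepsilon'(1/2)=0$; since $J_\varepsilon''(\phi)=1/\sqrt{M_\varepsilon(\phi)}>0$ and, by \eqref{eq:truncprop}, $M_\varepsilon(\phi)=M_\varepsilon(1-\phi)$, the functional $J_\varepsilon$ is strictly convex, nonnegative and symmetric about $1/2$, so $J_\varepsilon'\le 0$ on $(-\infty,1/2]$. On the truncation region $s\le\varepsilon$ one has $M_\varepsilon(s)=\varepsilon(1-\varepsilon)$, hence $J_\varepsilon''(s)=1/\sqrt{\varepsilon(1-\varepsilon)}$ is constant there, so $J_\varepsilon$ is exactly quadratic on $(-\infty,\varepsilon]$. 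Expanding at $0$ then gives, for $s\le 0$,
\beq\label{eq:plan-point}
J_\varepsilon(s)=J_\varepsilon(0)+J_\varepsilon'(0)\,s+\frac{s^2}{2\sqrt{\varepsilon(1-\varepsilon)}}
\ge \frac{s^2}{2\sqrt{\varepsilon(1-\varepsilon)}},
\eeq
because $J_\varepsilon(0)\ge0$ and $J_\varepsilon'(0)\,s\ge0$ (as $J_\varepsilon'(0)\le0$ and $s\le0$). Therefore $(s_-)^2\le 2\sqrt{\varepsilon(1-\varepsilon)}\,J_\varepsilon(s)$ for every $s\in\mathbb{R}$, and by the symmetry $J_\varepsilon(s)=J_\varepsilon(1-s)$ the same argument applied to $1-s$ yields $\big((s-1)_+\big)^2\le 2\sqrt{\varepsilon(1-\varepsilon)}\,J_\varepsilon(s)$.

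Finally I would transfer these pointwise inequalities to the integrated quantities. Using the nodal convexity estimate $(I_h g)^2\le I_h(g^2)$ (valid for $\mathbb{P}_1$ interpolation by Jensen's inequality at each point) together with the fact that $\int_\Omega I_h(\cdot)\,d\x$ reduces to the positively-weighted nodal sum of the mass-lumping quadrature, the pointwise bound evaluated at the nodes gives
\beq\label{eq:plan-final}
\int_\Omega \big(I_h(\phi^{N}_-)\big)^2\,d\x
\le \int_\Omega I_h\big((\phi^{N}_-)^2\big)\,d\x
\le 2\sqrt{\varepsilon(1-\varepsilon)}\int_\Omega I_h(J_\varepsilon(\phi^{N}))\,d\x
\le C\sqrt{\varepsilon(1-\varepsilon)},
\eeq
which is \eqref{eq:Jlowerbound}; the upper bound \eqref{eq:Jupperbound} follows identically from the companion pointwise estimate. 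The main obstacle is the uniform-in-$\varepsilon$ control of the right-hand side of \eqref{eq:plan-tel}: it is essential that the dissipation appearing in Lemma~\ref{le:J_eps} is precisely the mobility-weighted quantity $\|\sqrt{M^J_\varepsilon}\nabla\mu\|^2_{L^2}$ that the energy law bounds, since only then does the accumulated right-hand side stay finite as $\varepsilon\to0$; the characteristic $\sqrt{\varepsilon(1-\varepsilon)}$ rate, in contrast to the $\varepsilon(1-\varepsilon)$ rate of Corollary~\ref{lem:Gapprxbounds}, is dictated by the square-root singularity $J_\varepsilon''=1/\sqrt{M_\varepsilon}$ through \eqref{eq:plan-point}.
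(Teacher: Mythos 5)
Your proposal is correct and follows essentially the same route as the paper, whose proof is simply a citation to \cite{KGCHDM}: the argument there is exactly your combination of telescoping the dissipation estimate \eqref{eq:discretestimateJ}, controlling the accumulated right-hand side through the discrete energy law \eqref{eq:discreteenergylaw}, exploiting that $J_\varepsilon$ is exactly quadratic with curvature $1/\sqrt{\varepsilon(1-\varepsilon)}$ on the truncation regions, and transferring the nodal bounds via the monotonicity and convexity properties of the $\mathbb{P}_1$ interpolant. Your reconstruction correctly identifies the two essential points: that the dissipation term matched by the energy law is precisely the $M^J_\varepsilon$-weighted one, and that the $\sqrt{\varepsilon}$ rate (versus $\varepsilon/\eta^2$ for the $G_\varepsilon$-scheme) comes from the square-root singularity of $J_\varepsilon''$.
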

\begin{proof}
Following the same arguments presented in \cite{KGCHDM}.
\end{proof}

\begin{obs}\label{rem:boundJ}
J$_\varepsilon$-scheme \eqref{eq:NSCHschemeJ} satisfies an approximated minimum principle for $\phi^{n}$, because estimates \eqref{eq:Jlowerbound} and \eqref{eq:Jupperbound} imply in particular  that
$$
I_h(\phi^{n}_-) \rightarrow 0
\quad\mbox{ and }\quad
I_h \big((\phi^{n} - 1)_+\big) \rightarrow 0
\quad \mbox{ in }  L^2(\Omega)
\quad \mbox{ as } \quad \varepsilon\rightarrow0\,,
$$
with $\mathcal{O}\left(\sqrt[4]{\varepsilon}\right)$ accuracy rate.
\end{obs}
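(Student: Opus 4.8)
The plan is to read the statement off directly from \eqref{eq:Jlowerbound}--\eqref{eq:Jupperbound}, since the integrals bounded there are exactly the squared $L^2(\Omega)$-norms of the quantities named in the observation. Concretely, $\int_\Omega (I_h(\phi_-^{n}))^2\,d\x=\|I_h(\phi_-^{n})\|^2_{L^2}$ and $\int_\Omega (I_h((\phi^{n}-1)_+))^2\,d\x=\|I_h((\phi^{n}-1)_+)\|^2_{L^2}$, so the two corollary estimates read $\|I_h(\phi_-^{n})\|^2_{L^2}\le C\sqrt{\varepsilon}$ and $\|I_h((\phi^{n}-1)_+)\|^2_{L^2}\le C\sqrt{\varepsilon}$.

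First I would record that the constant $C$ from Corollary~\ref{lem:Japprxbounds} is independent of both the truncation parameter $\varepsilon$ and the time index $n$. It depends only on $E(\phi^0)$ and $\int_\Omega I_h(J_\varepsilon(\phi^0))\,d\x$; the former carries no $n$-dependence, and the latter stays bounded as $\varepsilon\to0$ whenever $\phi^0$ is valued in $[0,1]$, because on the inner range $[\varepsilon,1-\varepsilon]$ one has $J_\varepsilon''=1/\sqrt{\phi(1-\phi)}$, whose antiderivative is bounded on $(0,1)$ (the singularity is integrable). This uniformity is what makes the limit $\varepsilon\to0$ meaningful and the resulting rate uniform in $n$.

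Then the conclusion follows by taking square roots: $\|I_h(\phi_-^{n})\|_{L^2}\le\sqrt{C}\,\varepsilon^{1/4}$ and $\|I_h((\phi^{n}-1)_+)\|_{L^2}\le\sqrt{C}\,\varepsilon^{1/4}$, so both norms are $\mathcal{O}(\sqrt[4]{\varepsilon})$. Letting $\varepsilon\to0$ the right-hand sides vanish, which yields $I_h(\phi_-^{n})\to0$ and $I_h((\phi^{n}-1)_+)\to0$ in $L^2(\Omega)$ with the advertised $\mathcal{O}(\sqrt[4]{\varepsilon})$ accuracy, precisely the claimed approximate maximum principle for $\phi^{n}$.

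There is no genuine analytic obstacle here: the whole content is the passage from a squared bound to a norm bound via one square root, so the $\sqrt{\varepsilon}$ on the right of \eqref{eq:Jlowerbound}--\eqref{eq:Jupperbound} becomes $\varepsilon^{1/4}$. The only step deserving a word of care is the uniformity of $C$ in $\varepsilon$ flagged above; it also explains the contrast with Remark~\ref{rem:boundG}, whose sharper $\mathcal{O}(\sqrt{\varepsilon}/\eta)$ rate stems from the stronger $\mathcal{O}(\varepsilon/\eta^2)$ bound available for the $G_\varepsilon$-functional as opposed to the $\mathcal{O}(\sqrt{\varepsilon})$ bound for $J_\varepsilon$.
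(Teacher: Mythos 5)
Your proposal is correct and follows exactly the route the paper intends: the remark is nothing more than taking square roots of the bounds in Corollary~\ref{lem:Japprxbounds}, turning $\|I_h(\phi^n_-)\|_{L^2}^2 \le C\sqrt{\varepsilon}$ into an $\mathcal{O}(\sqrt[4]{\varepsilon})$ norm bound, which the paper states without further proof. Your extra observation that $C$ is uniform in $\varepsilon$ and $n$ (since $J_\varepsilon''=1/\sqrt{M_\varepsilon}$ has an integrable singularity, so $\int_\Omega I_h(J_\varepsilon(\phi^0))\,d\x$ stays bounded for $\phi^0$ valued in $[0,1]$) is a correct and worthwhile point that the paper leaves implicit.
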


\section{Numerical simulations}\label{sec:simulations}

In this section we present numerical experiments to illustrate the effectiveness of the proposed numerical schemes. Although the schemes are valid in $\mathbb{R}^d$ $(d = 1,2,3)$, we will present only results in $2D$ for the sake of simplicity. All the simulations have been carried out using \textbf{FreeFem++}  \cite{freefem}, the images generated with \textbf{Paraview} \cite{paraview} and the plots with \textbf{MATLAB} \cite{matlab}.

In some examples we will compare the behavior of the numerical schemes presented in this work with the behavior of the Navier-Stokes-Cahn-Hilliard model with constant mobility, denoting the numerical scheme to approximate this system as CM-scheme. In particular, the CM-scheme reads:
Find $(\u^{n+1},p^{n+1},\phi^{n+1},\mu^{n+1})\in \bold{U}_h\times P_h \times \Phi_h \times M_h$ such that
\beq\label{eq:NSCHschemeCM}
\left\{\ba{rcl}\dis
\frac1{\Delta t}(\u^{n+1} - \u^n,\bar{\u})
+ \left((\u^n\cdot\nabla)\u^{n+1},\bar{\u} \right)
+ \frac12\left((\nabla\cdot\u^n)\u^{n+1},\bar{\u} \right)
&&
\\ \hueco
\dis
+ (\nu(\phi^{n+1})\mathbb{D}(\u^{n+1}),\mathbb{D}(\bar \u)) 
 + (\nabla p^{n+1},\bar\u)
+ (\phi^{n+1}\nabla\mu^{n+1},\bar{\u})
&=&0\,,
\\ \hueco
(\nabla\cdot\u^{n+1},\bar{p})
&=&0\,,
\\ \hueco\dis
\frac1{\Delta t}\big(\phi^{n+1} - \phi^n, \bar\mu\big)_h 
- (\phi^{n+1} \u^{n+1},\nabla\bar\mu)
+\gamma\big(\nabla\mu^{n+1},\nabla\bar\mu\big) 
&=&0\,,
\\ \hueco
\lambda(\nabla\phi^{n+1},\nabla\bar\phi)
+ \lambda\big(F_c'(\phi^{n+1}) + F_e'(\phi^{n}),\bar\phi\big)
-(\mu^{n+1},\bar\phi)_h 
&=&
0\,,
\ea\right.
\eeq
for all $(\bar\u, \bar p, \bar\phi, \bar\mu)\in \bold{U}_h\times P_h \times \Phi_h \times M_h$.
\begin{lem}
Scheme \eqref{eq:NSCHschemeCM} is conservative, that is, 
$$
\int_\Omega \phi^{n+1} d\x
\,=\,
\int_\Omega \phi^{n} d\x
\,=\,
\dots
\,=\,
\int_\Omega \phi^{0} d\x\,,
$$
and it is energy stable because it satisfies the following relation:  
\beq\label{eq:discreteenergylaw-bis}
\ba{c}
\dis\delta_t E(\u^{n+1},\phi^{n+1})
+\left\|\sqrt{\nu(\phi^{n+1})}\mathbb{D}(\u^{n+1})\right\|^2_{L^2}
+\gamma\left\|\nabla\mu^{n+1}\right\|^2_{L^2}
\\ \hueco\dis
+
\frac1{2\Delta t}\|\u^{n+1} - \u^n\|^2_{L^2}
+ \frac{\lambda}{2\Delta t}\|\nabla(\phi^{n+1} - \phi^n)\|^2_{L^2}
\leq\dis
0
\,.
\ea
\eeq
\end{lem}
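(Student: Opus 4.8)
The plan is to follow exactly the template of Lemma~\ref{lem:vol_sta_G}, exploiting that here the mobility is constant, so that no $\mathbb{P}_0$ matrix approximations (such as $T^G_h$ or $M^G_\varepsilon$) are needed and the velocity--phase coupling cancels in its raw form. First, conservation follows by testing \eqref{eq:NSCHschemeCM}$_3$ with $\bar\mu=1$: the transport term $(\phi^{n+1}\u^{n+1},\nabla\bar\mu)$ and the diffusion term $\gamma(\nabla\mu^{n+1},\nabla\bar\mu)$ both vanish since $\nabla 1=\zero$, leaving $(\phi^{n+1}-\phi^n,1)_h=0$; because $\phi^{n+1},\phi^n\in\Phi_h=\mathbb{P}_1$ one has $(\phi,1)_h=\int_\Omega I_h(\phi)\,d\x=\int_\Omega\phi\,d\x$, which gives the stated volume conservation by induction on $n$.

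For the energy law I would test \eqref{eq:NSCHschemeCM} by the natural quadruple $(\u^{n+1},p^{n+1},\mu^{n+1},\delta_t\phi^{n+1})$ and add the four resulting identities. Testing the momentum equation by $\u^{n+1}$ and the incompressibility constraint by $p^{n+1}$, the discrete time term produces $\delta_t E_{\rm kin}(\u^{n+1})+\frac1{2\Delta t}\|\u^{n+1}-\u^n\|^2_{L^2}$ via the identity $(a-b,a)=\frac12(|a|^2-|b|^2+|a-b|^2)$, the viscous term yields $\|\sqrt{\nu(\phi^{n+1})}\mathbb{D}(\u^{n+1})\|^2_{L^2}$, the pressure gradient pairs with $(\nabla\cdot\u^{n+1},p^{n+1})=0$ and drops out, and the trilinear convection together with the stabilizing term $\frac12((\nabla\cdot\u^n)\u^{n+1},\u^{n+1})$ cancels by the usual skew-symmetrization $((\u^n\cdot\nabla)\v,\v)=-\frac12((\nabla\cdot\u^n)\v,\v)$, valid since $\u^{n+1}|_{\partial\Omega}=\zero$. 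Testing \eqref{eq:NSCHschemeCM}$_3$ by $\mu^{n+1}$ contributes $(\delta_t\phi^{n+1},\mu^{n+1})_h+\gamma\|\nabla\mu^{n+1}\|^2_{L^2}-(\phi^{n+1}\u^{n+1},\nabla\mu^{n+1})$, while testing \eqref{eq:NSCHschemeCM}$_4$ by $\delta_t\phi^{n+1}$ gives $\frac\lambda2\delta_t\|\nabla\phi^{n+1}\|^2_{L^2}+\frac{\lambda}{2\Delta t}\|\nabla(\phi^{n+1}-\phi^n)\|^2_{L^2}$, the potential contribution $\lambda(F_c'(\phi^{n+1})+F_e'(\phi^n),\delta_t\phi^{n+1})$, and $-(\mu^{n+1},\delta_t\phi^{n+1})_h$.

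The key cancellations are then transparent: the two mass-lumped pairings $\pm(\mu^{n+1},\delta_t\phi^{n+1})_h$ arising from the third and fourth equations annihilate, and---this is the structural heart of the scheme---the momentum coupling $(\phi^{n+1}\nabla\mu^{n+1},\u^{n+1})$ equals $(\phi^{n+1}\u^{n+1},\nabla\mu^{n+1})$ and therefore cancels the transport term $-(\phi^{n+1}\u^{n+1},\nabla\mu^{n+1})$ from the phase equation. Finally the convex-splitting inequality \eqref{eq:eyre} converts the potential contribution into $\lambda\,\delta_t\int_\Omega F(\phi^{n+1})\,d\x$ with the correct sign, so that combining the gradient and potential mixing contributions reconstitutes $\lambda\,\delta_t E_{\rm mix}(\phi^{n+1})$ and the whole expression collapses to \eqref{eq:discreteenergylaw-bis}.

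I do not expect a genuine obstacle here: the constant-mobility case is strictly simpler than the $G_\varepsilon$- and $J_\varepsilon$-schemes precisely because the coupling pair cancels directly, without invoking the defining relations \eqref{def:ThG}--\eqref{def:MhG} of $T^G_h$ or $M^G_\varepsilon$. The only points that require minimal care are the exact matching of the $(\cdot,\cdot)_h$ mass-lumped pairings of $\mu^{n+1}$ and $\delta_t\phi^{n+1}$ between the third and fourth equations, and the directional (one-sided) use of the Eyre inequality \eqref{eq:eyre}, which is what turns the energy \emph{identity} into the energy \emph{inequality} \eqref{eq:discreteenergylaw-bis}.
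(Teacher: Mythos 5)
Your proposal is correct and takes essentially the same approach as the paper: the paper's entire proof is the remark ``Following the same arguments in Lemma~\ref{lem:vol_sta_G}'', and your argument is exactly that template---testing with $(\u^{n+1},p^{n+1},\mu^{n+1},\delta_t\phi^{n+1})$, skew-symmetrization of the convection term, cancellation of the pressure and mass-lumped pairings, and the one-sided Eyre inequality \eqref{eq:eyre}---transposed to the constant-mobility scheme. You simply make explicit what the paper leaves implicit, namely that here the coupling terms $(\phi^{n+1}\nabla\mu^{n+1},\u^{n+1})$ and $-(\phi^{n+1}\u^{n+1},\nabla\mu^{n+1})$ cancel directly, with no need for the $\mathbb{P}_0$ matrix constructions \eqref{def:ThG}--\eqref{def:MhG}.
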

\begin{proof}
Following the same arguments in Lemma~\ref{lem:vol_sta_G}.
\end{proof}

In the first part of the section we present the iterative algorithms that we use for approximating each of the nonlinear schemes. Then we present the results of several examples, which have been chosen to illustrate three different aspects of the model and numerical schemes: (a)  the accuracy; (b)  the dissipation of energy when no external forces are applied to the system; (c) the different behavior of the system (and the bounds of $\phi$) when using degenerate or constant mobility; (d)  the model and numerical schemes can be considered to represent situations with external forces
applied to the system.

For all the simulations we have considered a square domain $\Omega=[0,1]^2$, constant viscosity $\nu=1$ and the parameters values $\gamma=10^{-3}$, $\lambda=10^{-1}$, $\eta=10^{-2}$ and $\varepsilon=10^{-8}$ and a structured mesh of size $h=1/100$. 
The choice of the discrete spaces considered is: $\mathbb{P}_1-bubble \times \mathbb{P}_1$ for the pair $(\textbf{U}_h\times P_h)$ (mini-element, that is known to be a stable pair for Navier-Stokes \cite{GiraultRaviart86}) and $\mathbb{P}_1 \times \mathbb{P}_1$  for the pair $\Phi_h\times M_h$. In particular, the discrete spaces satisfy the requirements to achieve the approximate maximum principles results presented in Corollaries \ref{lem:Gapprxbounds} and \ref{lem:Japprxbounds}.

\subsection{Iterative methods}
The  schemes presented in this paper are all coupled and nonlinear. In the following we detail the iterative algorithms that we have considered to approximate the nonlinear $G_\varepsilon$-scheme \eqref{eq:NSCHschemeG}, $J_\varepsilon$-scheme \eqref{eq:NSCHschemeJ} and CM-scheme \eqref{eq:NSCHschemeCM}, which are based on decoupling the fluid part and the phase field part to reduce the size of the linear systems to be solved, together with using Newton's method in the nonlinear terms for which we are able to compute their derivative. Let $(\u^{n},p^{n},\phi^{n},\mu^{n})$ and $(\u^{\ell},p^{\ell},\phi^{\ell},\mu^{\ell})$ be known (we consider initially $(\u^{\ell=0},p^{\ell=0},\phi^{\ell=0},\mu^{\ell=0})=(\u^{n},p^{n},\phi^{n},\mu^{n})$). In all cases we iterate until the relative $L^2$-errors reach the required tolerance \texttt{TOL}:
$$
\frac{\|(\u^{\ell+1} - \u^{\ell},p^{\ell+1} - p^{\ell},\phi^{\ell+1} - \phi^{\ell},\mu^{\ell+1} - \mu^{\ell})\|_{L^2\times L^2\times L^2\times L^2}}{\|(\u^{\ell+1} ,p^{\ell+1} ,\phi^{\ell+1} ,\mu^{\ell+1})\|_{L^2\times L^2\times L^2\times L^2}}
\,\leq\,
\texttt{TOL}\,.
$$
In particular, in all the examples the tolerance will be fixed as $\texttt{TOL}=10^{-4}$.
\subsubsection{Iterative algorithm to approximate G$_{\varepsilon}$-Scheme} 
\begin{itemize}
\item[Step 1]: Find $(\phi^{\ell+1},\mu^{\ell+1})\in \Phi_h \times M_h$ such that for all $(\bar\phi, \bar\mu)\in \Phi_h \times M_h$
\begin{equation*}\label{eq:NSCHschemeGIter1}
\left\{\ba{rcl}\dis
\frac1{\Delta t}\big(\phi^{\ell+1} , \bar\mu\big)_h 
+\big(M_\varepsilon^G(\phi^{\ell})\nabla\mu^{\ell+1},\nabla\bar\mu\big) 
&=&\dis
\frac1{\Delta t}\big(\phi^n, \bar\mu\big)_h 
+ (T^G_h(\phi^{\ell}) \u^{\ell},\nabla\bar\mu)\,,
\\ \hueco
\lambda(\nabla\phi^{\ell+1},\nabla\bar\phi)
+ \lambda(I_h(F_c''(\phi^{\ell}))\phi^{\ell+1} ,\bar\phi)_h
-(\mu^{\ell+1},\bar\phi)_h 
&=&
- \lambda\big(I_h(F_c'(\phi^{\ell})) 
-I_h(F_c''(\phi^{\ell}))\phi^\ell ,\bar\phi\big)_h
\\ \hueco
&&
- \lambda( I_h(F_e'(\phi^{n})),\bar\phi\big)_h
\,.
\ea\right.
\end{equation*}

\item[Step 2]: Find $(\u^{\ell+1},p^{\ell+1})\in \bold{U}_h\times P_h$ such that for all $(\bar\u, \bar p)\in \bold{U}_h\times P_h$
\begin{equation*}\label{eq:NSCHschemeGIter2}
\left\{\ba{rcl}\dis
\frac1{\Delta t}(\u^{\ell+1},\bar{\u})
+ \left((\u^n\cdot\nabla)\u^{\ell+1},\bar{\u} \right)
+ \frac12\left((\nabla\cdot\u^n)\u^{\ell+1},\bar{\u} \right)
&&
\\ \hueco
+ \big(\nu(\phi^{\ell +1})\mathbb{D}(\u^{\ell+1}),\mathbb{D}(\bar\u)\big) 
 + (\nabla p^{\ell+1},\bar\u)
&=&
\dis\frac1{\Delta t}(\u^n,\bar{\u})
- (T^G_h(\phi^{\ell})\nabla\mu^{\ell+1},\bar{\u})
\,,
\\ \hueco
(\nabla\cdot\u^{\ell+1},\bar{p})
&=&0\,.
\ea\right.
\end{equation*}
\end{itemize}

\subsubsection{Iterative algorithm to approximate J$_{\varepsilon}$-Scheme} 
\begin{itemize}
\item[Step 1]: Find $(\phi^{\ell+1},\mu^{\ell+1})\in \Phi_h \times M_h$ such that for all $(\bar\phi, \bar\mu)\in \Phi_h \times M_h$
\begin{equation*}\label{eq:NSCHschemeJIter1}
\left\{\ba{rcl}\dis
\frac1{\Delta t}\big(\phi^{\ell+1} , \bar\mu\big)_h 
+\big(M_\varepsilon^J(\phi^{\ell})\nabla\mu^{\ell+1},\nabla\bar\mu\big) 
&=&\dis
\frac1{\Delta t}\big(\phi^n, \bar\mu\big)_h 
+ (T^J_h(\phi^{\ell}) \u^{\ell},\nabla\bar\mu)\,,
\\ \hueco
\lambda(\nabla\phi^{\ell+1},\nabla\bar\phi)
+ \lambda\big(F_c''(\phi^{\ell})\phi^{\ell +1},\bar\phi\big)
-(\mu^{\ell+1},\bar\phi)_h 
&=&
-\lambda\big(F_c'(\phi^{\ell})- F_c''(\phi^{\ell})\phi^{\ell} + F_e'(\phi^{n}),\bar\phi\big)
\,.
\ea\right.
\end{equation*}

\item[Step 2]: Find $(\u^{\ell+1},p^{\ell+1})\in \bold{U}_h\times P_h$ such that for all $(\bar\u, \bar p)\in \bold{U}_h\times P_h$
\begin{equation*}\label{eq:NSCHschemeJIter2}
\left\{\ba{rcl}\dis
\frac1{\Delta t}(\u^{\ell+1},\bar{\u})
+ \left((\u^n\cdot\nabla)\u^{\ell+1},\bar{\u} \right)
+ \frac12\left((\nabla\cdot\u^n)\u^{\ell+1},\bar{\u} \right)
&&
\\ \hueco
+ \big(\nu(\phi^{\ell +1})\mathbb{D}(\u^{\ell+1}),\mathbb{D}(\bar\u)\big) 
 + (\nabla p^{\ell+1},\bar\u)
&=&
\dis\frac1{\Delta t}(\u^n,\bar{\u})
- (T^J_h(\phi^{\ell})\nabla\mu^{\ell+1},\bar{\u})
\,,
\\ \hueco
(\nabla\cdot\u^{\ell+1},\bar{p})
&=&0\,.
\ea\right.
\end{equation*}
\end{itemize}

\subsubsection{Iterative algorithm to approximate CM-Scheme} 
\begin{itemize}
\item[Step 1]: Find $(\phi^{\ell+1},\mu^{\ell+1})\in \Phi_h \times M_h$ such that for all $(\bar\phi, \bar\mu)\in \Phi_h \times M_h$
\begin{equation*}\label{eq:NSCHschemeJIter1}
\left\{\ba{rcl}\dis
\frac1{\Delta t}\big(\phi^{\ell+1} , \bar\mu\big)_h 
+\big(M\nabla\mu^{\ell+1},\nabla\bar\mu\big) 
&=&\dis
\frac1{\Delta t}\big(\phi^n, \bar\mu\big)_h 
+ (\phi^{\ell} \u^{\ell},\nabla\bar\mu)\,,
\\ \hueco
\lambda(\nabla\phi^{\ell+1},\nabla\bar\phi)
+ \lambda\big(F_c''(\phi^{\ell})\phi^{\ell +1},\bar\phi\big)
-(\mu^{\ell+1},\bar\phi)_h 
&=&
-\lambda\big(F_c'(\phi^{\ell})- F_c''(\phi^{\ell})\phi^{\ell} + F_e'(\phi^{n}),\bar\phi\big)
\,.
\ea\right.
\end{equation*}

\item[Step 2]: Find $(\u^{\ell+1},p^{\ell+1})\in \bold{U}_h\times P_h$ such that for all $(\bar\u, \bar p)\in \bold{U}_h\times P_h$
\begin{equation*}\label{eq:NSCHschemeJIter2}
\left\{\ba{rcl}\dis
\frac1{\Delta t}(\u^{\ell+1},\bar{\u})
+ \left((\u^n\cdot\nabla)\u^{\ell+1},\bar{\u} \right)
+ \frac12\left((\nabla\cdot\u^n)\u^{\ell+1},\bar{\u} \right)
&&
\\ \hueco
+ \big(\nu(\phi^{\ell +1})\mathbb{D}(\u^{\ell+1}),\mathbb{D}(\bar\u)\big) 
 + (\nabla p^{\ell+1},\bar\u)
&=&
\dis\frac1{\Delta t}(\u^n,\bar{\u})
- (\phi^{\ell}\nabla\mu^{\ell+1},\bar{\u})
\,,
\\ \hueco
(\nabla\cdot\u^{\ell+1},\bar{p})
&=&0\,.
\ea\right.
\end{equation*}
\end{itemize}

\subsection{Example I. Accuracy study}
In this example we estimate numerically the order of convergence in time  of the two presented numerical schemes. We now introduce some additional notation. The individual errors using discrete norms and the convergence rate between two 
consecutive 
time steps of size $\Delta t$ and $\widetilde{\Delta t}$ are defined as
$$
e_2(\psi):=\|\psi_{exact} - \psi_s\|_{L^2(\Omega)},
e_1(\psi):=\|\psi_{exact} - \psi_s\|_{H^1(\Omega)}
\mbox{ and }
r_i(\cdot):=\left[\log\left(\frac{e_i(\cdot)}{\tilde{e}_i(\cdot)}\right)\right]\Big/ \left[\log\left(\frac{\Delta t}{\widetilde{\Delta t}}\right)\right]\,.
$$
We compute the EOC (Experimental Order of Convergence) using as reference (or exact) solution the one obtained by solving the system using the initial condition
\begin{equation*}\label{eq:initialcondex1}
\phi^0(x,y)=\frac12\left(\cos(5\pi x)\sin\left(3\pi y+\frac{\pi}2\right) + 1\right)
\end{equation*}
and the discretization parameters $h = 1/100$ and $\Delta t = 10^{-8}$. This configuration is presented in Figure~\ref{fig:Ex1FinalConfig}.

\begin{figure}[H]
\begin{center}
\includegraphics[width=0.32\textwidth]{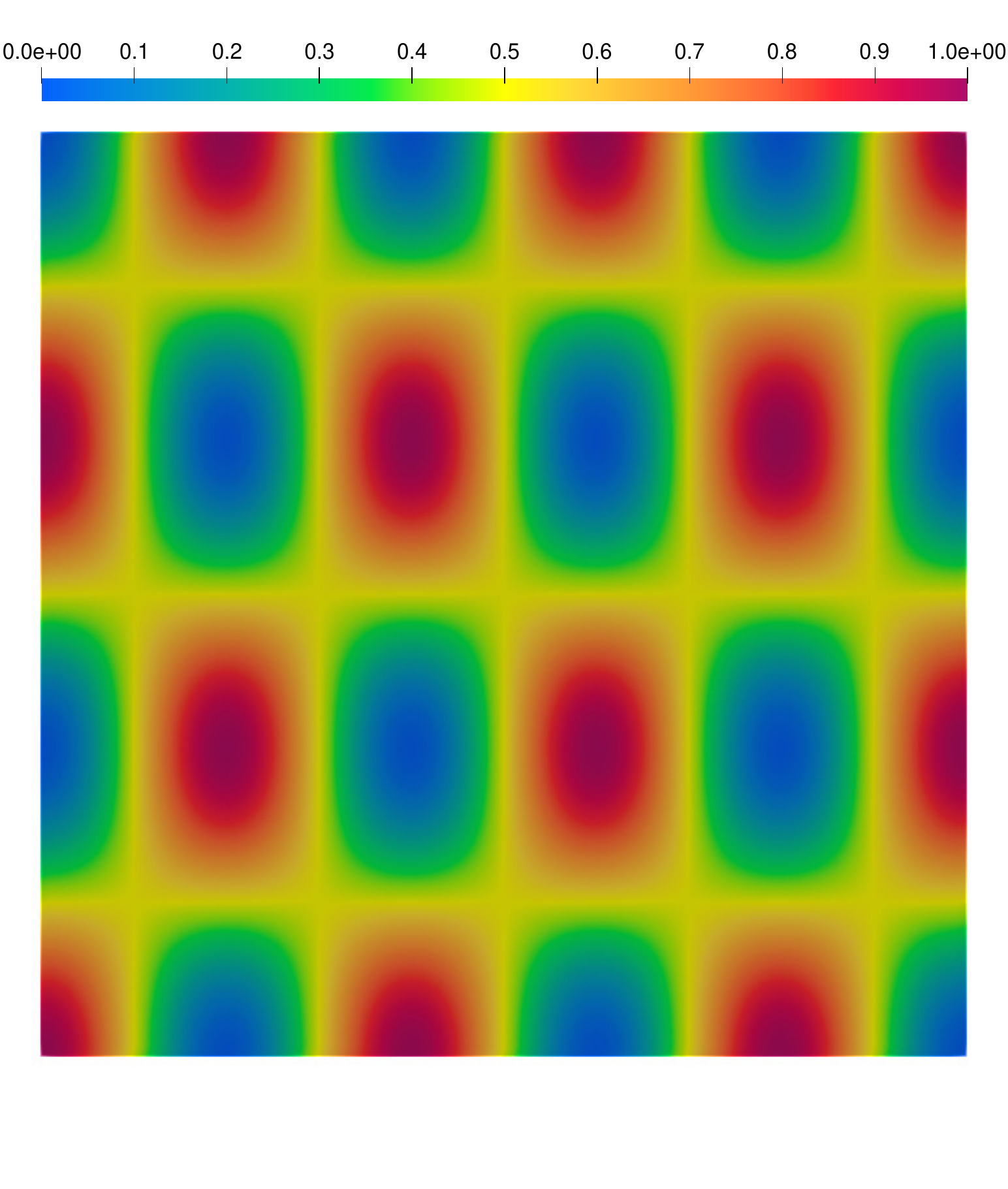}
\includegraphics[width=0.32\textwidth]{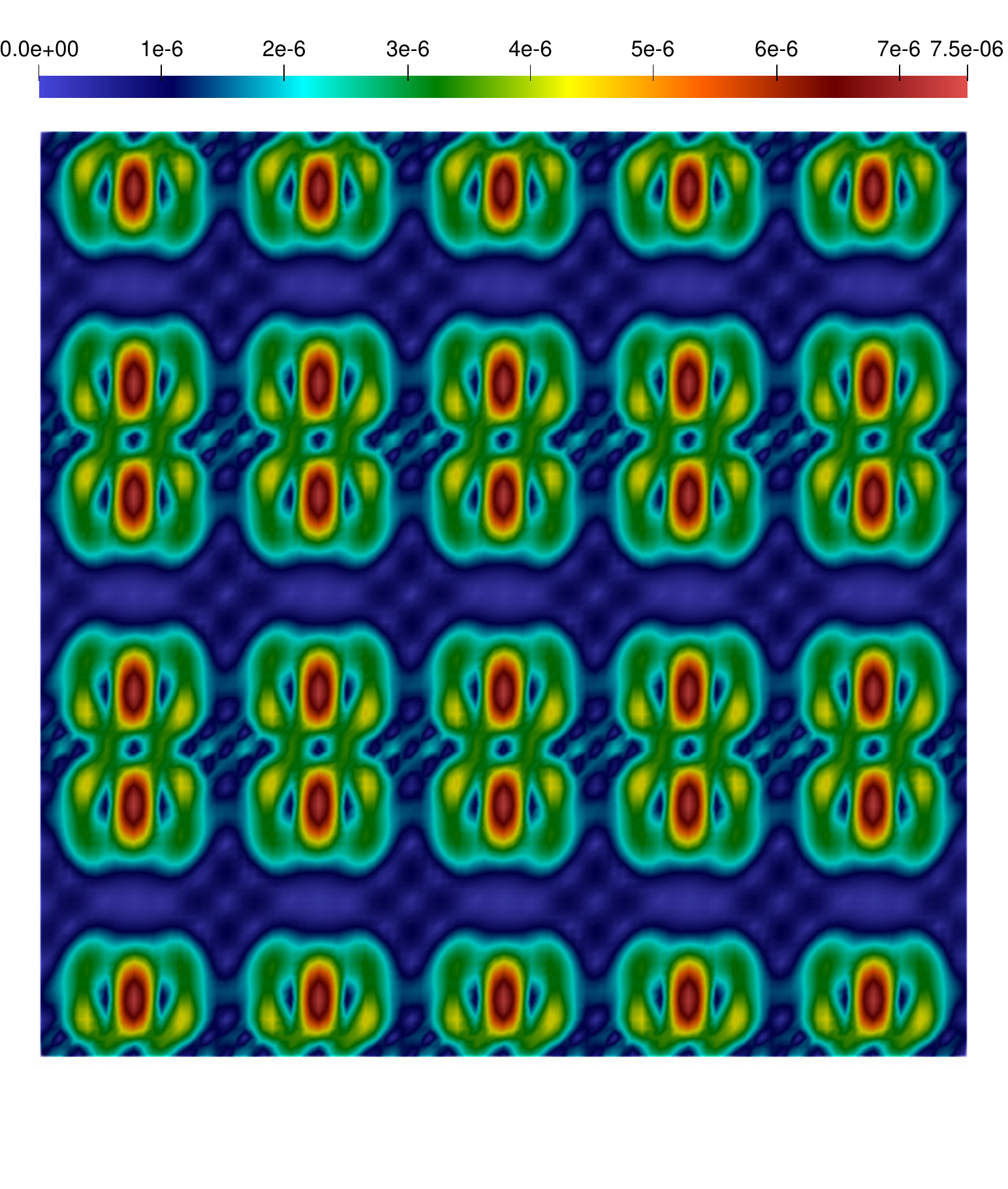}
\includegraphics[width=0.32\textwidth]{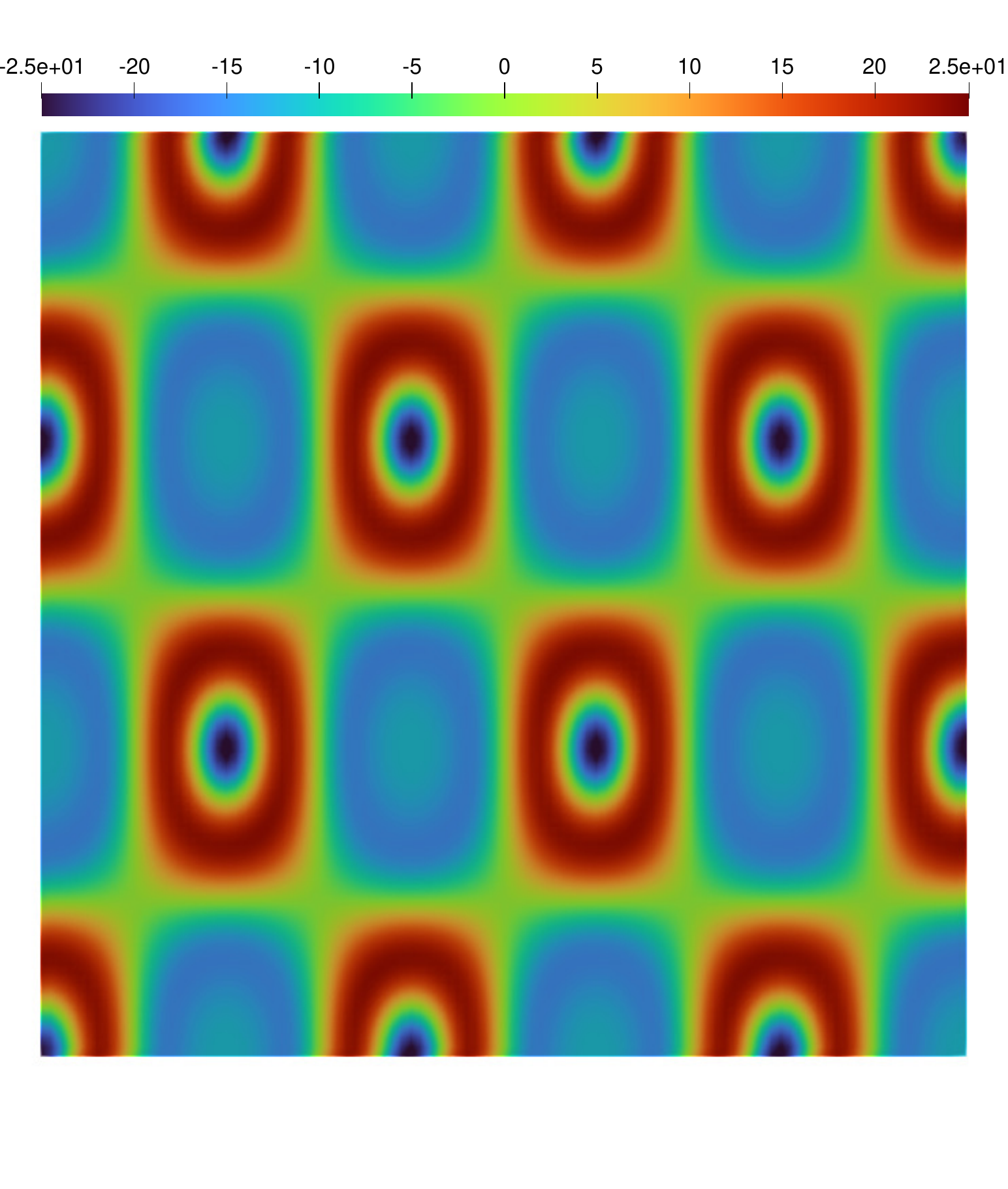}
\caption{Example I. Experimental Order of Convergence. Reference solution for $\phi$ (left), $|\u|$ (center) and $p$ (right).} \label{fig:Ex1FinalConfig}
\end{center}
\end{figure}

\begin{table}
\begin{tabular}{|c|cc|cc|cc|cc|}                      
\hline                                                                                                                                                     
$\Delta t $& $e_2(\phi)$  & $r_2(\phi)$  & $e_1(\phi)$ & $r_1(\phi)$ & $e_2(\mu)$ & $r_2(\mu)$ & $e_1(\mu)$ & $r_1(\mu)$ 
\\ \hline
$1\times10^{-6}$ & $0.135\times10^{-7}$ & $ - $ & $0.104\times10^{-5}$ & $ - $ & $0.145\times10^{-2}$ & $ - $ & $0.766\times10^{-1}$ & $ - $ 
\\                         
$2\times10^{-6}$ & $0.067\times10^{-7}$ & $ 1.0162 $ & $0.051\times10^{-5} $ & $ 1.0148 $ & $0.071\times10^{-2}$ & $ 1.0145 $ & $0.379\times10^{-1}$ & $ 1.0145 $ 
\\
$3\times10^{-6}$ & $0.044\times10^{-7}$ & $ 1.0268 $ & $ 0.034\times10^{-5}$ & $ 1.0240 $ & $0.047\times10^{-2}$ & $ 1.0252 $ & $0.250\times10^{-1}$ & $ 1.0252 $ 
\\
$4\times10^{-6}$ & $0.032\times10^{-7}$ & $ 1.0362 $ & $0.025\times10^{-5}$ & $ 1.0316 $ & $0.035\times10^{-2}$ & $ 1.0360 $ & $ 0.185\times10^{-1}$ & $  1.0359$ 
\\
$5\times10^{-6}$ & $0.026\times10^{-7}$ & $ 1.0463 $ & $0.020\times10^{-5}$ & $ 1.0414 $ & $0.027\times10^{-2}$ & $ 1.0469 $ & $0.147\times10^{-1}$ & $ 1.0468 $ 
\\
$6\times10^{-6}$ & $0.021\times10^{-7}$ & $ 1.0550 $ & $0.016\times10^{-5}$ & $ 1.0525 $ & $ 0.023\times10^{-2}$ & $ 1.0580 $ & $0.121\times10^{-1}$ & $ 1.0580 $ 
\\
$7\times10^{-6}$ & $0.018\times10^{-7}$ & $ 1.0750 $ & $ 0.014\times10^{-5}$ & $ 1.0683 $ & $ 0.019\times10^{-2}$ & $ 1.0694 $ & $0.102\times10^{-1}$ & $ 1.0693 $ 
\\ \hline
$\Delta t$ & $e_2(\u)$  & $r_2(\u)$  & $e_1(\u)$ & $r_1(\u)$ & $e_2(p)$ & $r_2(p)$ 
\\ \hline    
$1\times10^{-6}$ &  $0.173\times10^{-7}$  & $ - $  & $0.130\times10^{-5}$ & $ - $ & $0.798\times10^{-3}$ & $ - $ 
\\ 
$2\times10^{-6}$ &  $0.084\times10^{-7}$  & $ 1.0280 $  & $0.063\times10^{-5}$ & $ 1.0148 $ & $0.395\times10^{-3}$ & $ 1.0145 $ 
\\    
$3\times10^{-6}$ &  $0.055\times10^{-7}$  & $ 1.0330 $  & $0.042\times10^{-5}$ & $ 1.0240 $ & $  0.260\times10^{-3}$ & $ 1.0252 $ 
\\   
$4\times10^{-6}$ &  $0.041\times10^{-7}$  & $ 1.0414 $  & $0.031\times10^{-5}$ & $ 1.0316 $ & $0.193\times10^{-3}$ & $ 1.0360 $ 
\\   
$5\times10^{-6}$ &  $0.032\times10^{-7}$  & $ 1.0511 $  & $0.024\times10^{-5}$ & $ 1.0414 $ & $0.153\times10^{-3}$ & $ 1.0469 $ 
\\   
$6\times10^{-6}$ &  $0.027\times10^{-7}$  & $ 1.0614 $  & $0.020\times10^{-5}$ & $ 1.0525 $ & $0.126\times10^{-3}$ & $ 1.0580 $ 
\\
$7\times10^{-6}$ &  $0.022\times10^{-7}$  & $ 1.0723 $  & $0.017\times10^{-5}$ & $ 1.0683 $ & $0.107\times10^{-3}$ & $ 1.0694 $ 
\\ \hline
\end{tabular}
\caption{Example I. G$_\varepsilon$-Scheme. Experimental absolute errors and order of convergences with reference solution computed using $\Delta t=10^{-8}$. }\label{tab:Ex1_orderG}
\end{table}

The convergence history for a sequence of time steps using $h = 1/100$ is presented in Tables~\ref{tab:Ex1_orderG} and \ref{tab:Ex1_orderJ} ($G_\varepsilon$-scheme and $J_\varepsilon$-scheme, respectively).
We can observe that all the unknowns for the two schemes show a good performance from the order of convergence point of view, because all of them achieve the expected order one in time.

\begin{table}
\begin{tabular}{|c|cc|cc|cc|cc|}                      
\hline                                                                                                                                                                                                                                          
$\Delta t $& $e_2(\phi)$  & $r_2(\phi)$  & $e_1(\phi)$ & $r_1(\phi)$ & $e_2(\mu)$ & $r_2(\mu)$ & $e_1(\mu)$ & $r_1(\mu)$ 
\\ \hline                                                                                                                                                        
$1\times10^{-6}$ & $0.732\times10^{-7}$ & $ - $ & $0.224\times10^{-4}$ & $ - $ & $0.138\times10^{-2}$ & $ - $ & $1.034\times10^{-1}$ & $ - $ 
\\ 
$2\times10^{-6}$ & $0.365\times10^{-7}$ & $ 1.0029 $ & $ 0.112\times10^{-4}$ & $ 1.0004 $ & $0.068\times10^{-2}$ & $ 1.0141 $ & $0.499\times10^{-1}$ & $ 1.0050 $ 
\\ 
$3\times10^{-6}$ & $0.241\times10^{-7}$ & $ 1.0177 $ & $ 0.074\times10^{-4}$ & $ 1.0170 $ & $0.045\times10^{-2}$ & $ 1.0250 $ & $0.330\times10^{-1}$ & $ 1.0197 $ 
\\ 
$4\times10^{-6}$ & $0.179\times10^{-7}$ & $ 1.0306$ & $0.055\times10^{-4}$ & $ 1.0301 $ & $0.033\times10^{-2}$ & $ 1.0358 $ & $0.245\times10^{-1} $ & $ 1.0321 $ 
\\ 
$5\times10^{-6}$ & $0.142\times10^{-7}$ & $ 1.0427 $ & $0.043\times10^{-4}$ & $ 1.0423 $ & $0.026\times10^{-2}$ & $ 1.0468 $ & $0.194\times10^{-1}$ & $ 1.0439 $ 
\\ 
$6\times10^{-6}$ & $0.117\times10^{-7}$ & $ 1.0544 $ & $0.036\times10^{-4}$ & $ 1.0541 $ & $0.021\times10^{-2}$ & $ 1.0579 $ & $ 0.160\times10^{-1}$ & $ 1.0555 $ 
\\ 
$7\times10^{-6}$ & $0.099\times10^{-7}$ & $ 1.0673 $ & $ 0.030\times10^{-4}$ & $ 1.0671 $ & $ 0.018\times10^{-2}$ & $ 1.0693 $ & $0.136\times10^{-1}$ & $ 1.0673 $ 
\\ \hline
$\Delta t$ & $e_2(\u)$  & $r_2(\u)$  & $e_1(\u)$ & $r_1(\u)$ & $e_2(p)$ & $r_2(p)$ 
\\ \hline                            
$1\times10^{-6}$ &  $0.483\times10^{-7}$  & $ - $  & $0.160\times10^{-4}$ & $ - $ & $0.7727$ & $ - $ 
\\     
$2\times10^{-6}$ &  $0.232\times10^{-7}$  & $ 1.0588 $  & $0.074\times10^{-4}$ & $ 1.1119 $ & $0.3734$ & $ 1.0492 $ 
\\     
$3\times10^{-6}$ &  $0.153\times10^{-7}$  & $ 1.0239 $  & $0.048\times10^{-4}$ & $ 1.0540 $ & $ 0.2464$ & $ 1.0250 $ 
\\     
$4\times10^{-6}$ &  $0.113\times10^{-7}$  & $ 1.0348 $  & $0.035\times10^{-4}$ & $ 1.0558 $ & $ 0.1829$ & $ 1.0358 $ 
\\     
$5\times10^{-6}$ &  $0.090\times10^{-7}$  & $ 1.0458 $  & $0.028\times10^{-4}$ & $ 1.0620 $ & $0.1448$ & $ 1.0468 $ 
\\     
$6\times10^{-6}$ &  $0.074\times10^{-7}$  & $ 1.0570 $  & $0.023\times10^{-4}$ & $ 1.0703 $ & $0.1194$ & $ 1.0579 $ 
\\ 
$7\times10^{-6}$ &  $0.063\times10^{-7}$  & $ 1.0687 $  & $0.019\times10^{-4}$ & $ 1.0796 $ & $0.1013$ & $ 1.0693 $ 
\\ \hline
\end{tabular}
\caption{Example I. J$_\varepsilon$-Scheme. Experimental absolute errors and order of convergences with reference solution computed using $\Delta t=10^{-8}$. }\label{tab:Ex1_orderJ}
\end{table}

\subsection{Example II. Energy stability and bound preservation}

In this example we are interested in studying the dissipative character of the proposed numerical schemes as well as their ability of achieving the expected boundedness preservation of the variable $\phi$. In all the simulations we consider the time step $\Delta t=10^{-4}$.

\subsubsection{Merging droplets}

In this case we have designed an initial condition resembling two droplets of a fluid immersed in another fluid, whose boundaries are slightly touching each other. The two droplets will merge arriving to an equilibrium configurations with a single larger droplet.

The dynamic of the two merging drops is presented in Figure~\ref{fig:Ex2_merging_dyn_G} ($G_\varepsilon$-scheme), Figure~\ref{fig:Ex2_merging_dyn_J} ($J_\varepsilon$-scheme) and Figure~\ref{fig:Ex2_merging_dyn_CM} (CM-scheme), where we can observe how the movement of the interface produces motion in the fluid (that was originally at rest) to produce a single circular drop while maintaining the volume of the two fluids. It is interesting to note that the dynamic of the constant mobility case (CM-scheme) is much faster than the one exhibited by the non-constant mobility. 

\begin{figure}[H]
\begin{center}
\includegraphics[scale=0.1125]{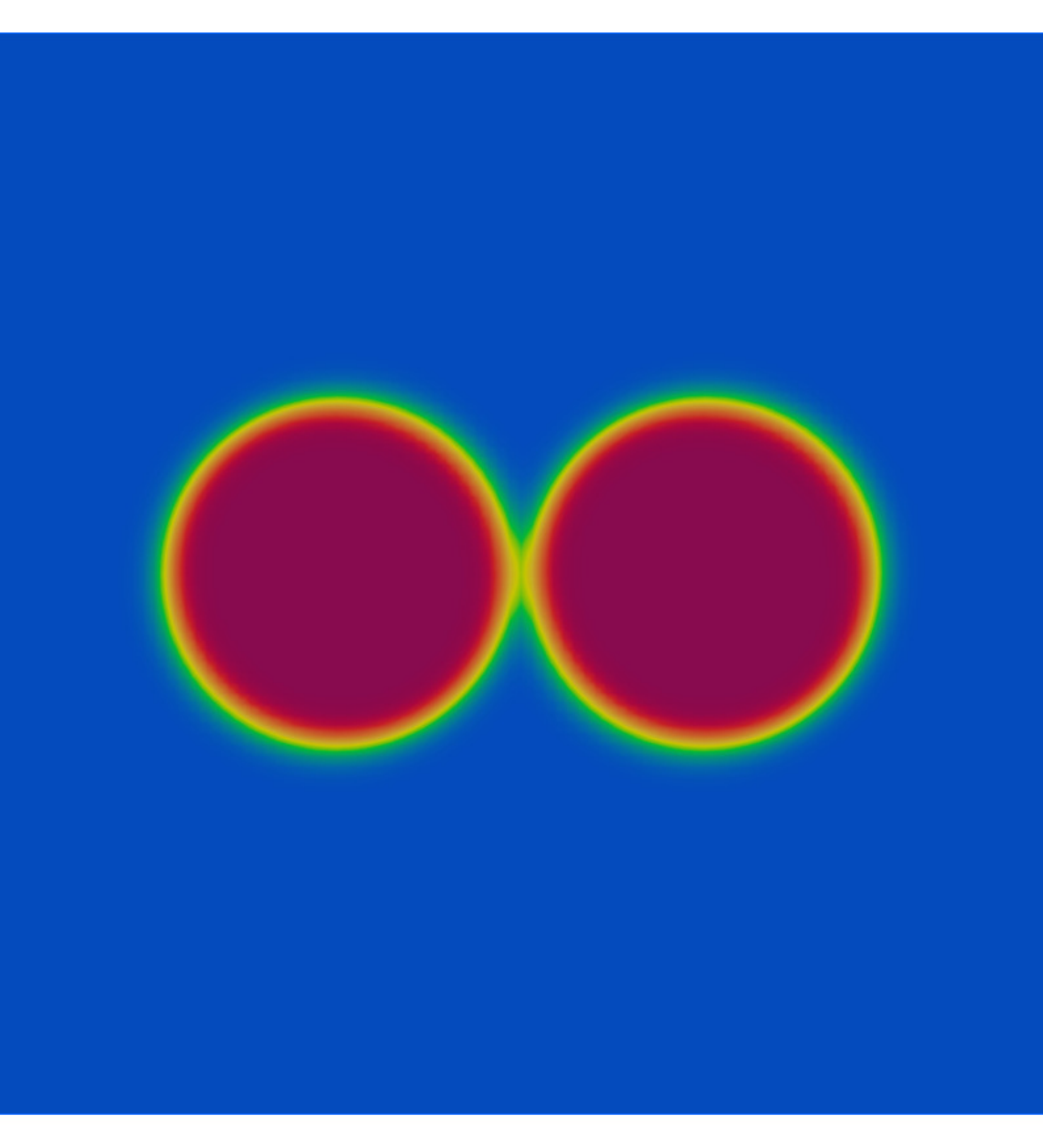}
\includegraphics[scale=0.1125]{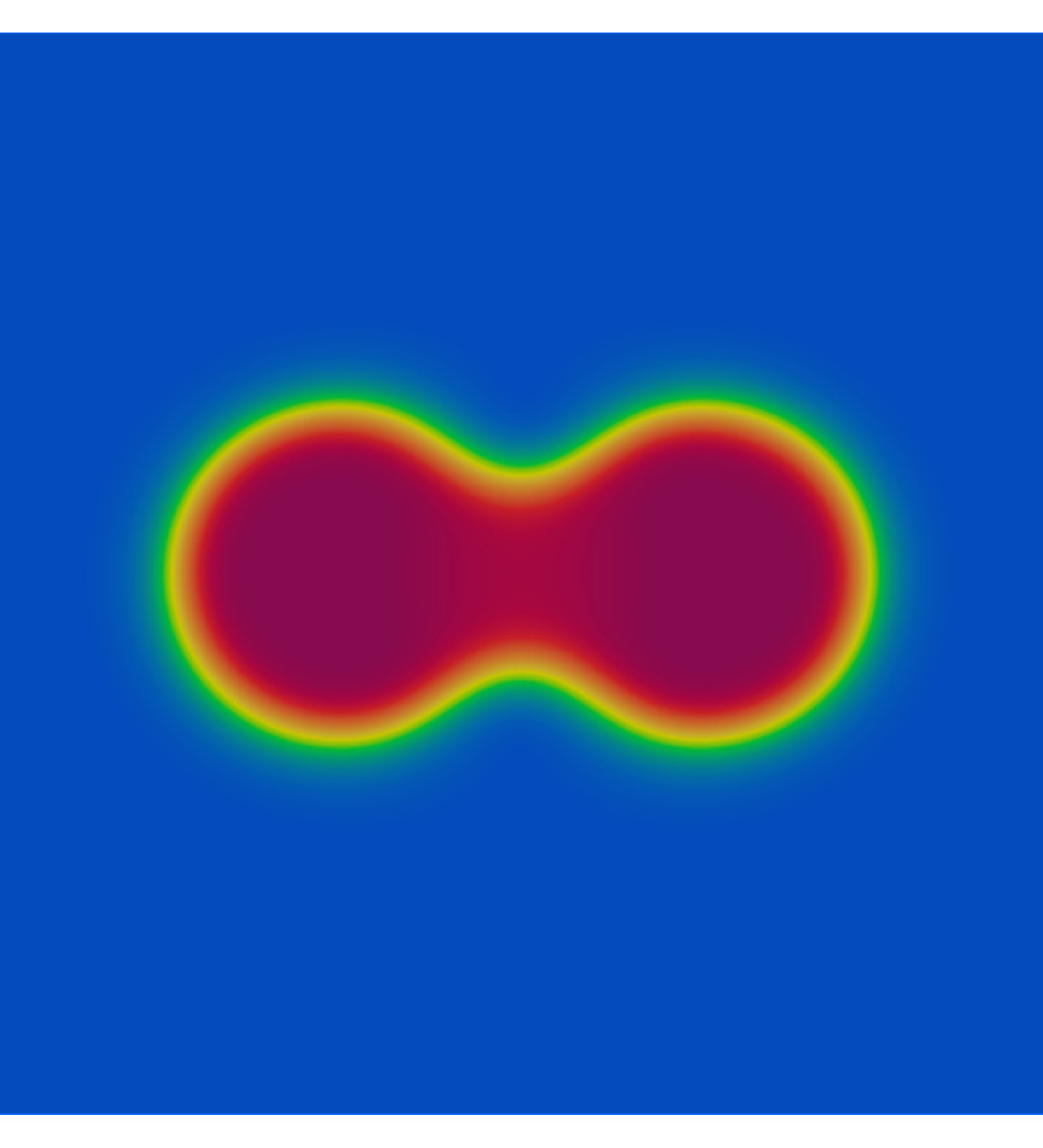}
\includegraphics[scale=0.1125]{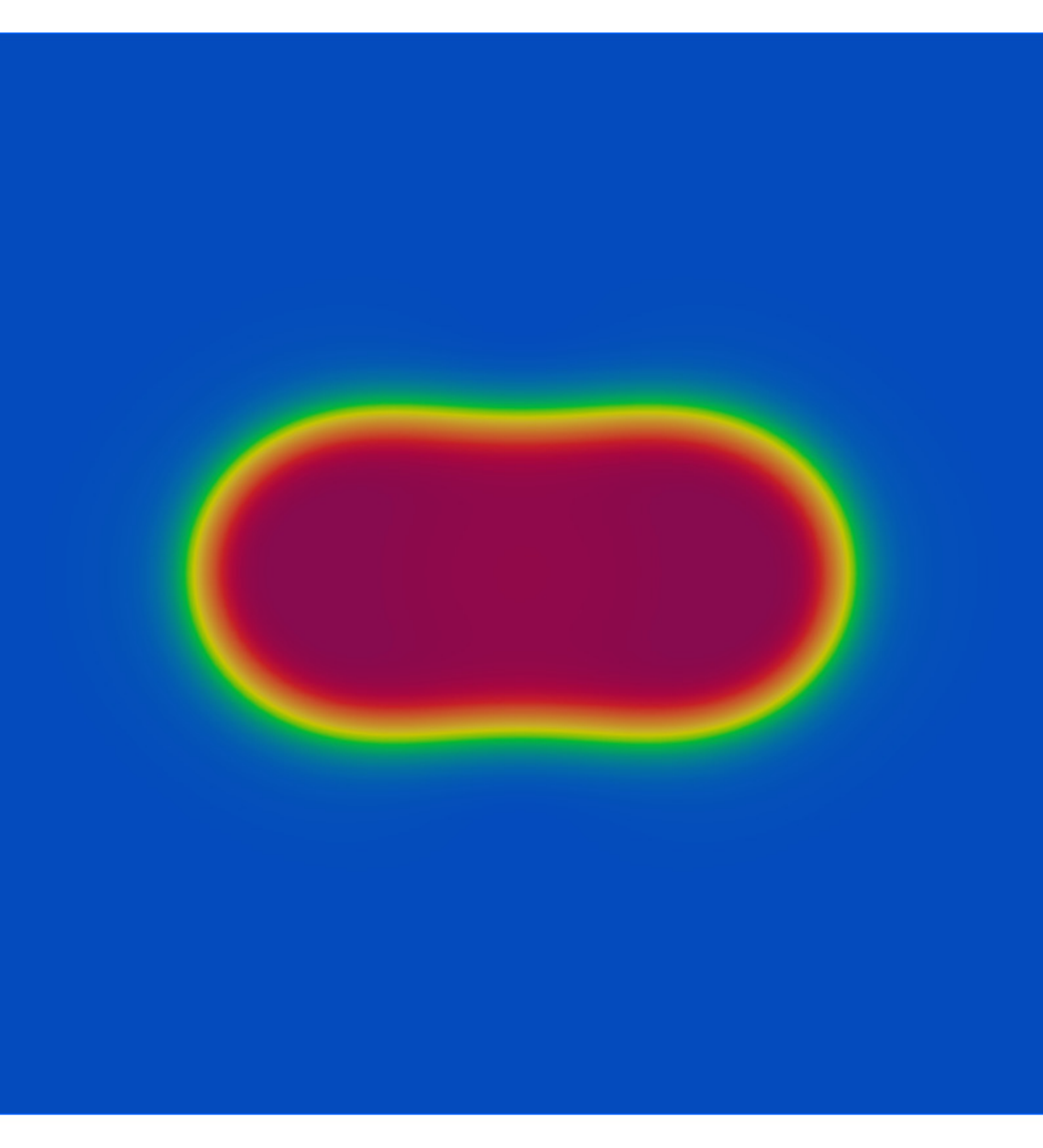}
\includegraphics[scale=0.1125]{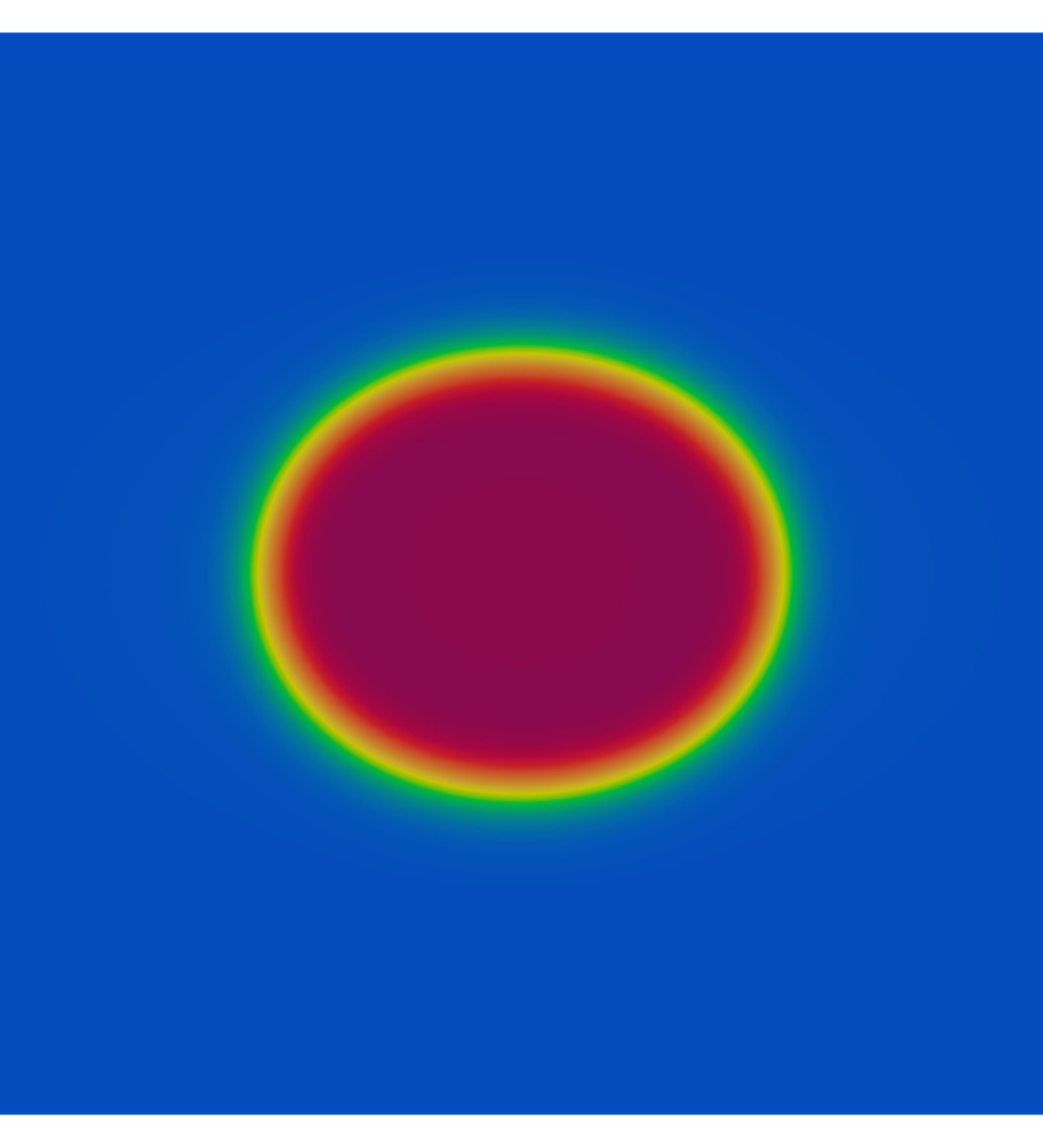}
\includegraphics[scale=0.1125]{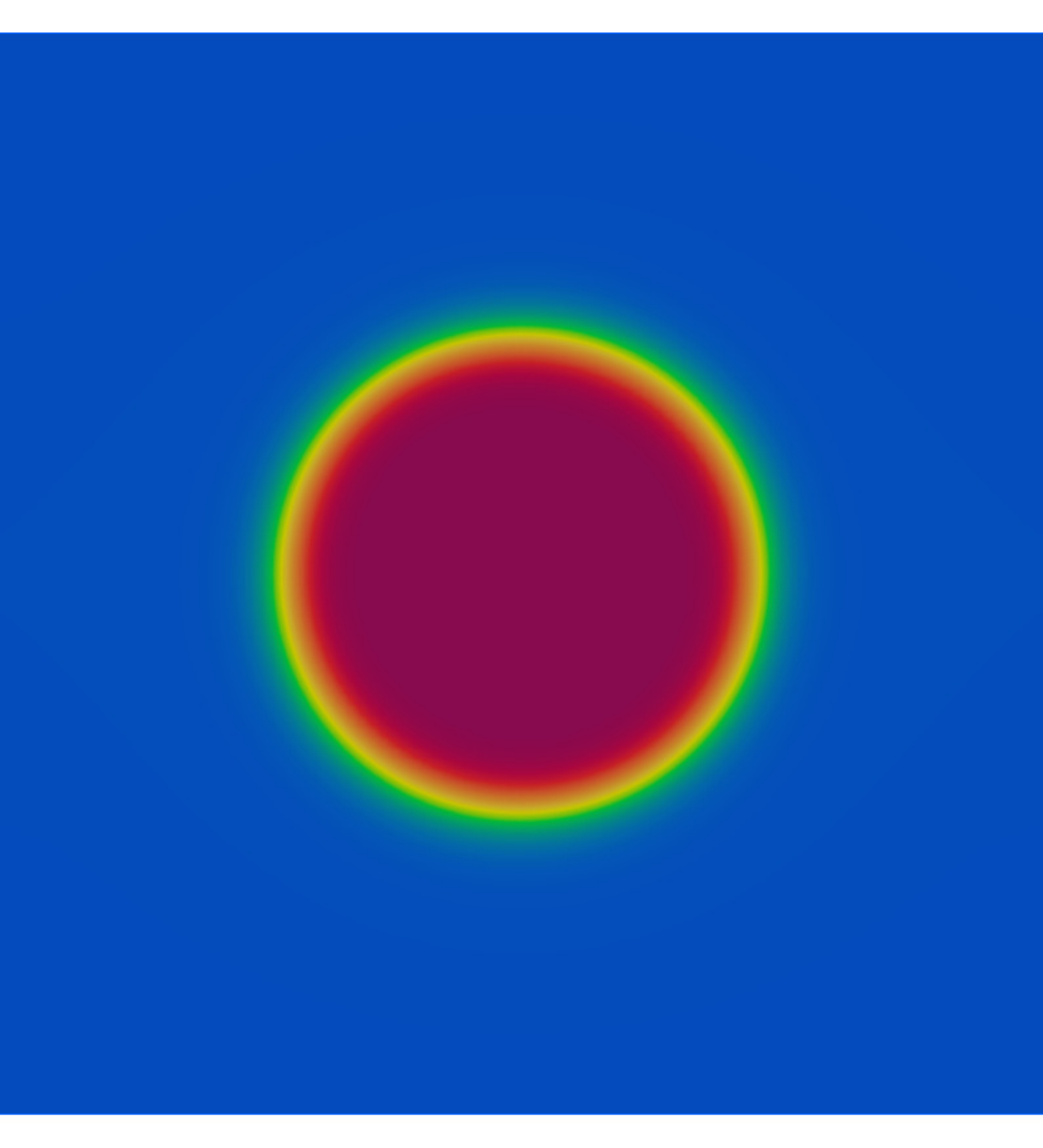}
\\
\includegraphics[scale=0.1125]{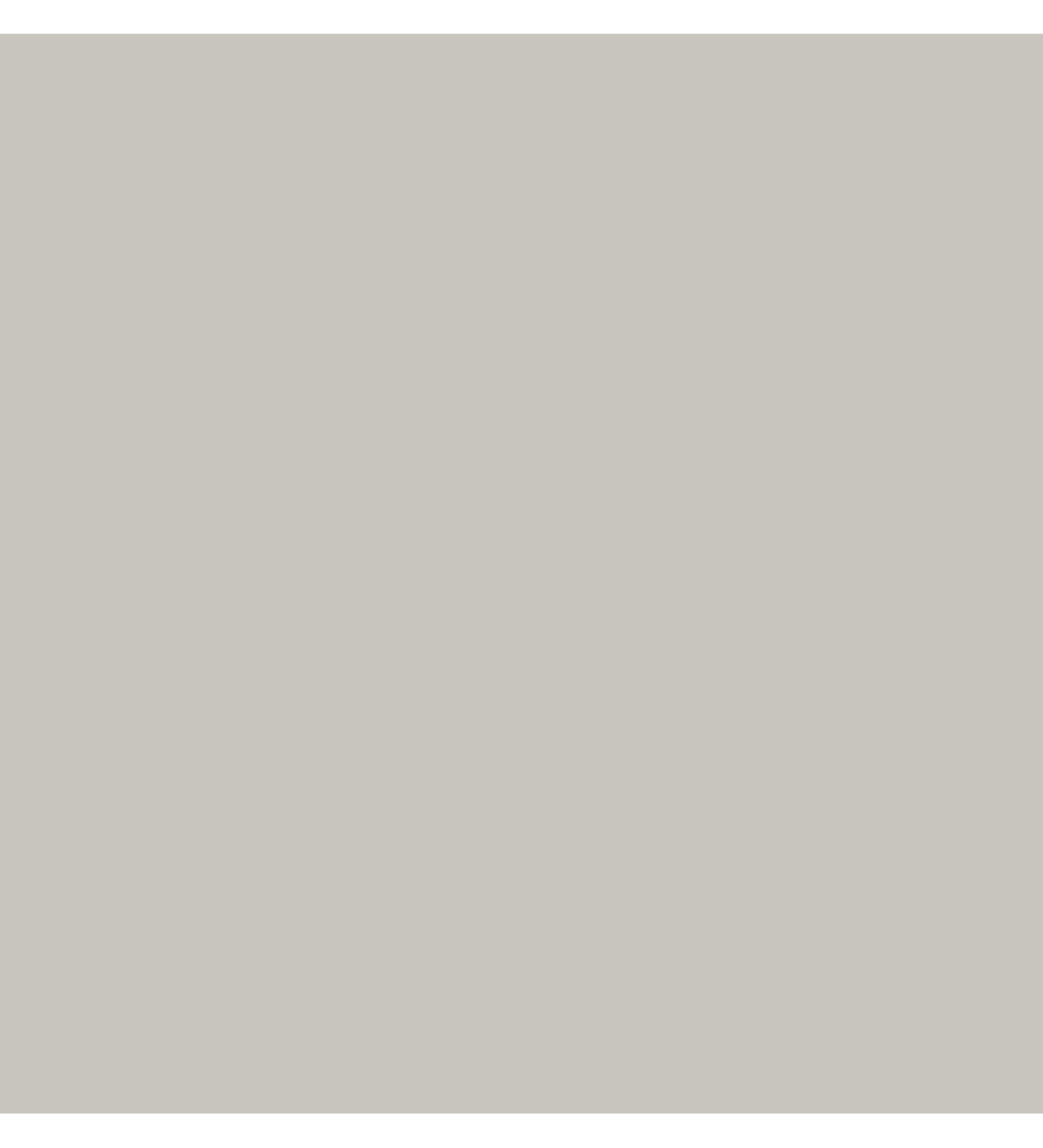}
\includegraphics[scale=0.1125]{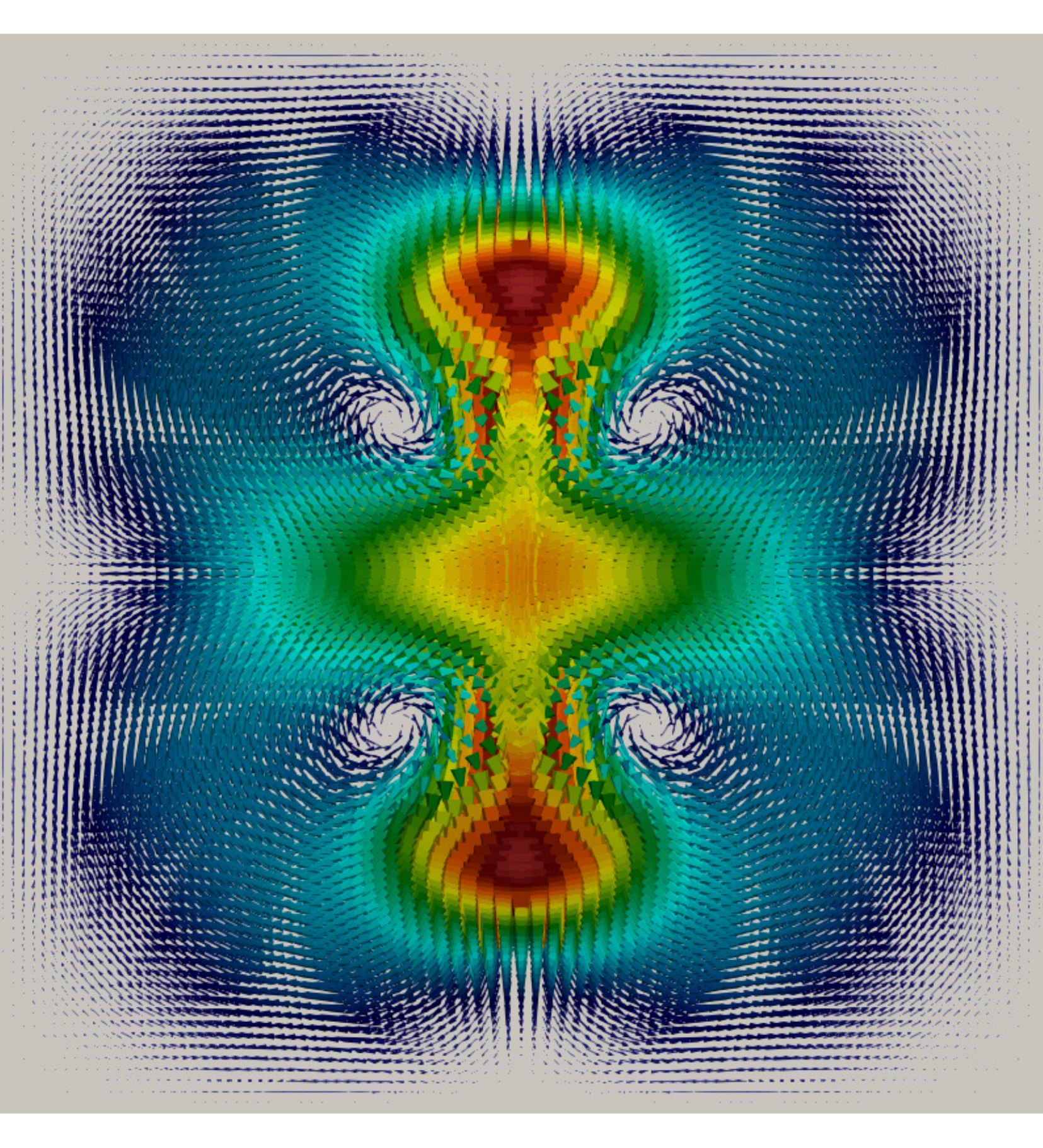}
\includegraphics[scale=0.1125]{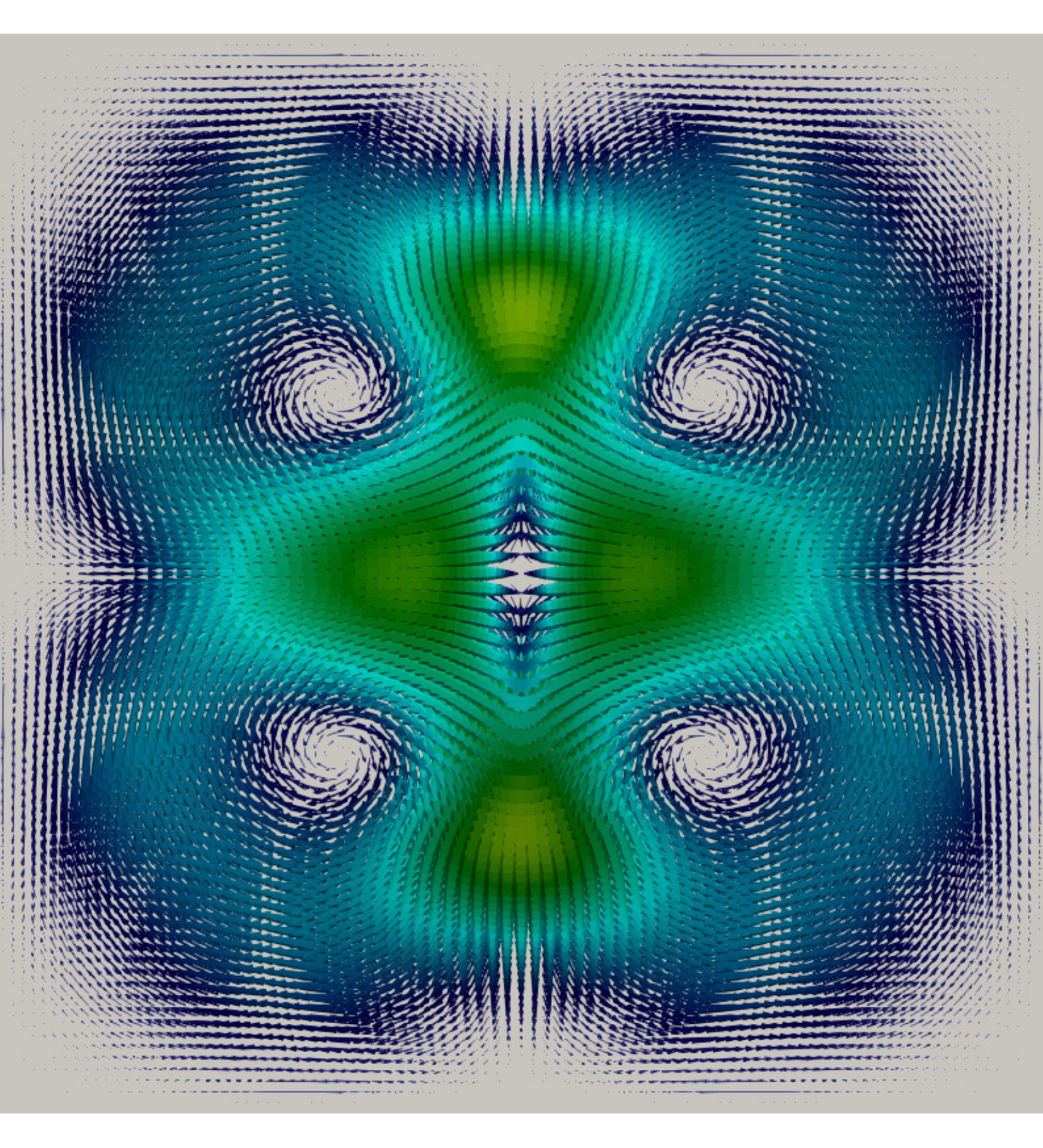}
\includegraphics[scale=0.1125]{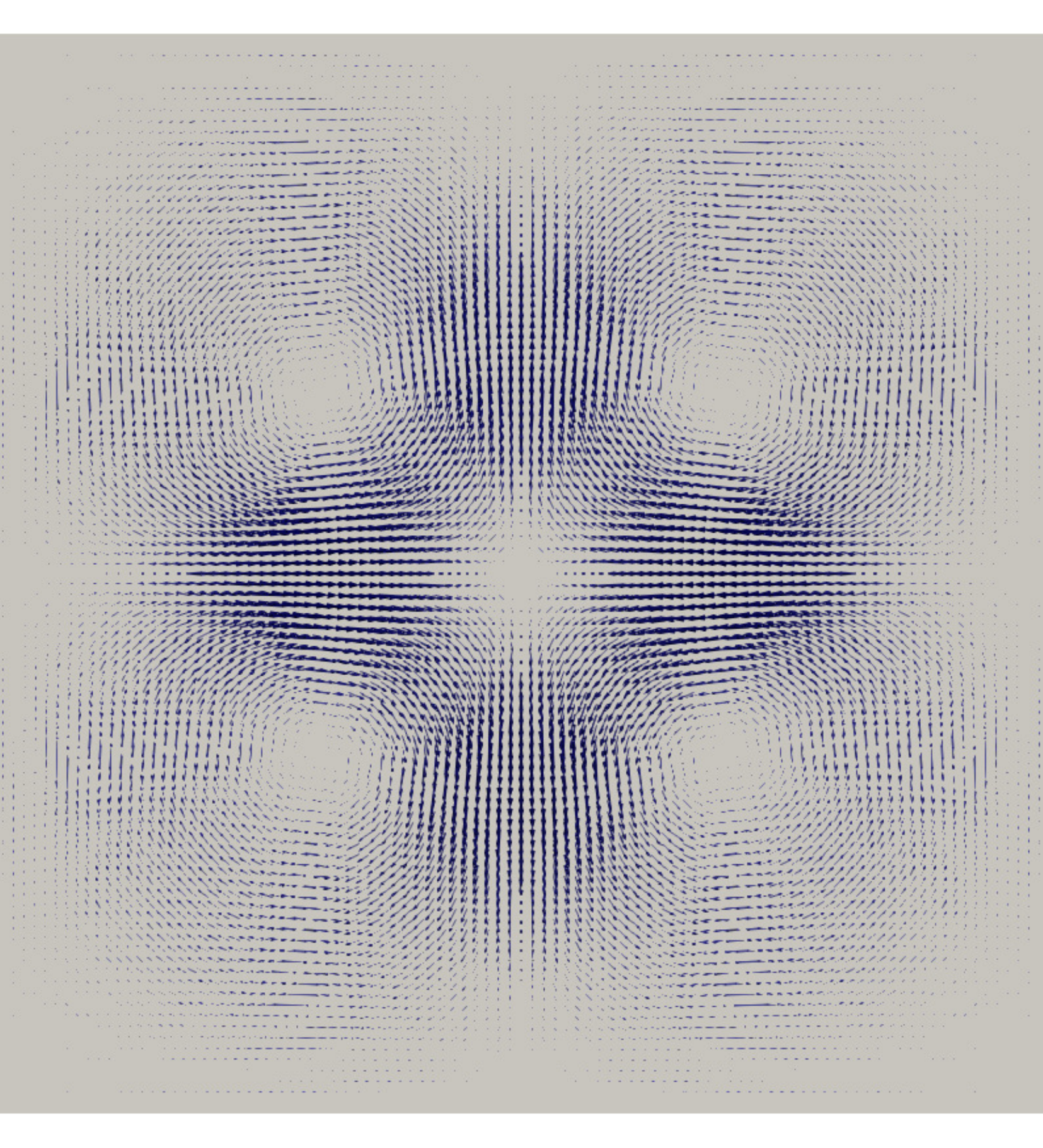}
\includegraphics[scale=0.1125]{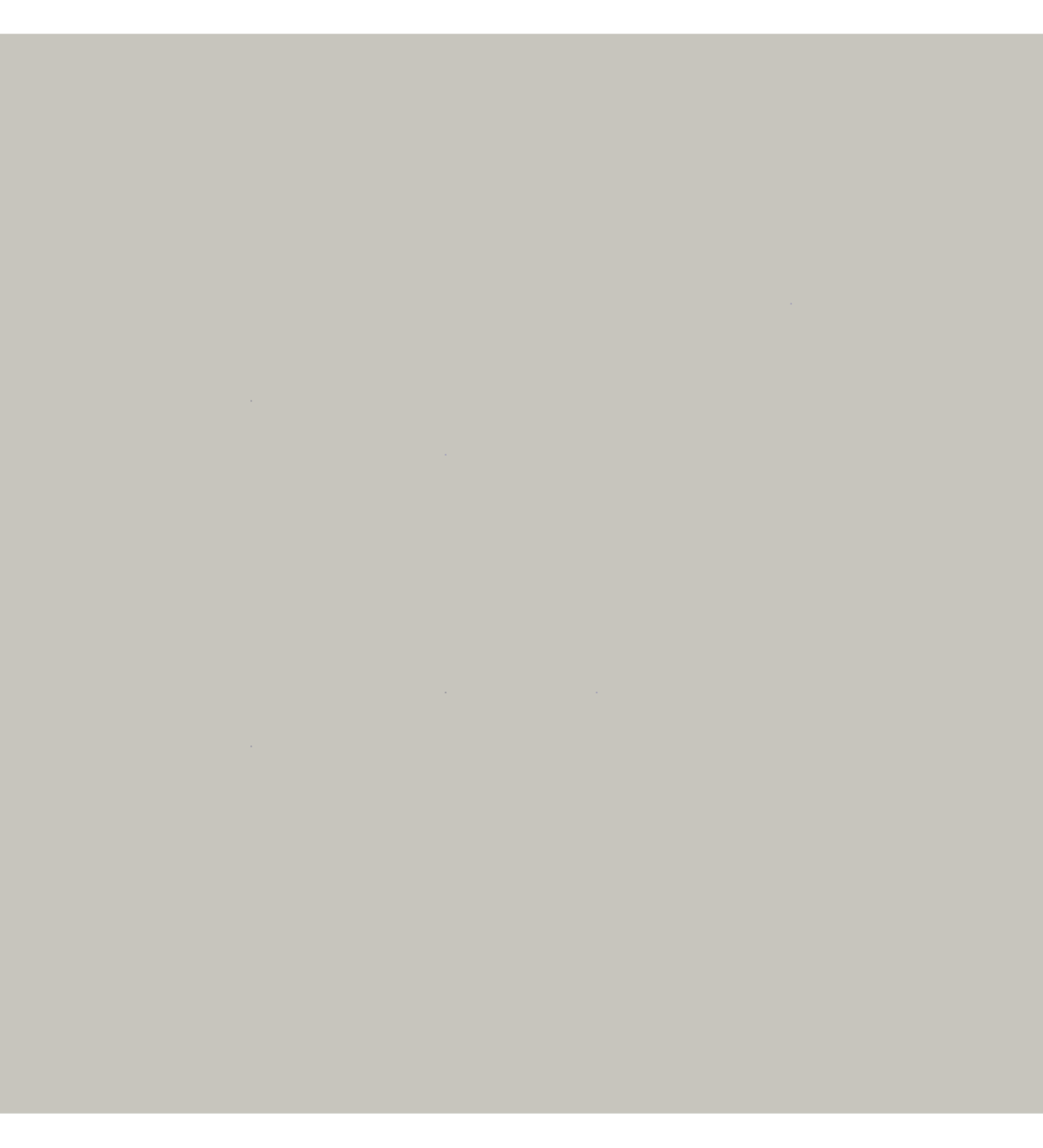}
\end{center}
\caption{Example II. Merging droplets. Evolution in time of $\phi$ and $\u$ for $G_\varepsilon$-scheme at times $t=0, 0.2, 0.5, 1.5$ and $5$.}\label{fig:Ex2_merging_dyn_G}
\end{figure}

\begin{figure}[H]
\begin{center}
\includegraphics[scale=0.1125]{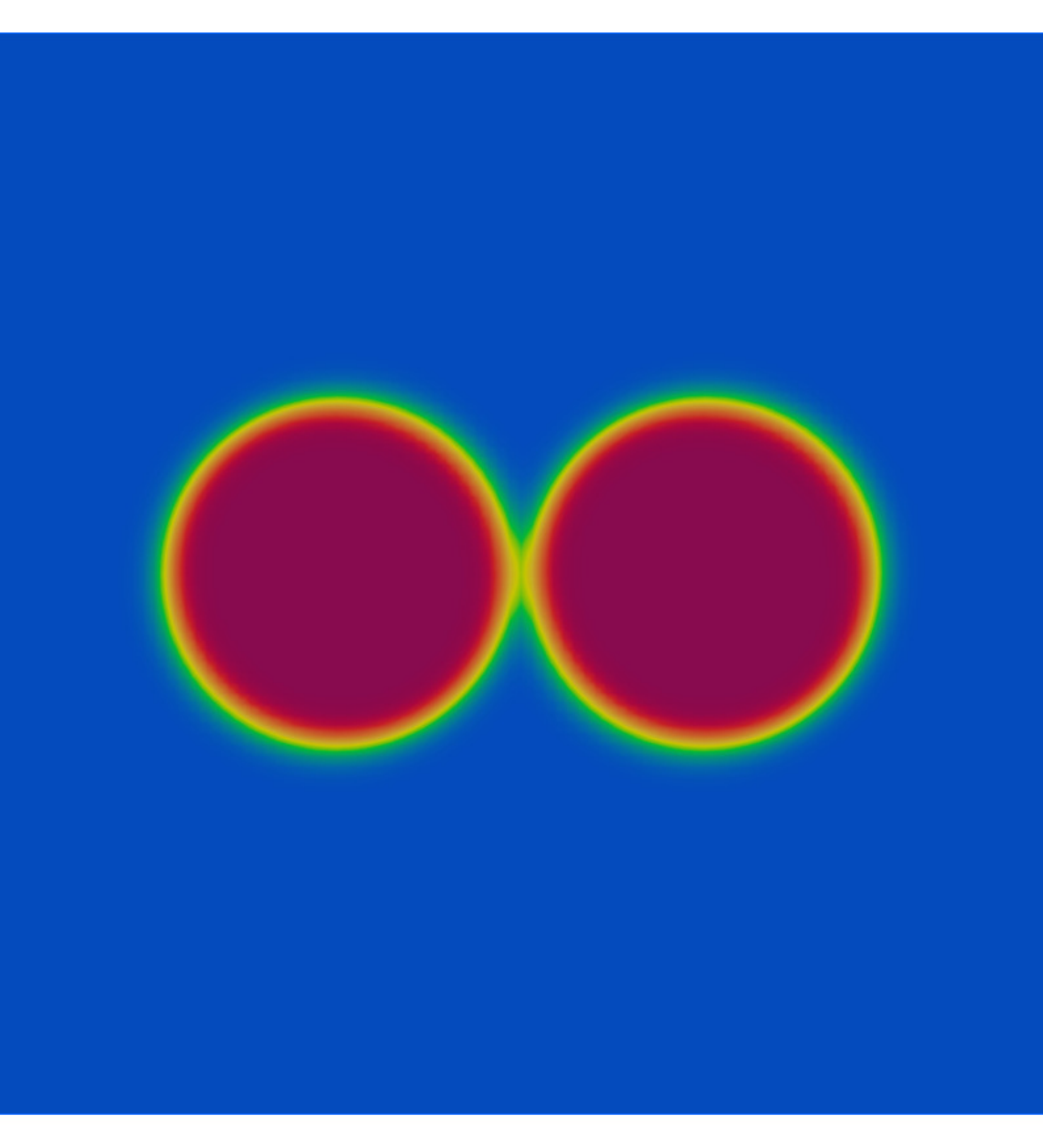}
\includegraphics[scale=0.1125]{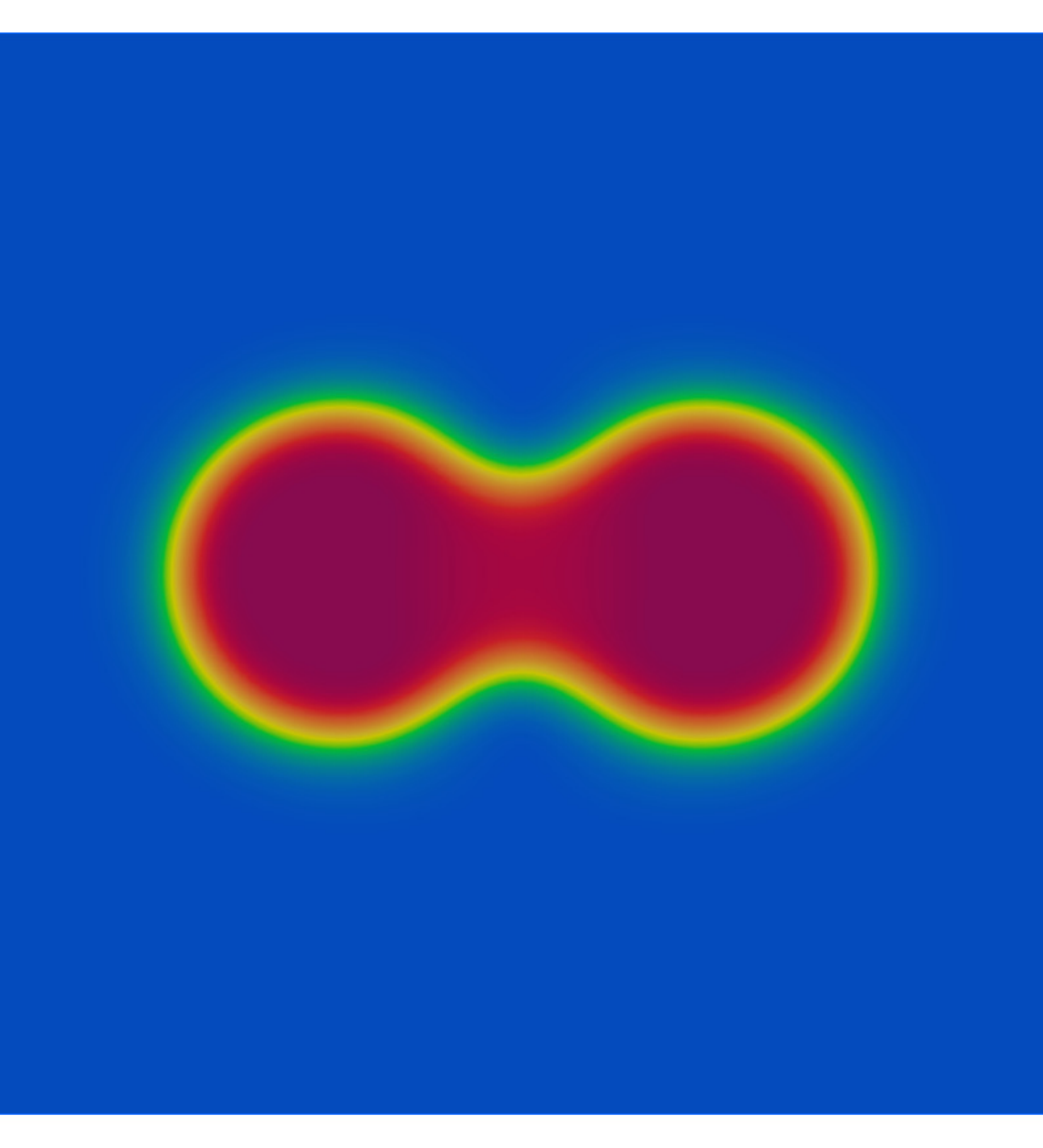}
\includegraphics[scale=0.1125]{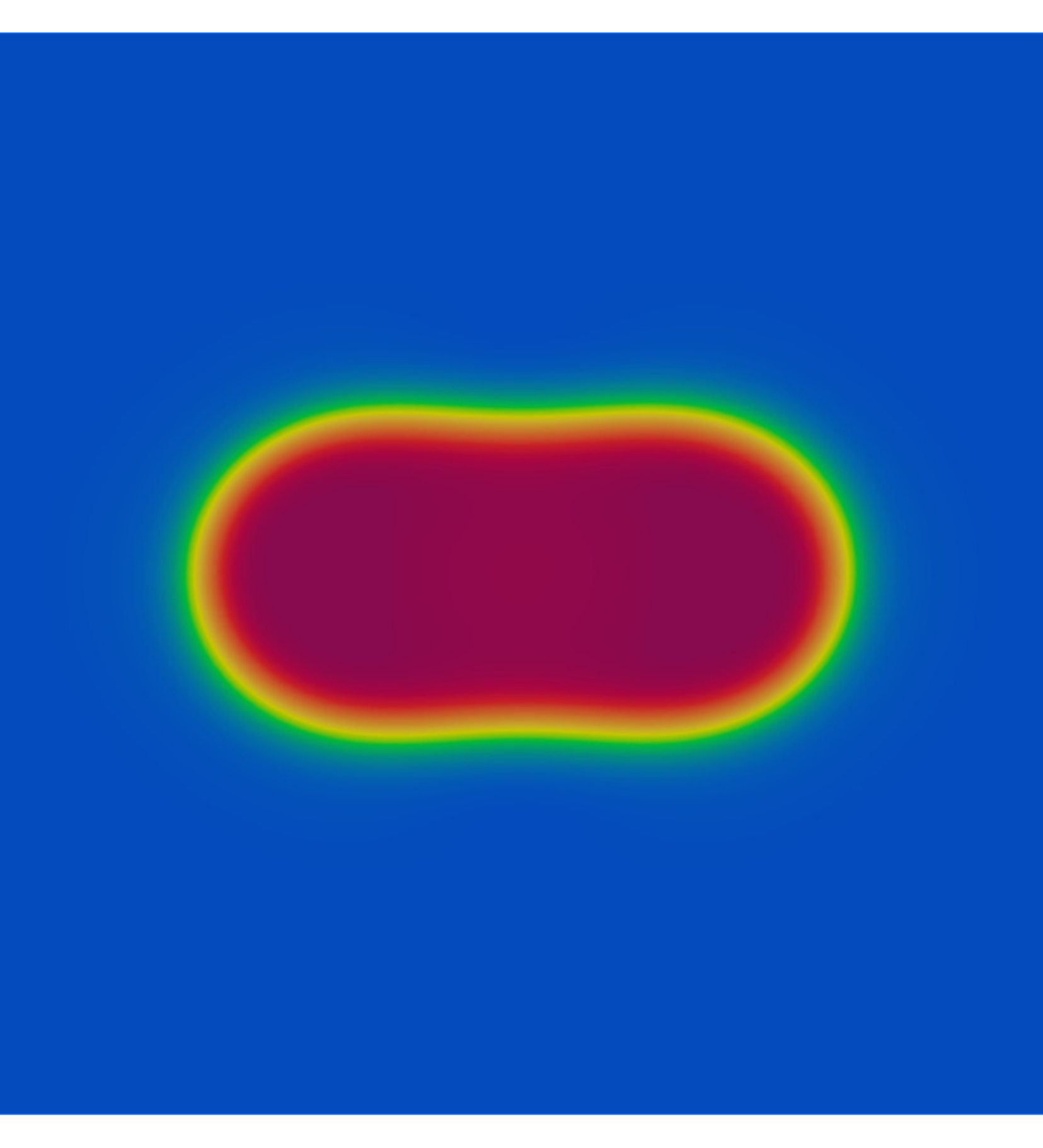}
\includegraphics[scale=0.1125]{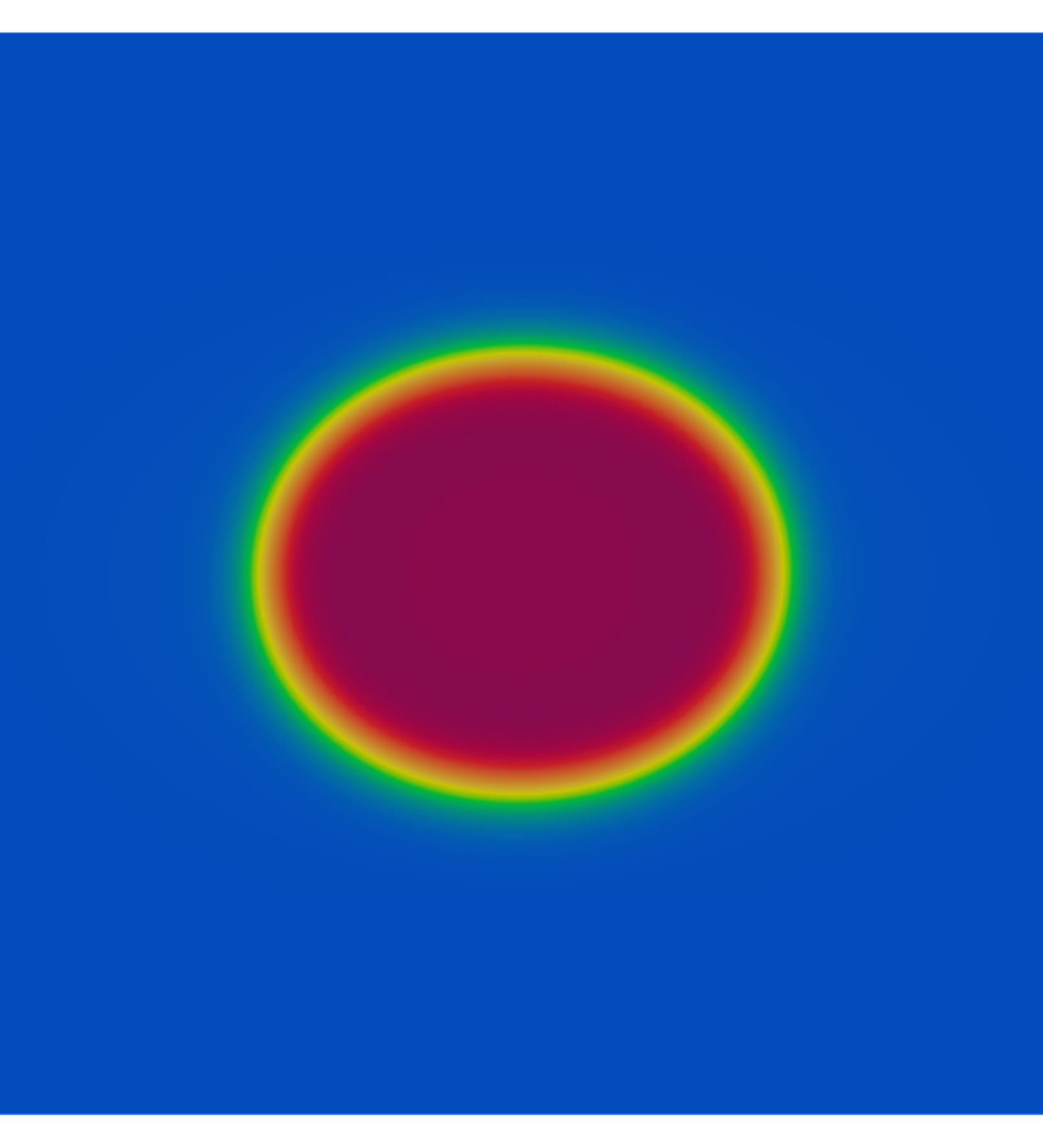}
\includegraphics[scale=0.1125]{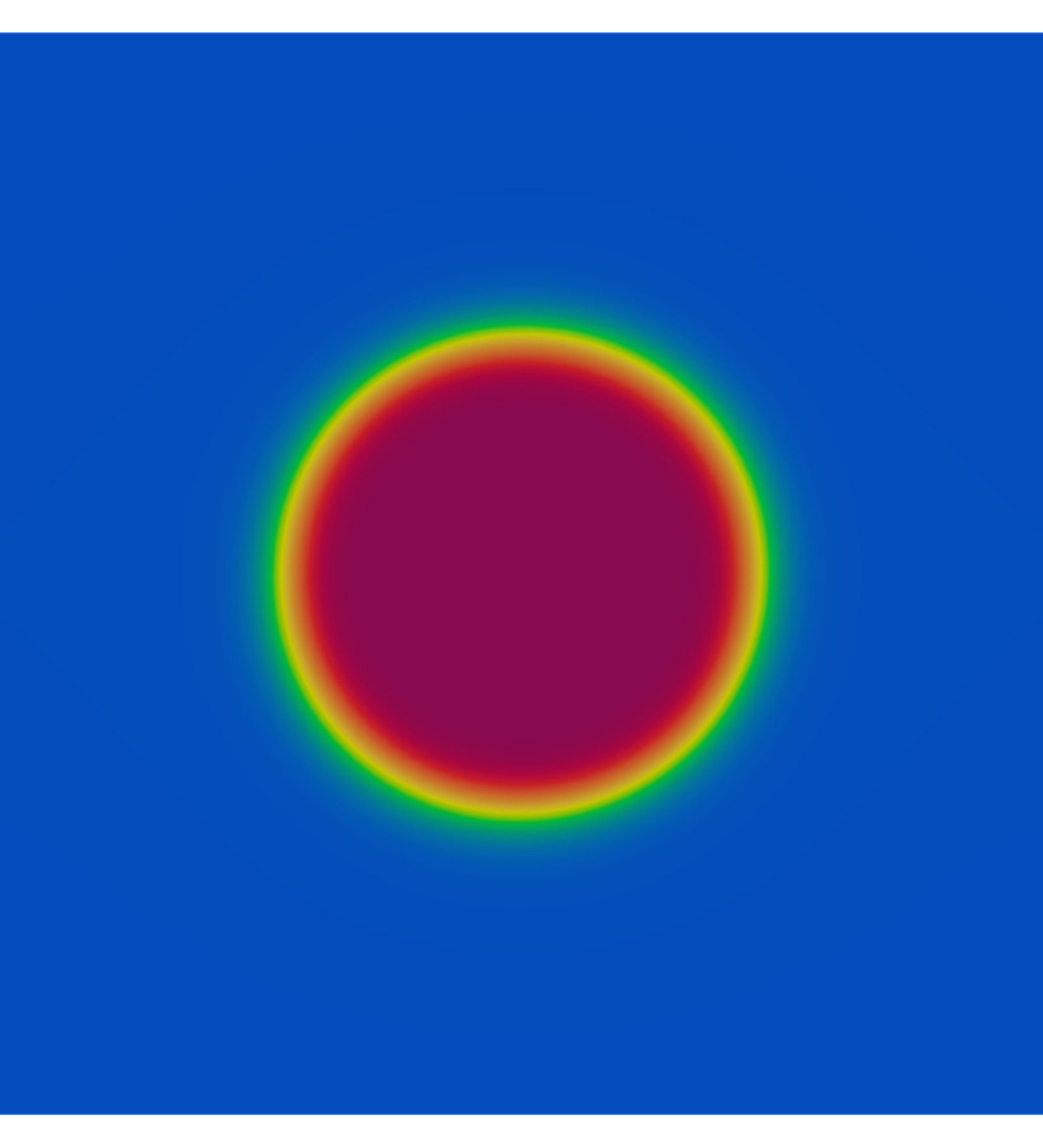}
\\
\includegraphics[scale=0.1125]{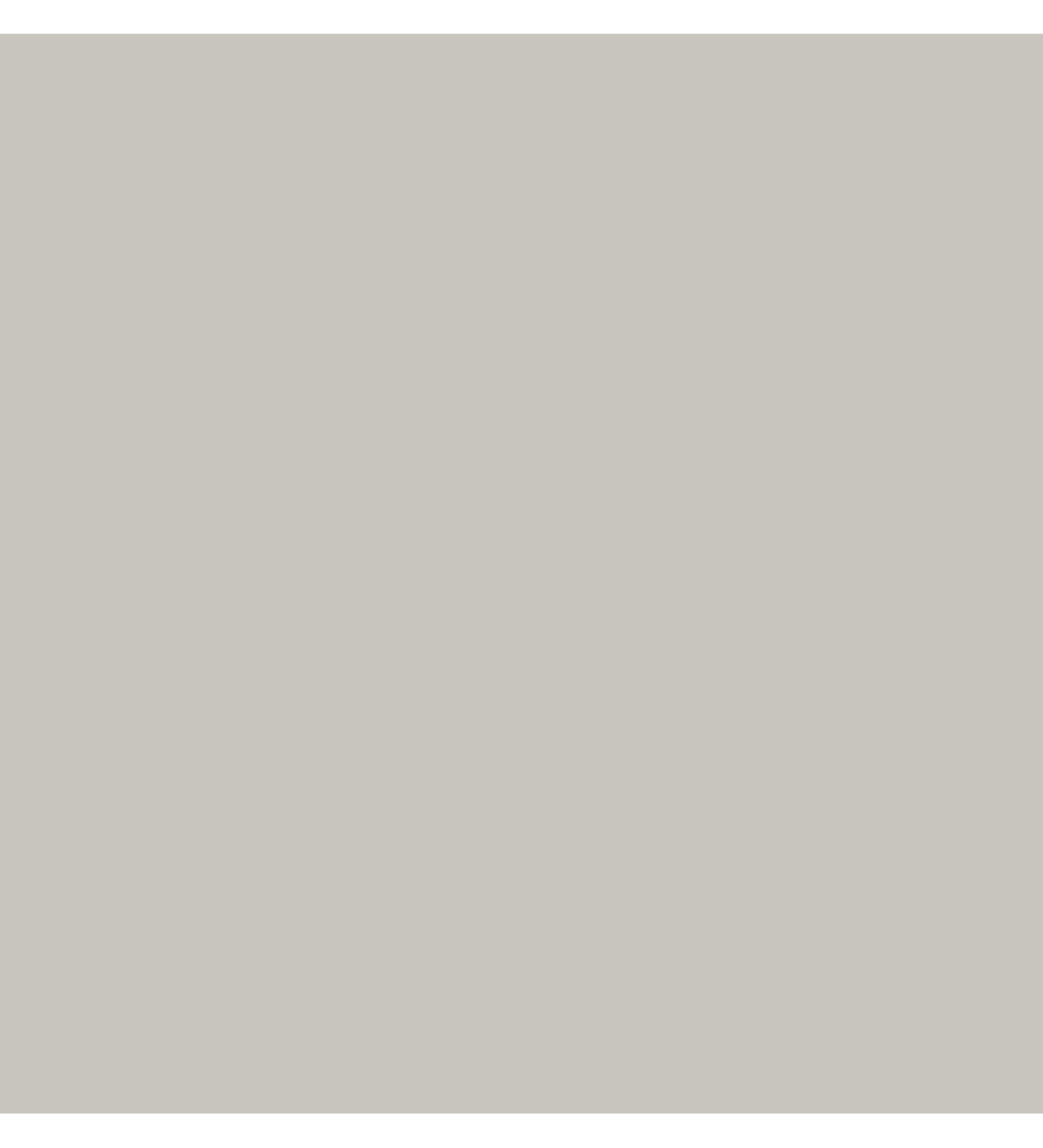}
\includegraphics[scale=0.1125]{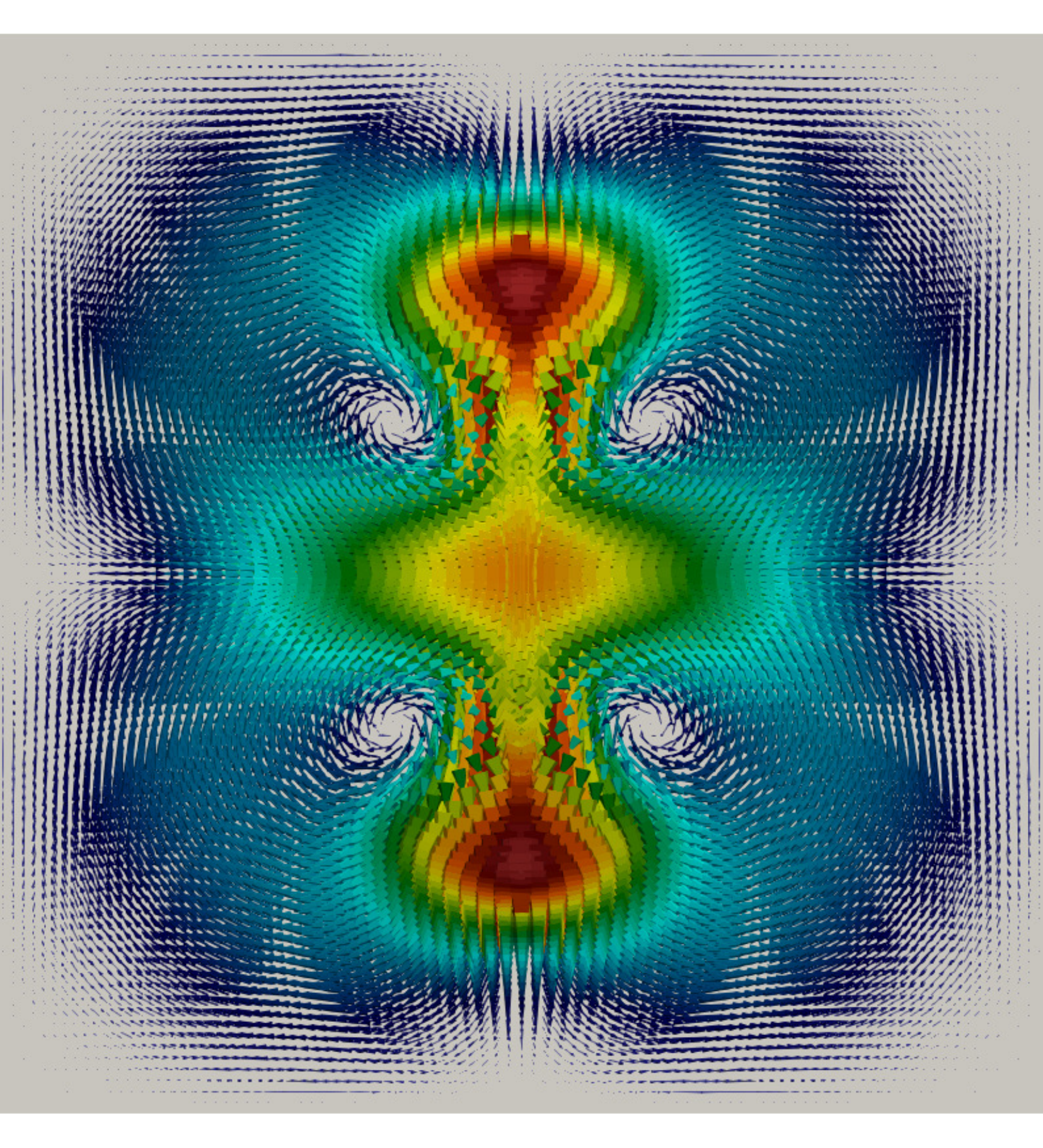}
\includegraphics[scale=0.1125]{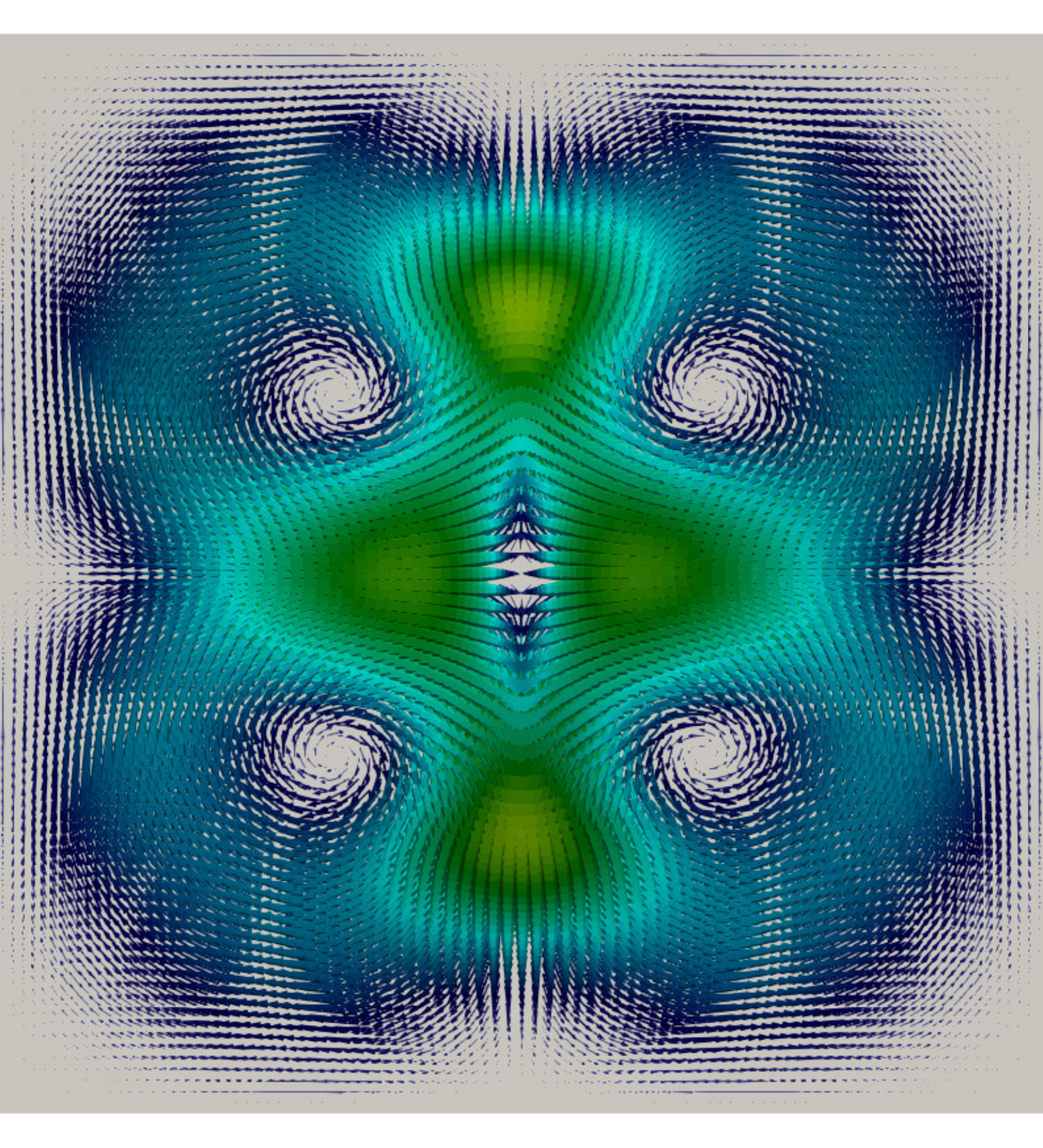}
\includegraphics[scale=0.1125]{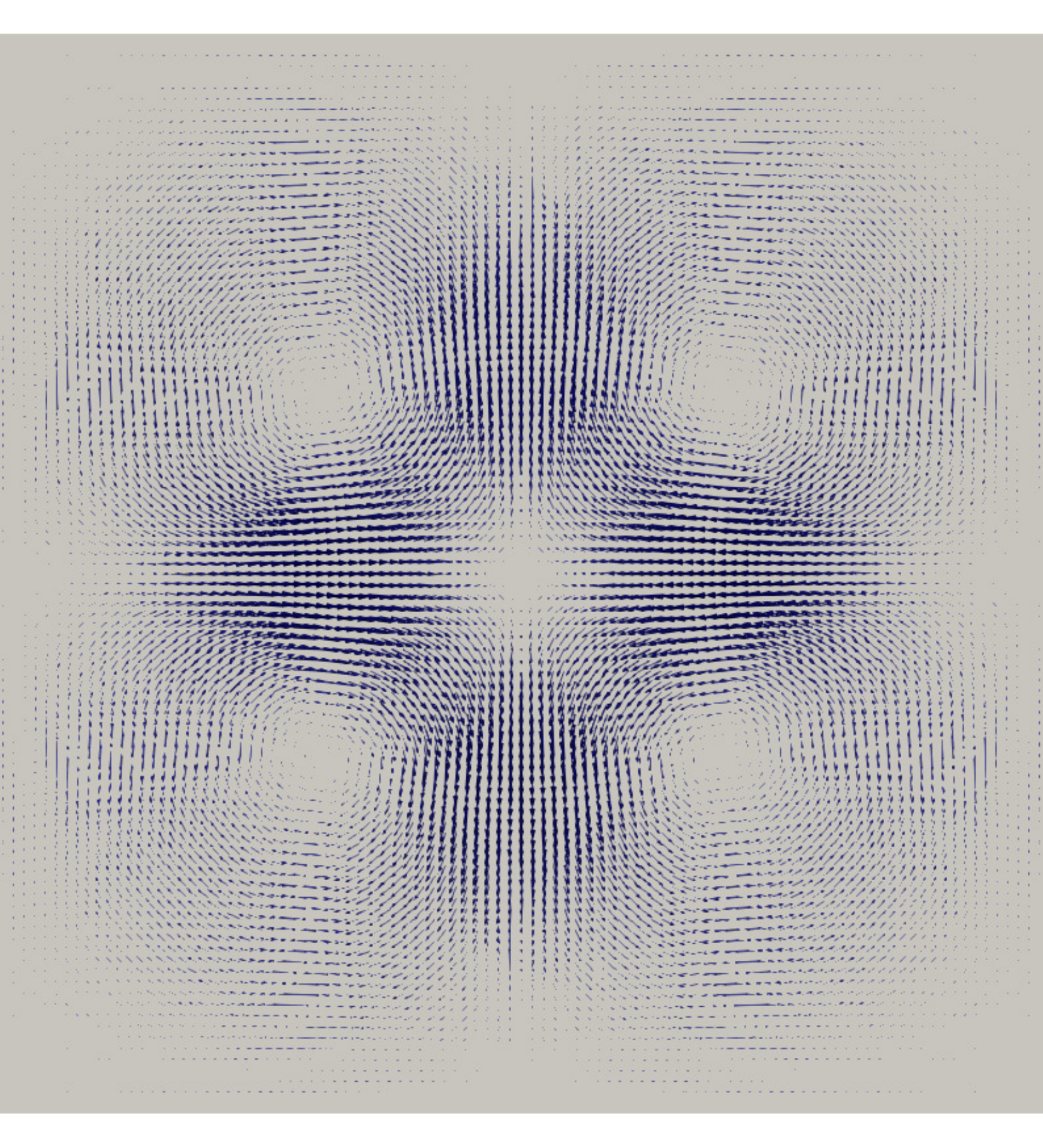}
\includegraphics[scale=0.1125]{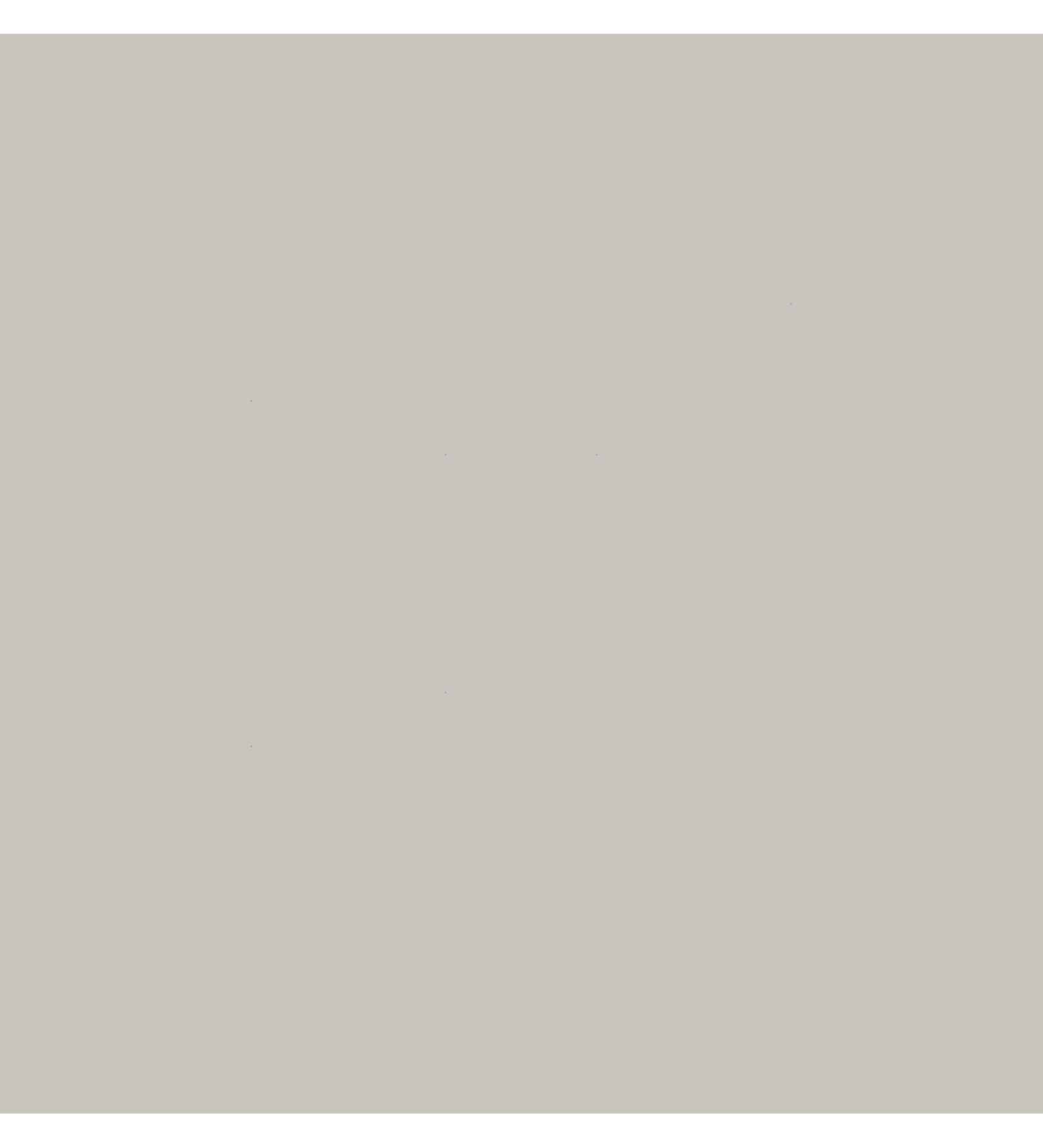}
\end{center}
\caption{Example II. Merging droplets. Evolution in time of $\phi$ and $\u$ for $J_\varepsilon$-scheme at times $t=0, 0.2, 0.5, 1.5$ and $5$.}\label{fig:Ex2_merging_dyn_J}
\end{figure}

\begin{figure}[H]
\begin{center}
\includegraphics[scale=0.1125]{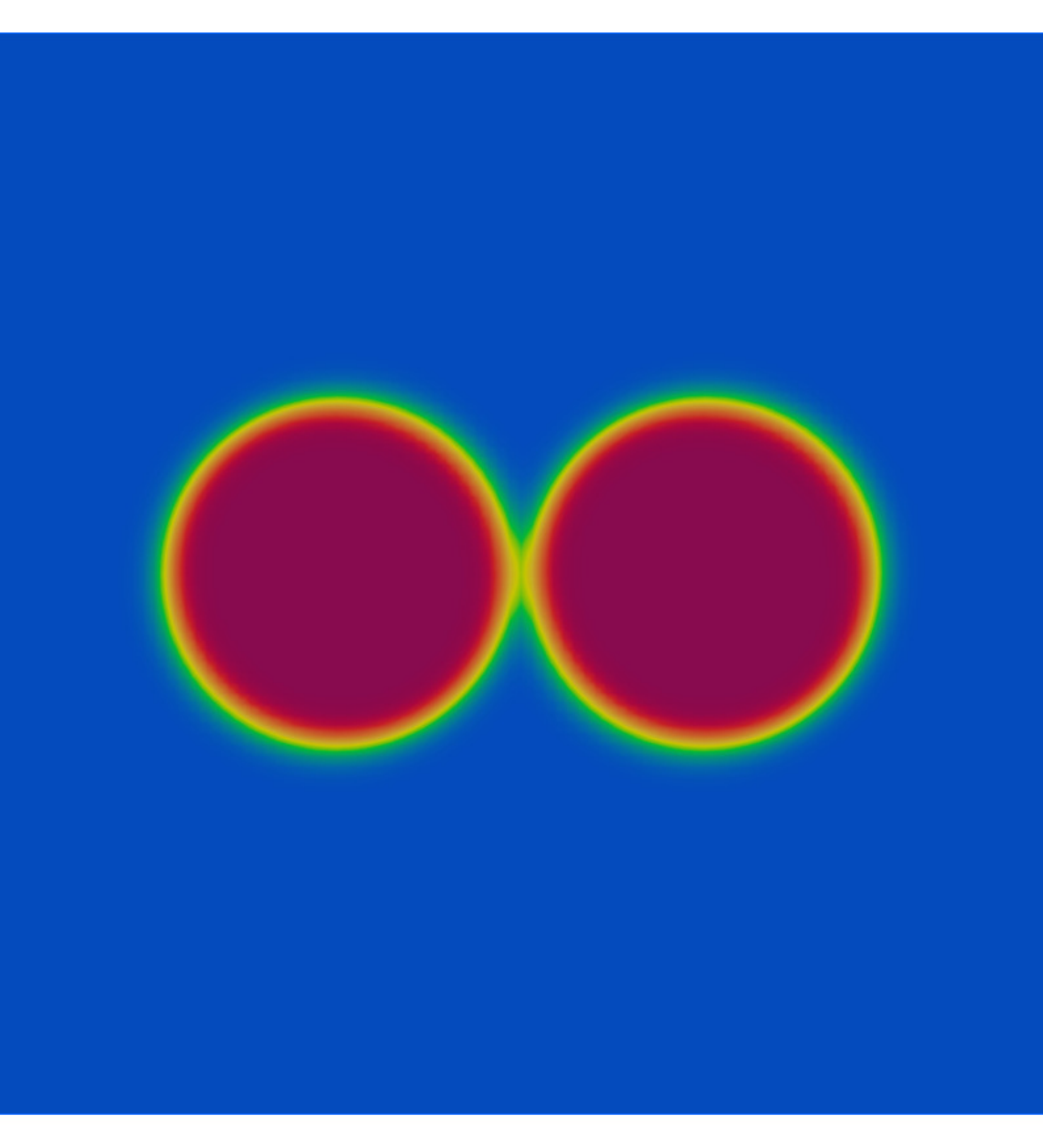}
\includegraphics[scale=0.1125]{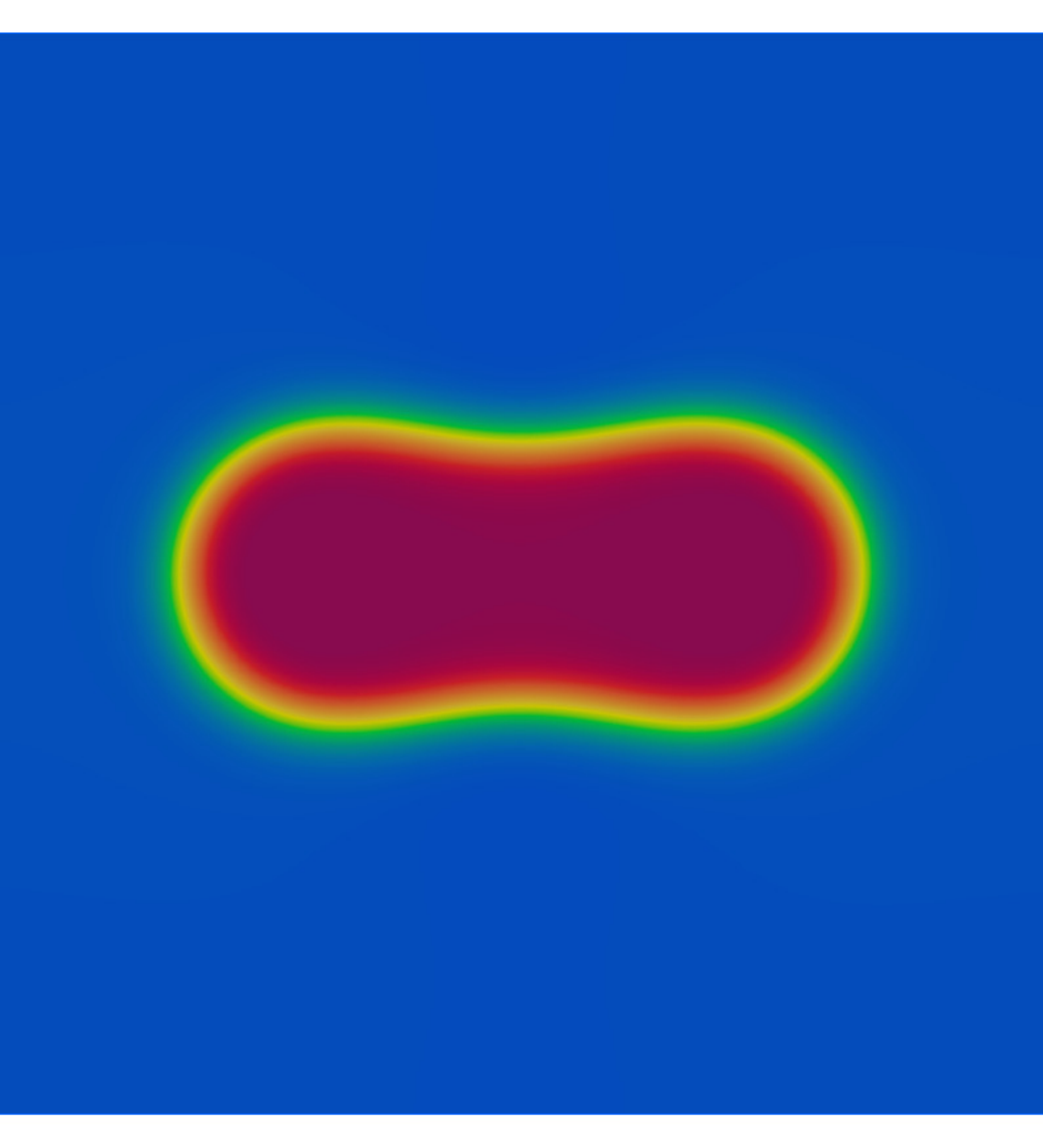}
\includegraphics[scale=0.1125]{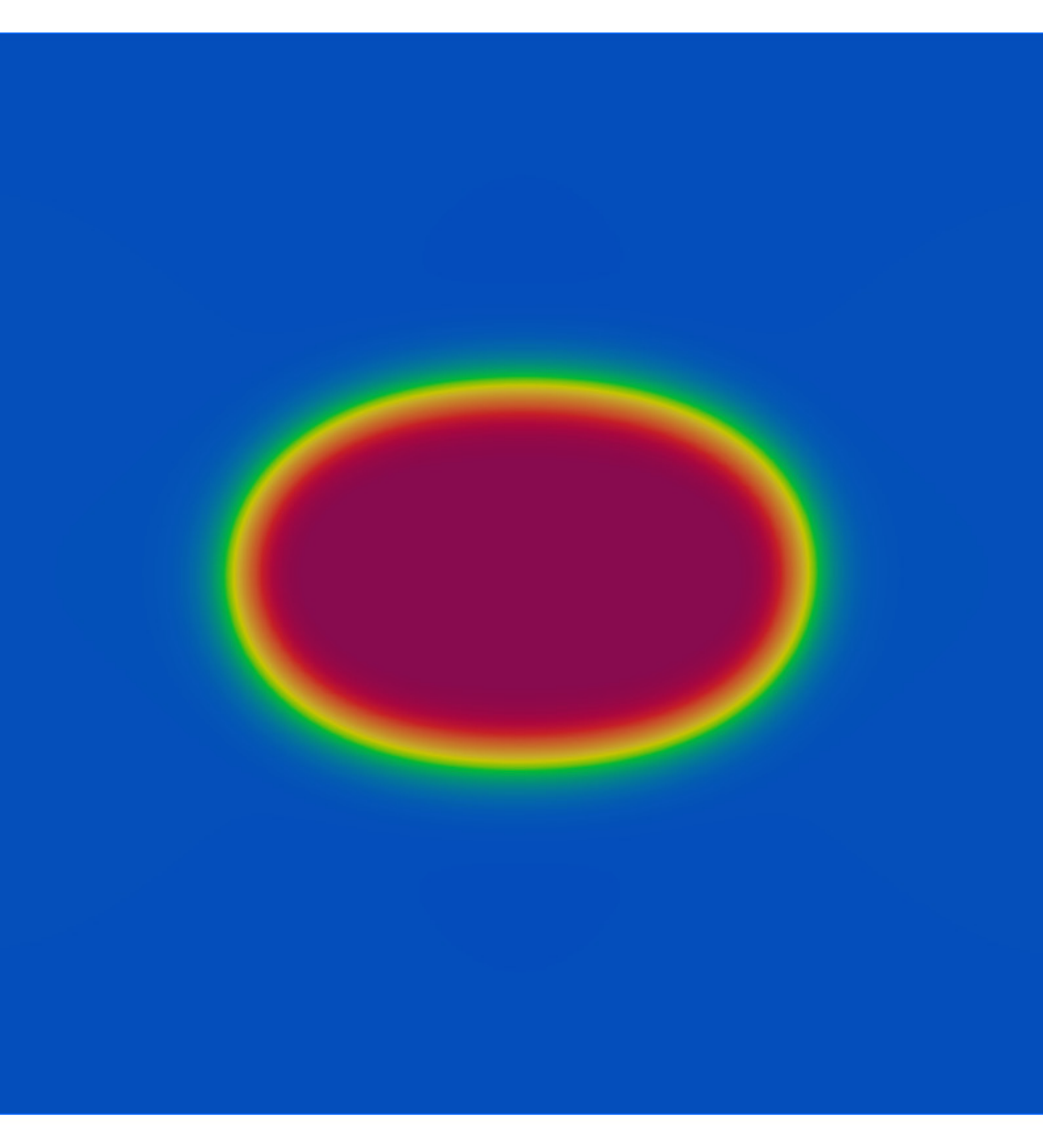}
\includegraphics[scale=0.1125]{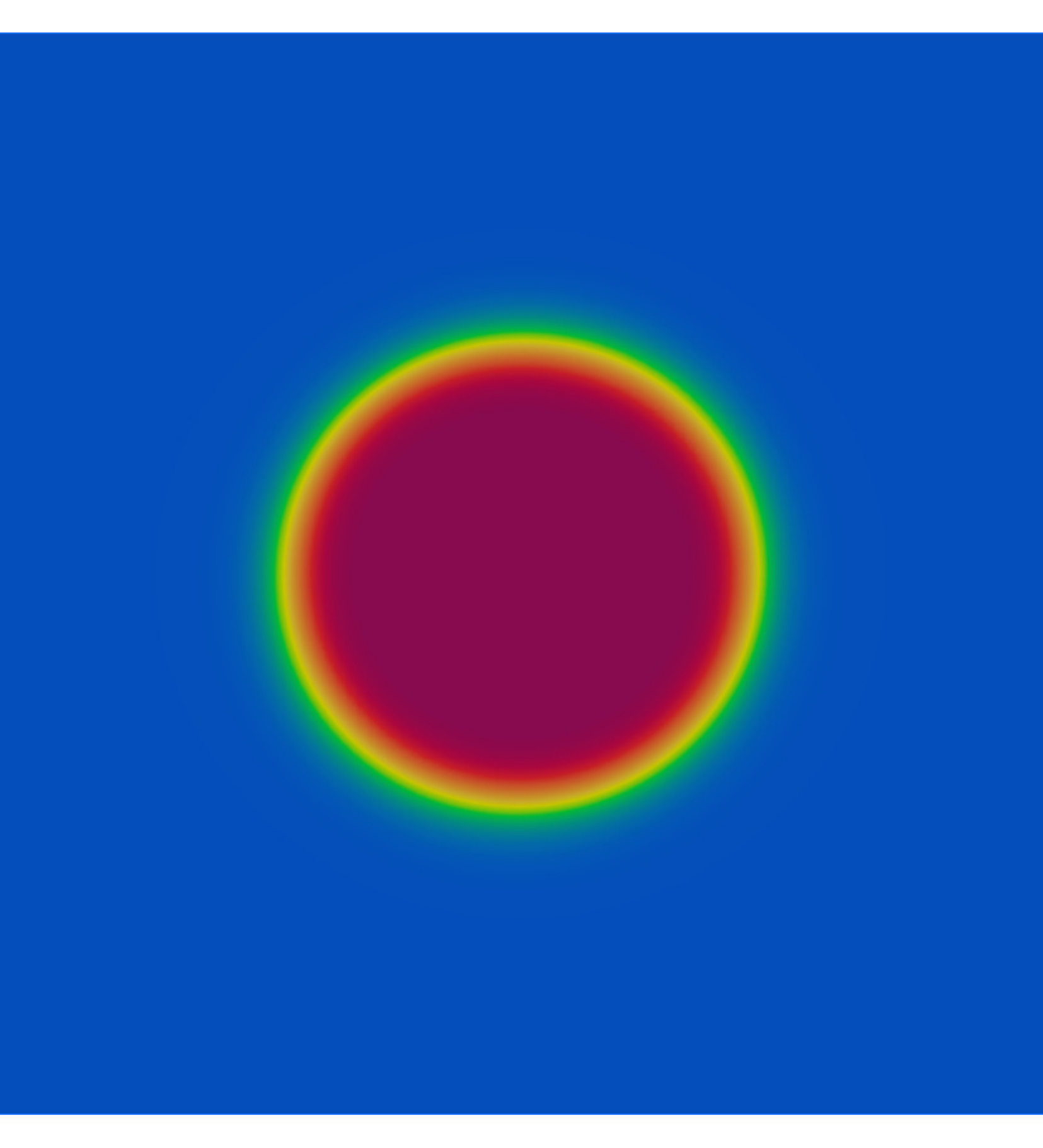}
\includegraphics[scale=0.1125]{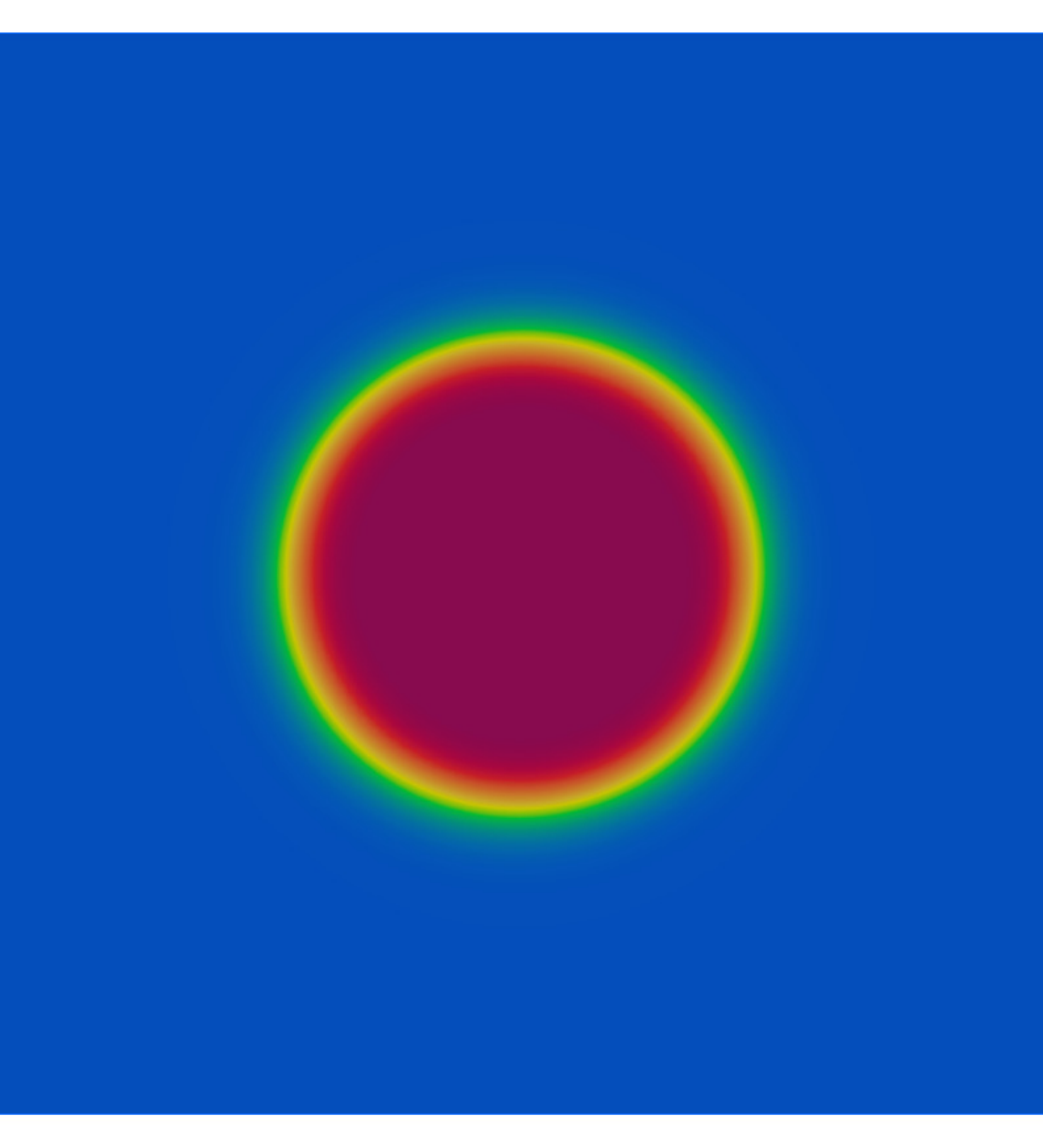}
\\
\includegraphics[scale=0.1125]{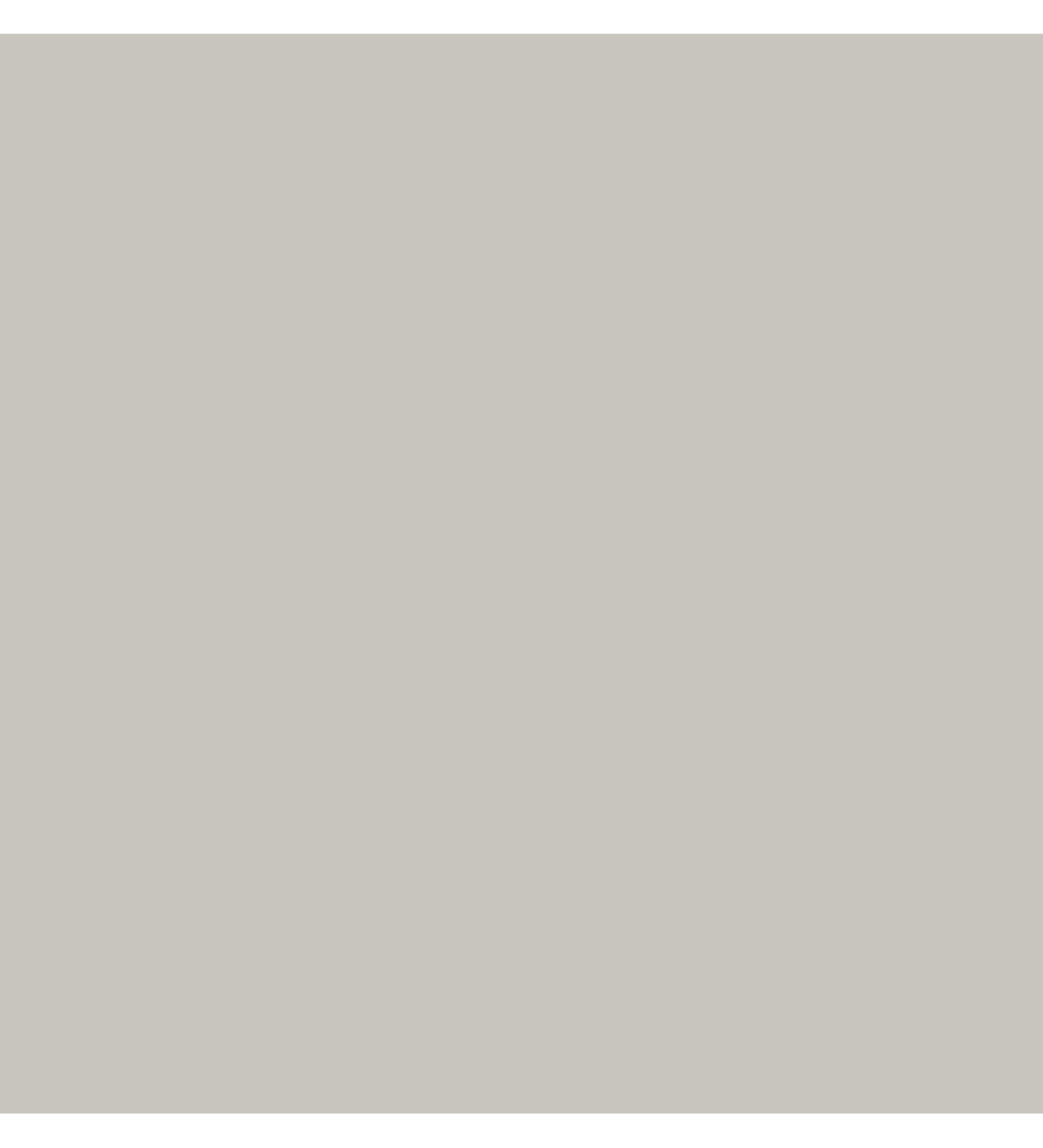}
\includegraphics[scale=0.1125]{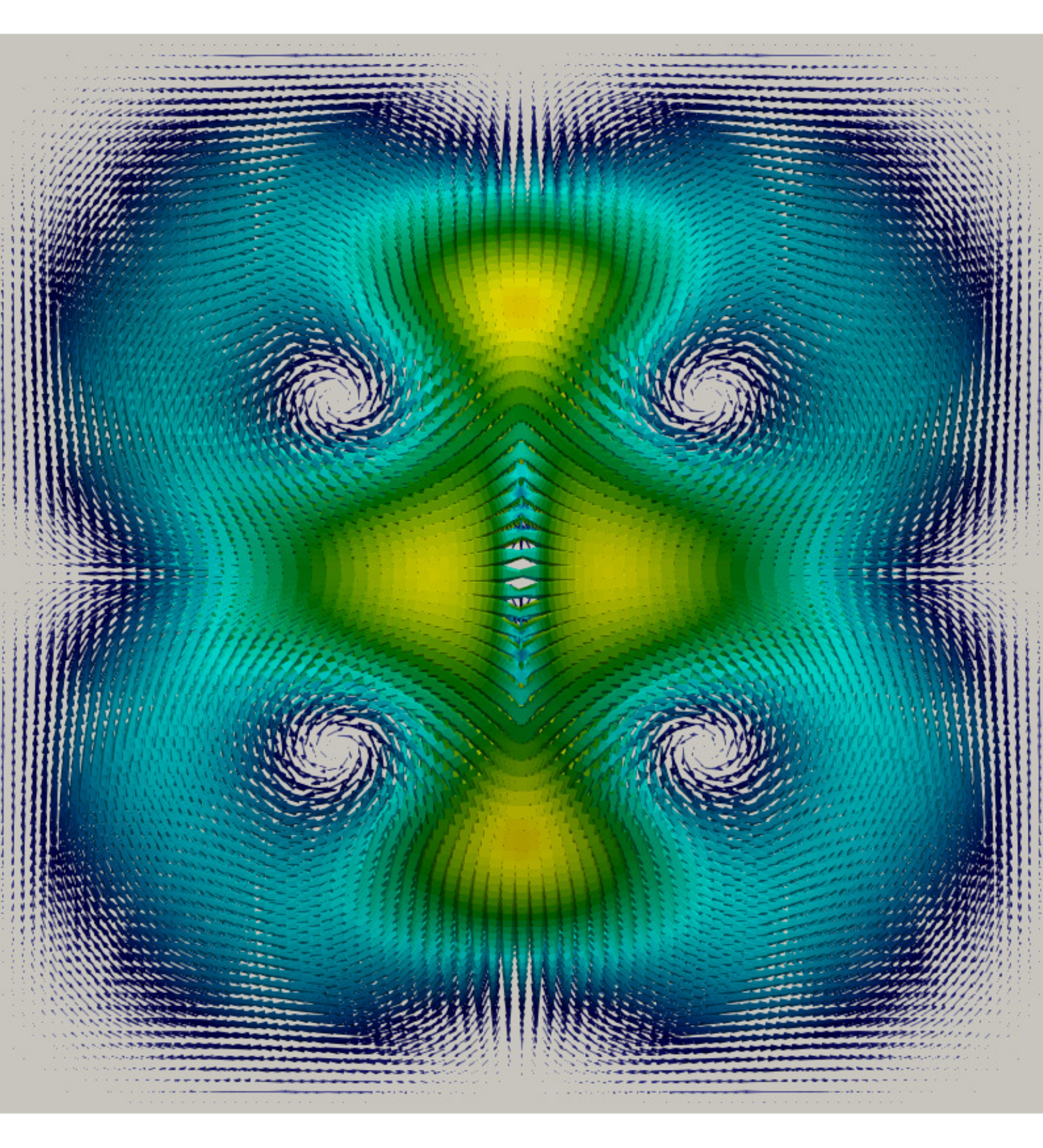}
\includegraphics[scale=0.1125]{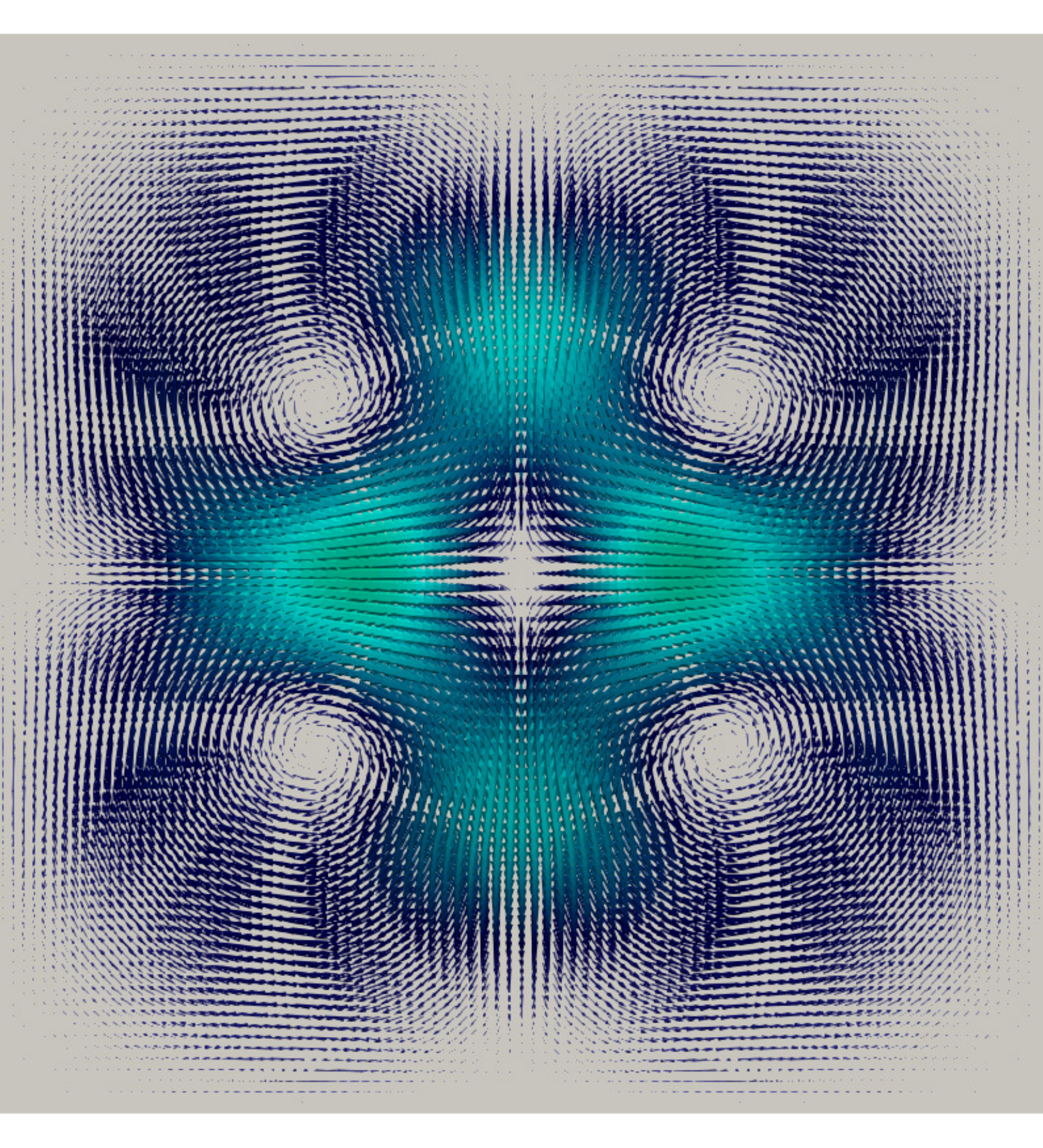}
\includegraphics[scale=0.1125]{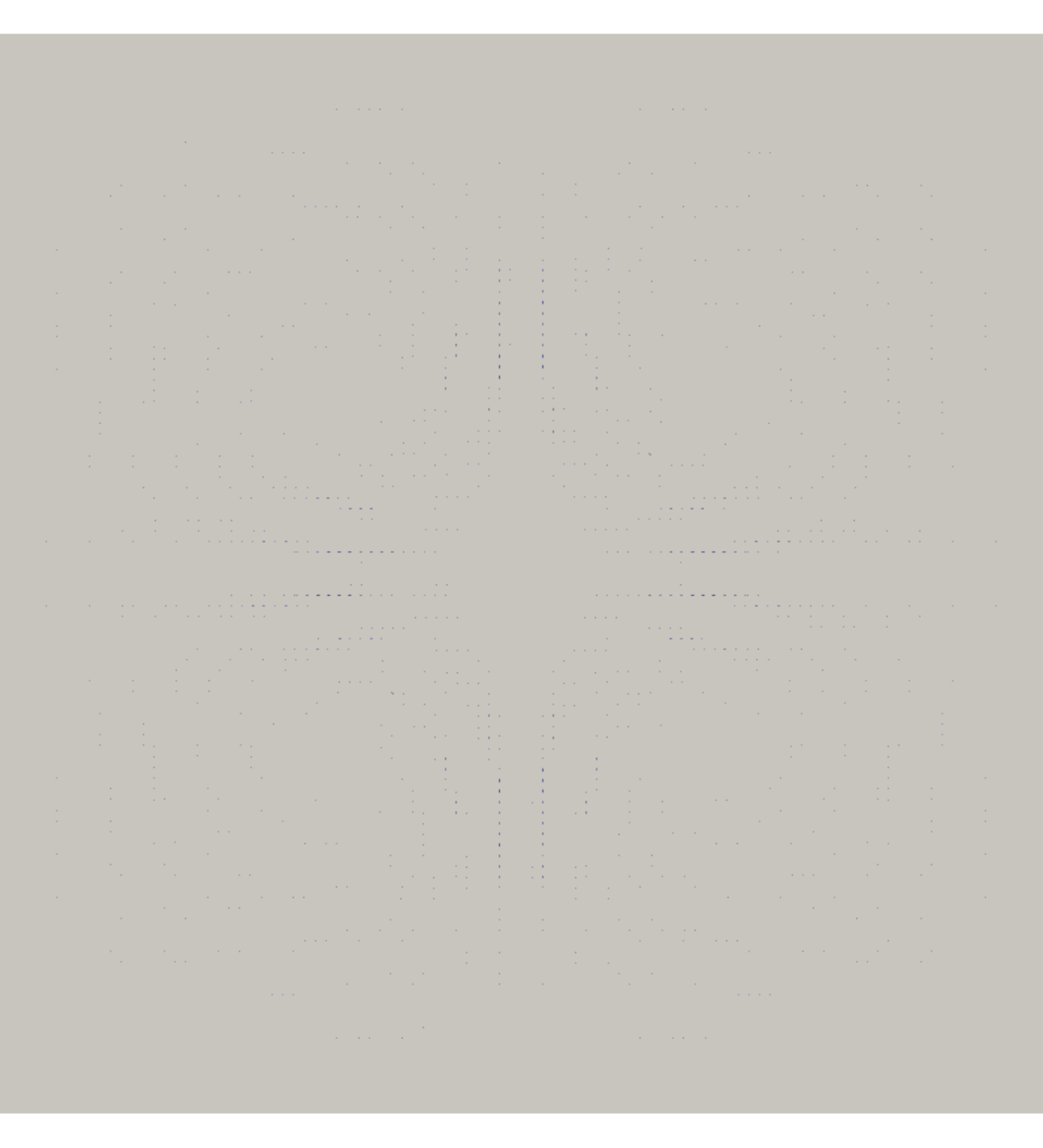}
\includegraphics[scale=0.1125]{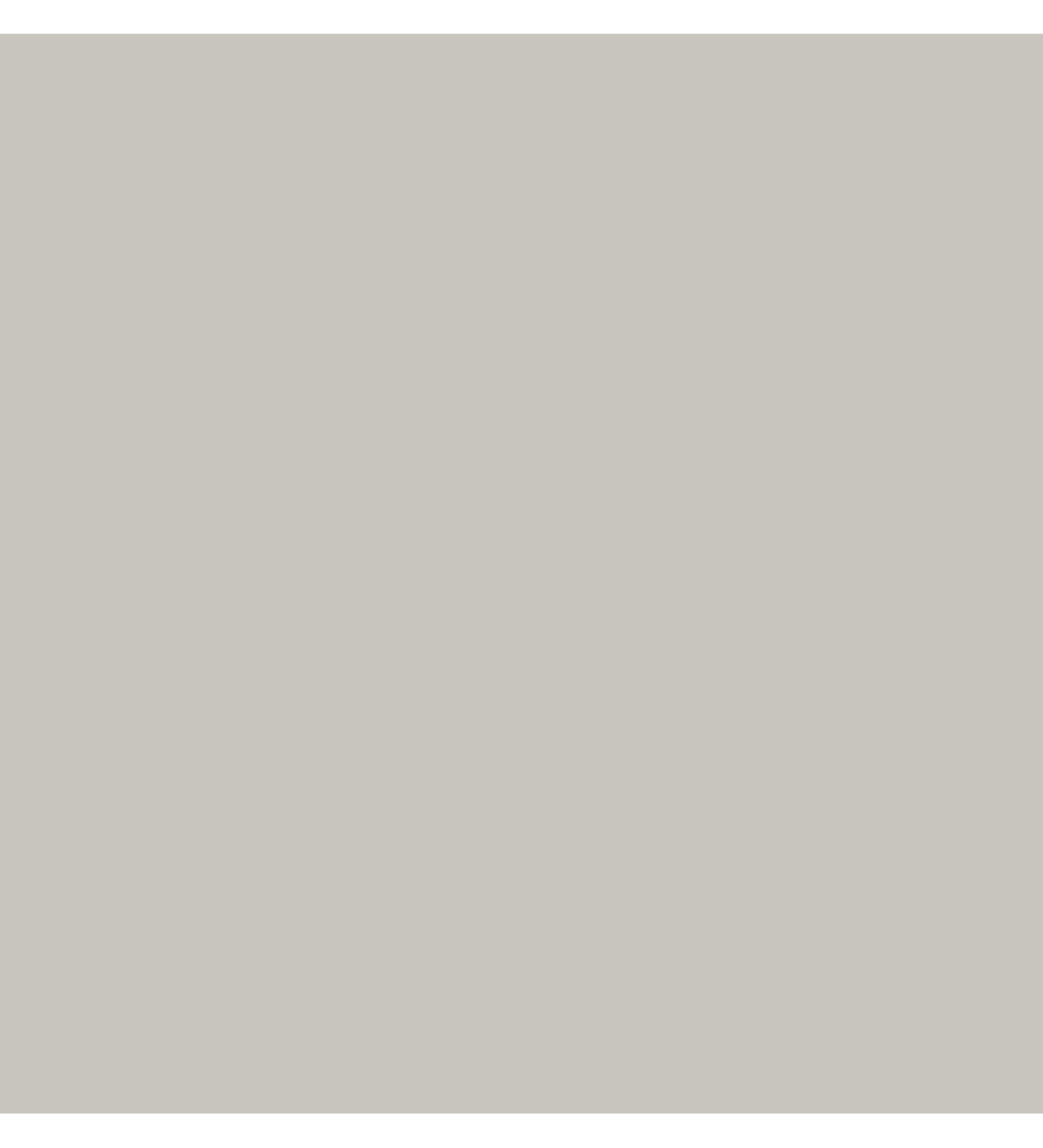}
\end{center}
\caption{Example II. Merging droplets. Evolution in time of $\phi$ and $\u$ for CM-scheme at times $t=0, 0.2, 0.5, 1.5$ and $5$.}\label{fig:Ex2_merging_dyn_CM}
\end{figure}

In Figure~\ref{fig:Ex2_Merging_energies} we present the evolution in time of the different energies and the evolution of the volume.  As expected, all the schemes achieve the dissipation of the free energy (as predicted) as well as the conservation of volume. Moreover in Figure~\ref{fig:Ex2_Merging_bounds} we present the evolution in time of the maximum and minimum of $\phi$ in the domain $\Omega$ and the evolution of $\int_\Omega(\phi_-)^2 d\x$ and $\int_\Omega((\phi-1)_+)^2 d\x$. In fact, we know that these two terms are bounded in terms of $\varepsilon$ due to Remark~\ref{rem:boundG} ($G_\varepsilon$-scheme) and Remark~\ref{rem:boundJ} ($J_\varepsilon$-scheme). As expected, CM-scheme does not enforce the variable $\phi$ to remain in the interval $[0,1]$ while the other two schemes are very close to achieving it. This is a consequence of the evolution $\int_\Omega(\phi_-)^2 d\x$ and $\int_\Omega((\phi-1)_+)^2 d\x$ which are clearly bounded by $\varepsilon$ and as expected the obtained bound for $G_\varepsilon$-scheme is slightly better than the one for $J_\varepsilon$-scheme (as expected from Remarks~\ref{rem:boundG} and \ref{rem:boundJ}).

\begin{figure}[H]
\begin{center}
\includegraphics[scale=0.1125]{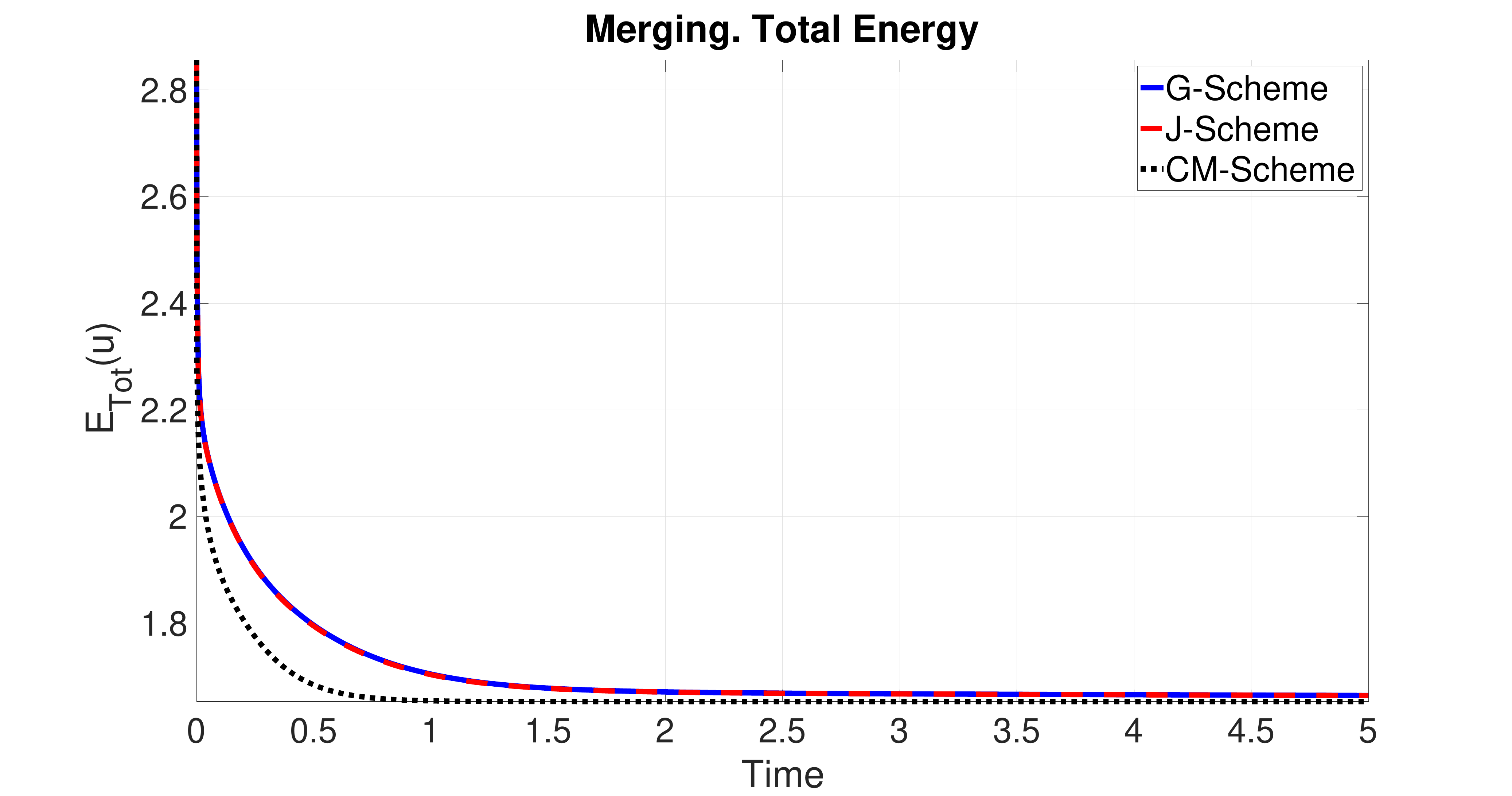}
\includegraphics[scale=0.1125]{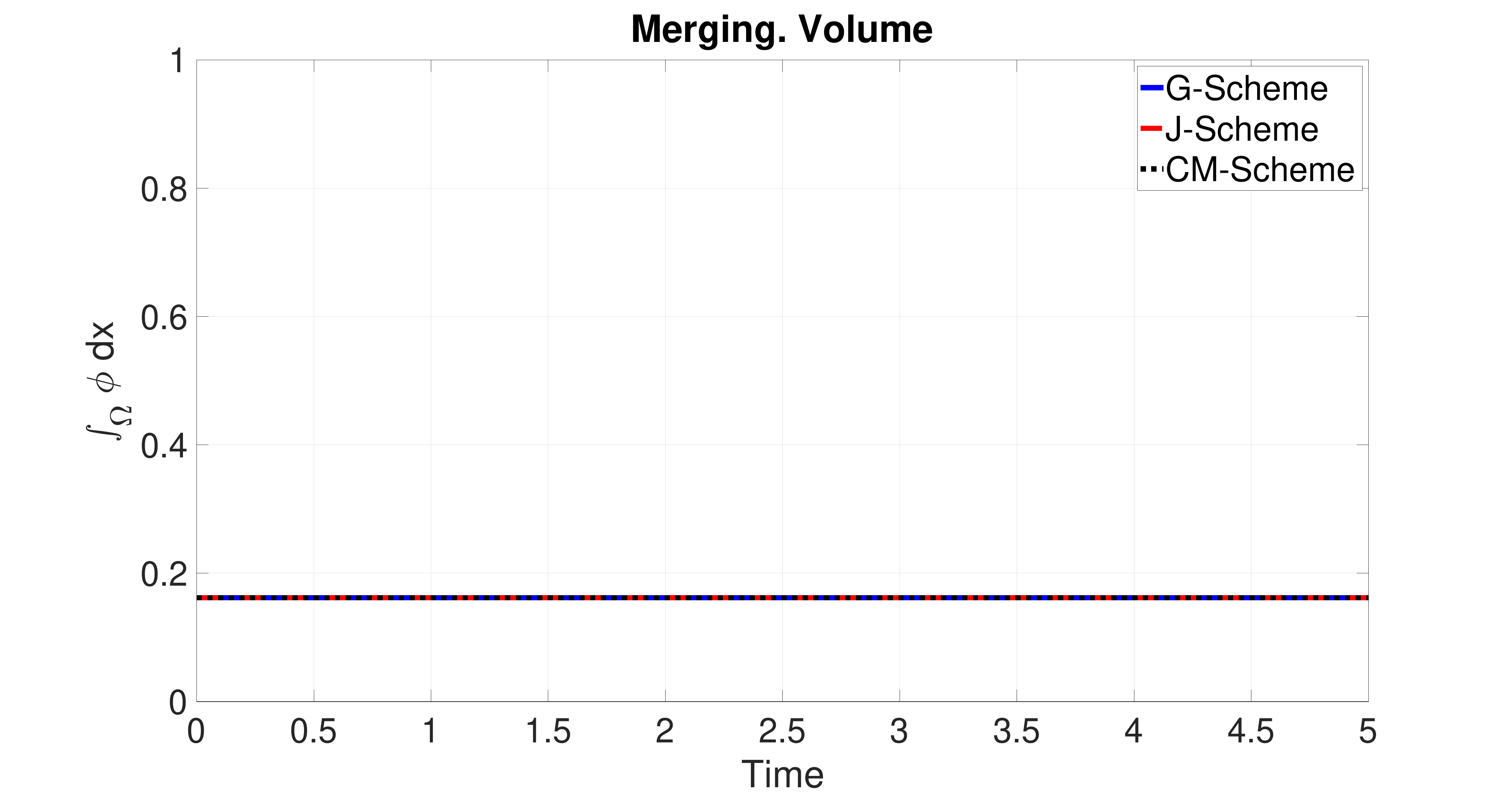}
\\
\includegraphics[scale=0.1125]{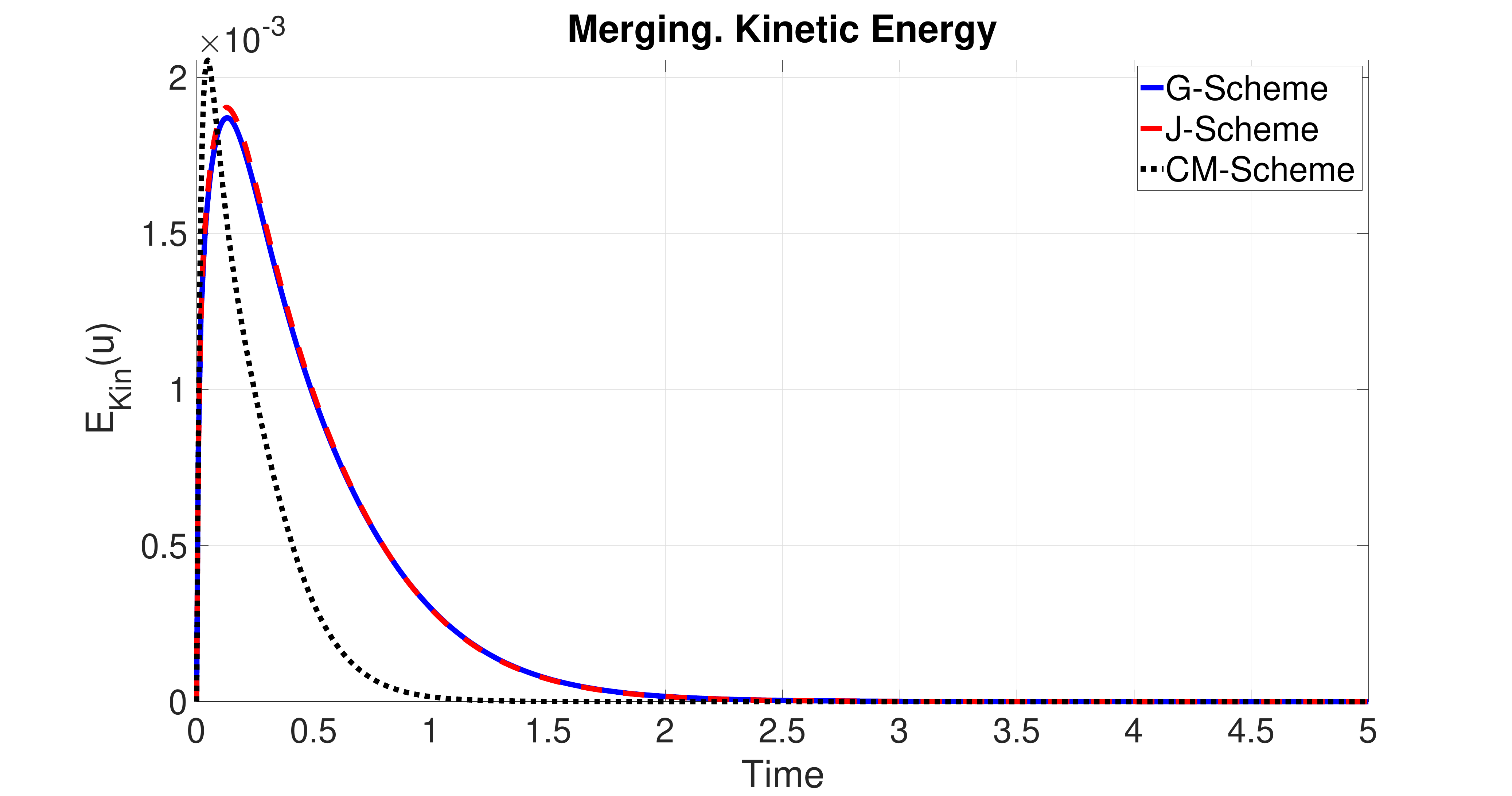}
\includegraphics[scale=0.1125]{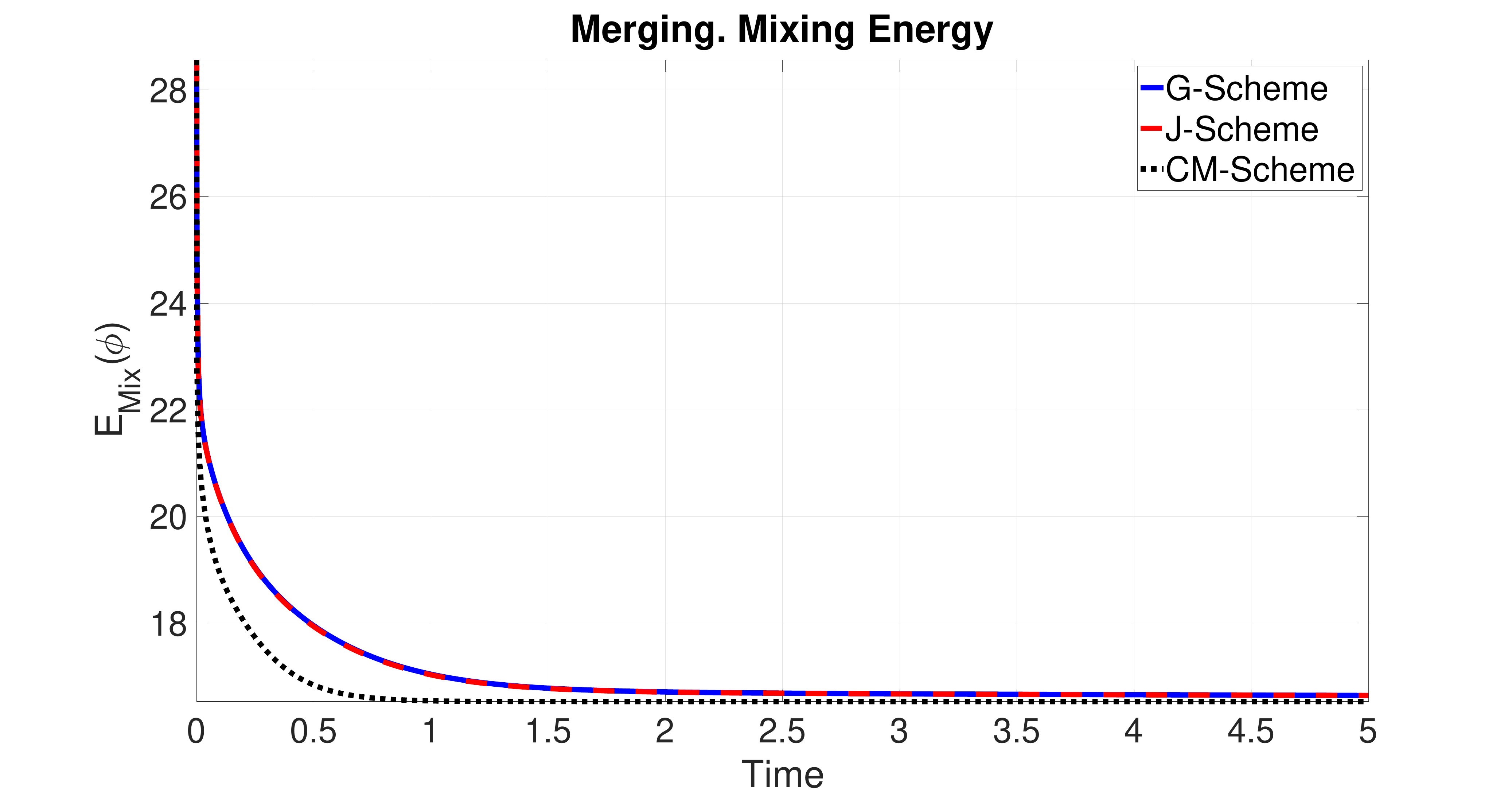}
\end{center}
\caption{Example II. Merging droplets. Evolution of the energies and the volume of the system}\label{fig:Ex2_Merging_energies}
\end{figure}

\begin{figure}[H]
\begin{center}
\includegraphics[scale=0.1125]{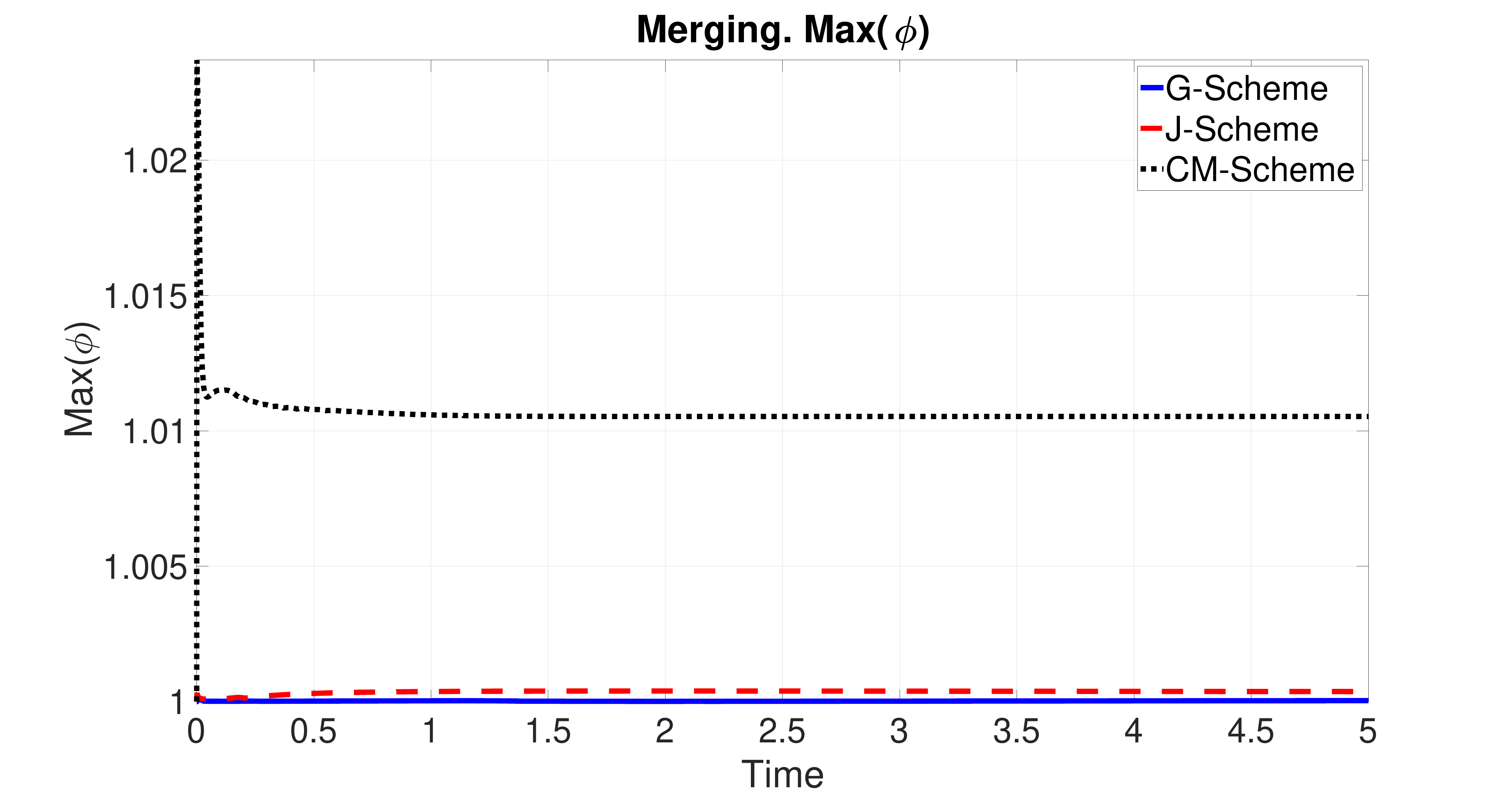}
\includegraphics[scale=0.1125]{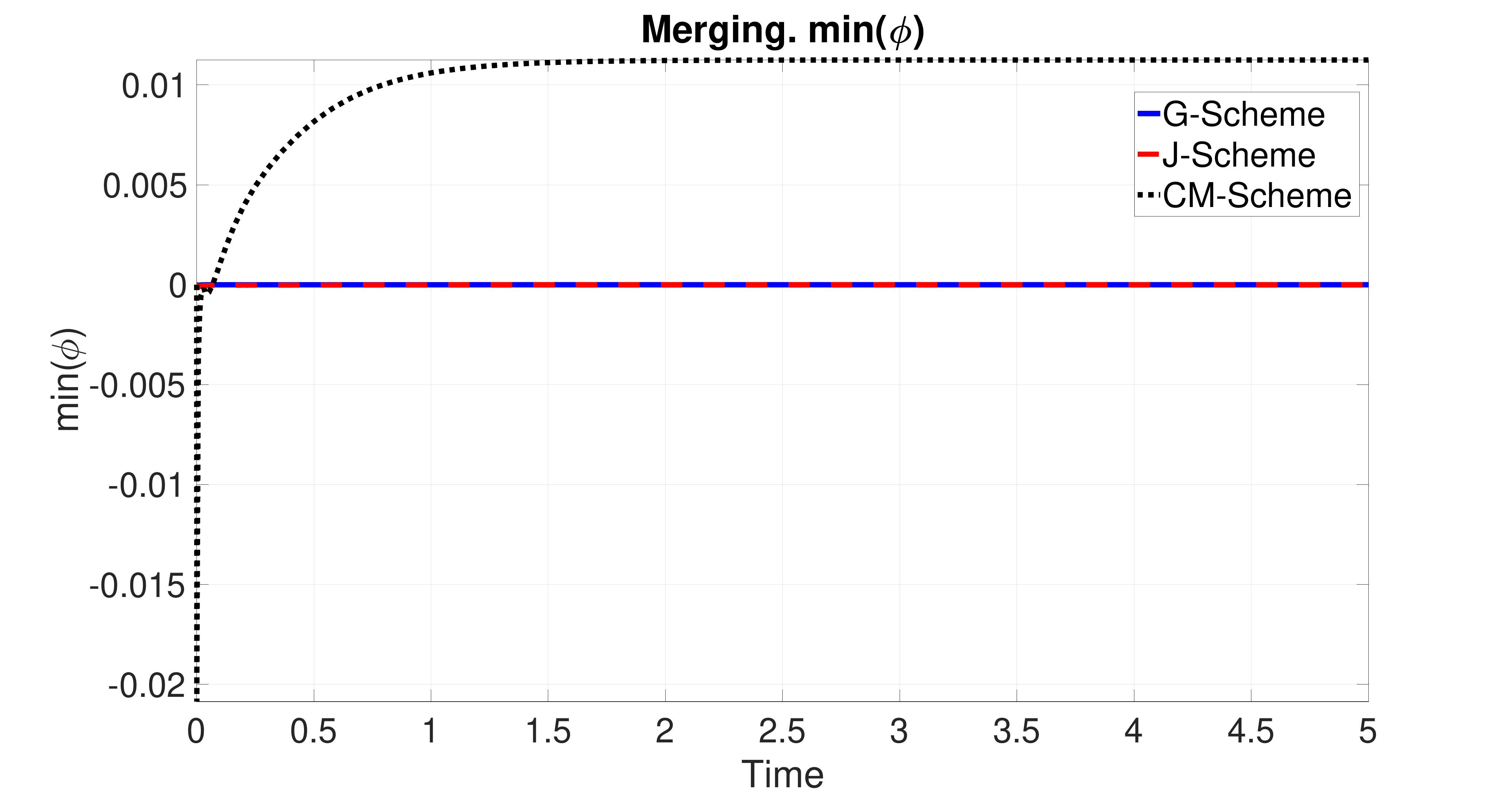}
\\
\includegraphics[scale=0.1125]{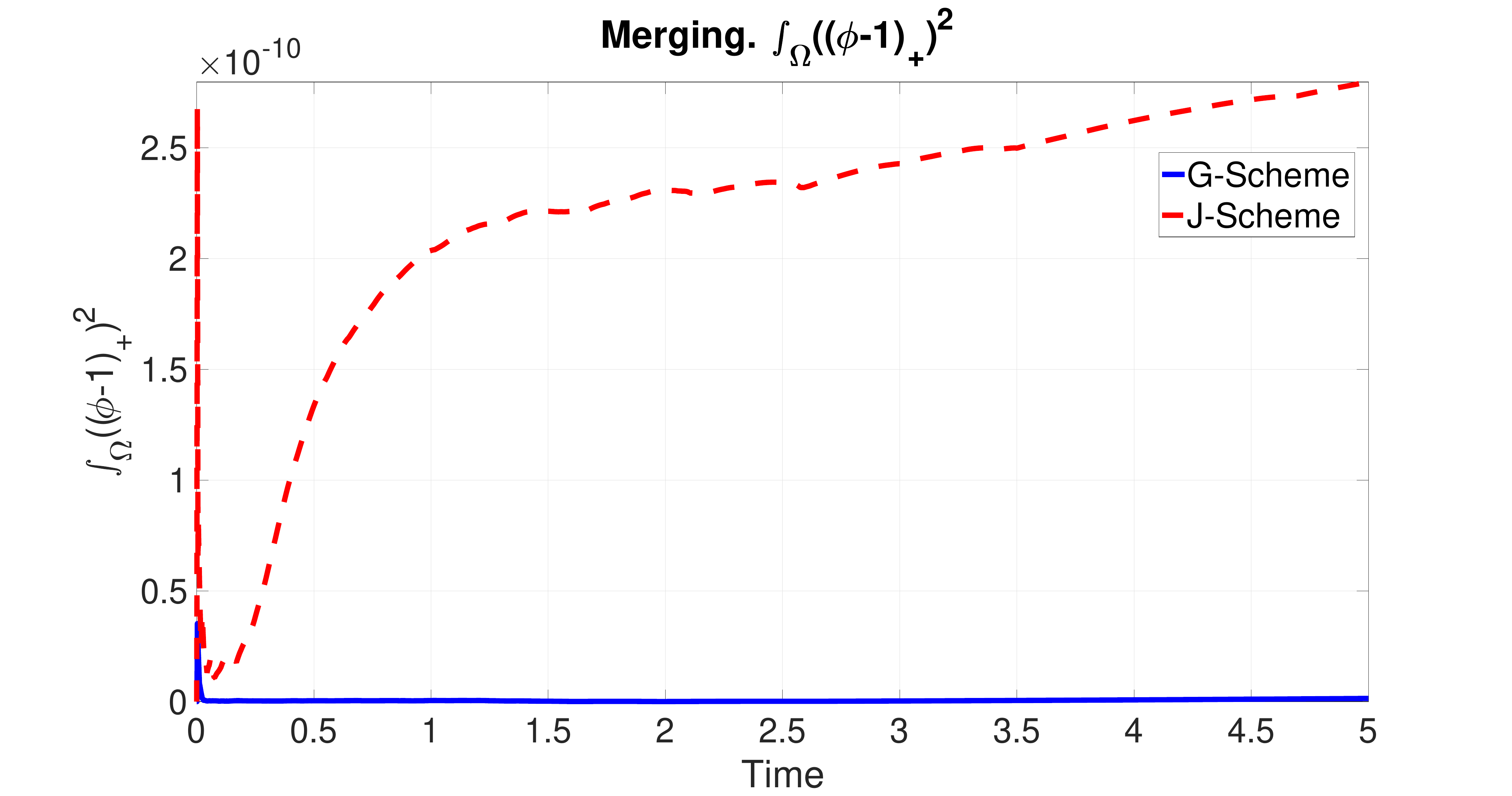}
\includegraphics[scale=0.1125]{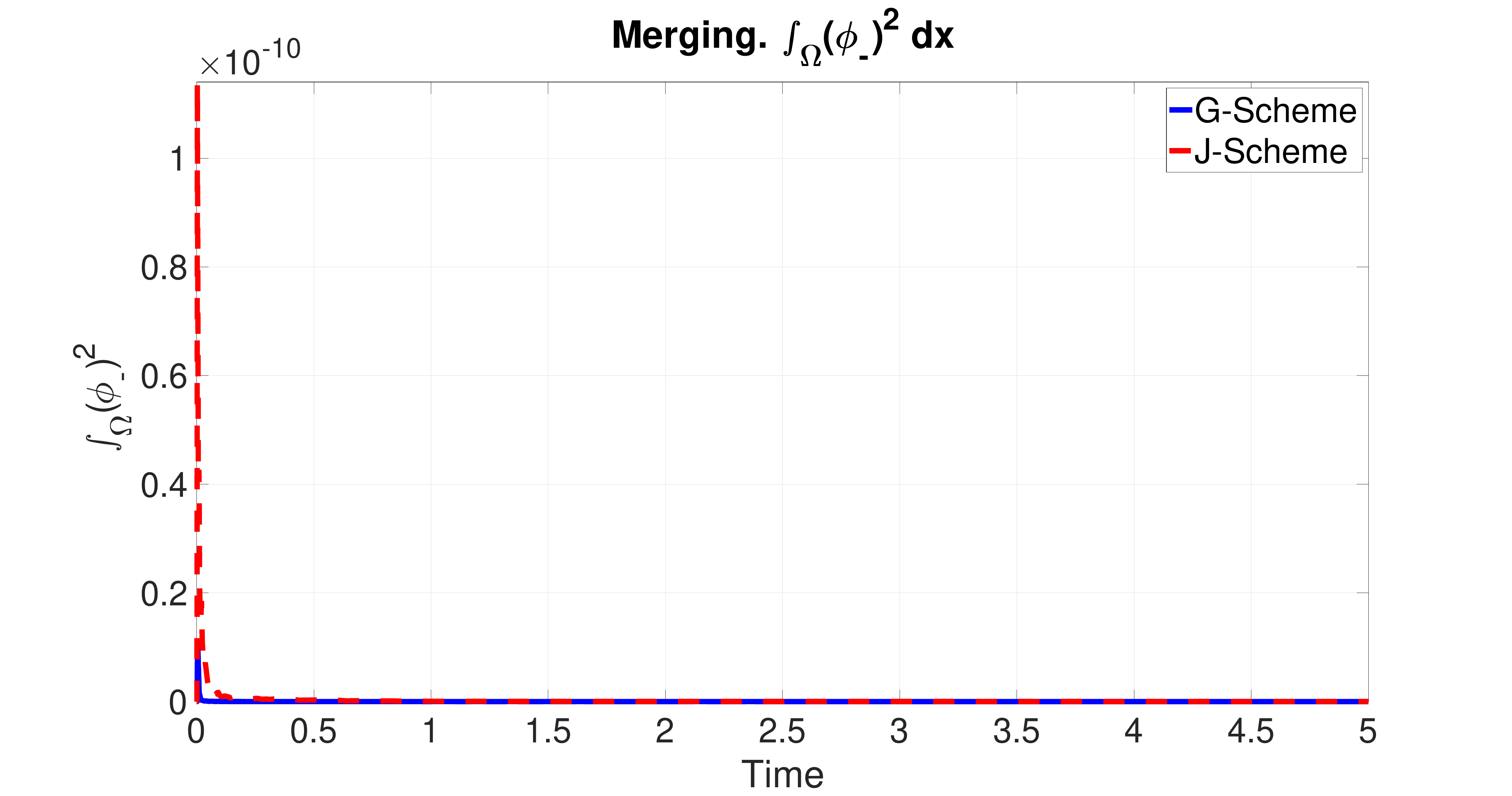}
\end{center}
\caption{Example II. Merging droplets.  Evolution of the bounds of $\phi$.}\label{fig:Ex2_Merging_bounds}
\end{figure}

\subsubsection{Coarsening effect}

In this case we have designed an initial condition resembling two droplets of a fluid immersed in another fluid, whose boundaries are far away from each other. The dynamics of the system will depend on the choice of the mobility term, producing the coarsening effect when the mobility is constant (see Figure~\ref{fig:ExIICoarCM}), i.e. the two droplets will combine to arrive to an equilibrium configuration with a single larger droplet. The coarsening effect will not occur when the mobility is non-constant (or at least it does not occur in the time interval that we have simulated), as it can be observed in Figures \ref{fig:ExIICoarG} and \ref{fig:ExIICoarJ} ($G_\varepsilon$-scheme and $J_\varepsilon$-scheme, respectively).

\begin{figure}[H]
\begin{center}
\includegraphics[scale=0.1125]{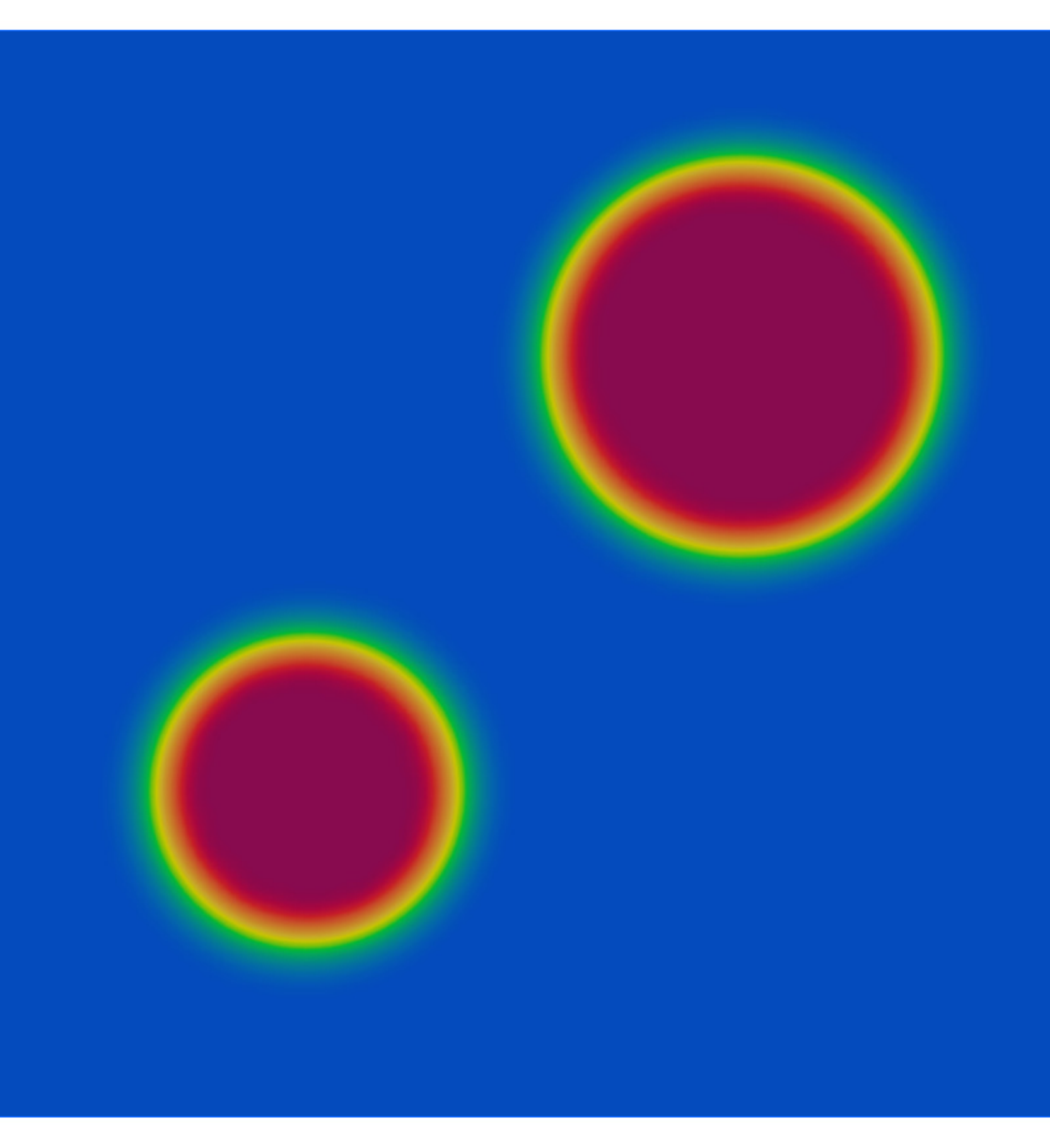}
\includegraphics[scale=0.1125]{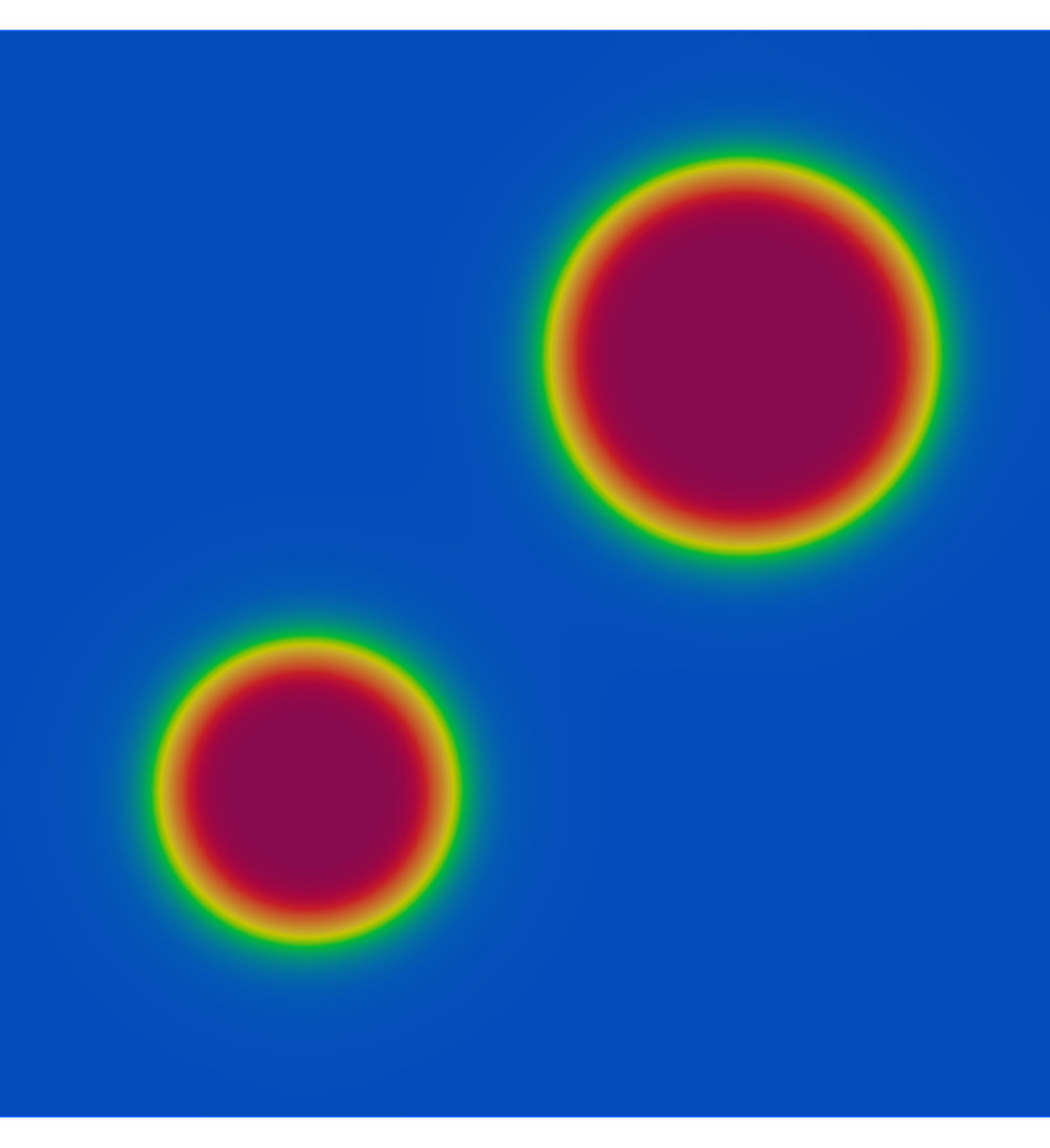}
\includegraphics[scale=0.1125]{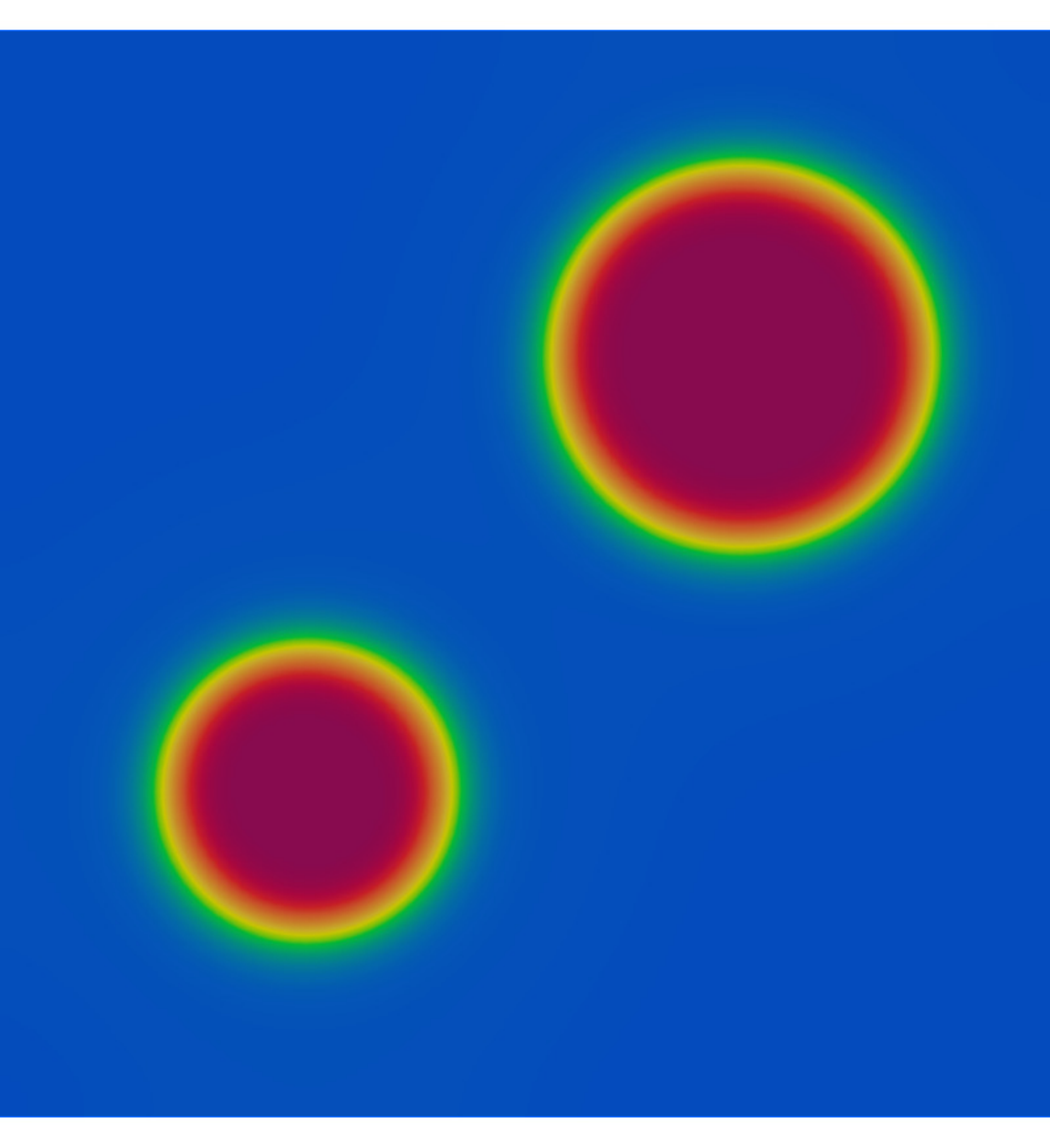}
\includegraphics[scale=0.1125]{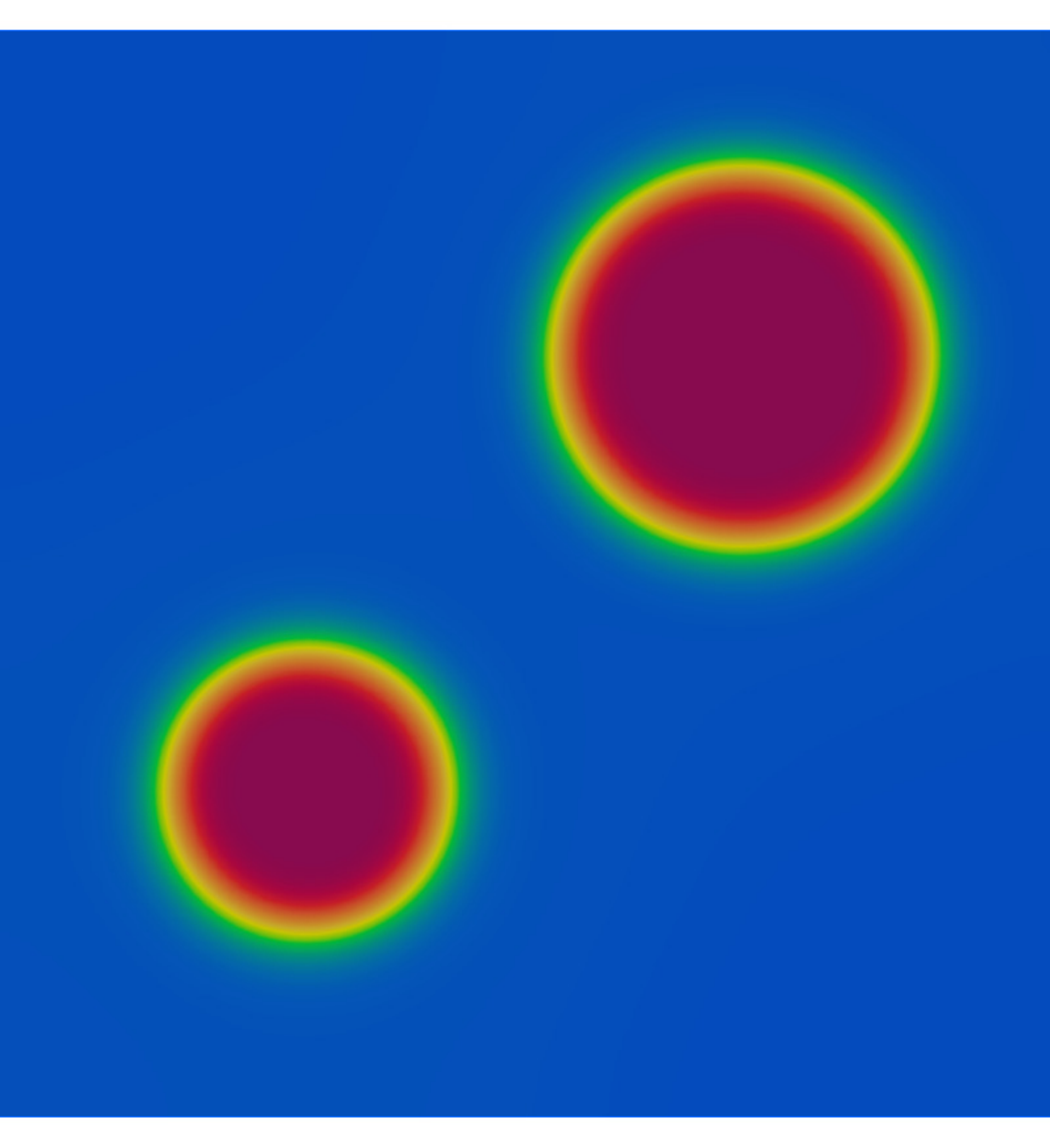}
\includegraphics[scale=0.1125]{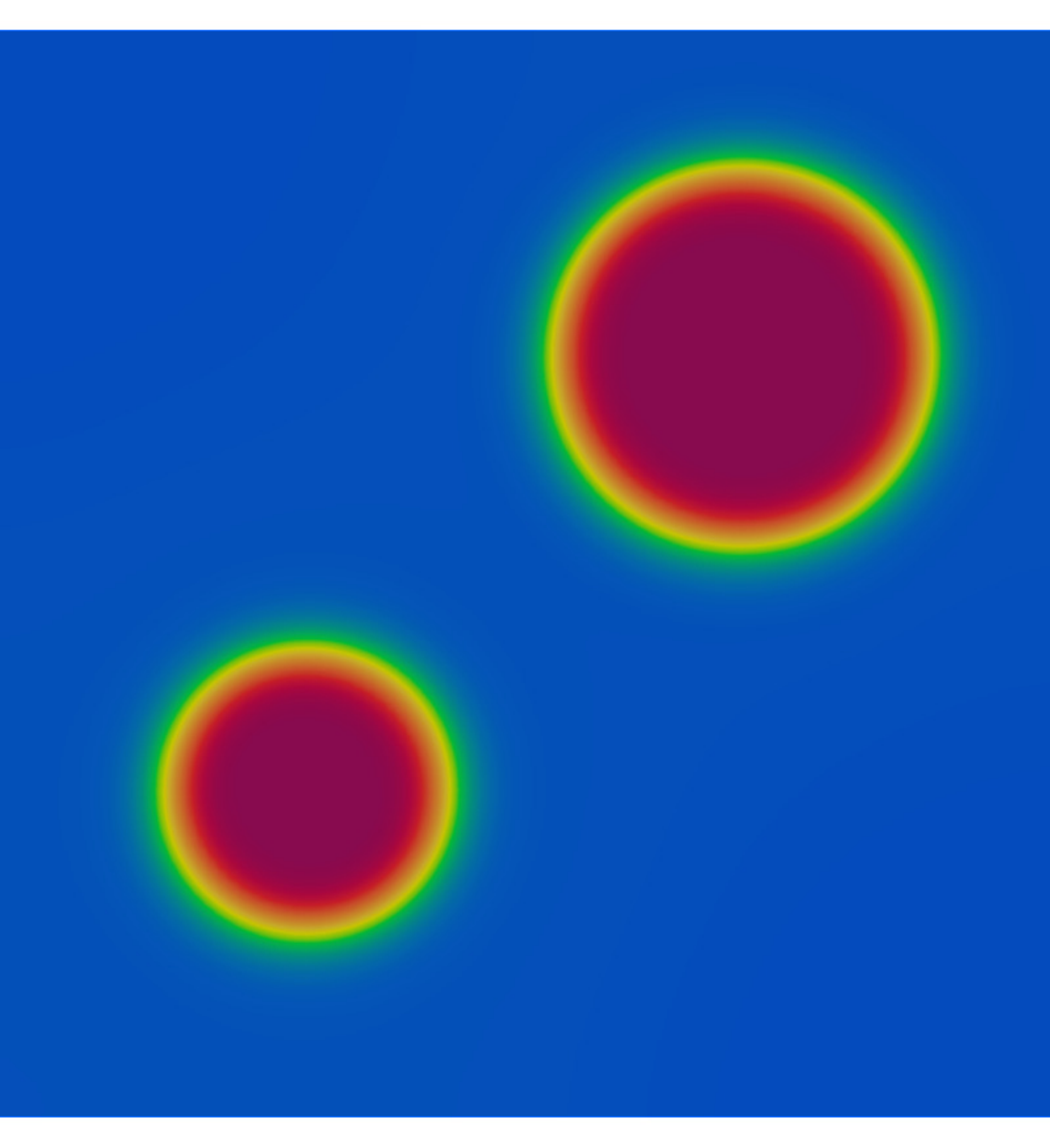}
\\
\includegraphics[scale=0.1125]{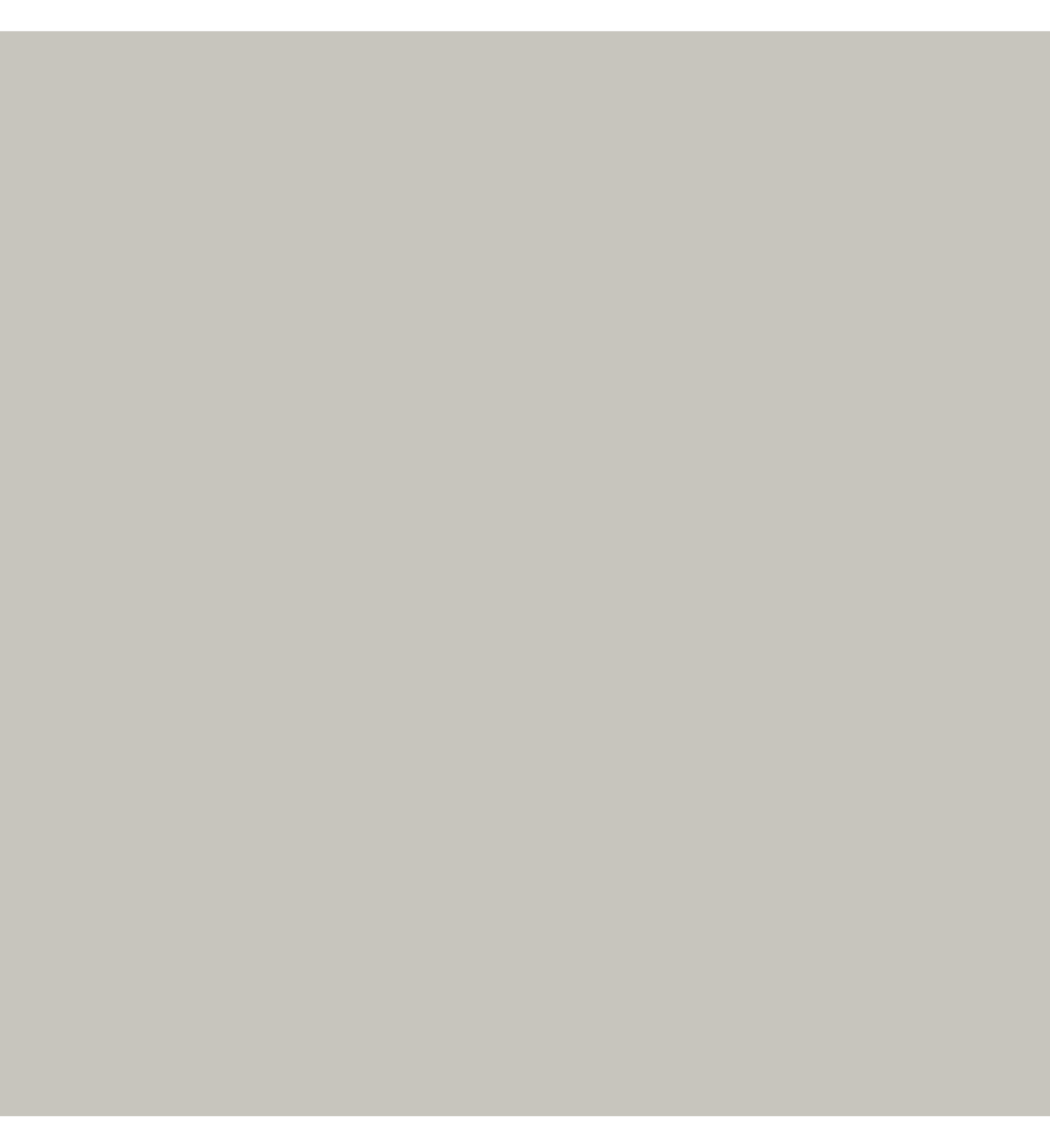}
\includegraphics[scale=0.1125]{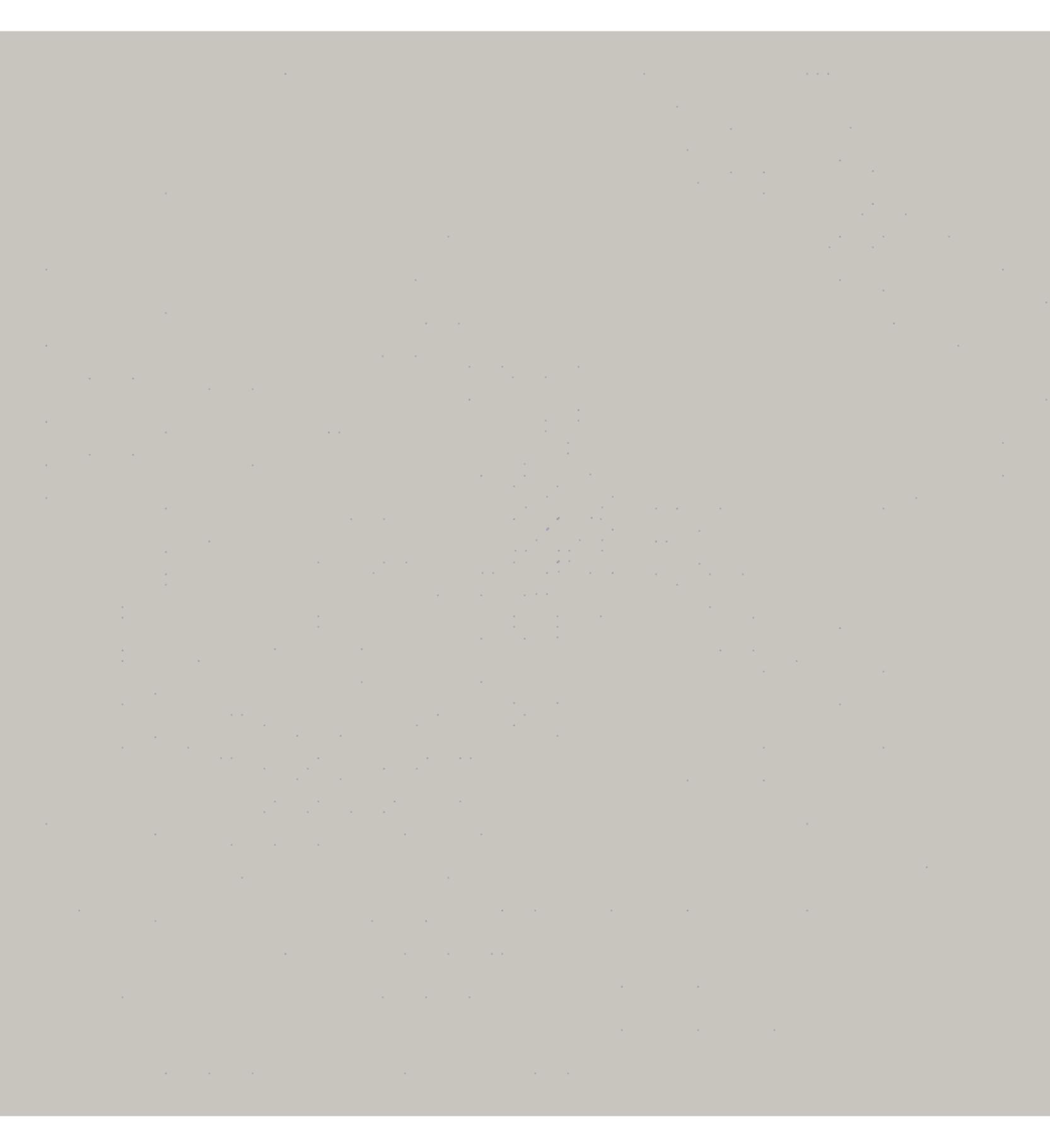}
\includegraphics[scale=0.1125]{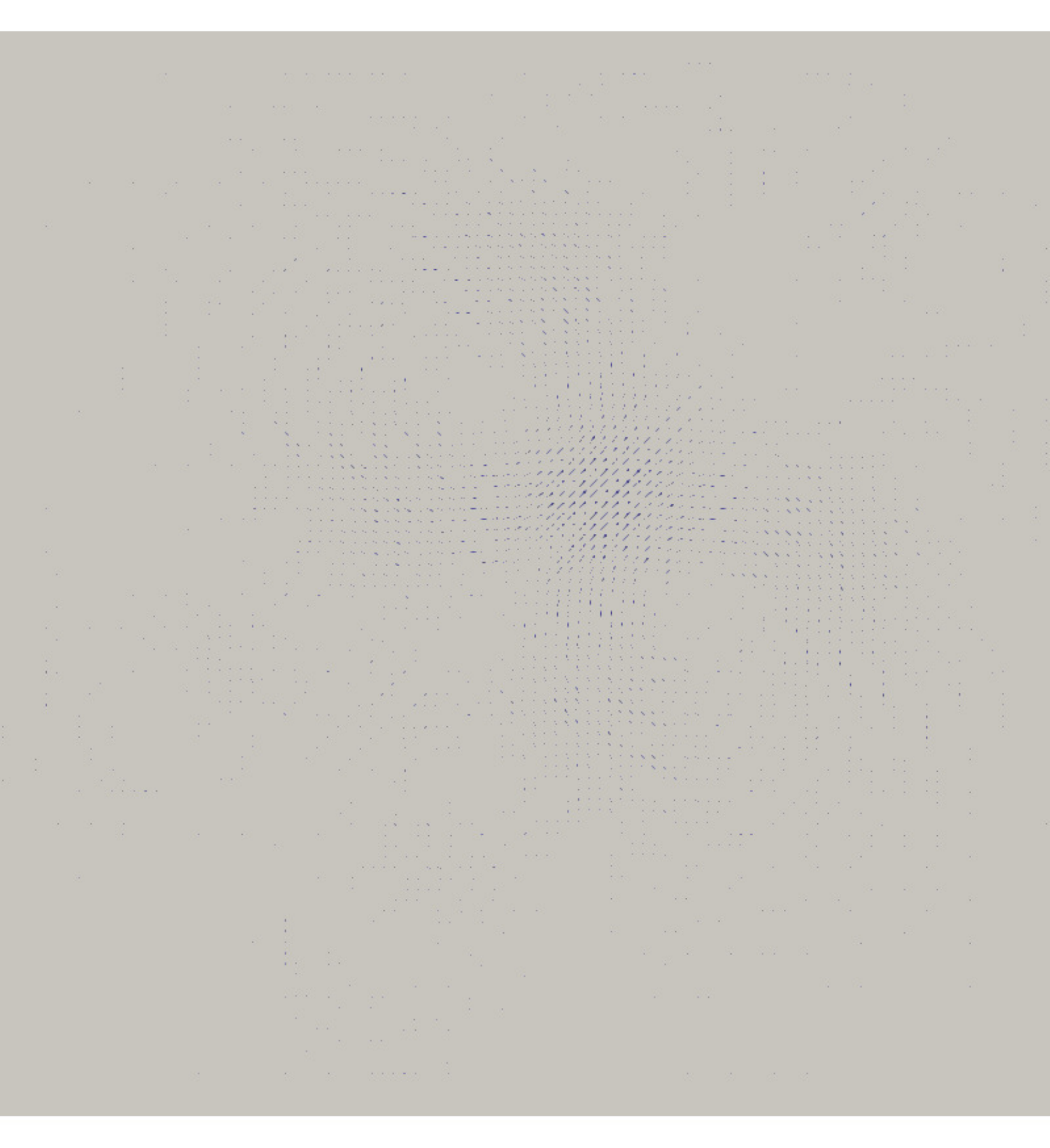}
\includegraphics[scale=0.1125]{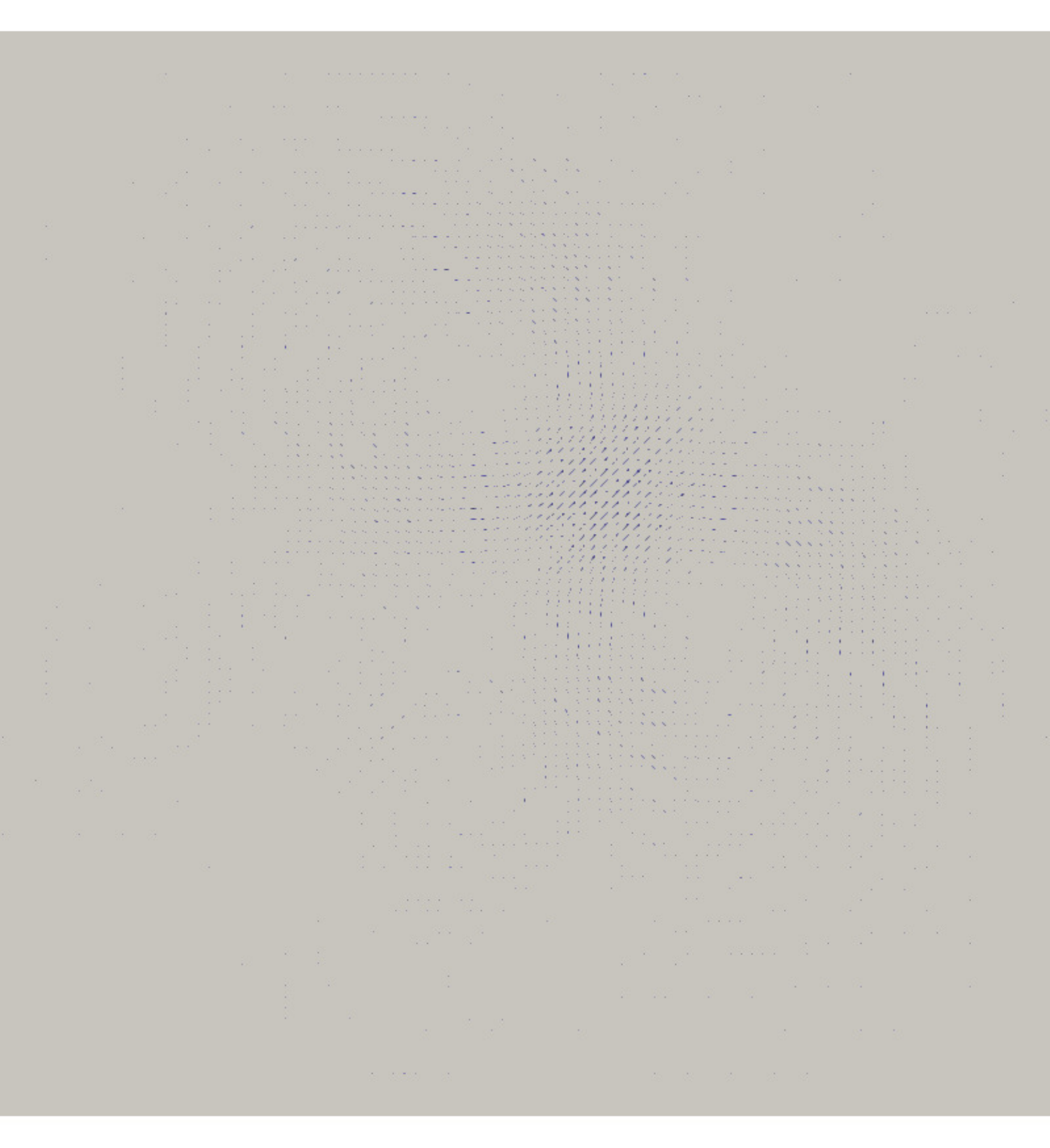}
\includegraphics[scale=0.1125]{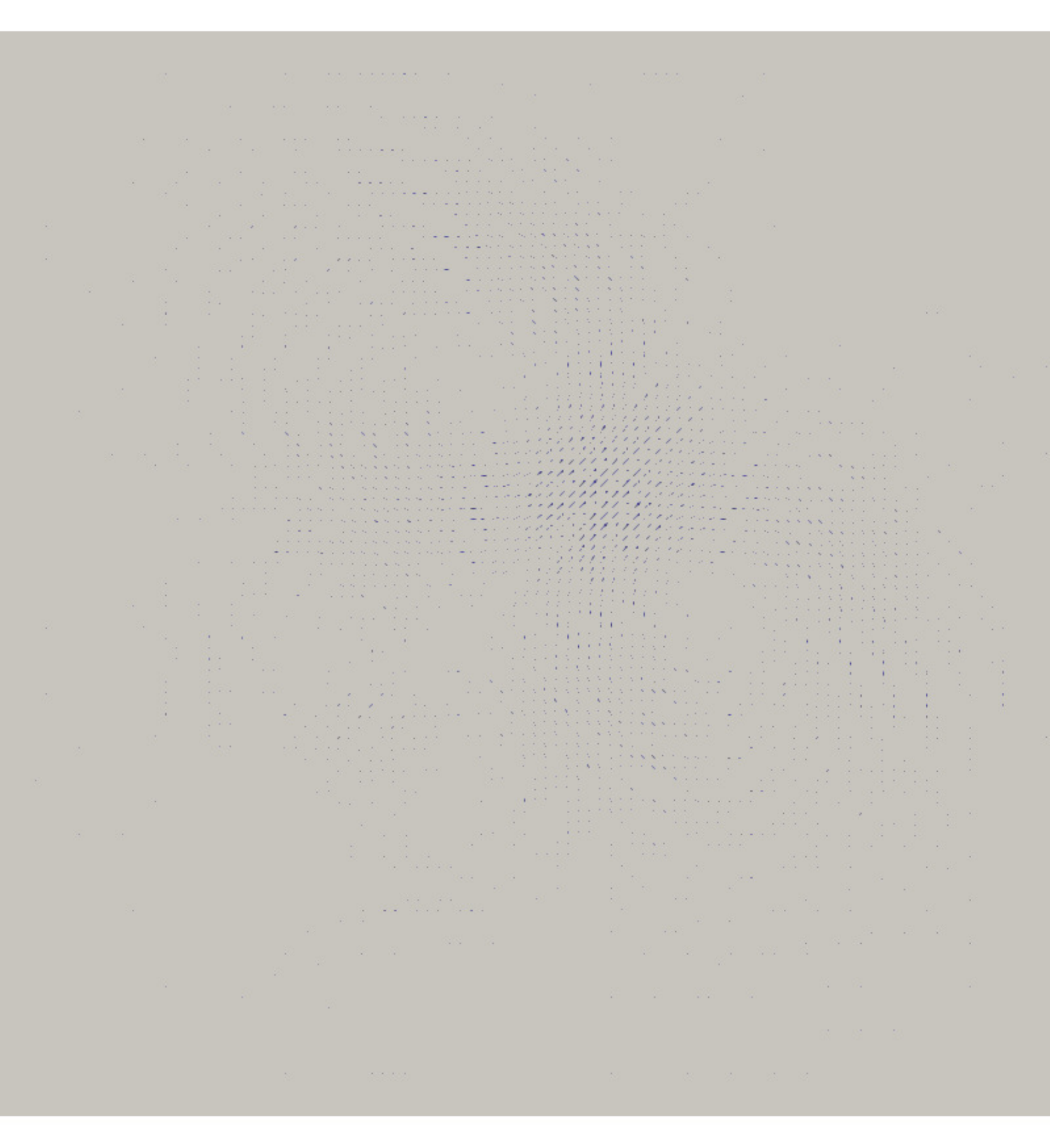}
\end{center}
\caption{Example II. Coarsening effect. Evolution in time of $\phi$ and $\u$ for $G_\varepsilon$-scheme at times  at times $t=0, 1, 2.5, 4$ and $5$.}\label{fig:ExIICoarG}
\end{figure}

\begin{figure}[H]
\begin{center}
\includegraphics[scale=0.1125]{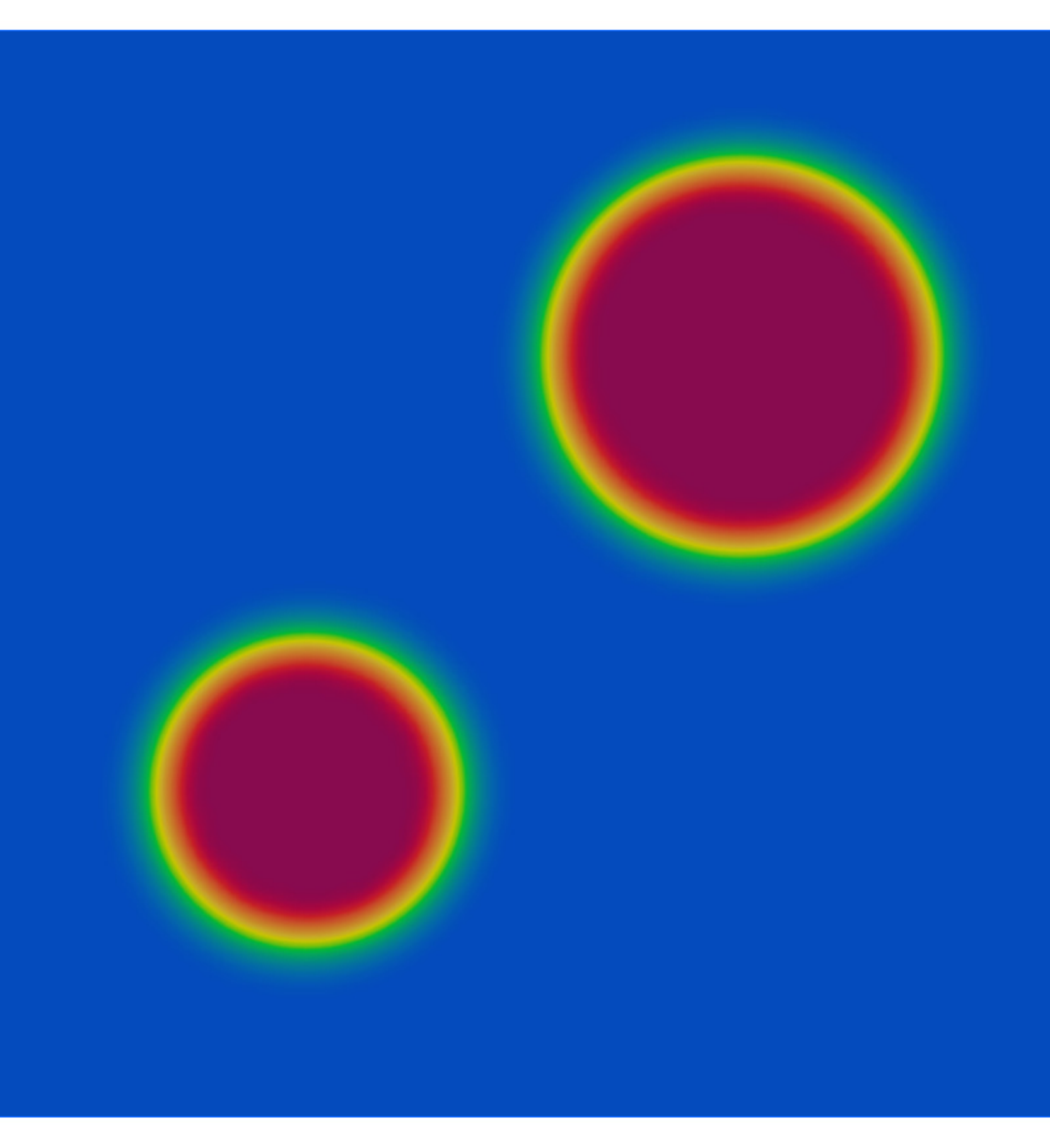}
\includegraphics[scale=0.1125]{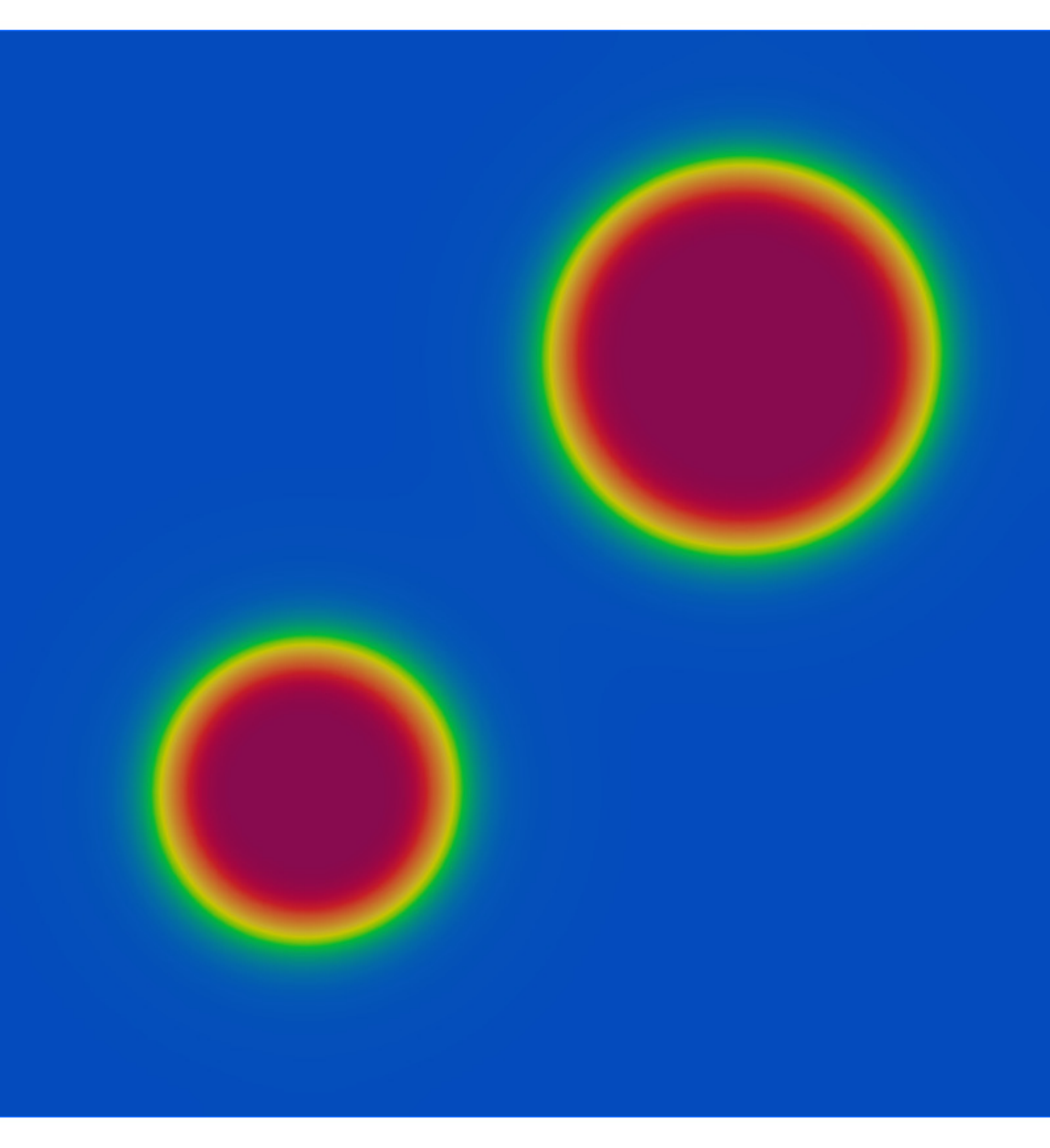}
\includegraphics[scale=0.1125]{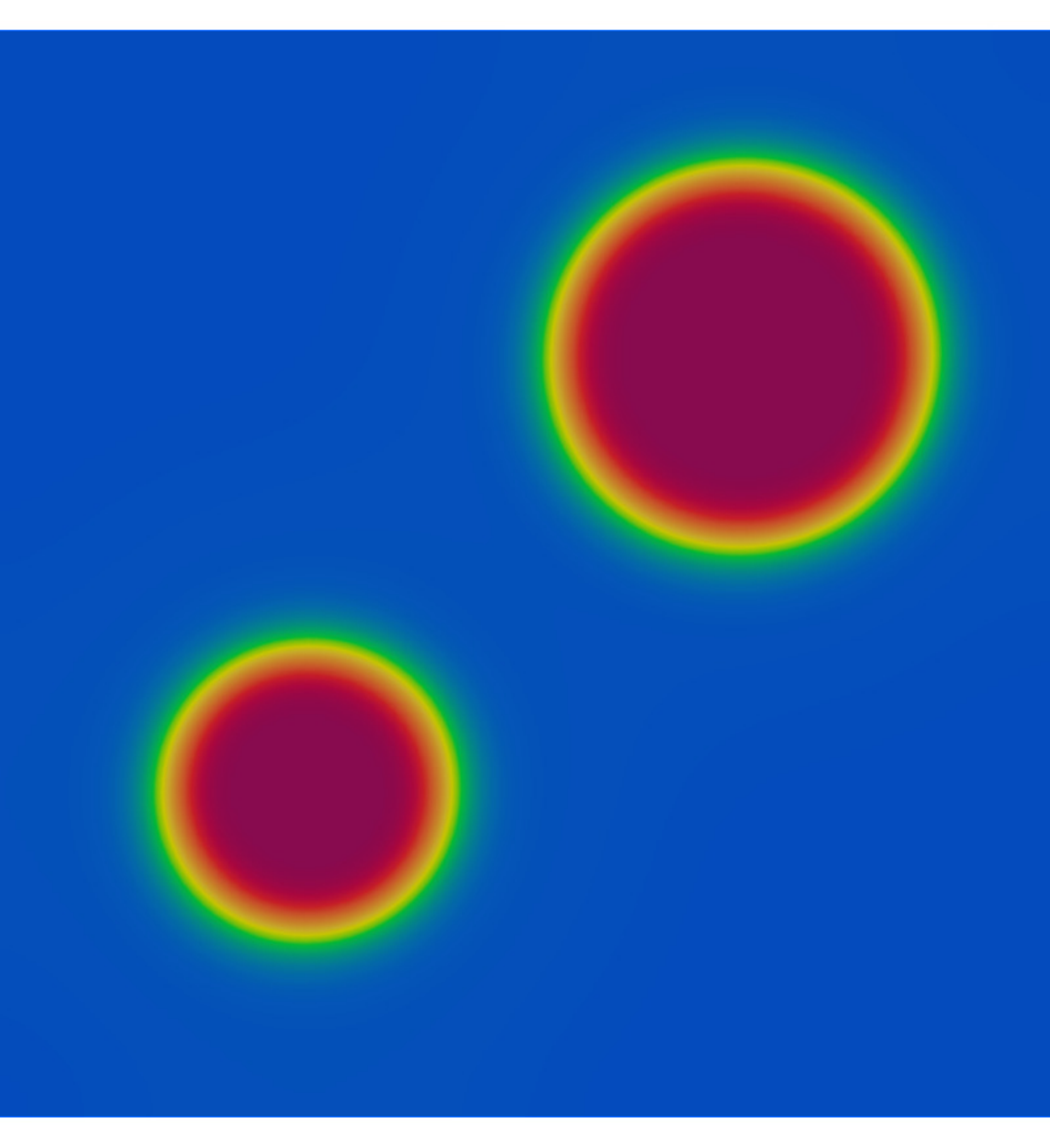}
\includegraphics[scale=0.1125]{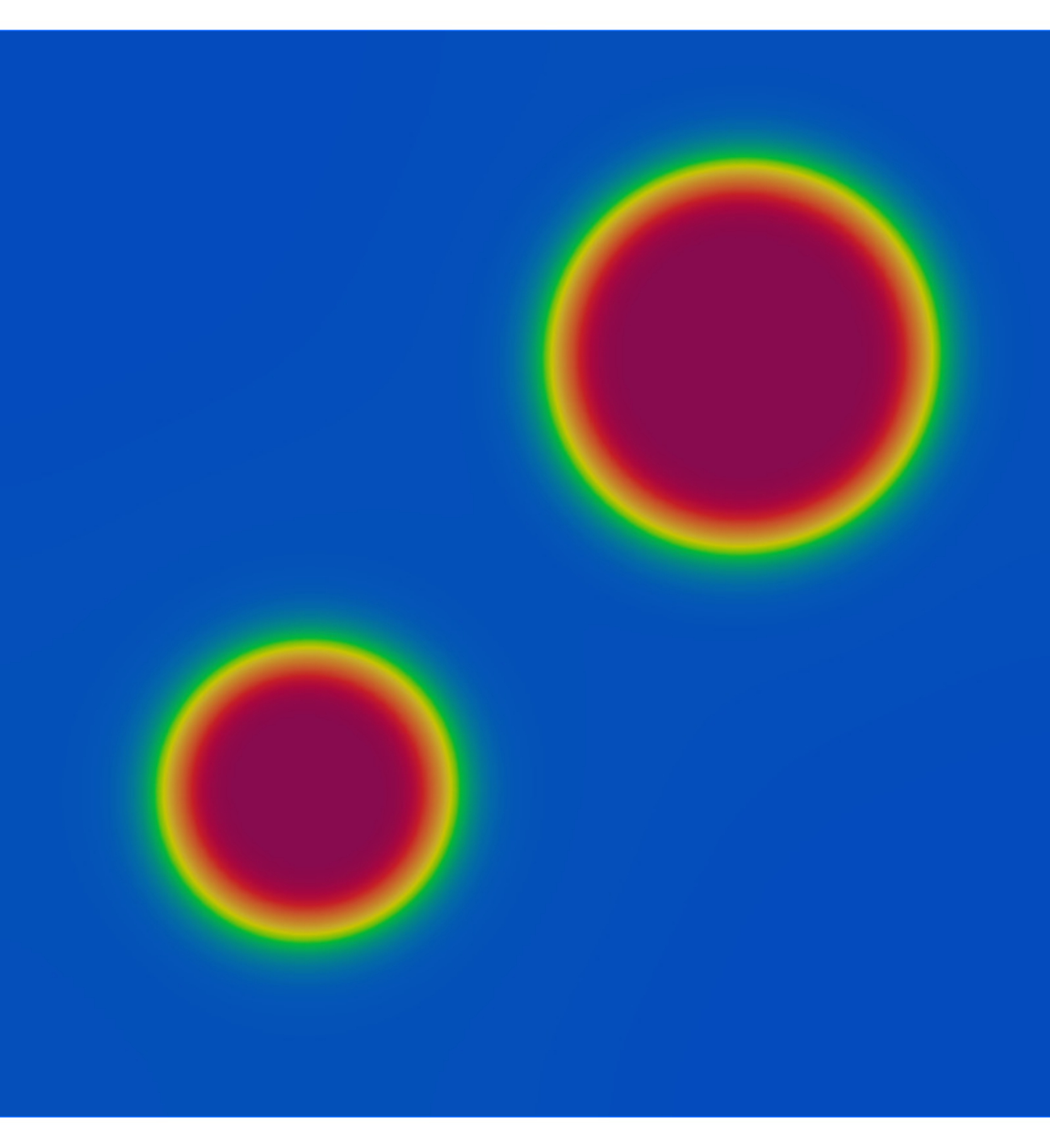}
\includegraphics[scale=0.1125]{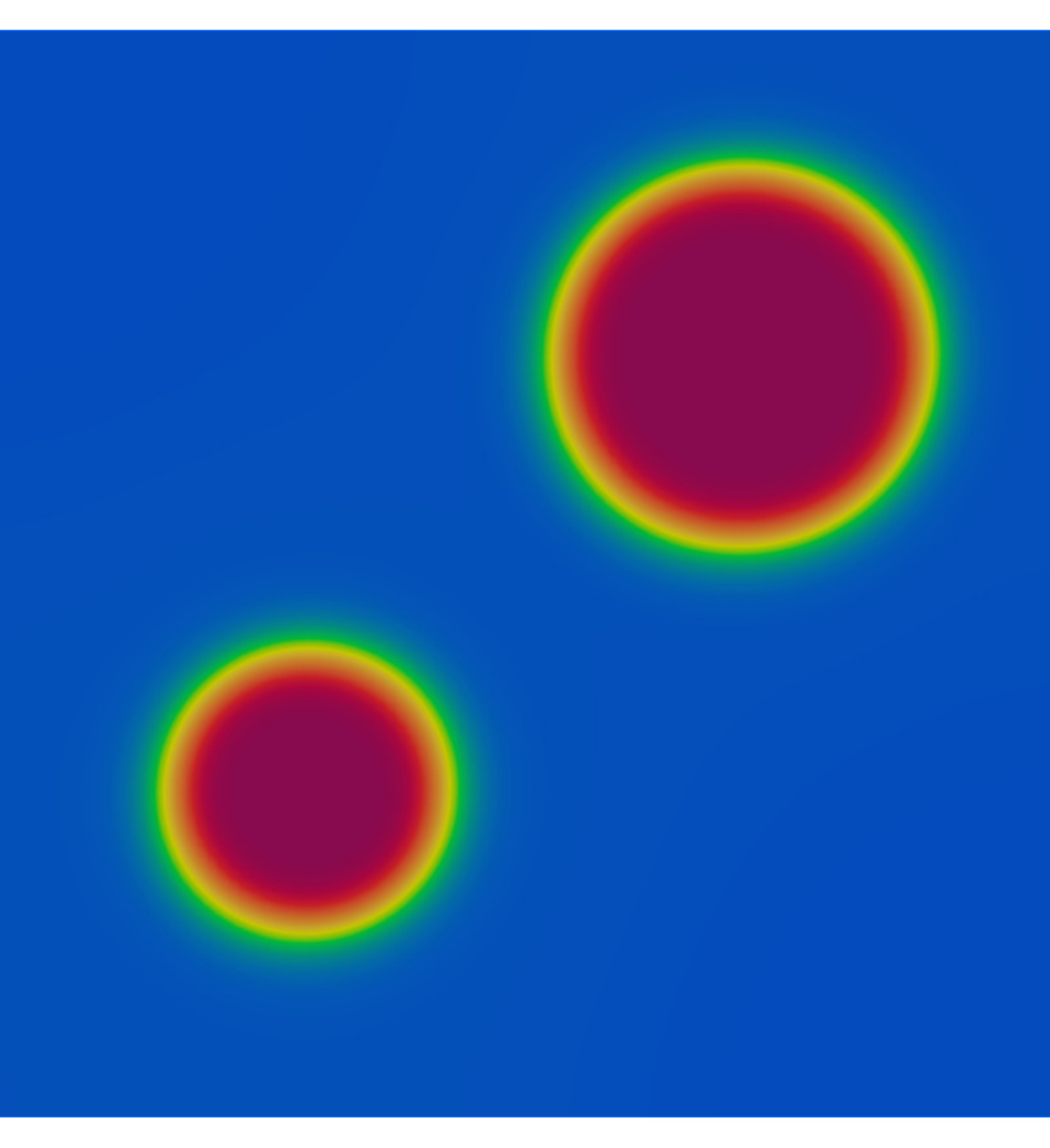}
\\
\includegraphics[scale=0.1125]{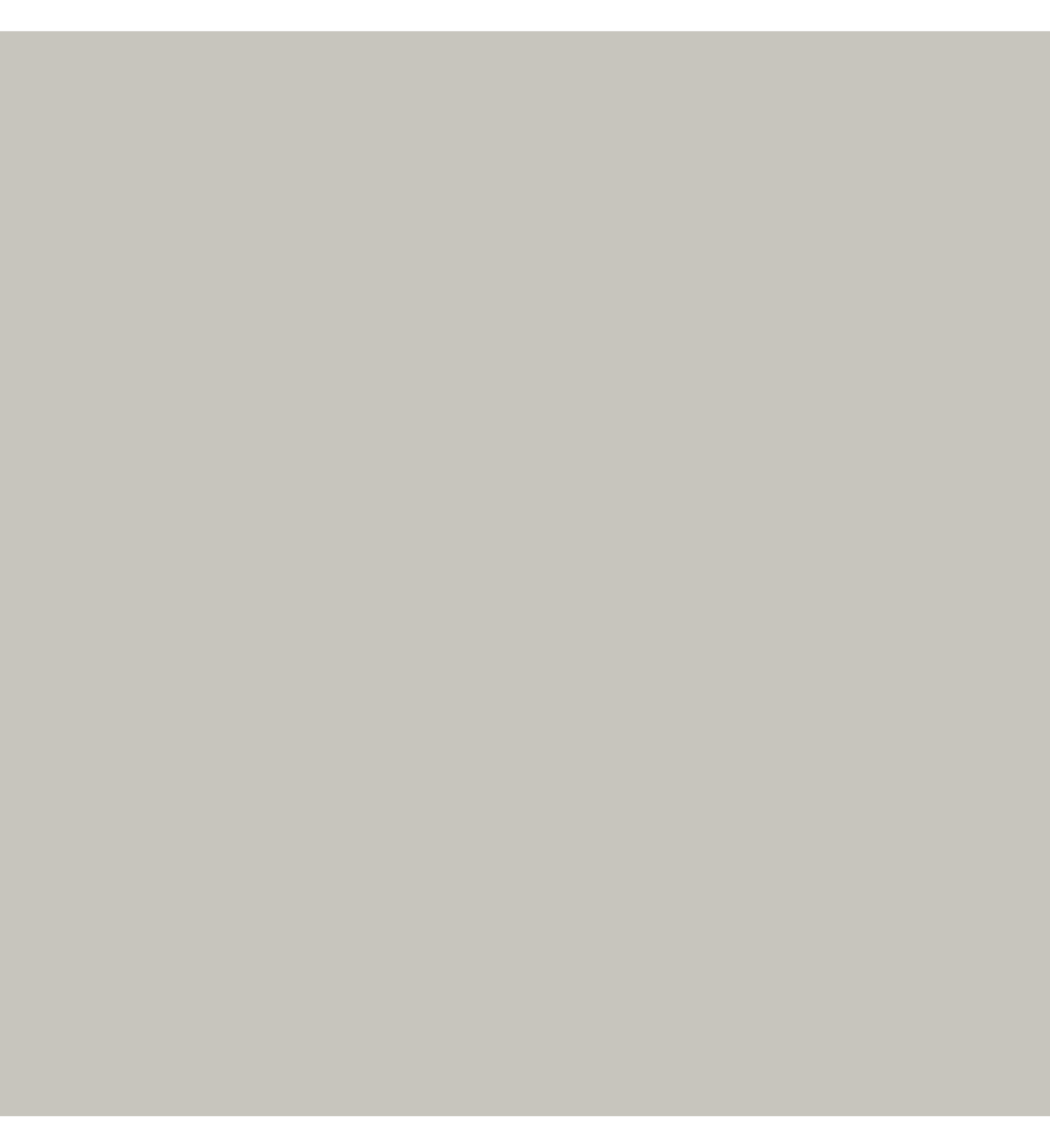}
\includegraphics[scale=0.1125]{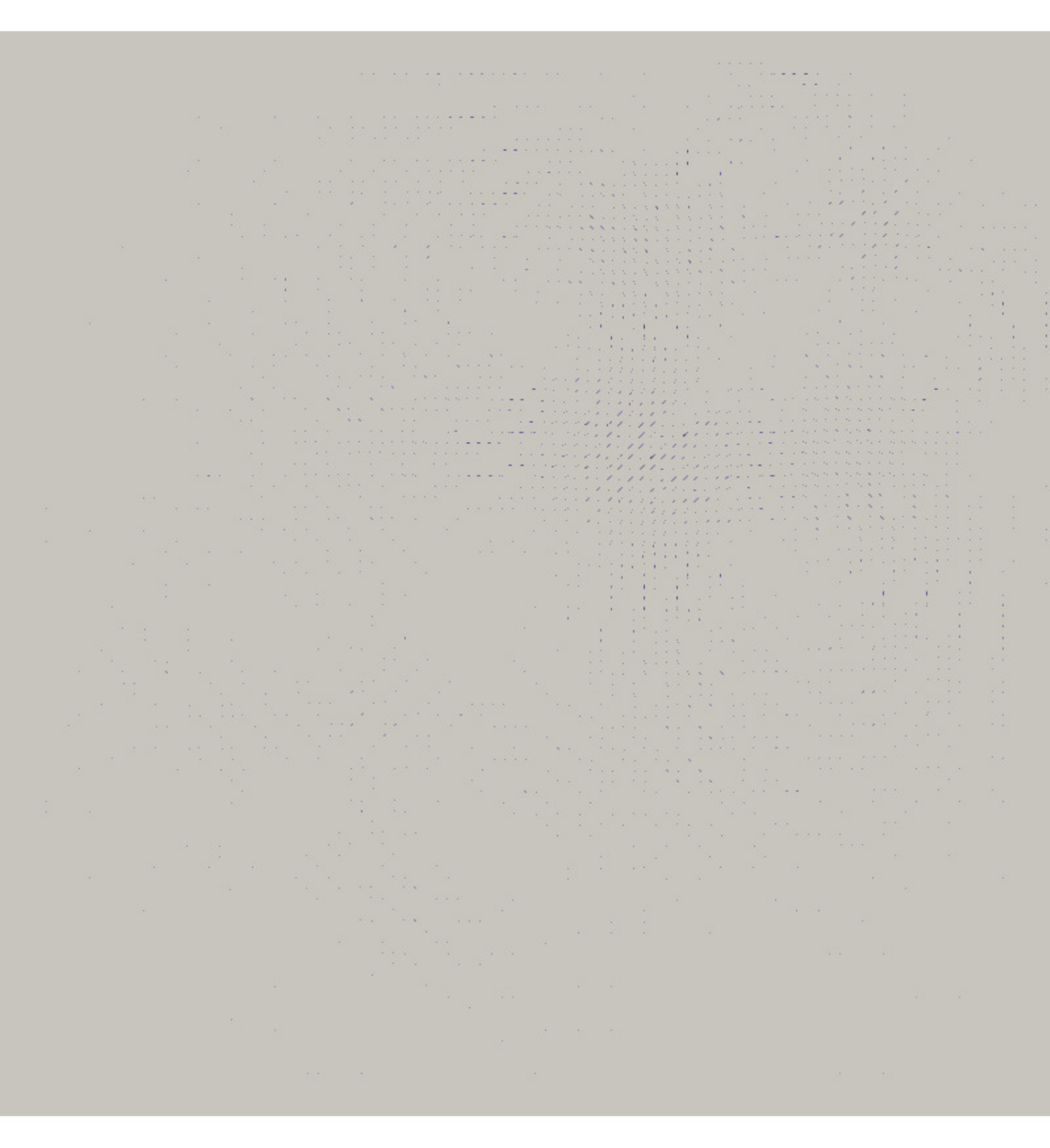}
\includegraphics[scale=0.1125]{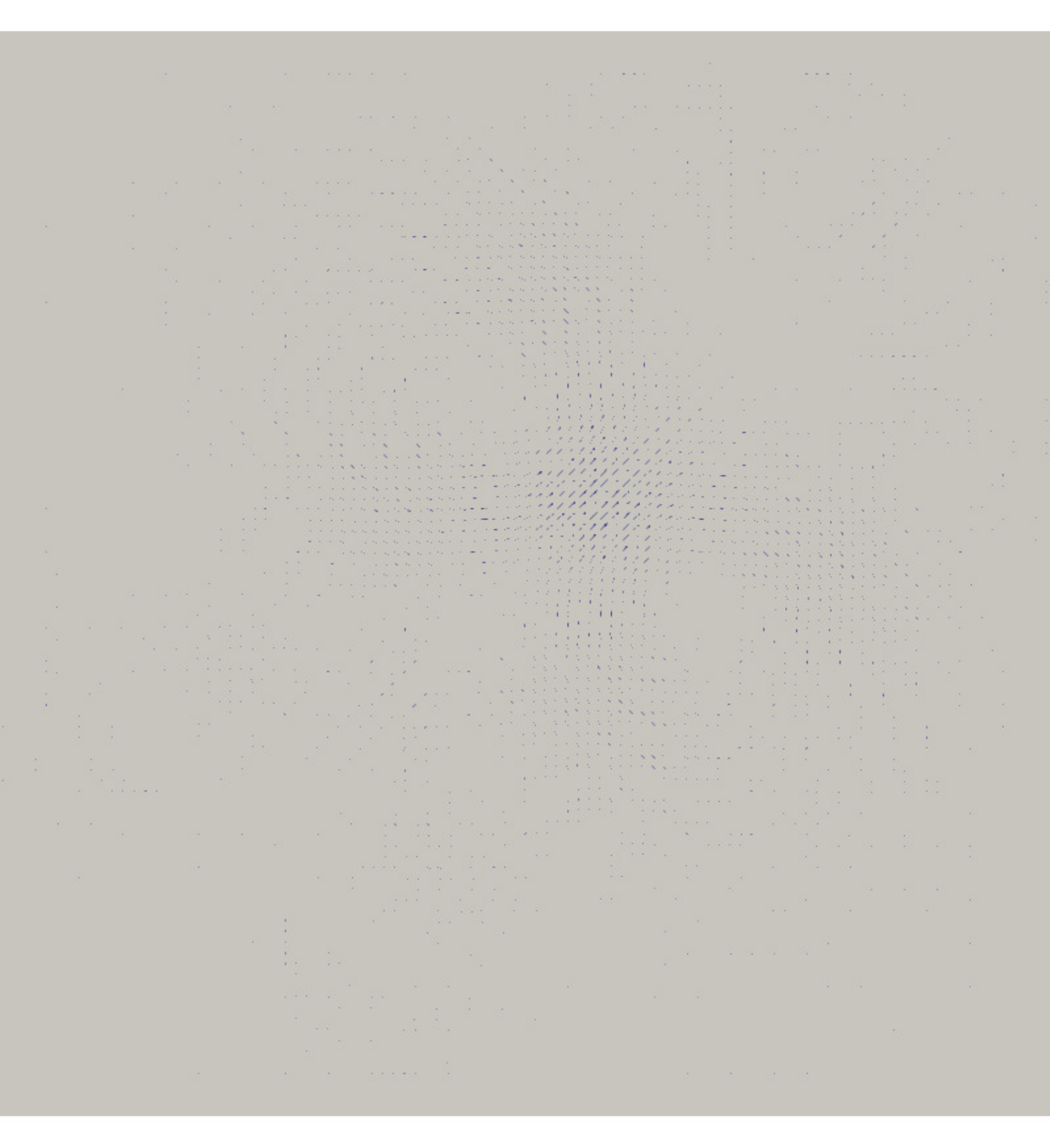}
\includegraphics[scale=0.1125]{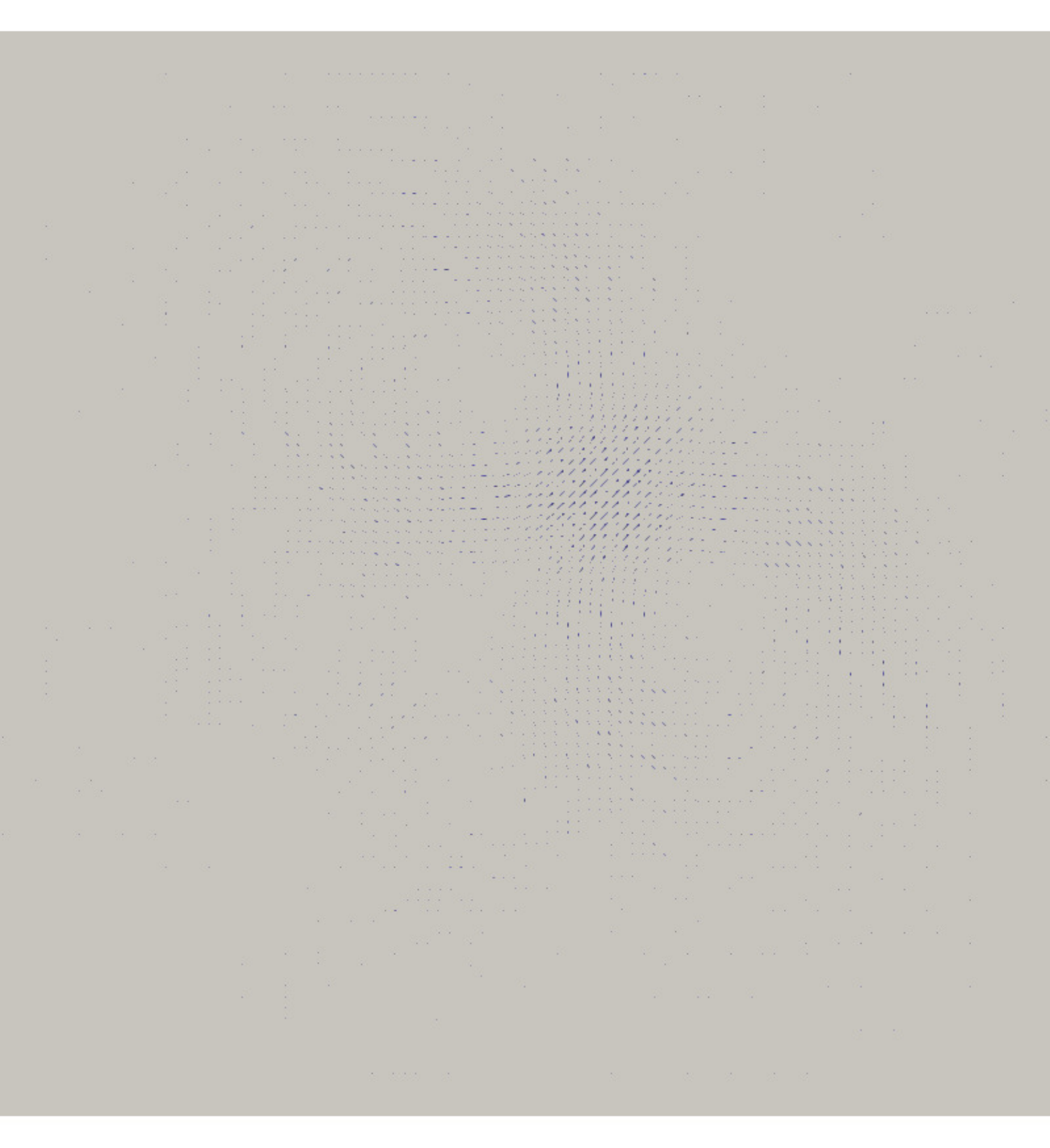}
\includegraphics[scale=0.1125]{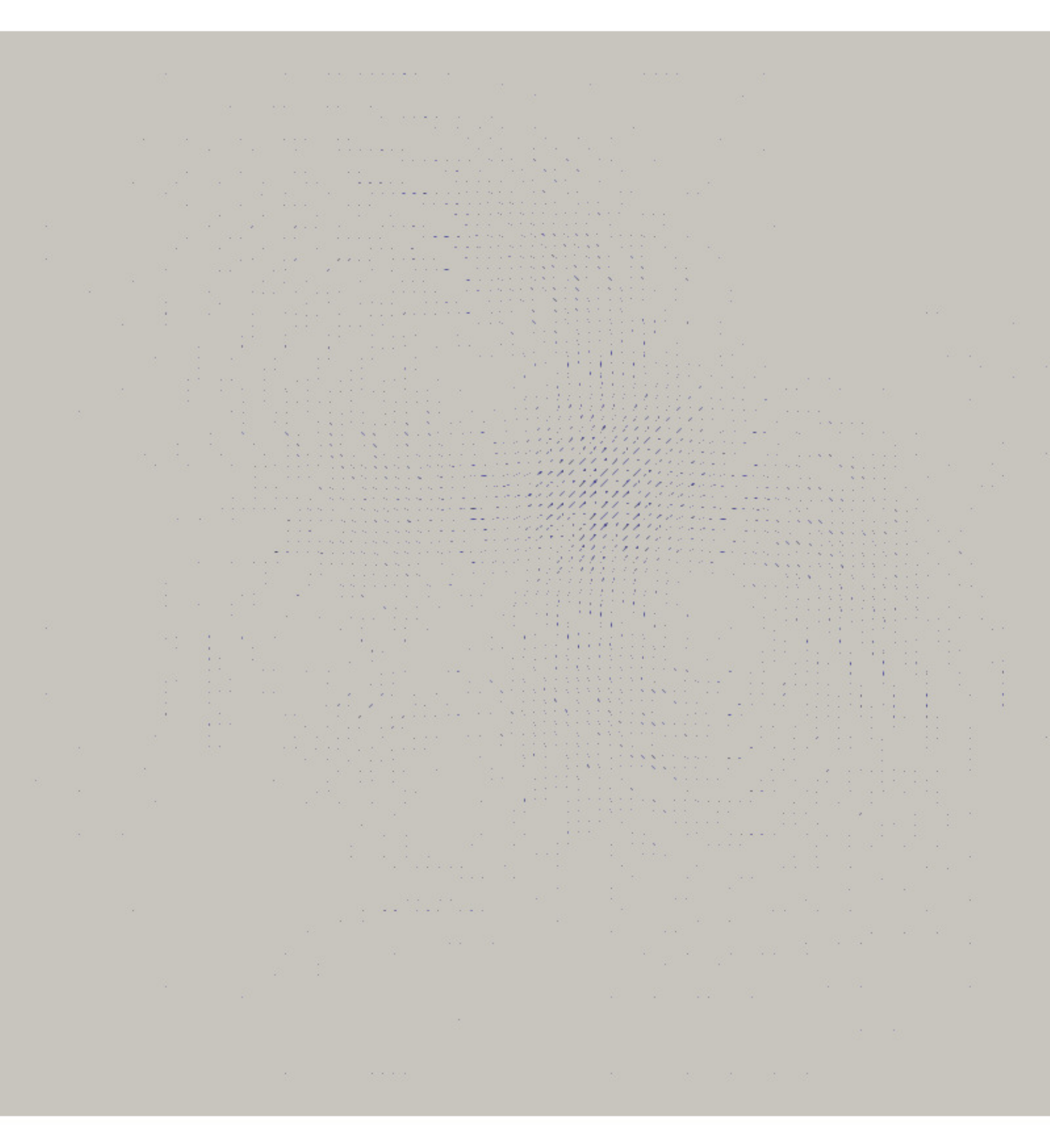}
\end{center}
\caption{Example II. Coarsening effect. Evolution in time of $\phi$ and $\u$ for $J_\varepsilon$-scheme at times  at times $t=0, 1, 2.5, 4$ and $5$.}\label{fig:ExIICoarJ}
\end{figure}

\begin{figure}[H]
\begin{center}
\includegraphics[scale=0.1125]{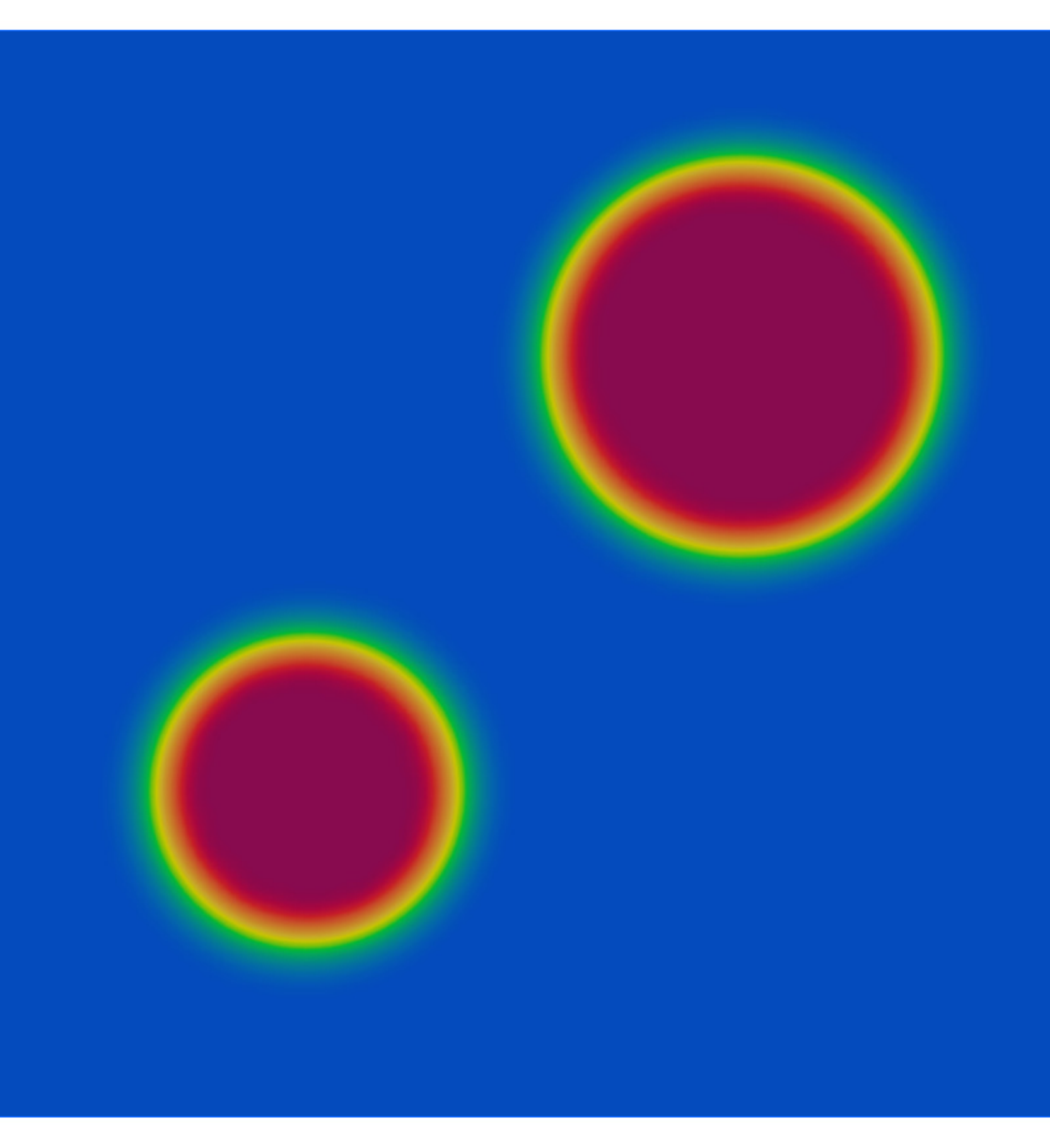}
\includegraphics[scale=0.1125]{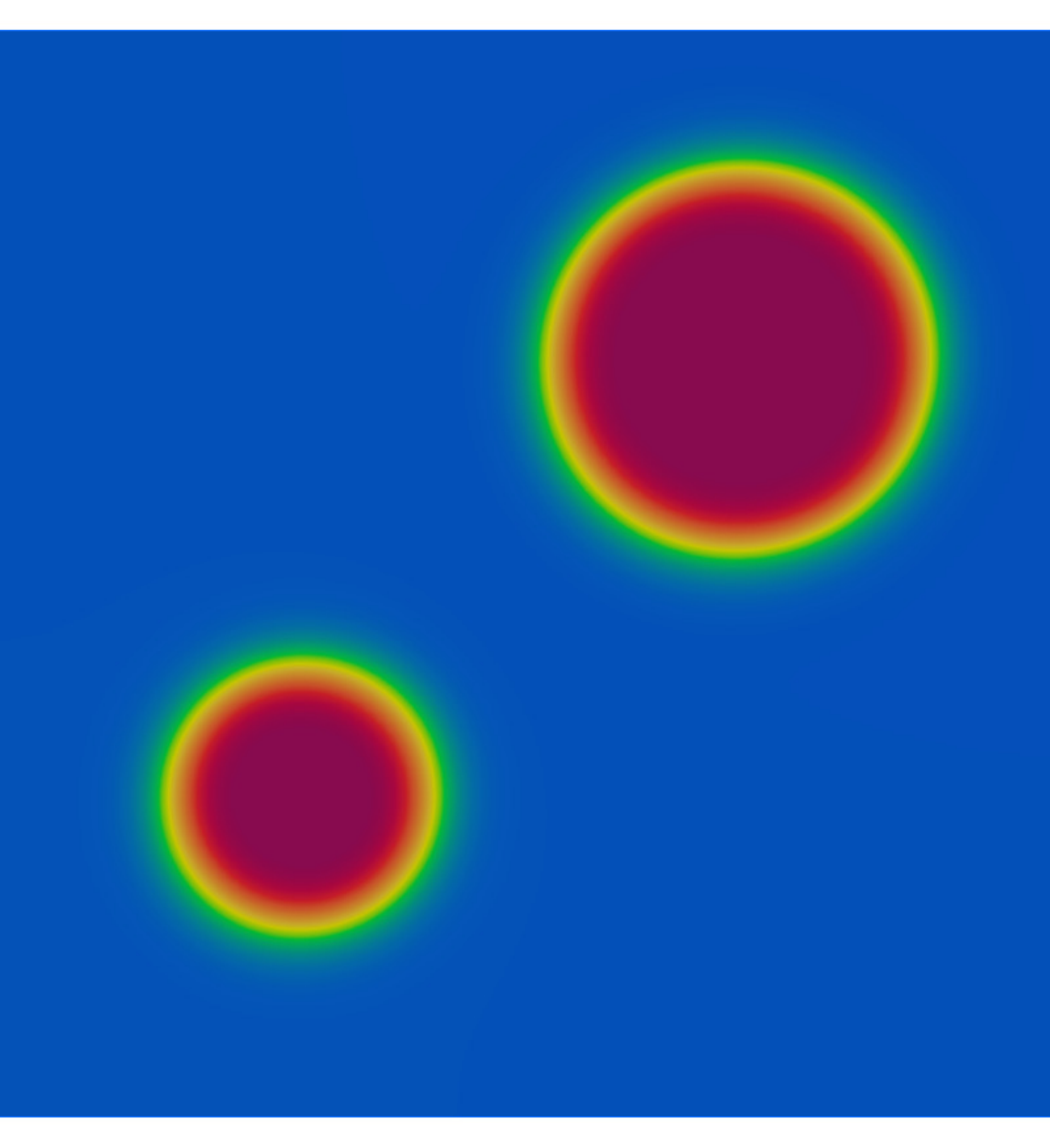}
\includegraphics[scale=0.1125]{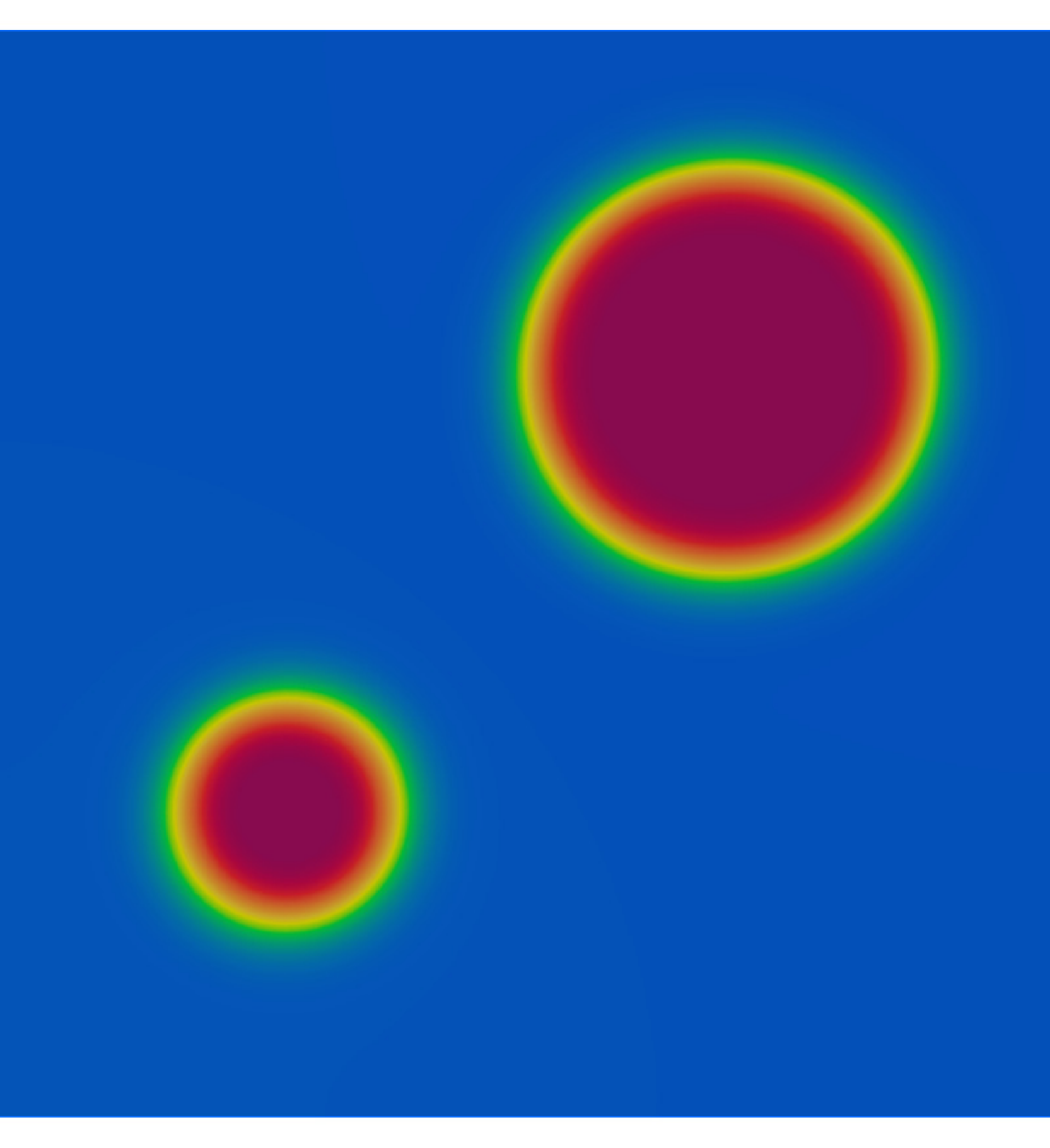}
\includegraphics[scale=0.1125]{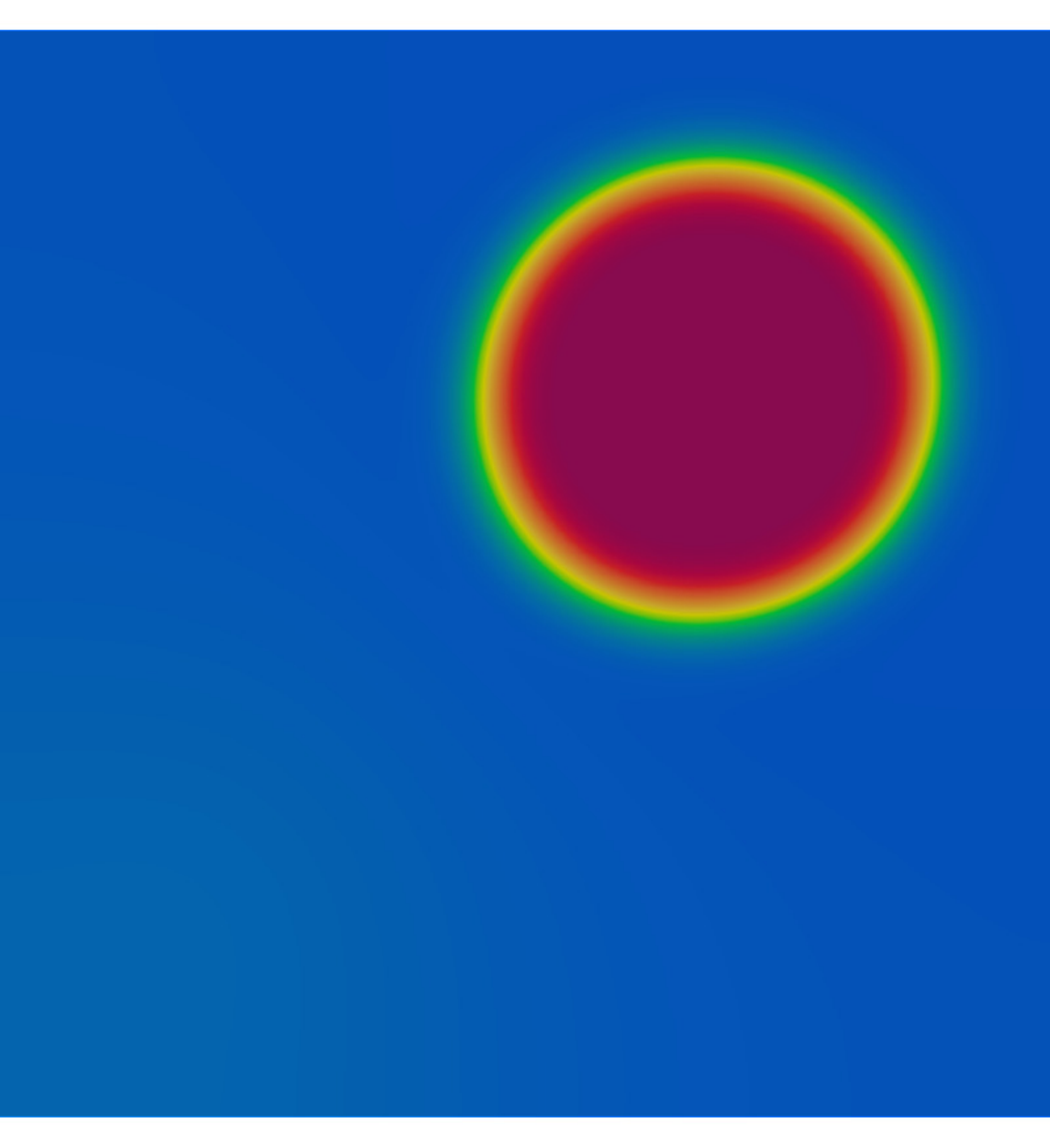}
\includegraphics[scale=0.1125]{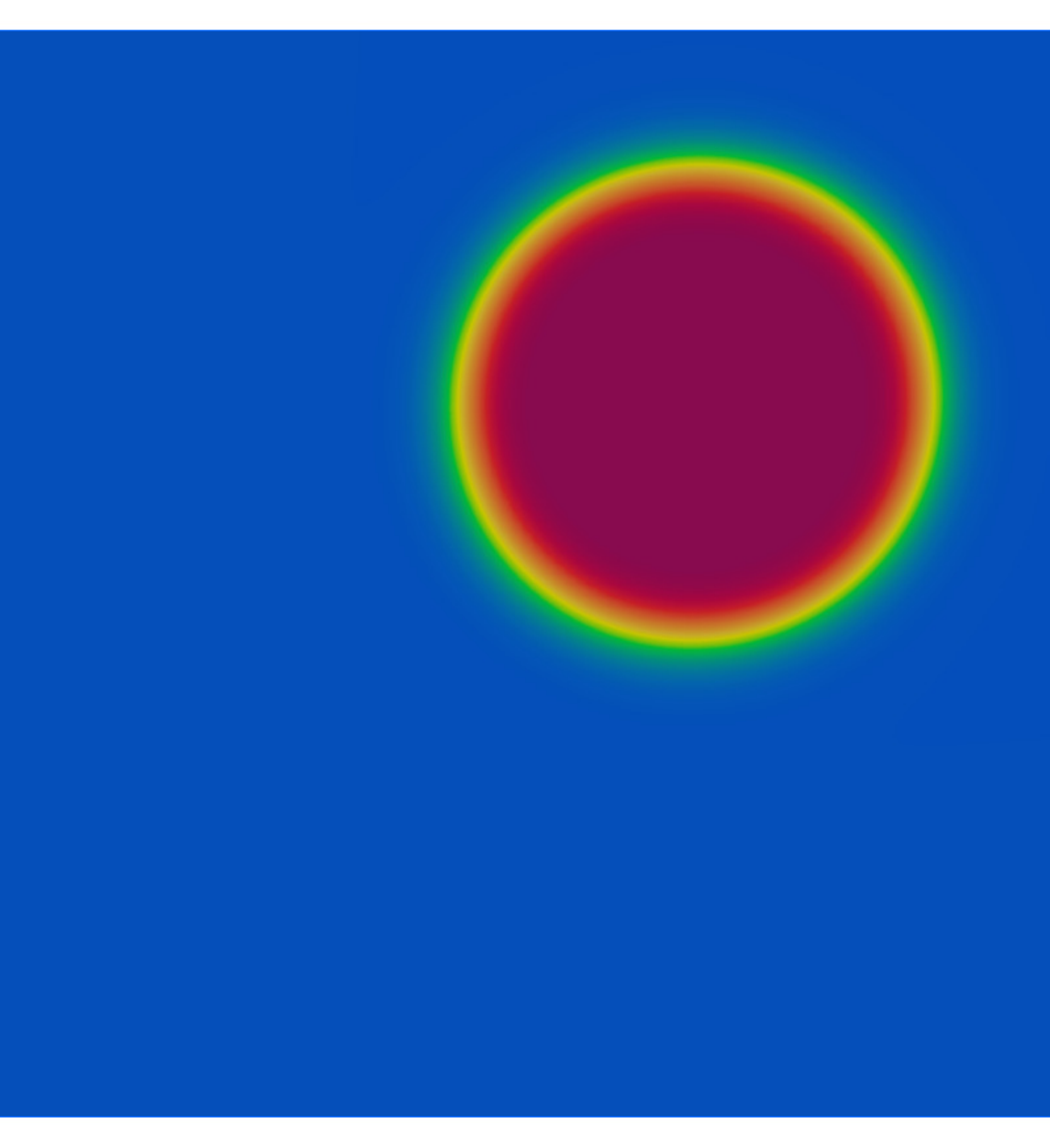}
\\
\includegraphics[scale=0.1125]{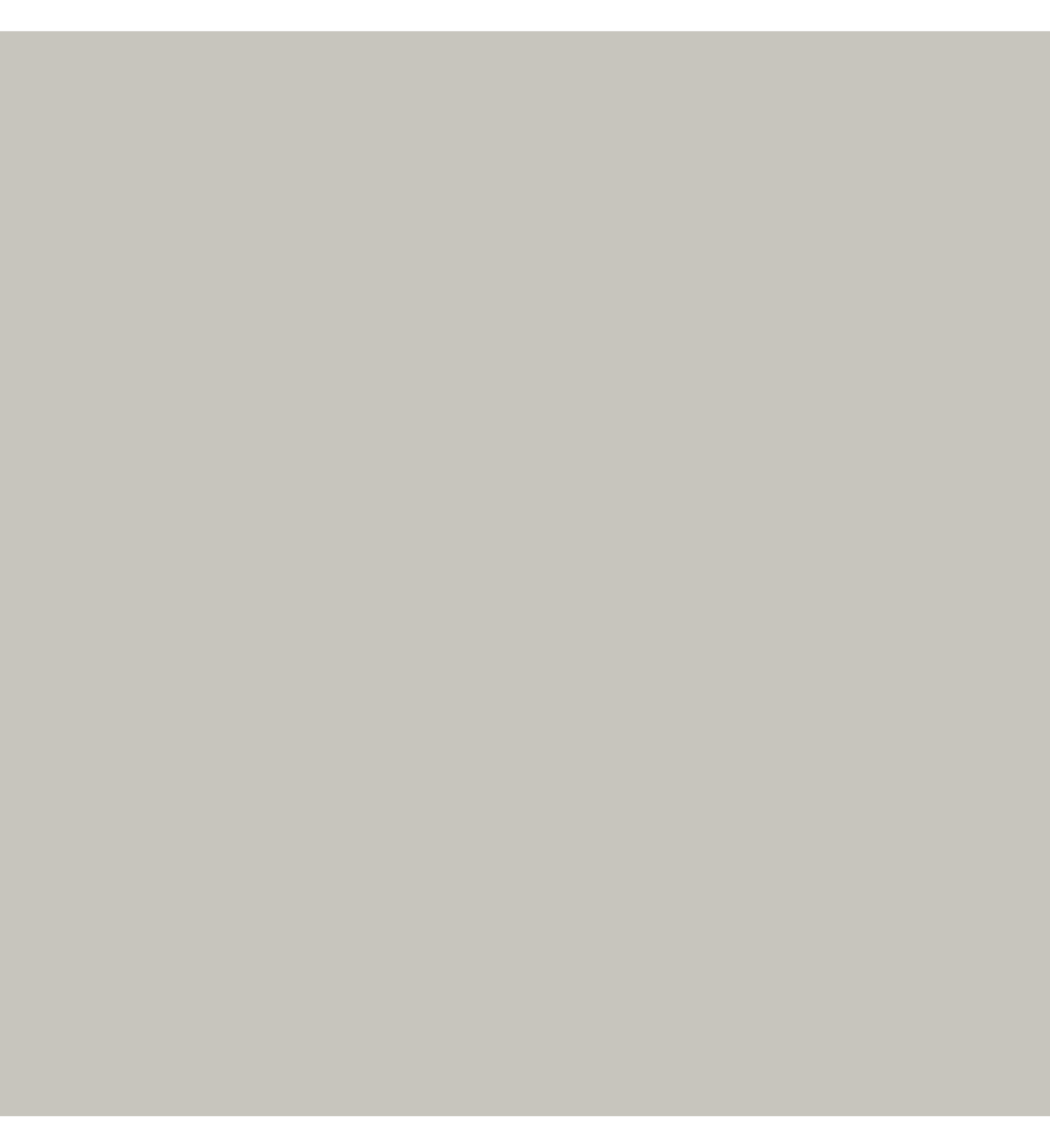}
\includegraphics[scale=0.1125]{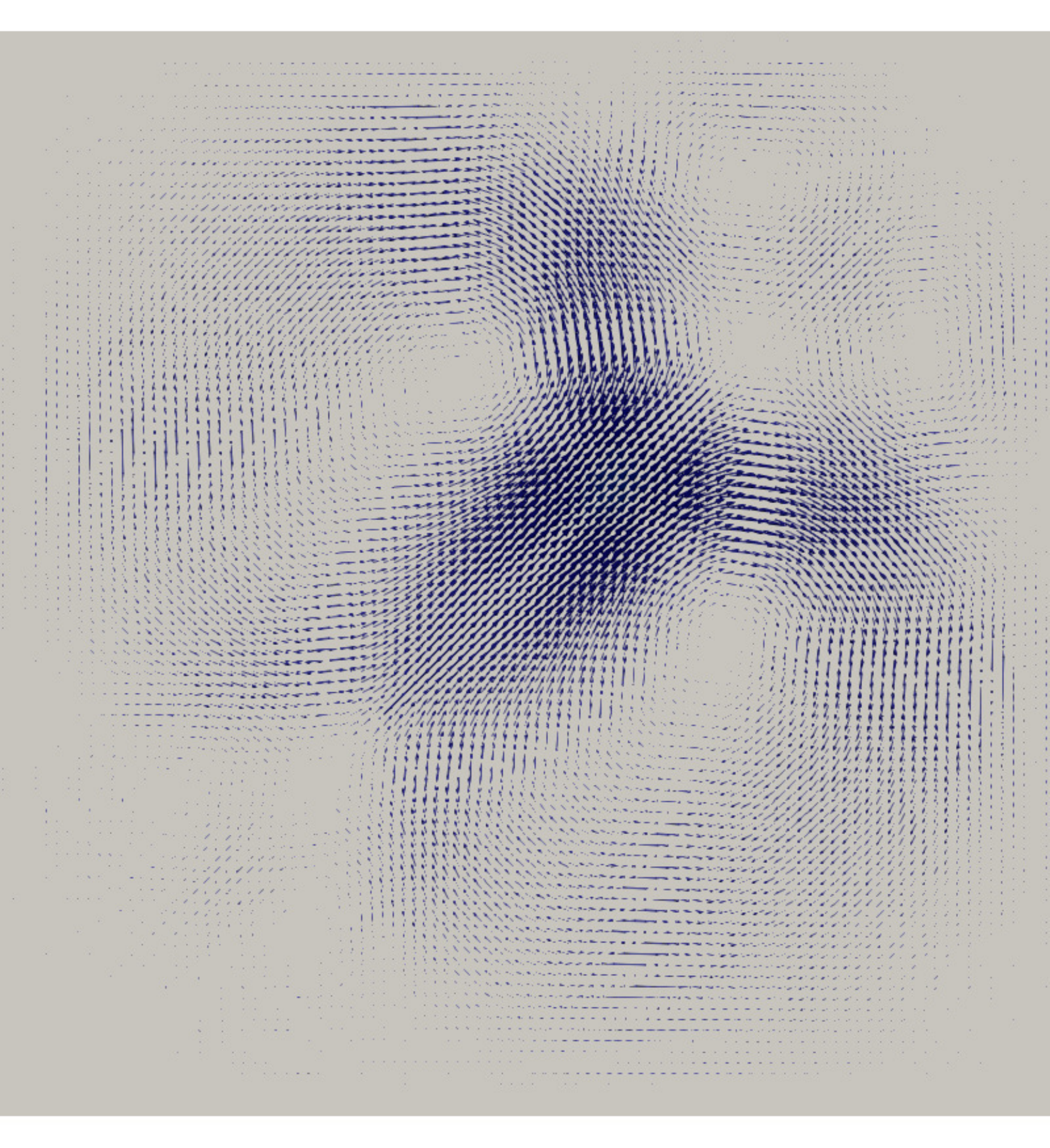}
\includegraphics[scale=0.1125]{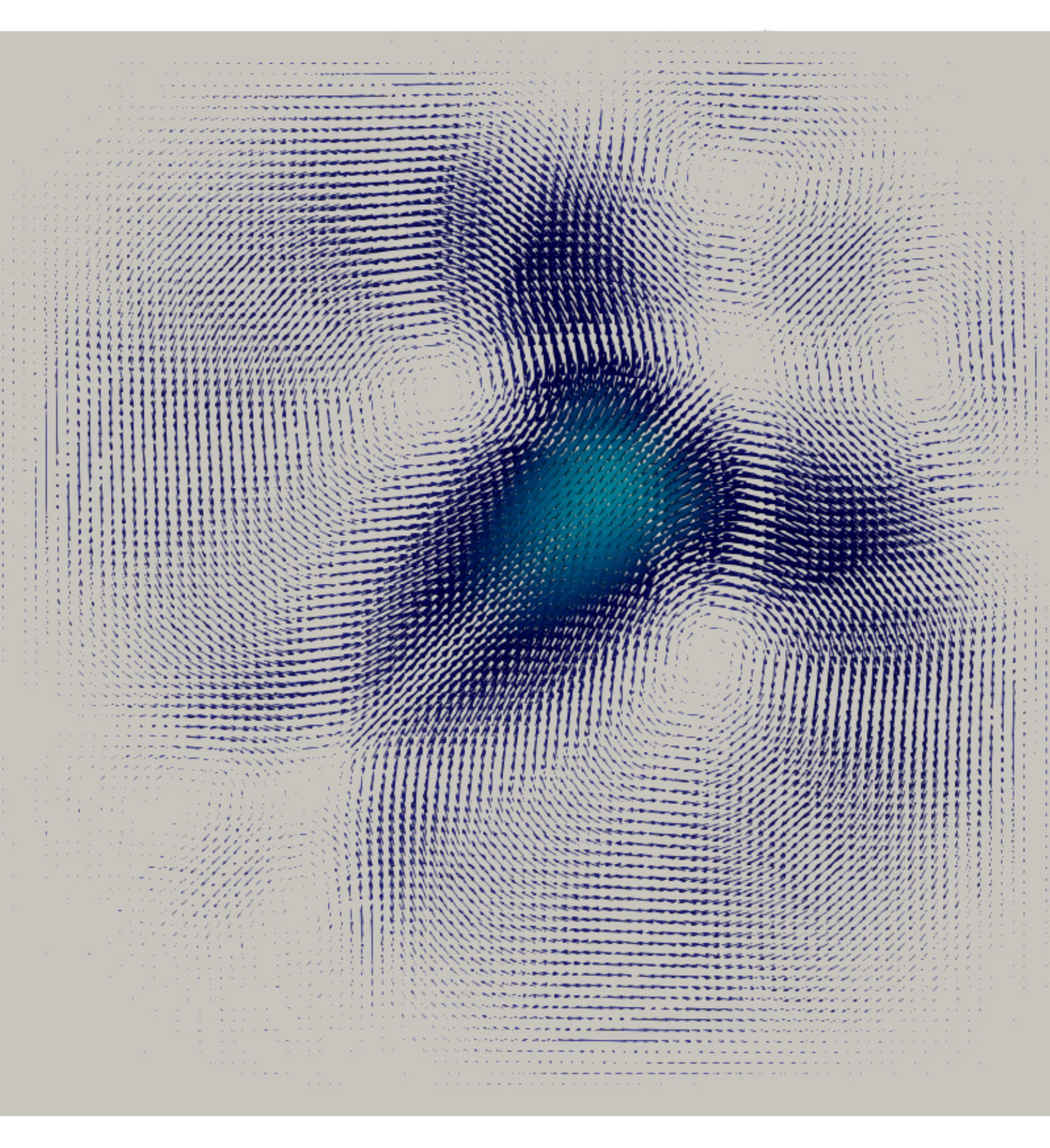}
\includegraphics[scale=0.1125]{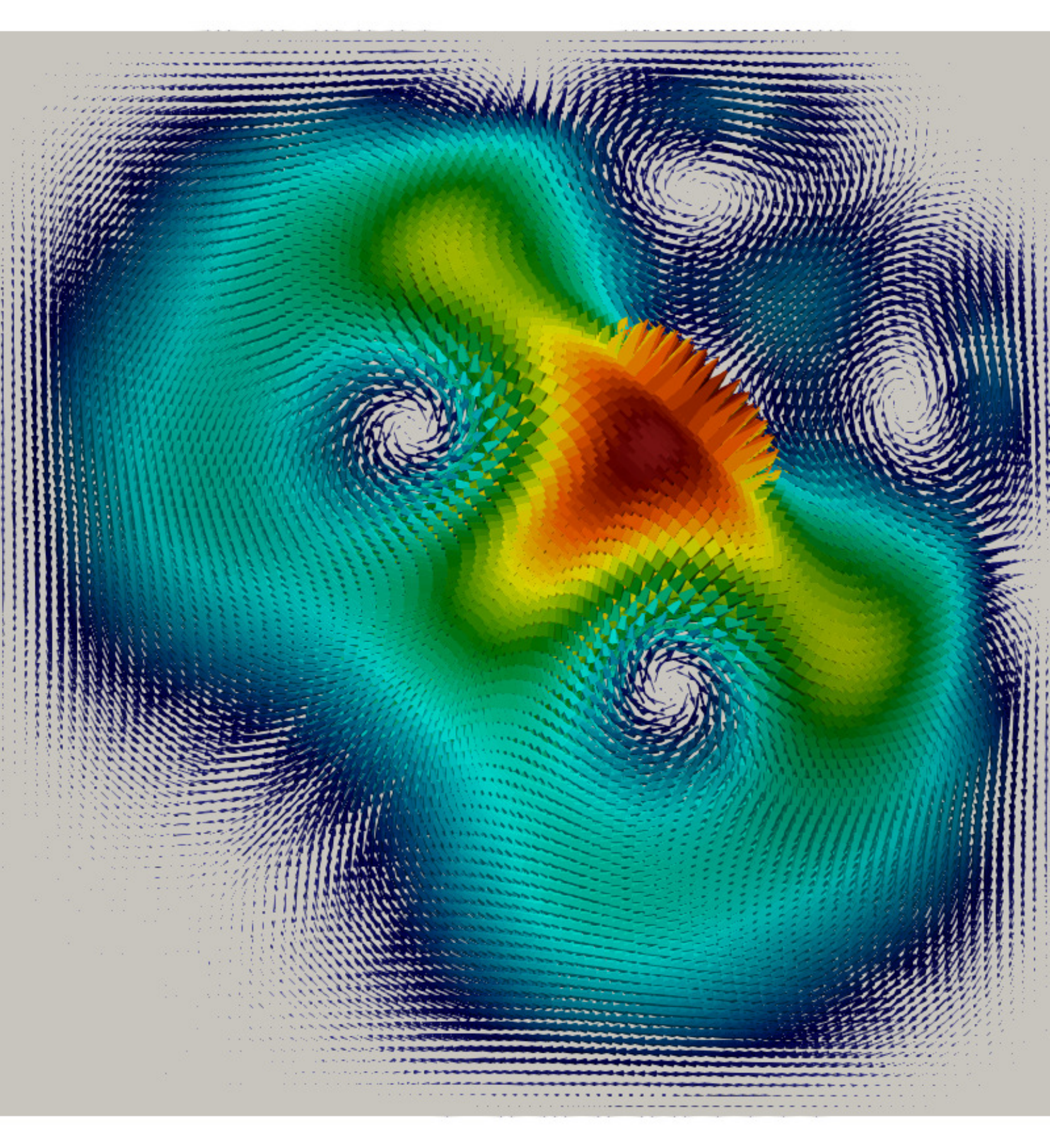}
\includegraphics[scale=0.1125]{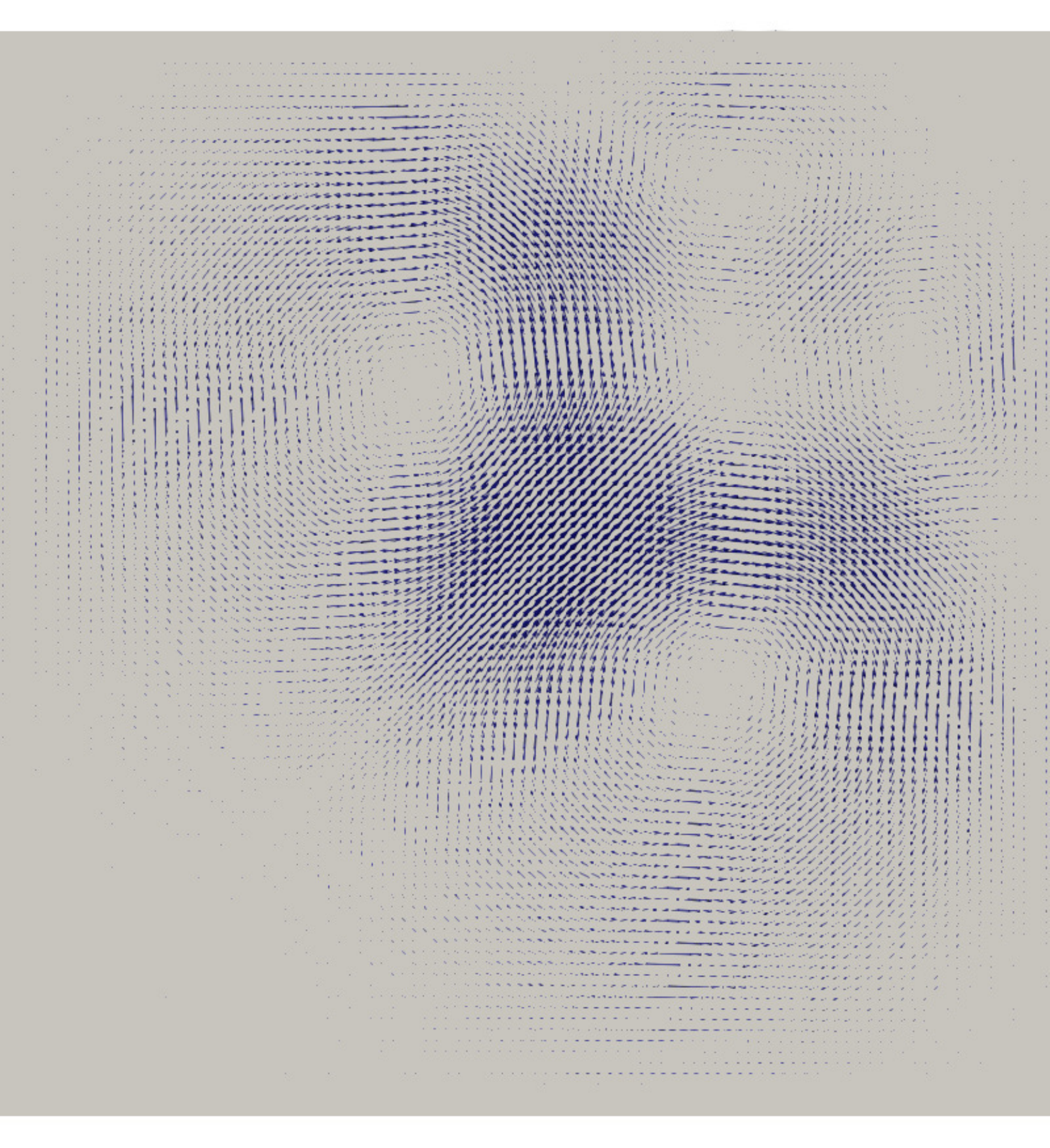}
\end{center}
\caption{Example II. Coarsening effects. Evolution in time of $\phi$ and $\u$ for CM-scheme at times  at times $t=0, 1, 2.5, 4$ and $5$.}\label{fig:ExIICoarCM}
\end{figure}

In Figure~\ref{fig:Ex2_Coarsening_energies} we present the evolution in time of the different energies and the evolution of the volume.  As expected, all the schemes achieve the dissipation of the free energy as well as the conservation of volume. The interesting part is that the behaviors are completely different between the constant and non-constant mobilities. In the constant mobility case, the elimination of the smaller ball produces a steep decrease in the mixing energy, which induces movement in the fluid part and therefore it produces the rising of the kinetic energy. Once the smaller ball is gone, the kinetic energy also decreases until the system reaches the equilibrium configuration. In the non-constant mobility case, there is initially a decrease in the mixing energy which is produced by the system adjusting the given interface in the initial condition to the desired one. This produces a small movement in the fluid part which leads to a very subtle increase of the kinetic energy. After that, the energy decreases until reaching the equilibrium configuration.

Additionally, in Figure~\ref{fig:Ex2_Coarsening_bounds} we present the evolution in time of the maximum and minimum of $\phi$ in the domain $\Omega$ as well as the evolution of $\int_\Omega(\phi_-)^2 d\x$ and $\int_\Omega((\phi-1)_+)^2 d\x$. As expected, CM-scheme does not enforce the variable $\phi$ to remain in the interval $[0,1]$ while the $G_\varepsilon$-scheme and $J_\varepsilon$-scheme are very close to achieving it. This is a consequence of the evolution $\int_\Omega(\phi_-)^2 d\x$ and $\int_\Omega((\phi-1)_+)^2 d\x$ which are clearly bounded in terms of $\varepsilon$. Furthermore, the obtained bound for $G_\varepsilon$-scheme is slightly better than the one for $J_\varepsilon$-scheme (as expected from Remarks~\ref{rem:boundG} and \ref{rem:boundJ}).

\begin{figure}[H]
\begin{center}
\includegraphics[scale=0.1125]{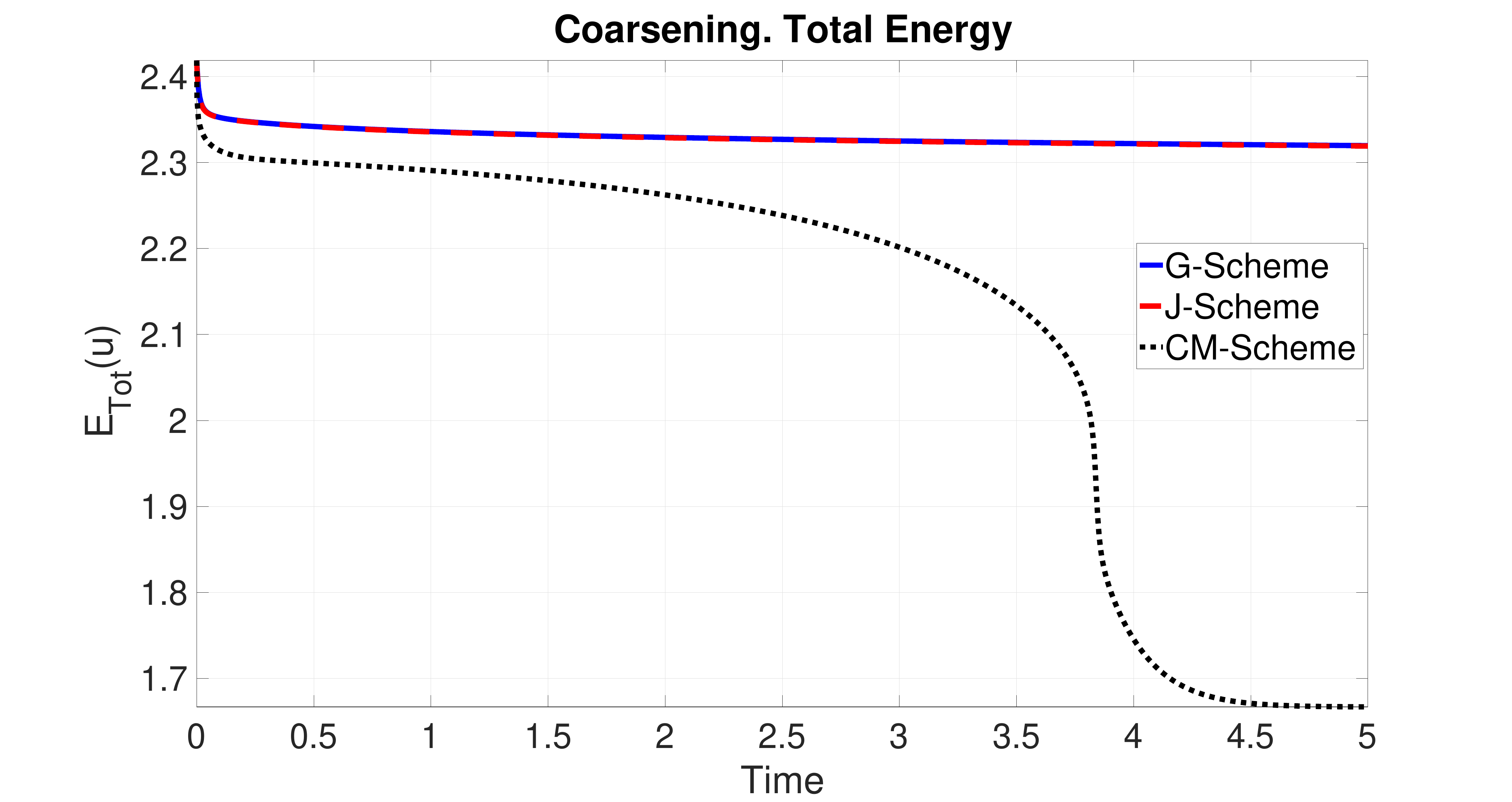}
\includegraphics[scale=0.1125]{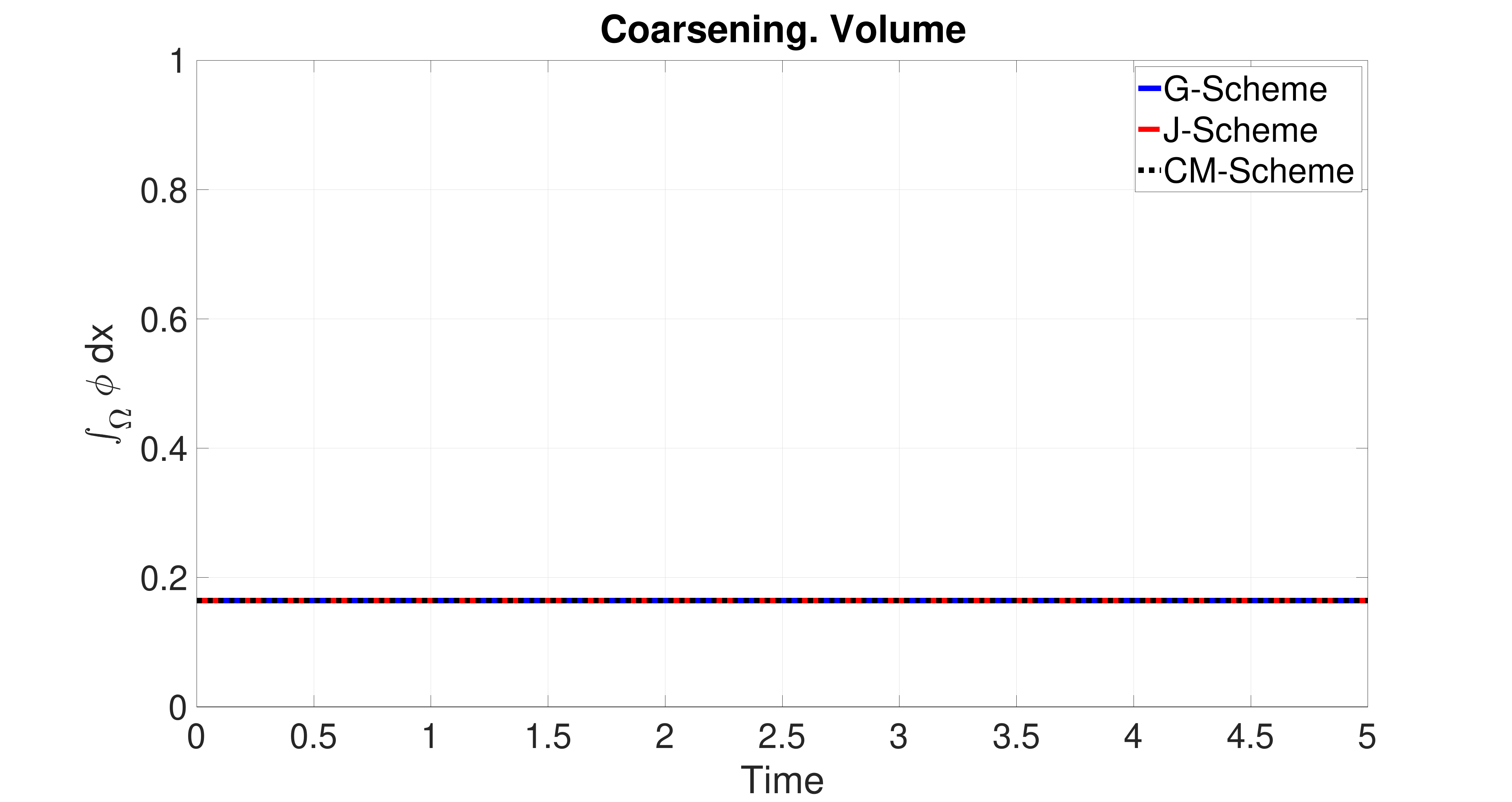}
\\
\includegraphics[scale=0.1125]{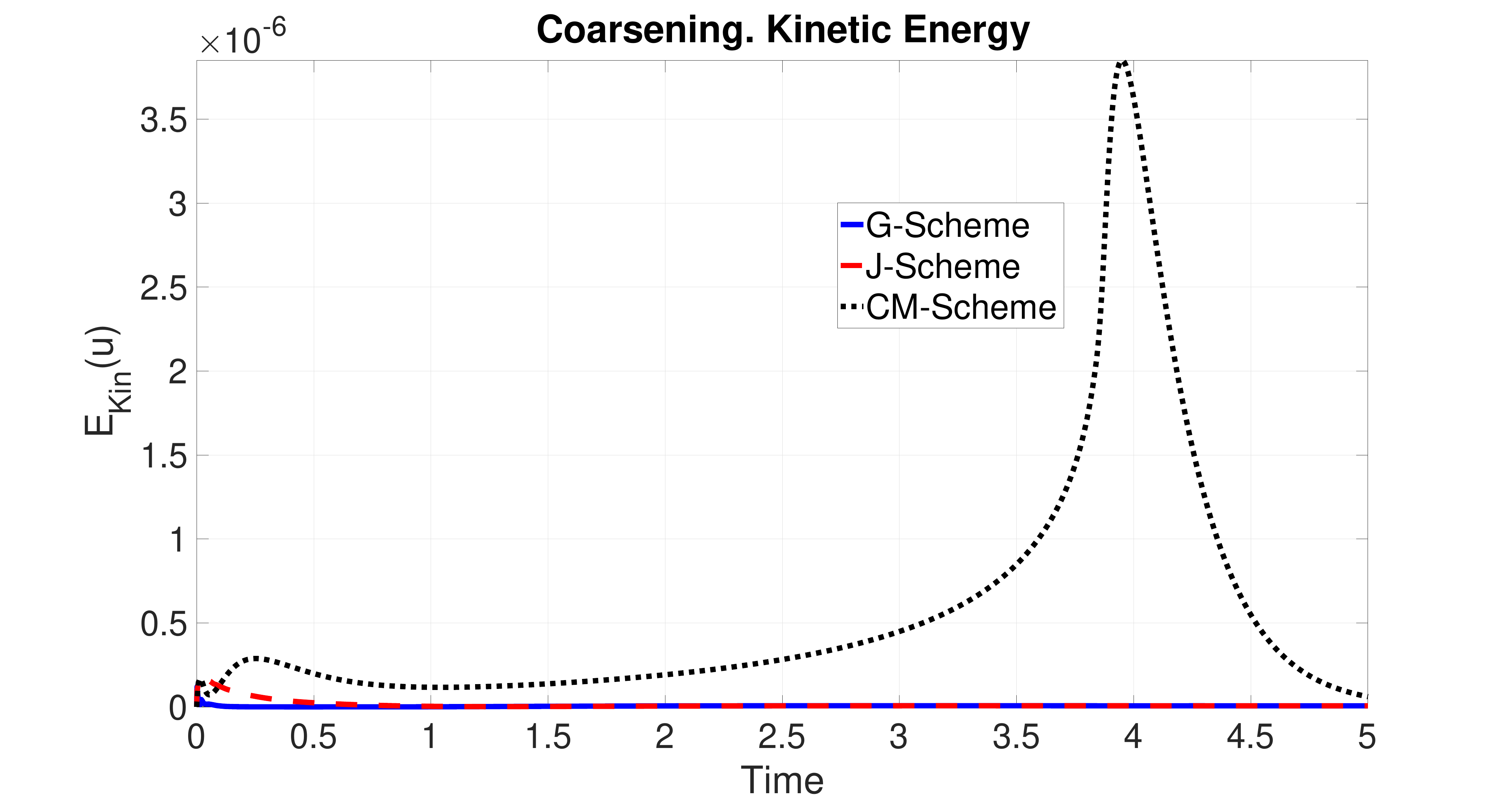}
\includegraphics[scale=0.1125]{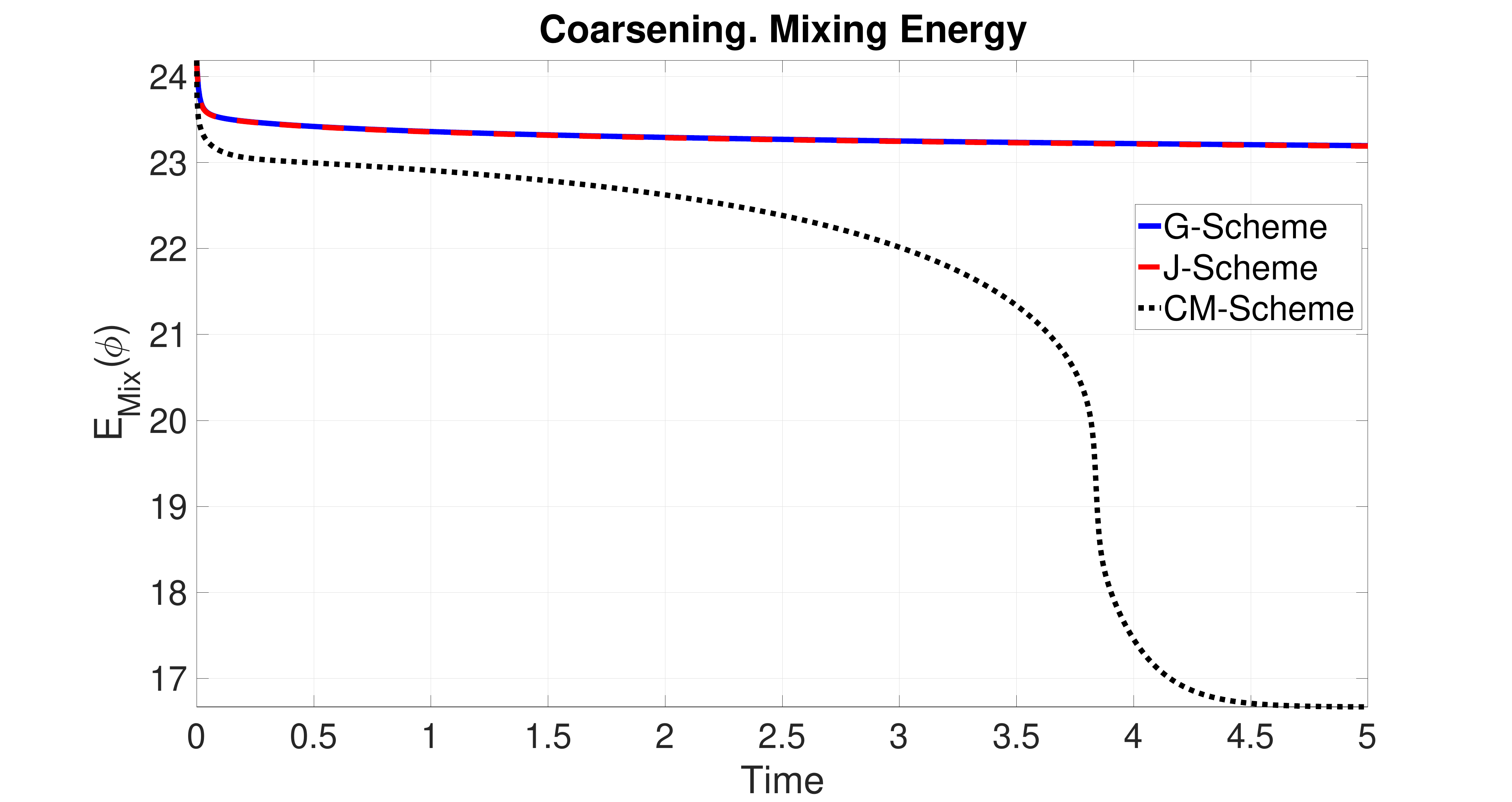}
\end{center}
\caption{Example II. Coarsening effect. Evolution of the energies and the volume of the system}\label{fig:Ex2_Coarsening_energies}
\end{figure}

\begin{figure}[H]
\begin{center}
\includegraphics[scale=0.1125]{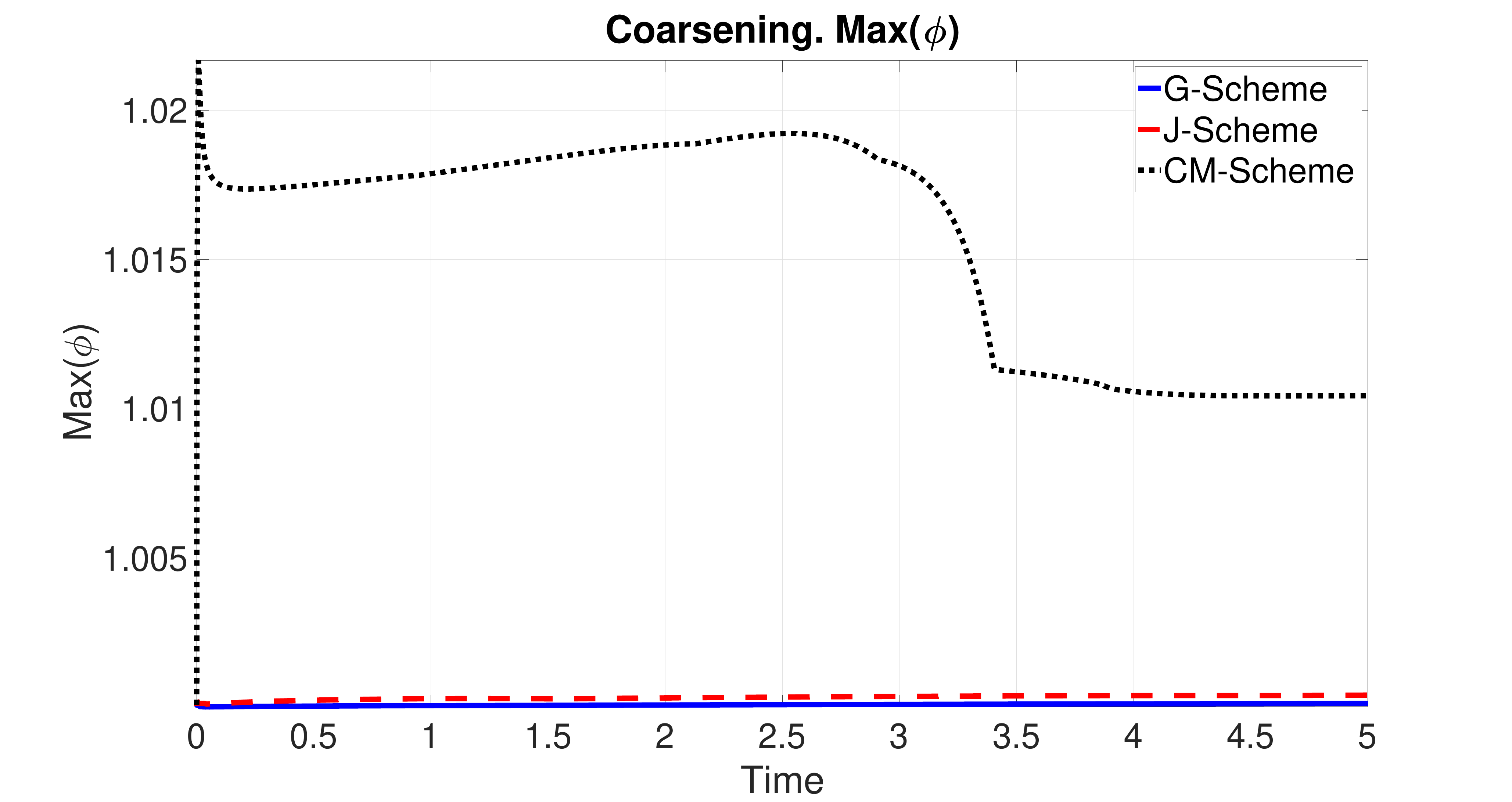}
\includegraphics[scale=0.1125]{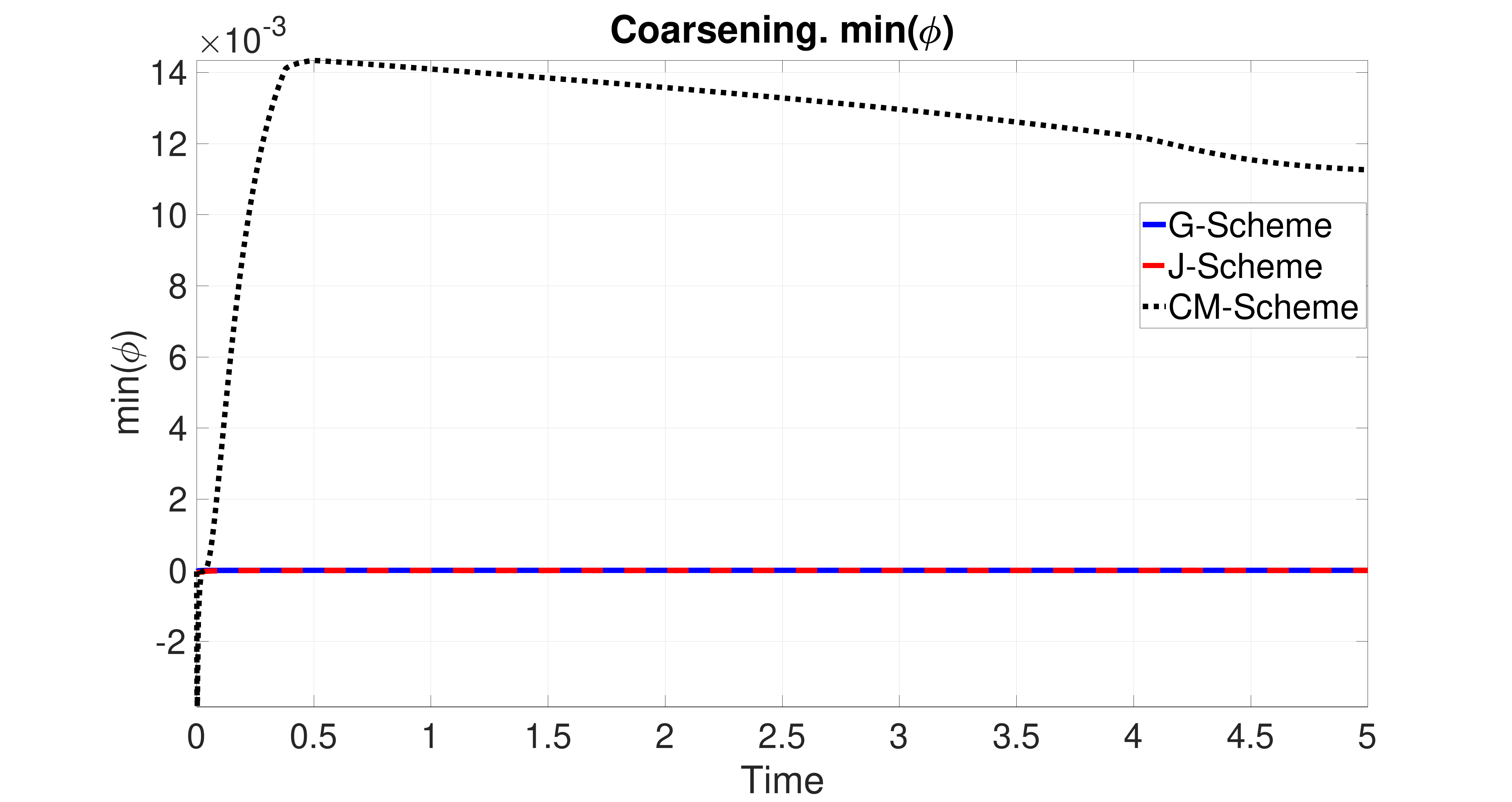}
\\
\includegraphics[scale=0.1125]{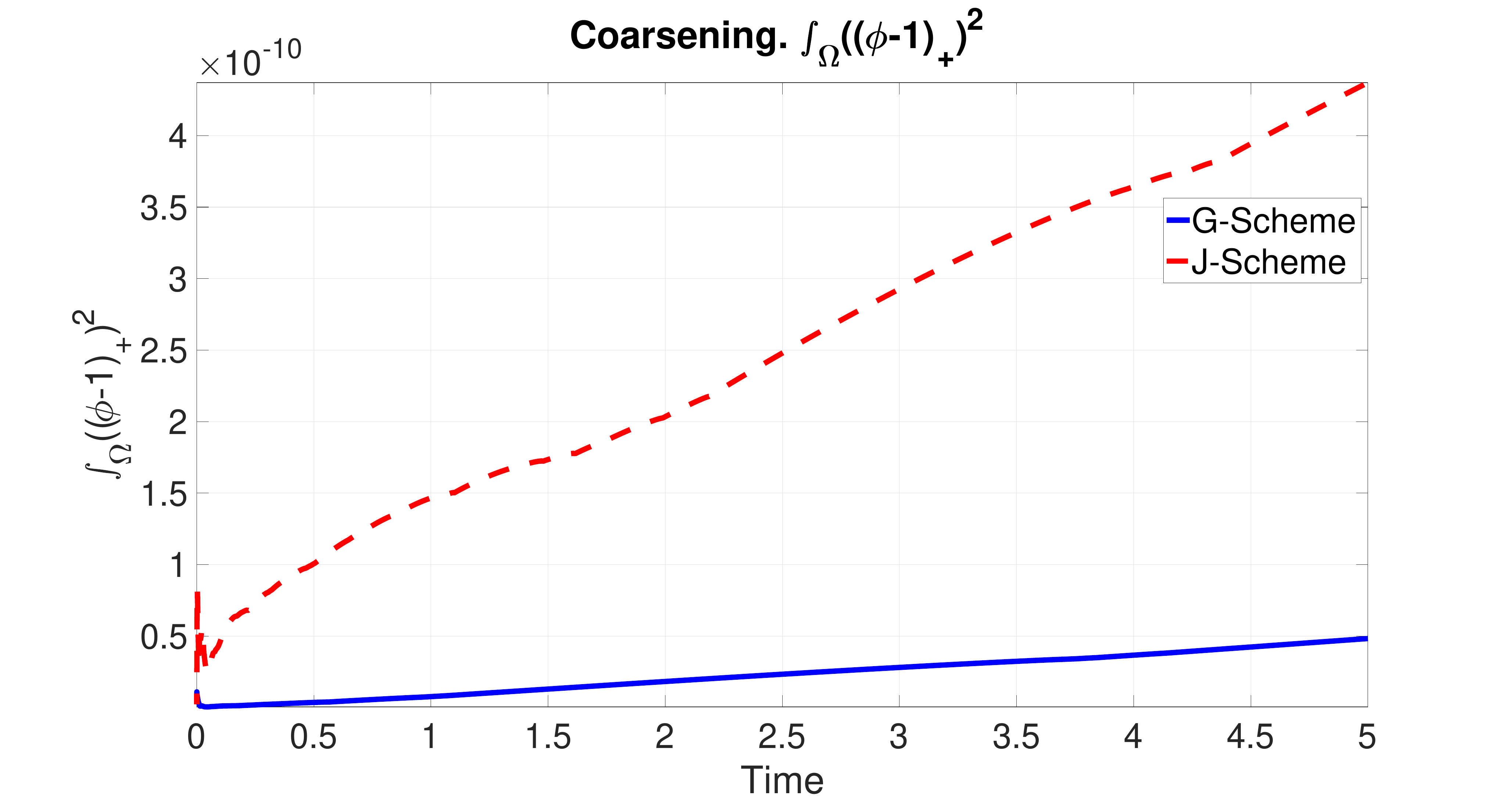}
\includegraphics[scale=0.1125]{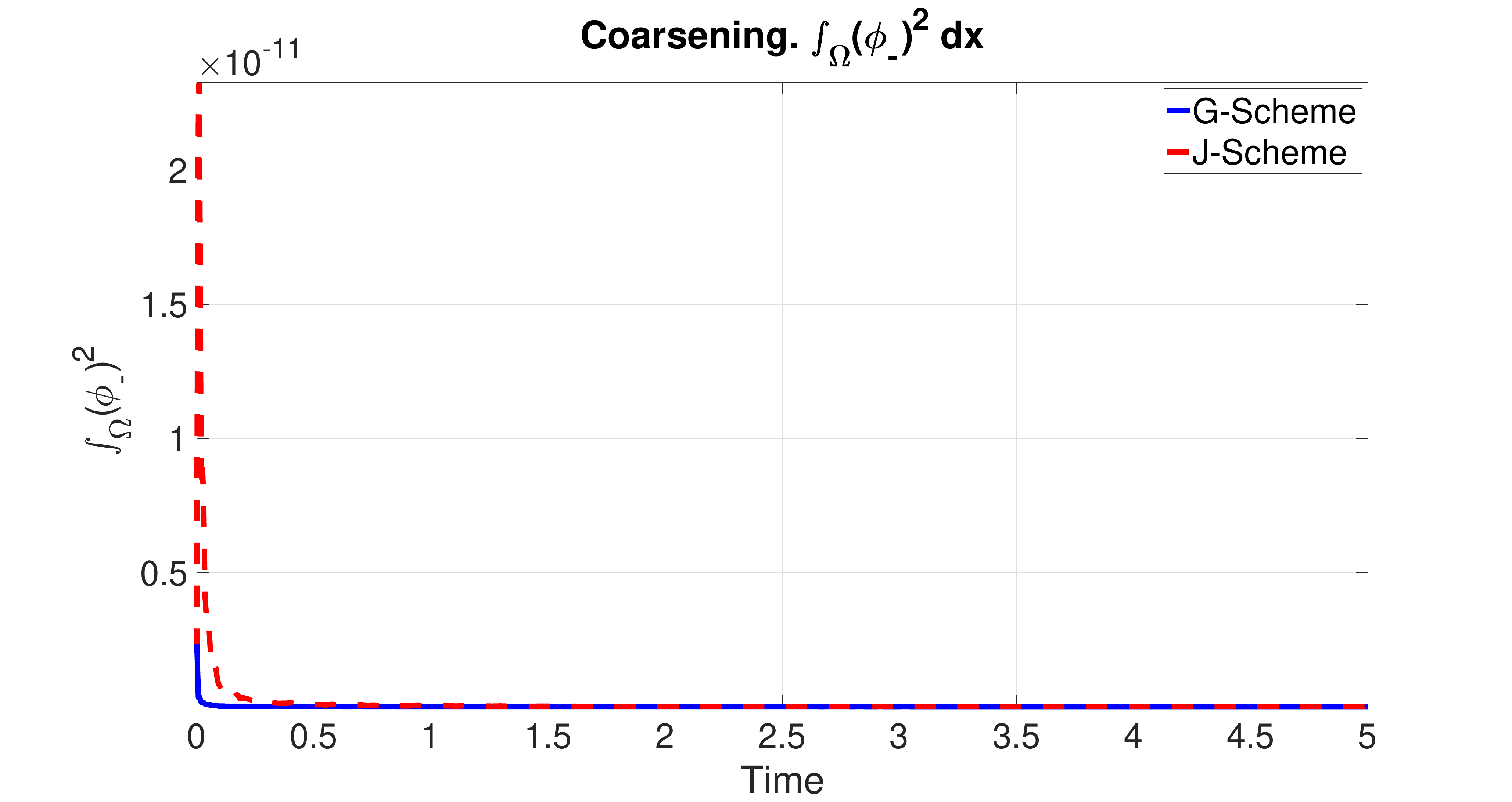}
\end{center}
\caption{Example II. Coarsening effect.  Evolution of the bounds of $\phi$.}\label{fig:Ex2_Coarsening_bounds}
\end{figure}


\subsection{Example III. Rotating fluids}

In this part of the section we illustrate the validity of the numerical schemes to capture more realistic (and complicated) dynamics. To this end we have studied the dynamics produced by a rotating fluid, in the sense that there is an external force applied to the system through the boundary conditions (note that with the introduction of an external force, no dissipation of the total energy is expected anymore). In particular, 
\beq
\u(\x,0)
=\u(\x,t)\Big|_{\partial\Omega}
=2\pi(\cos(\pi y)\sin(\pi x)^2,-2\cos(\pi x)\sin(\pi x)\sin(\pi y))^T\,.
\eeq

We can observe the complete dynamics of the phase field function $\phi$ in Figures~\ref{fig:ExIIIDynG}, \ref{fig:ExIIIDynJ} and \ref{fig:ExIIIDynCM} ($G_\varepsilon$-scheme, $J_\varepsilon$-scheme and CM-scheme, respectively).
The dynamics of $G_\varepsilon$-scheme and $J_\varepsilon$-scheme coincide, but they differ from the dynamics of the constant mobility case computed using CM-scheme. In all the cases the balls rotate, merge, elongate and separate until the system reaches a configuration that does not change much in time. In any case, the imposition of this external force  result in very non-trivial dynamics for both mobilities (constant and non-constant).

\begin{figure}[H]
\begin{center}
\includegraphics[scale=0.1125]{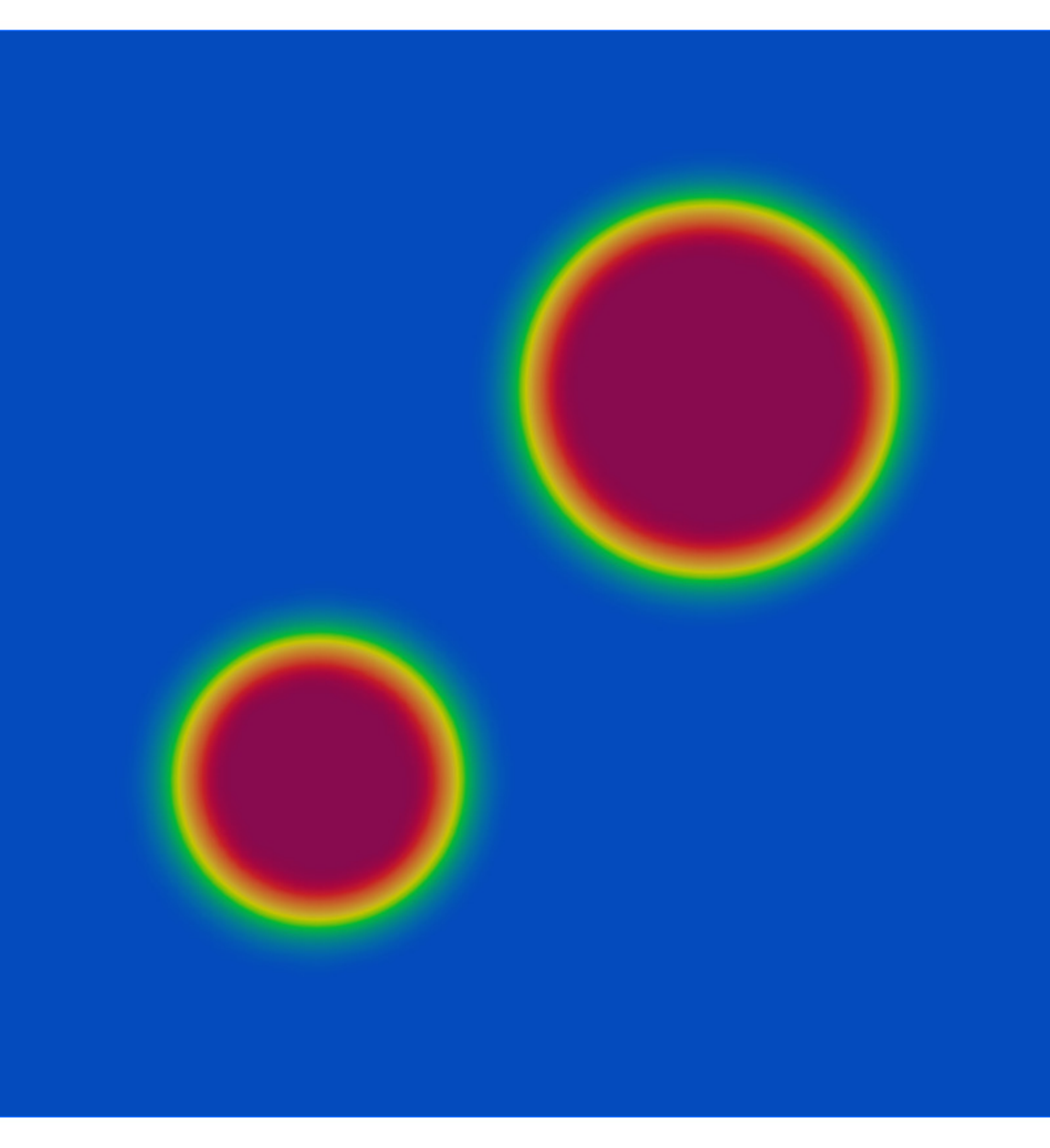}
\includegraphics[scale=0.1125]{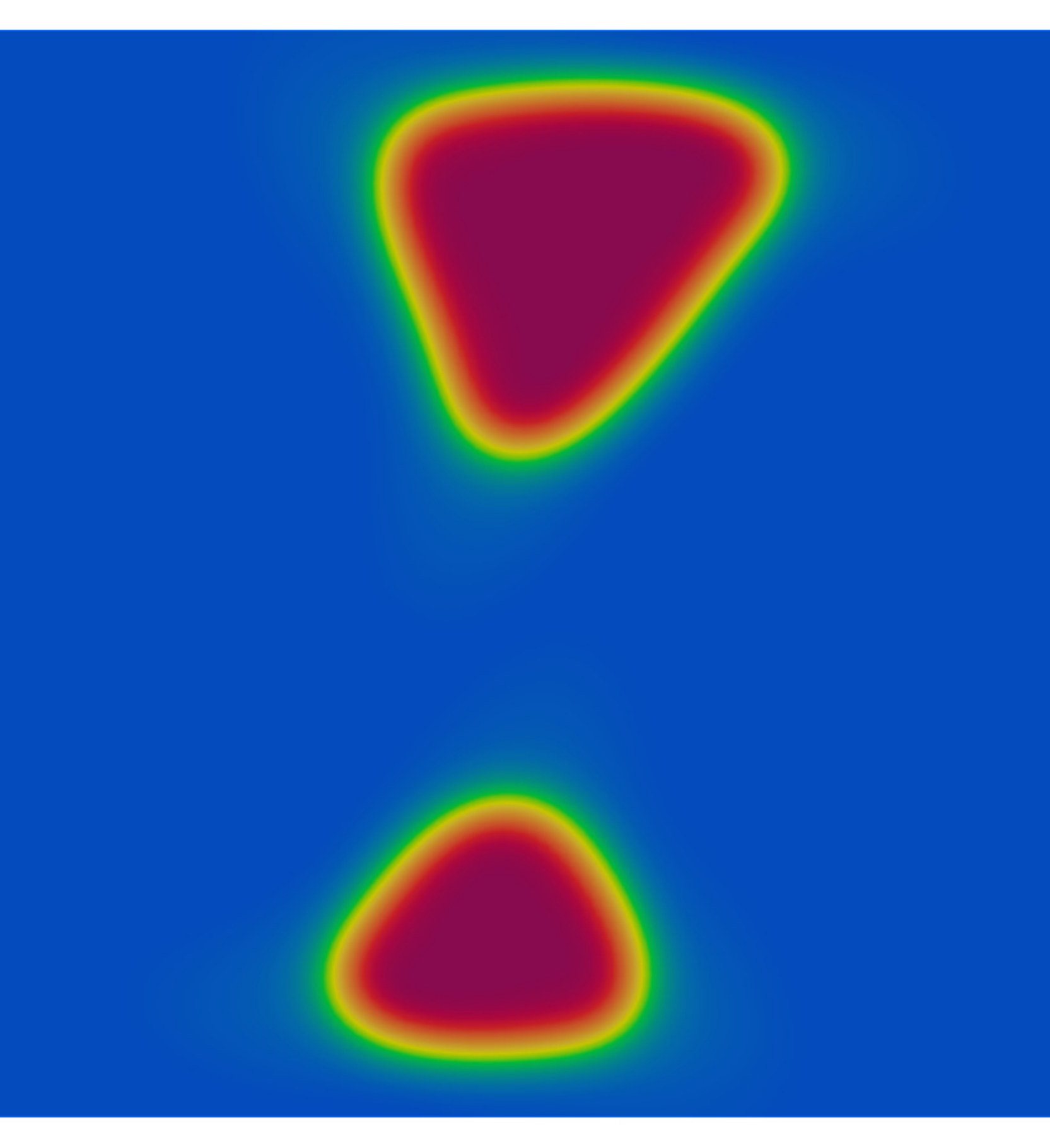}
\includegraphics[scale=0.1125]{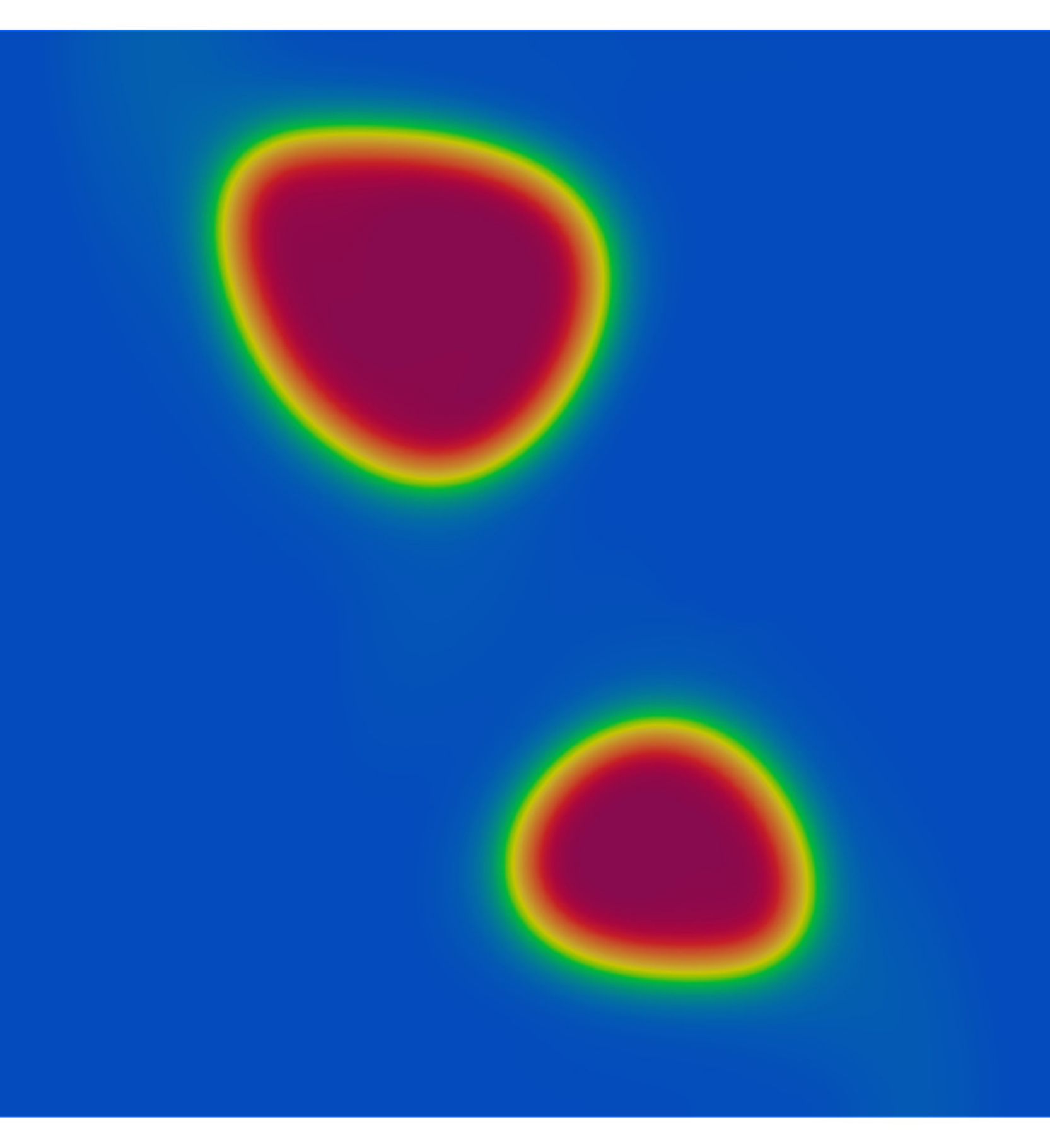}
\includegraphics[scale=0.1125]{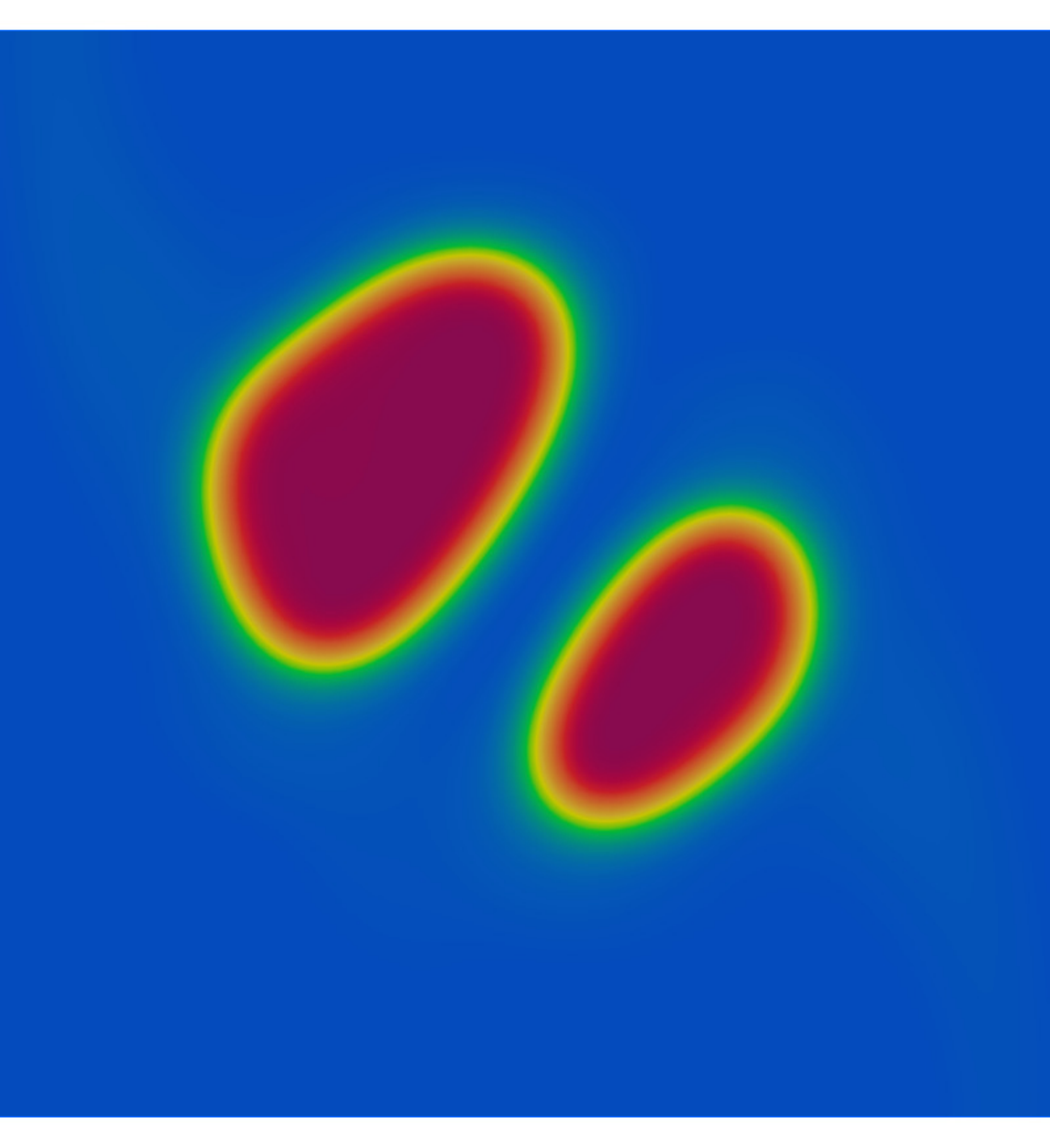}
\includegraphics[scale=0.1125]{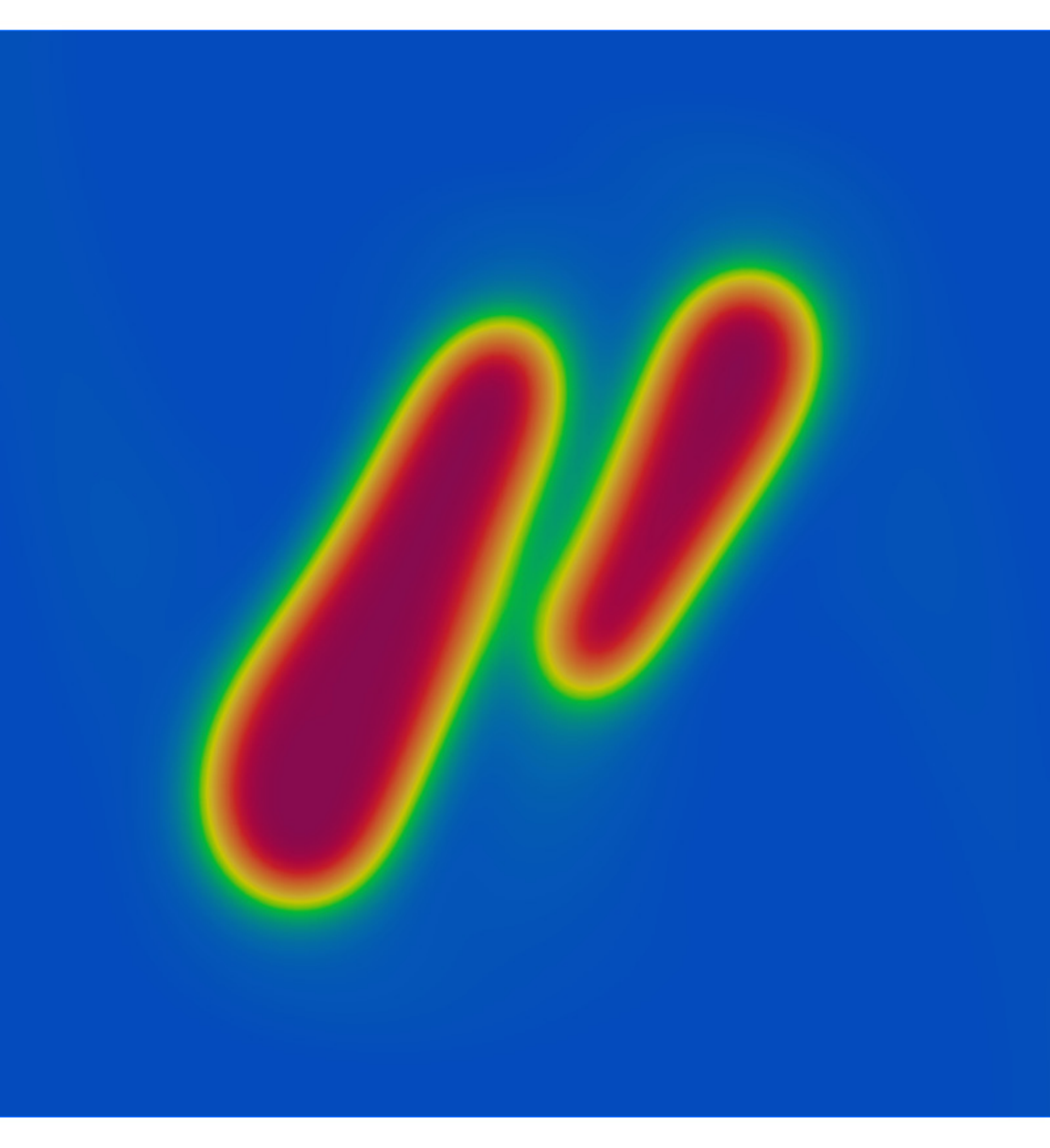}
\\
\includegraphics[scale=0.1125]{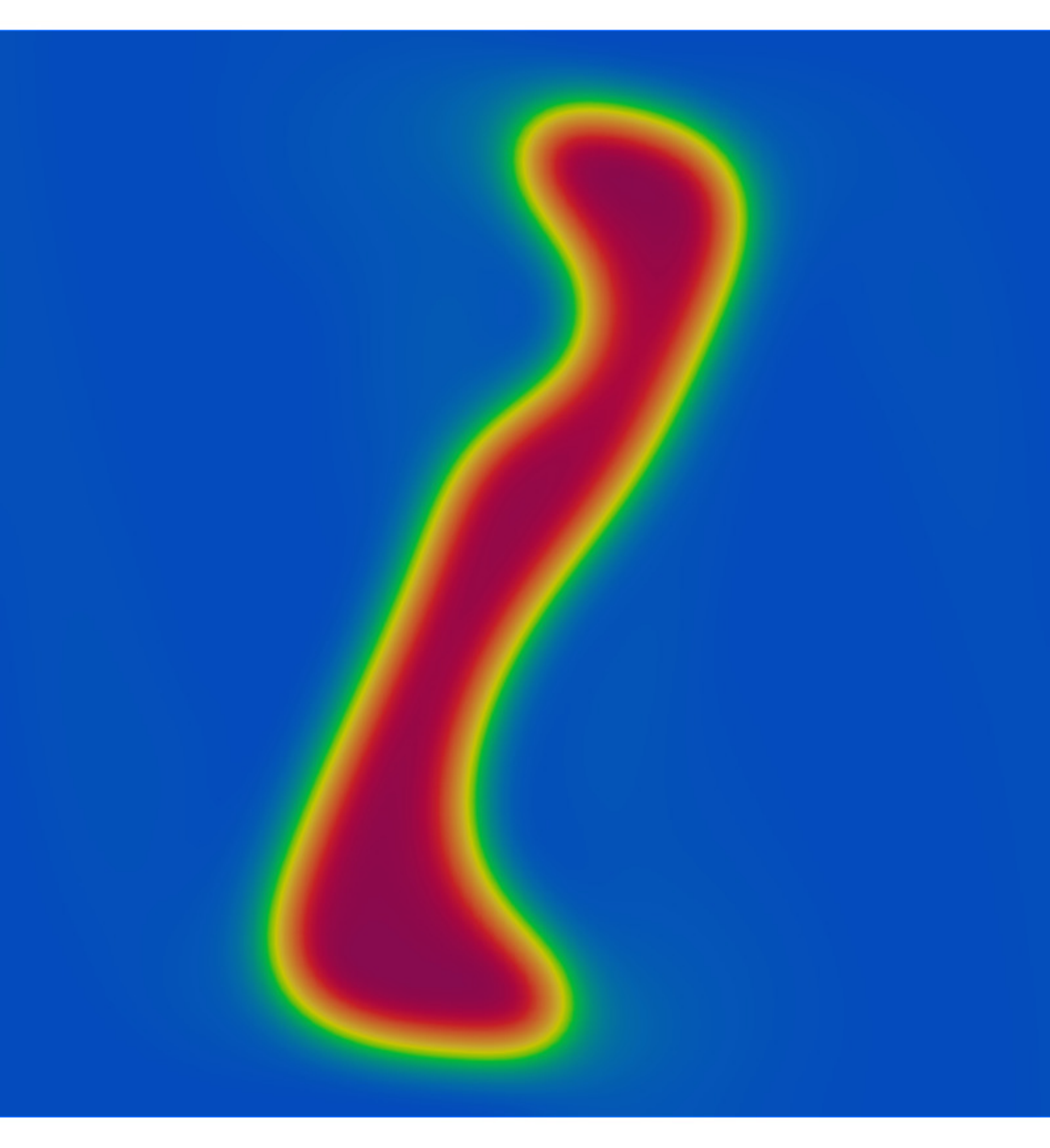}
\includegraphics[scale=0.1125]{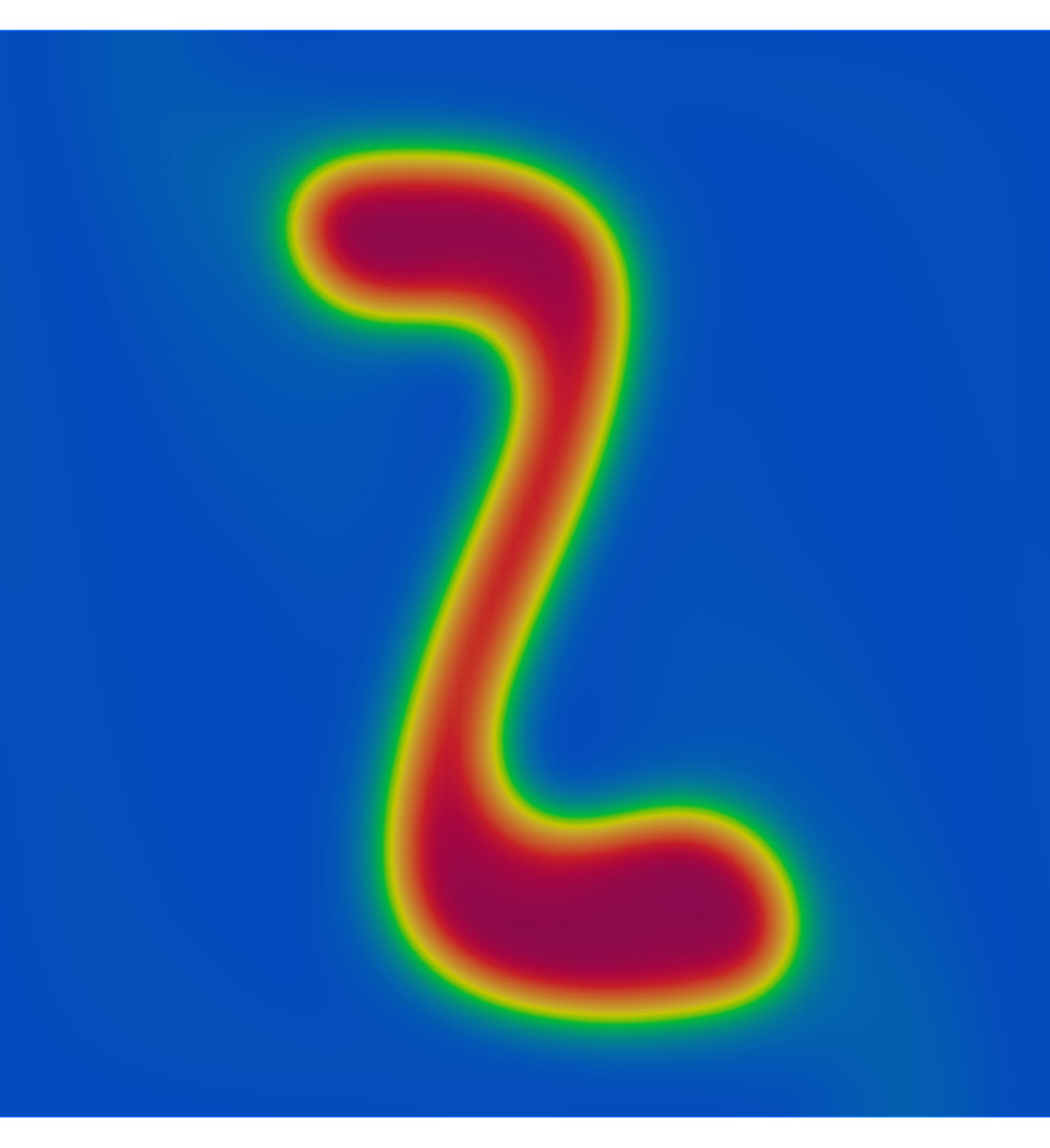}
\includegraphics[scale=0.1125]{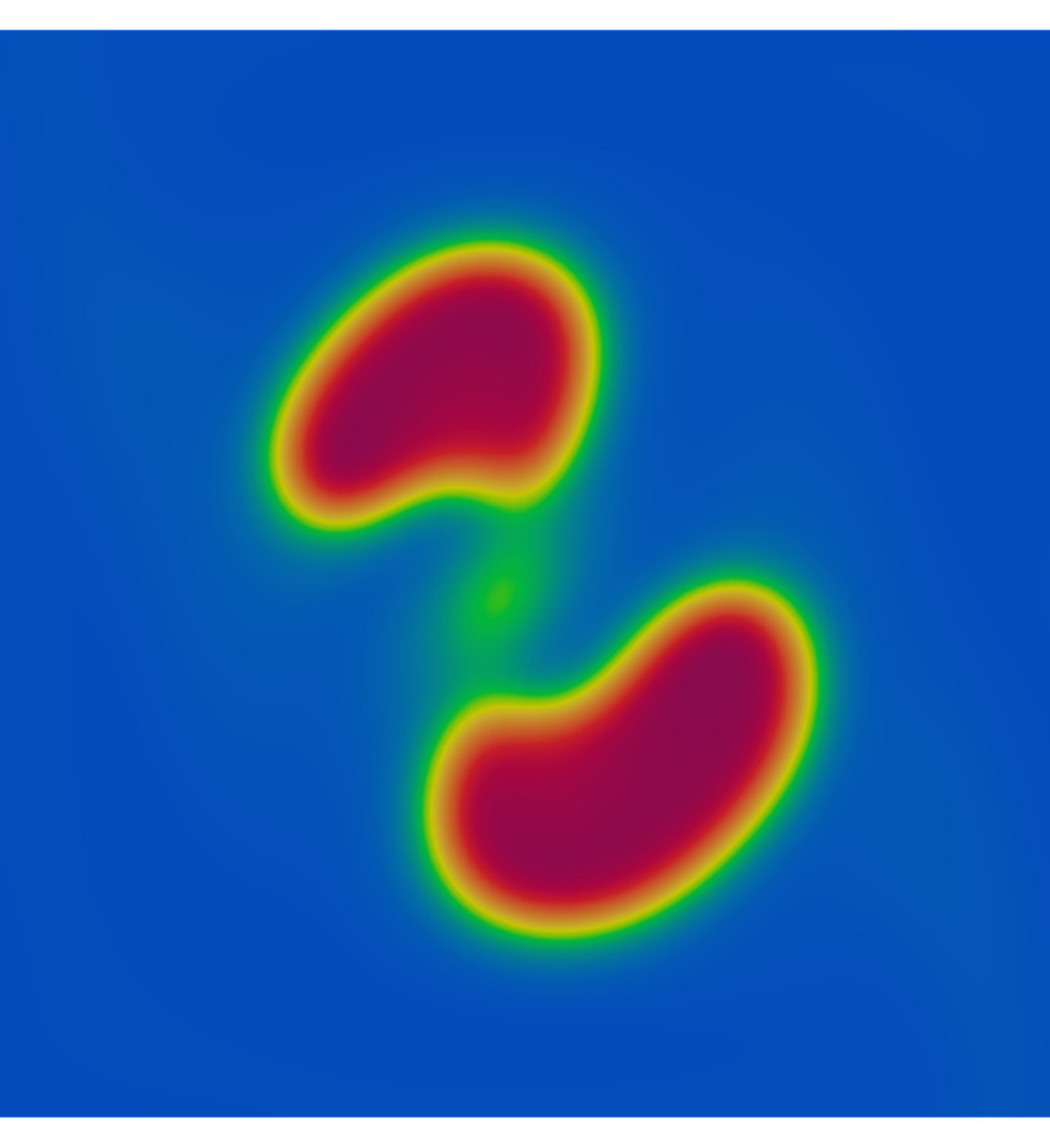}
\includegraphics[scale=0.1125]{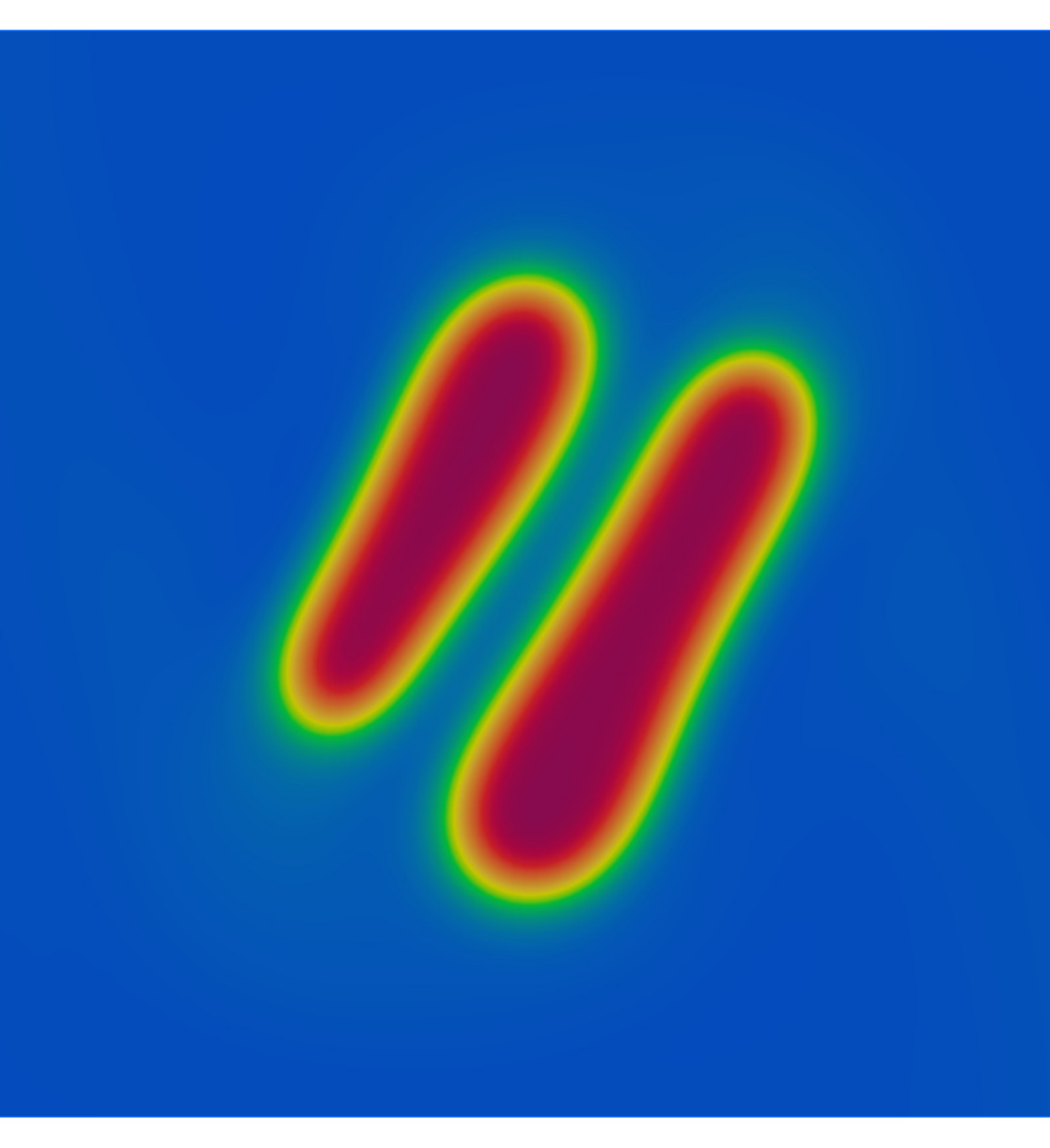}
\includegraphics[scale=0.1125]{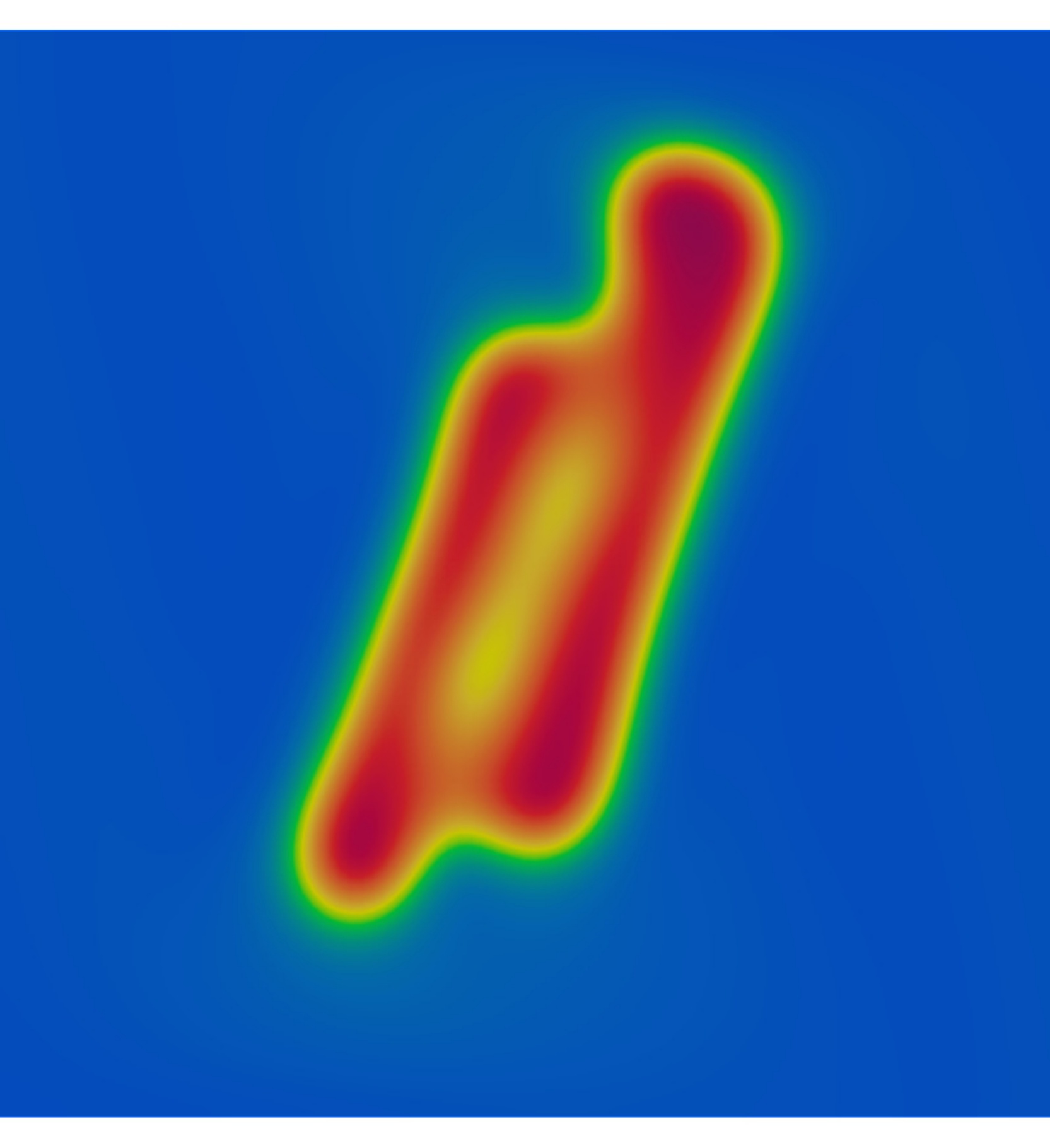}
\\
\includegraphics[scale=0.1125]{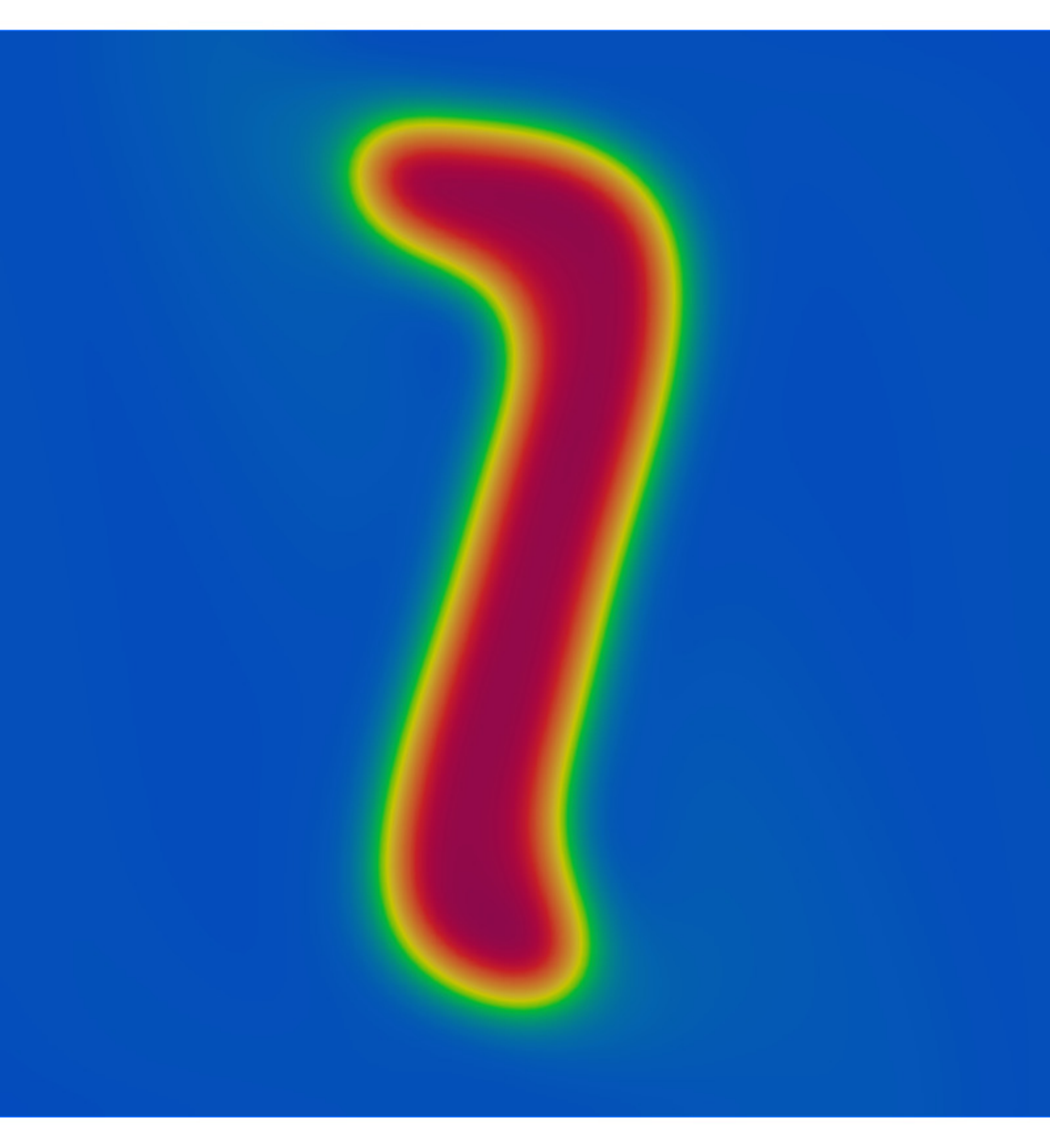}
\includegraphics[scale=0.1125]{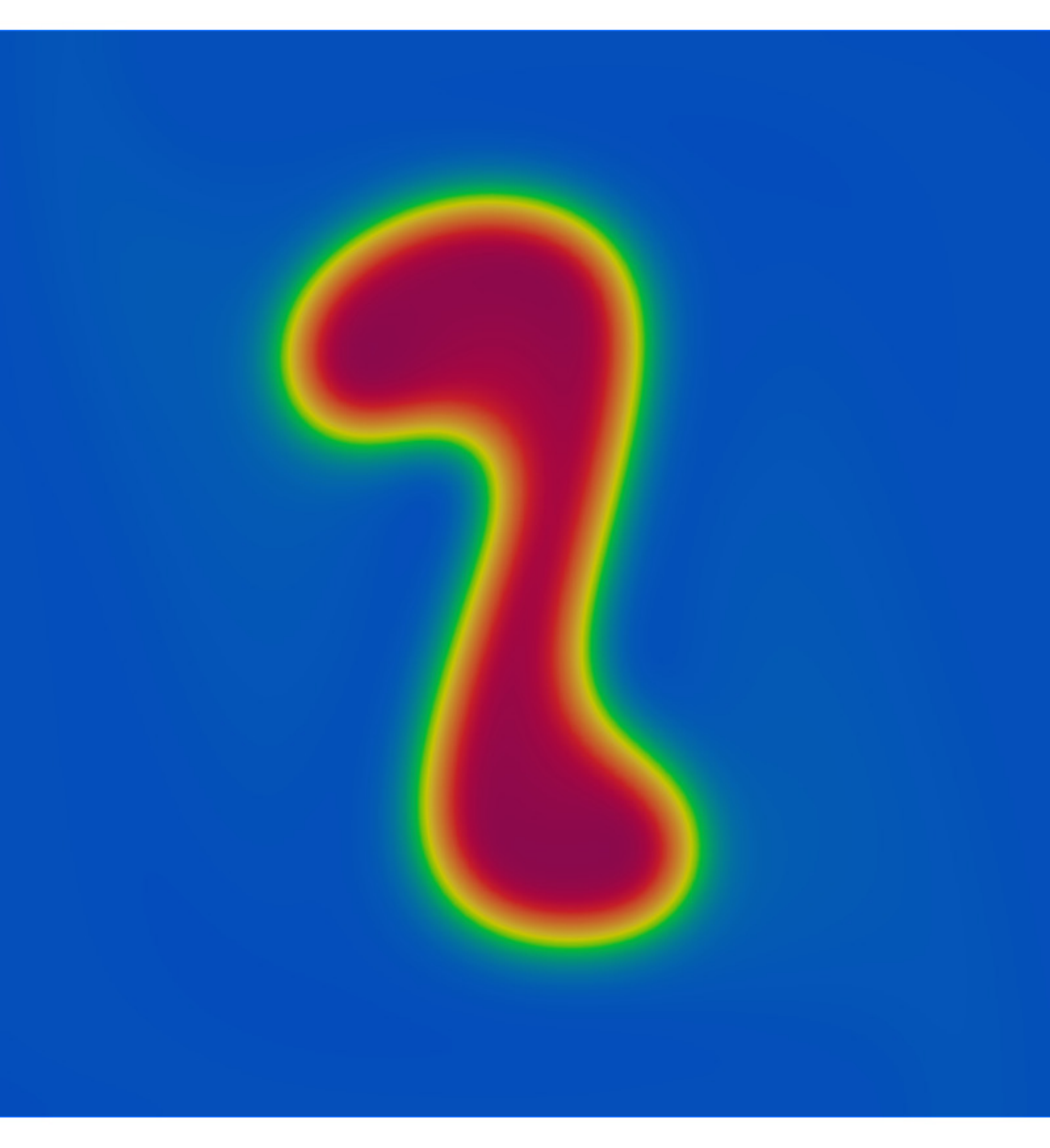}
\includegraphics[scale=0.1125]{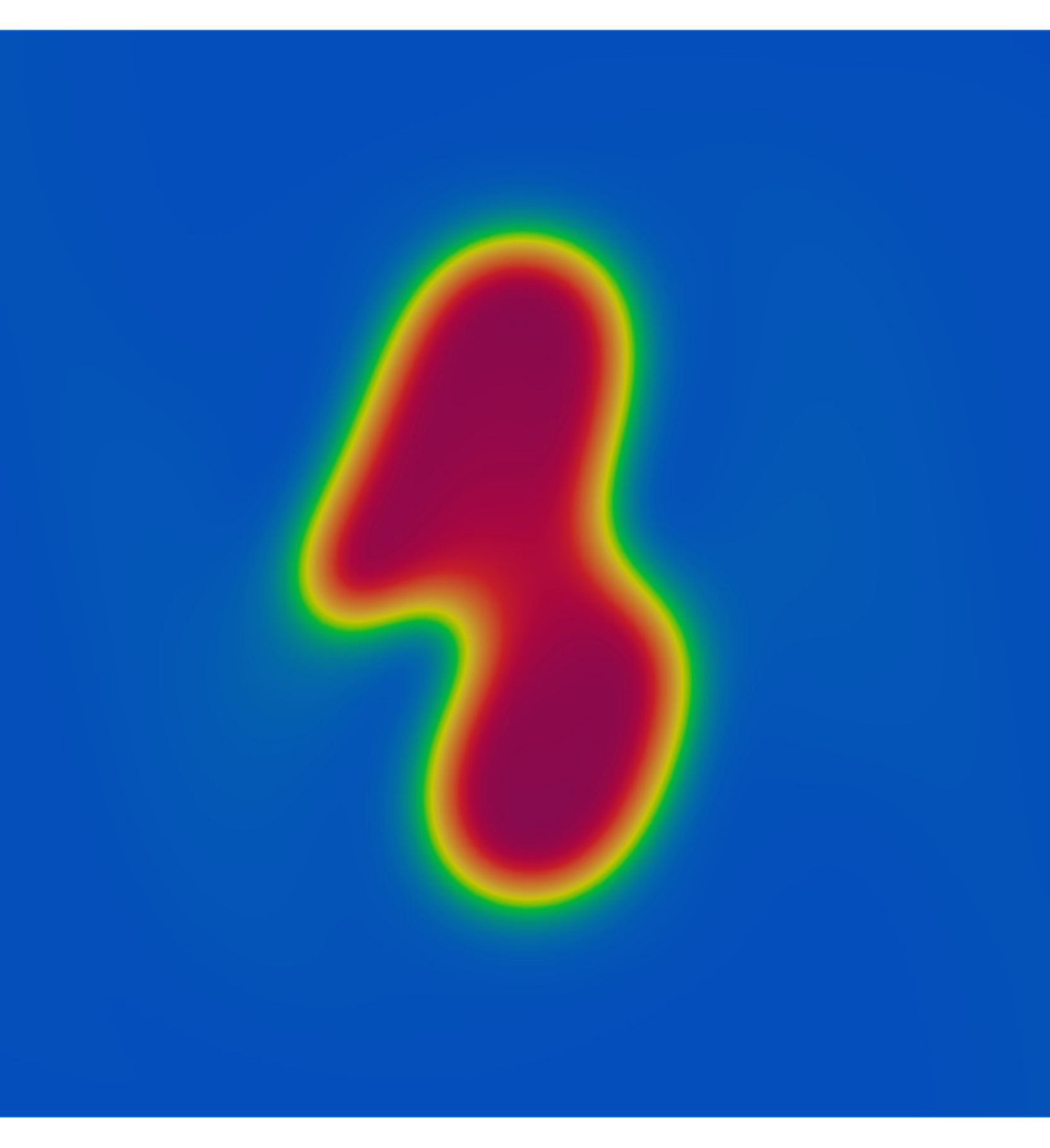}
\includegraphics[scale=0.1125]{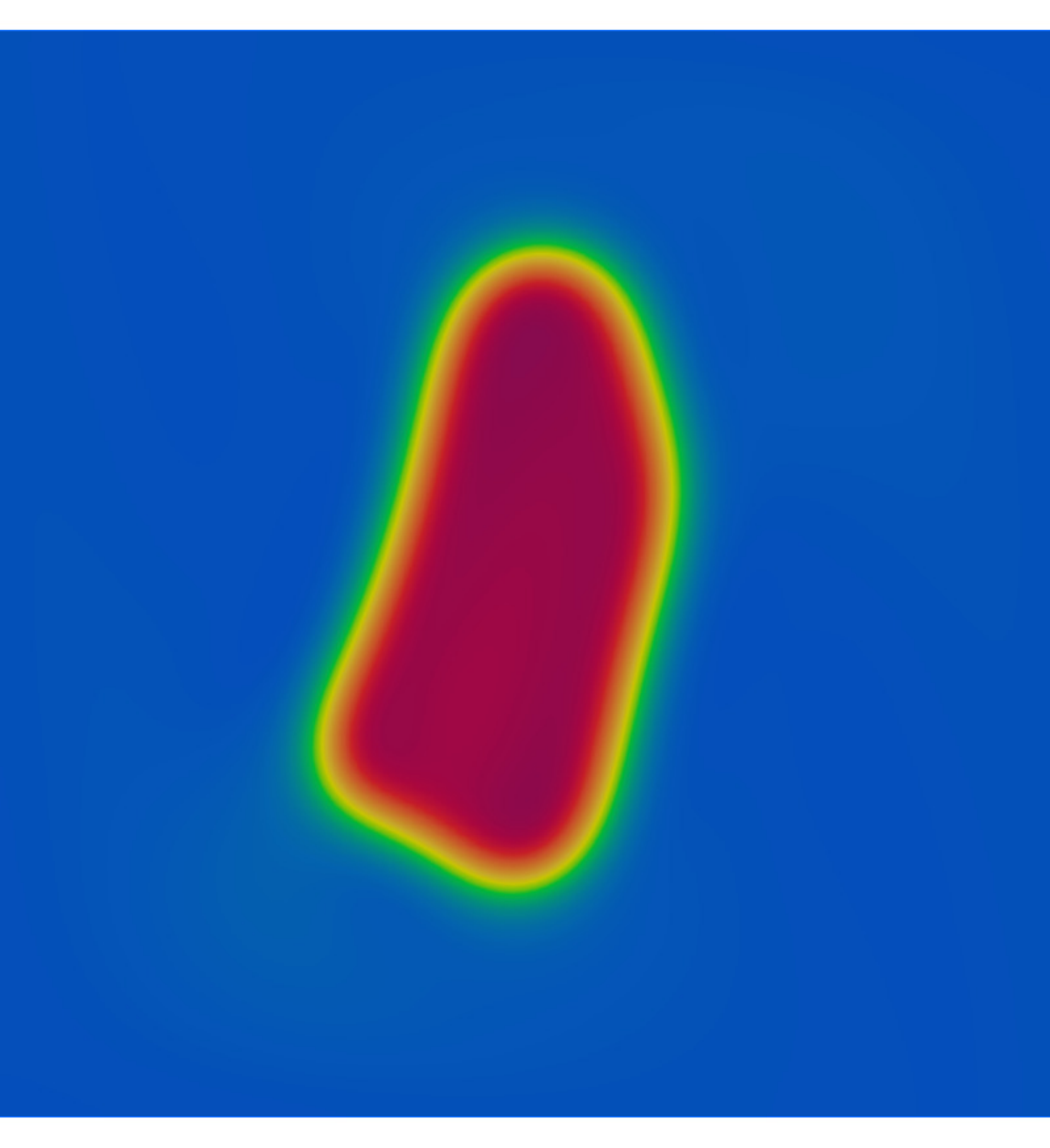}
\includegraphics[scale=0.1125]{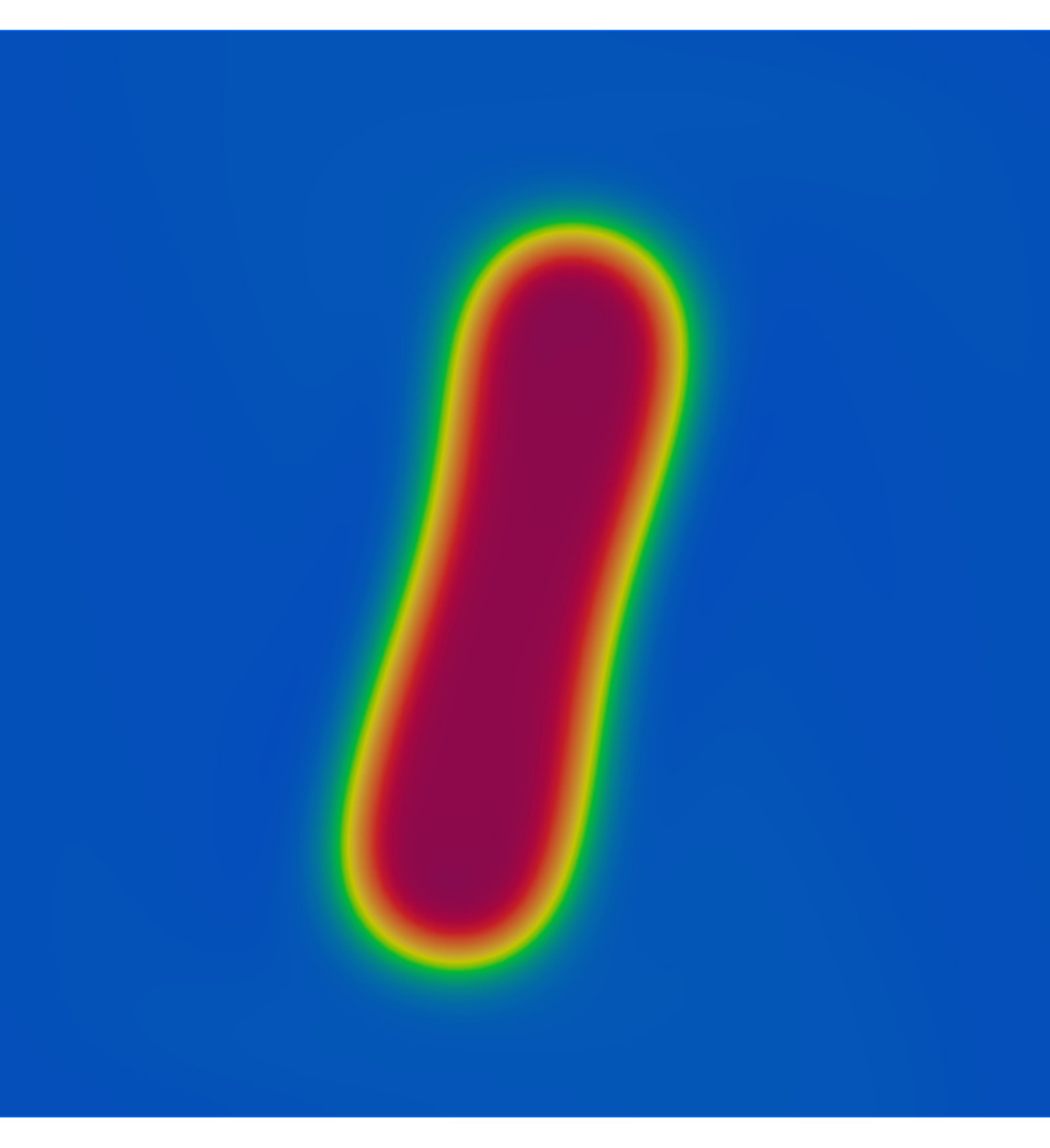}
\\
\includegraphics[scale=0.1125]{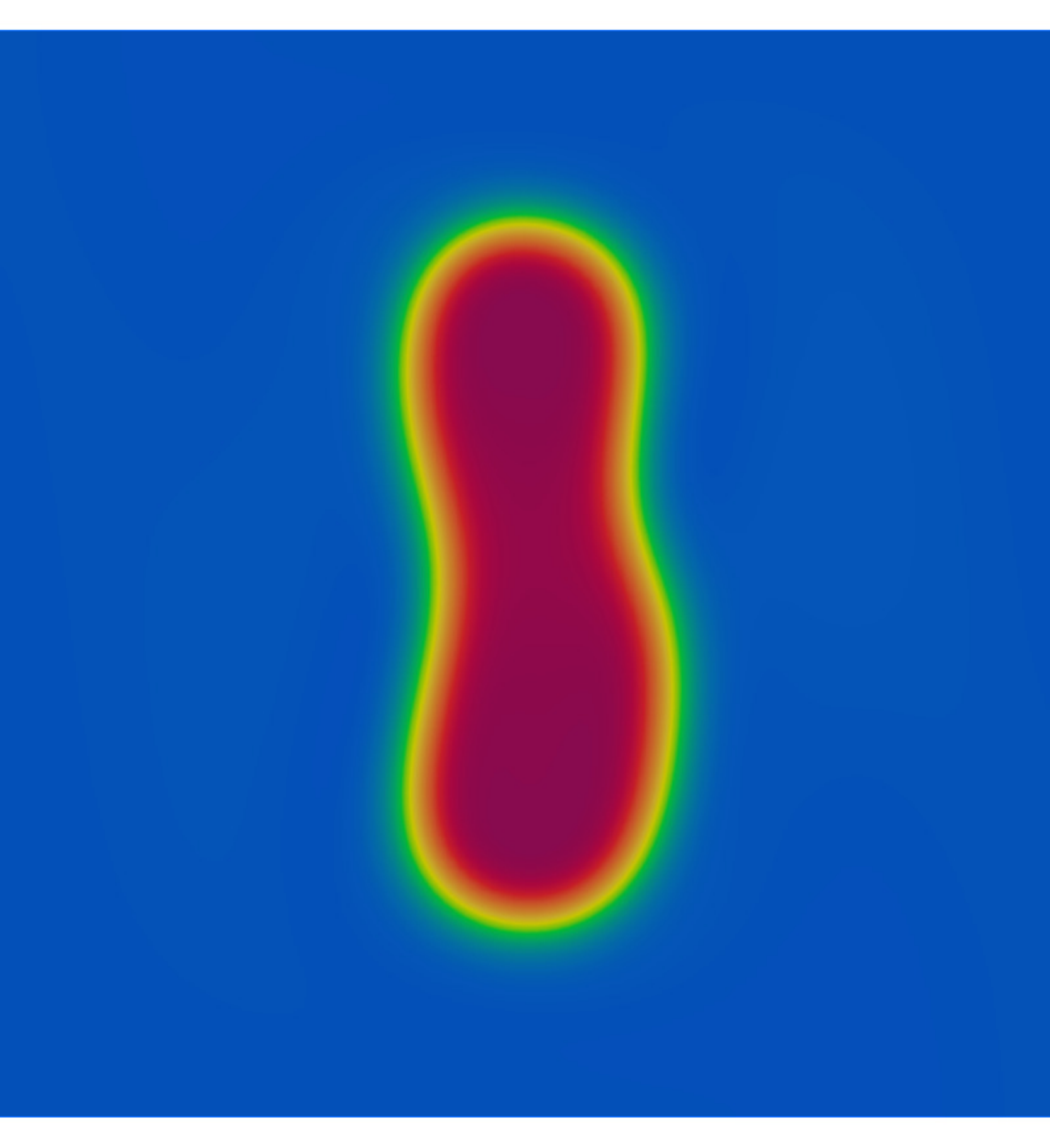}
\includegraphics[scale=0.1125]{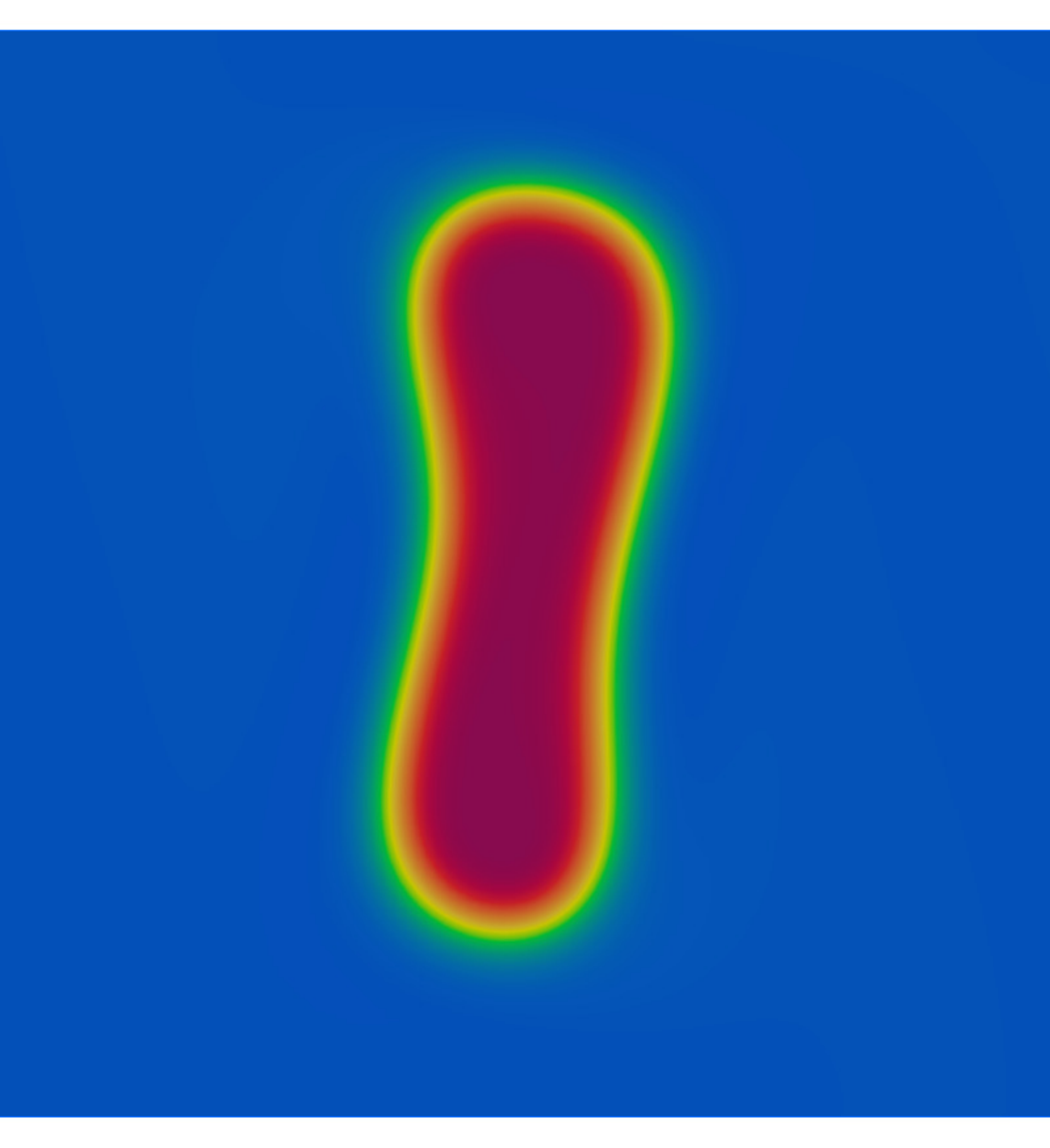}
\includegraphics[scale=0.1125]{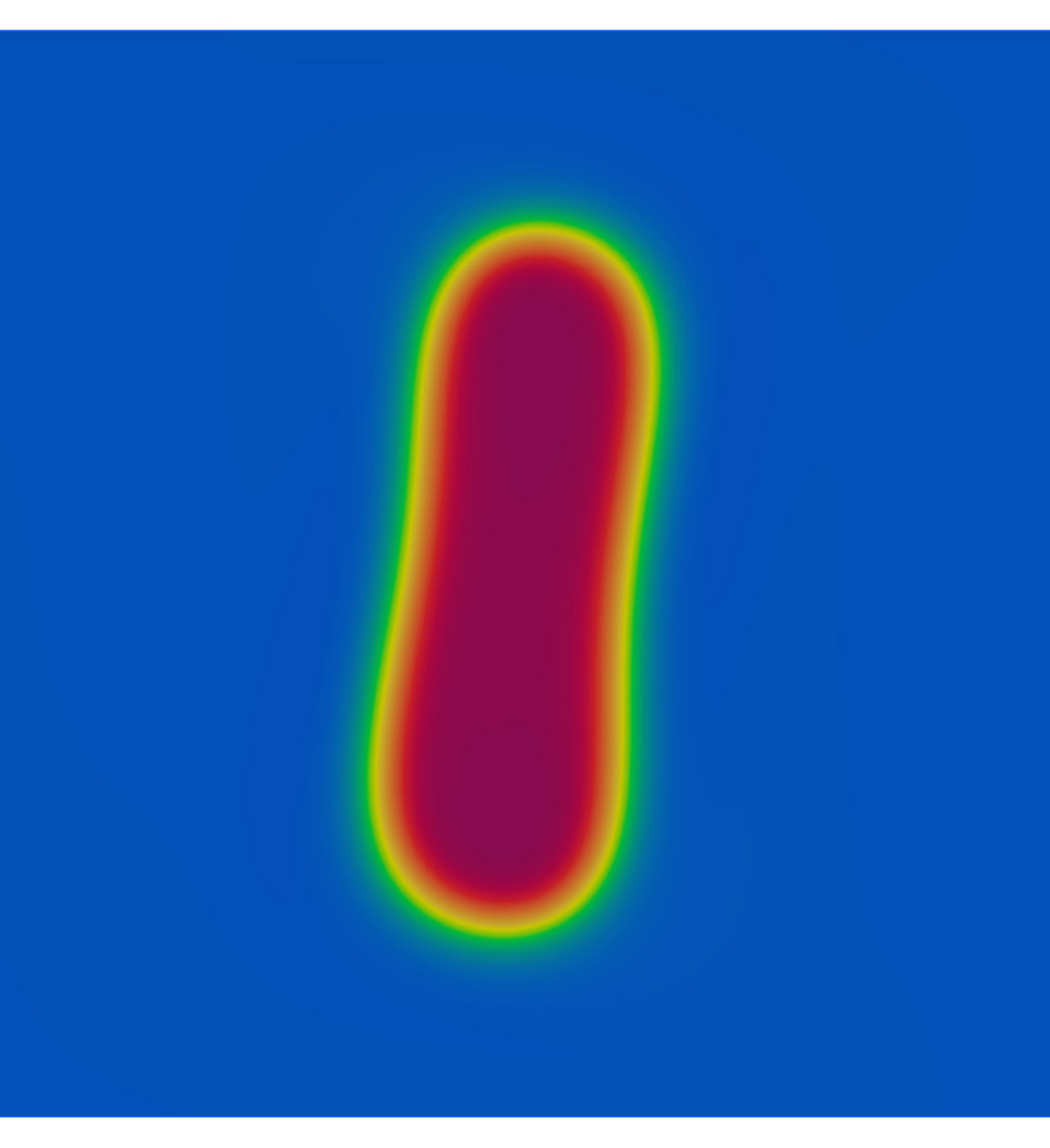}
\includegraphics[scale=0.1125]{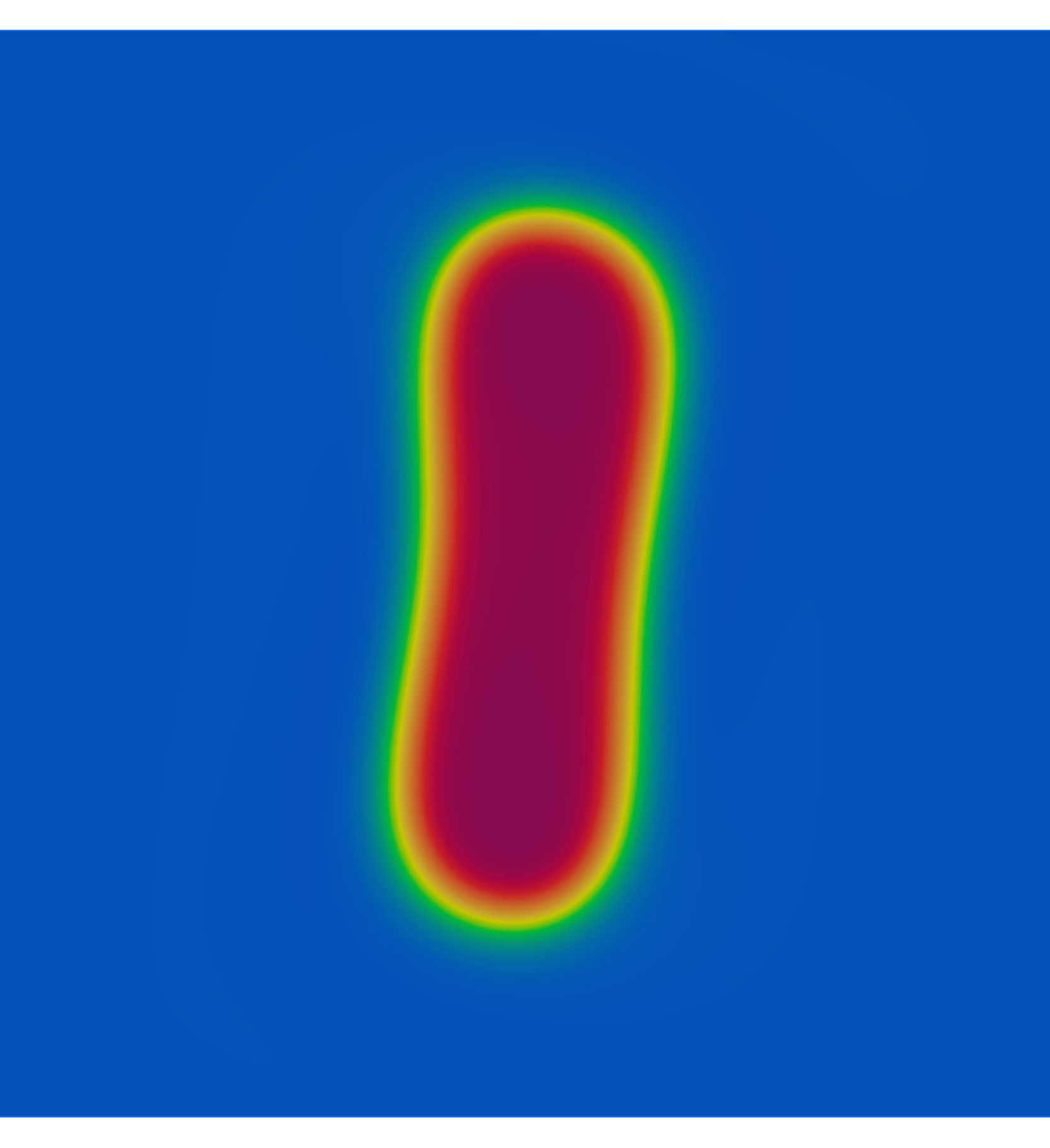}
\includegraphics[scale=0.1125]{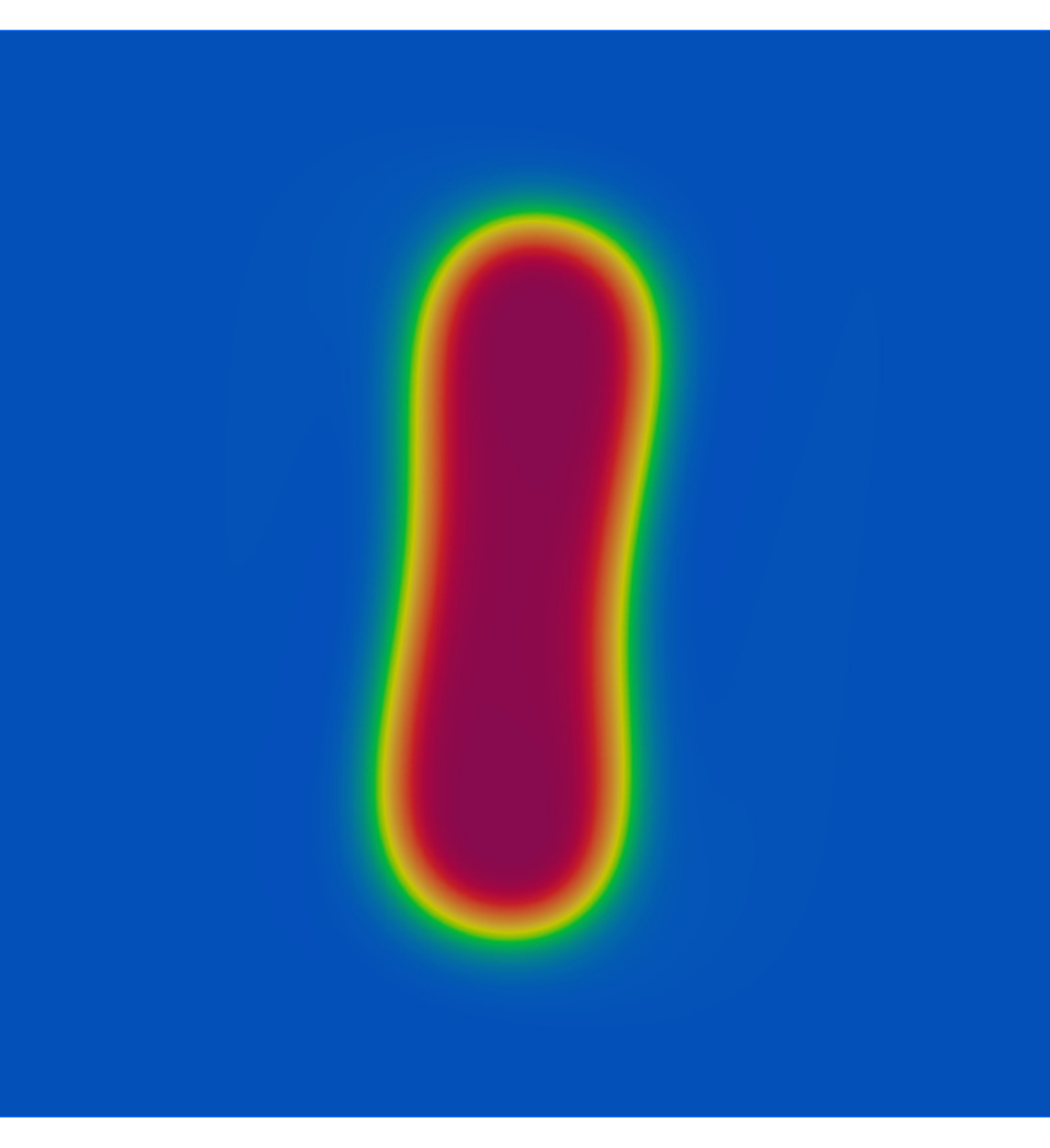}
\end{center}
\caption{Example III. Rotating fluids. Evolution in time of $\phi$ for $G_\varepsilon$-scheme at times $t=0, 0.15, 0.3, 0.45, 0.6, 0.75, 0.9, 1.05, 1.2, 1.35, 1.5, 1.65, 1.8, 1.95, 2.1, 2.5, 3.0, 3.5, 4.25$ and $5$.}\label{fig:ExIIIDynG}
\end{figure}

\begin{figure}[H]
\begin{center}
\includegraphics[scale=0.1125]{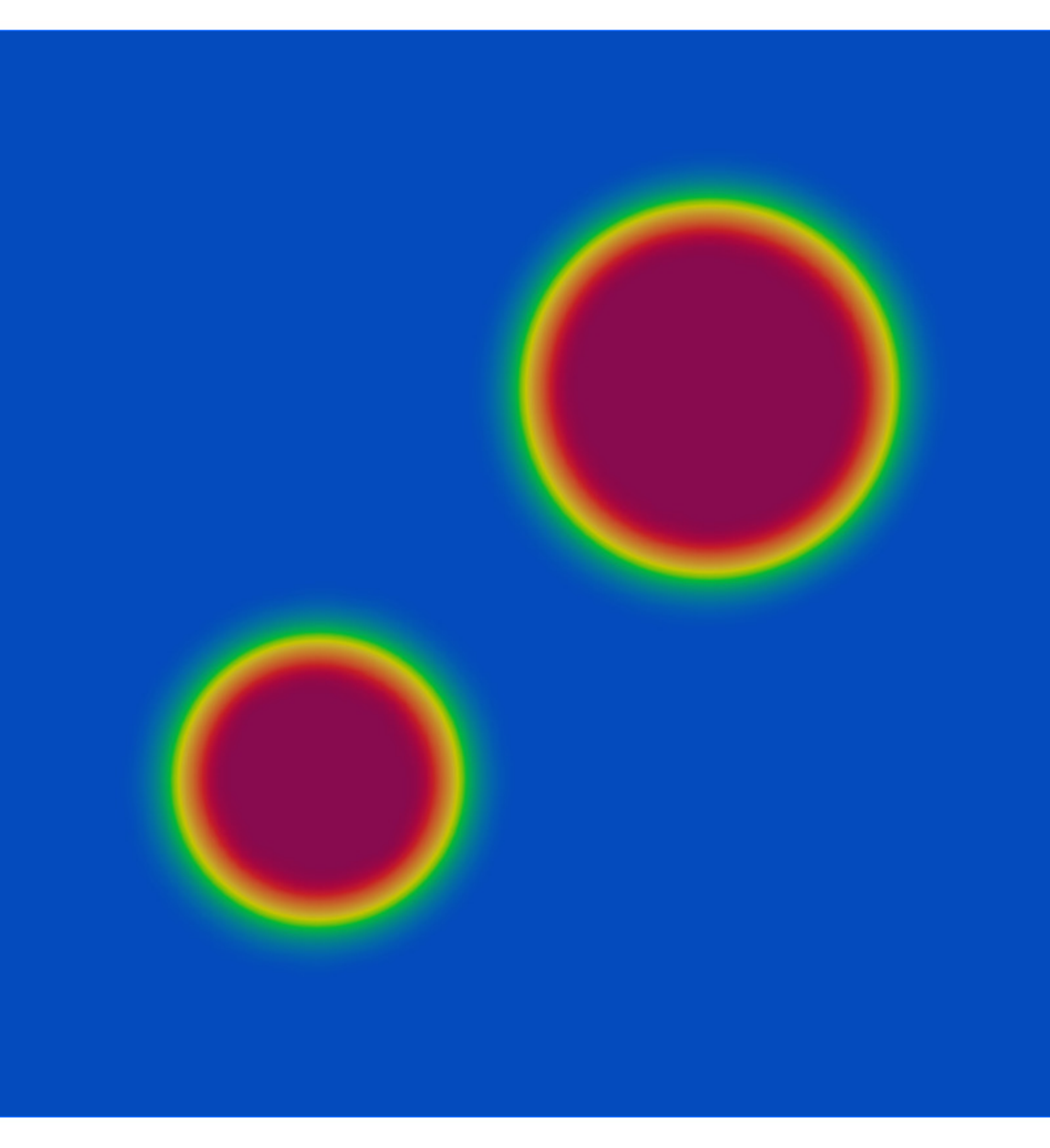}
\includegraphics[scale=0.1125]{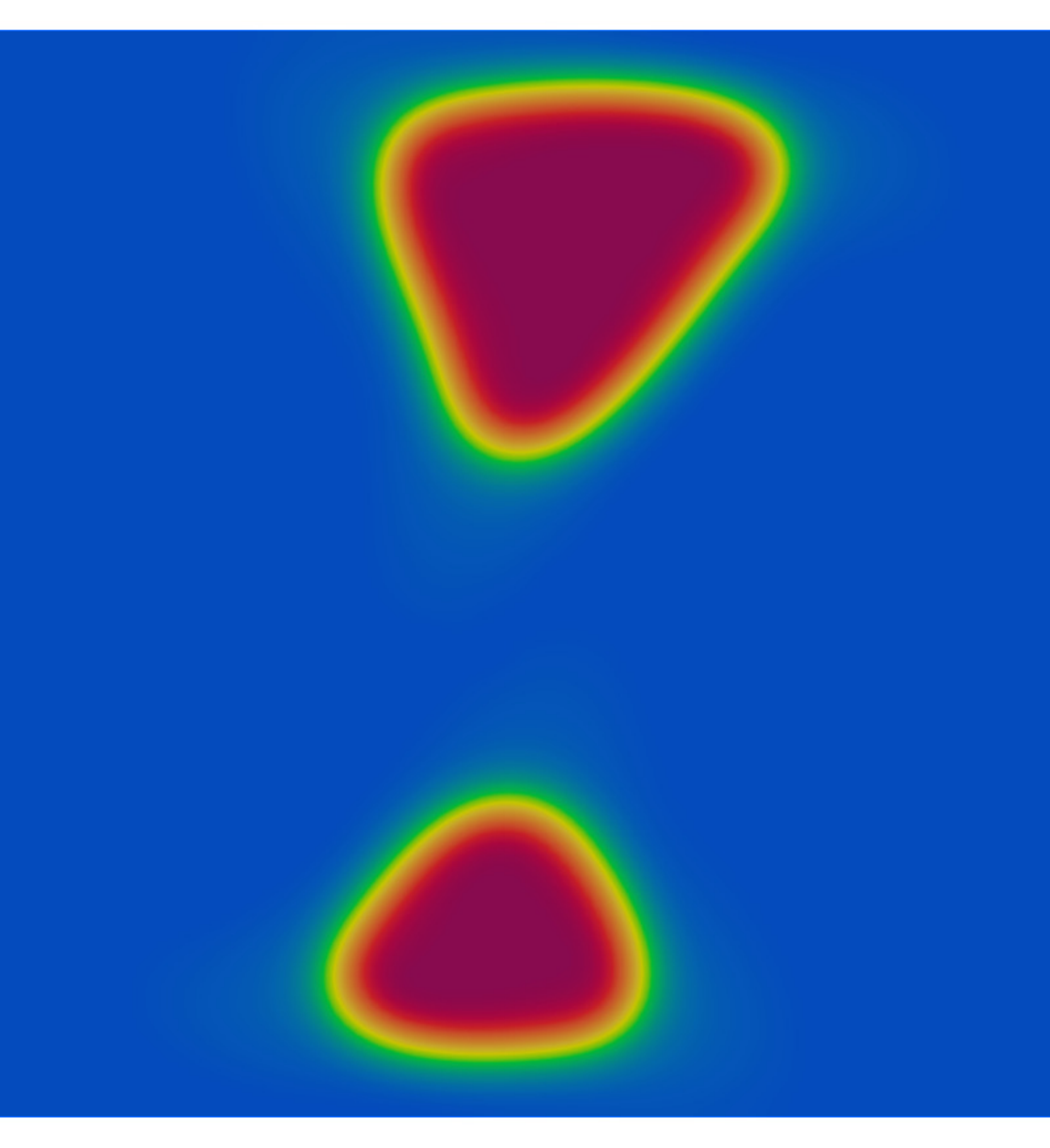}
\includegraphics[scale=0.1125]{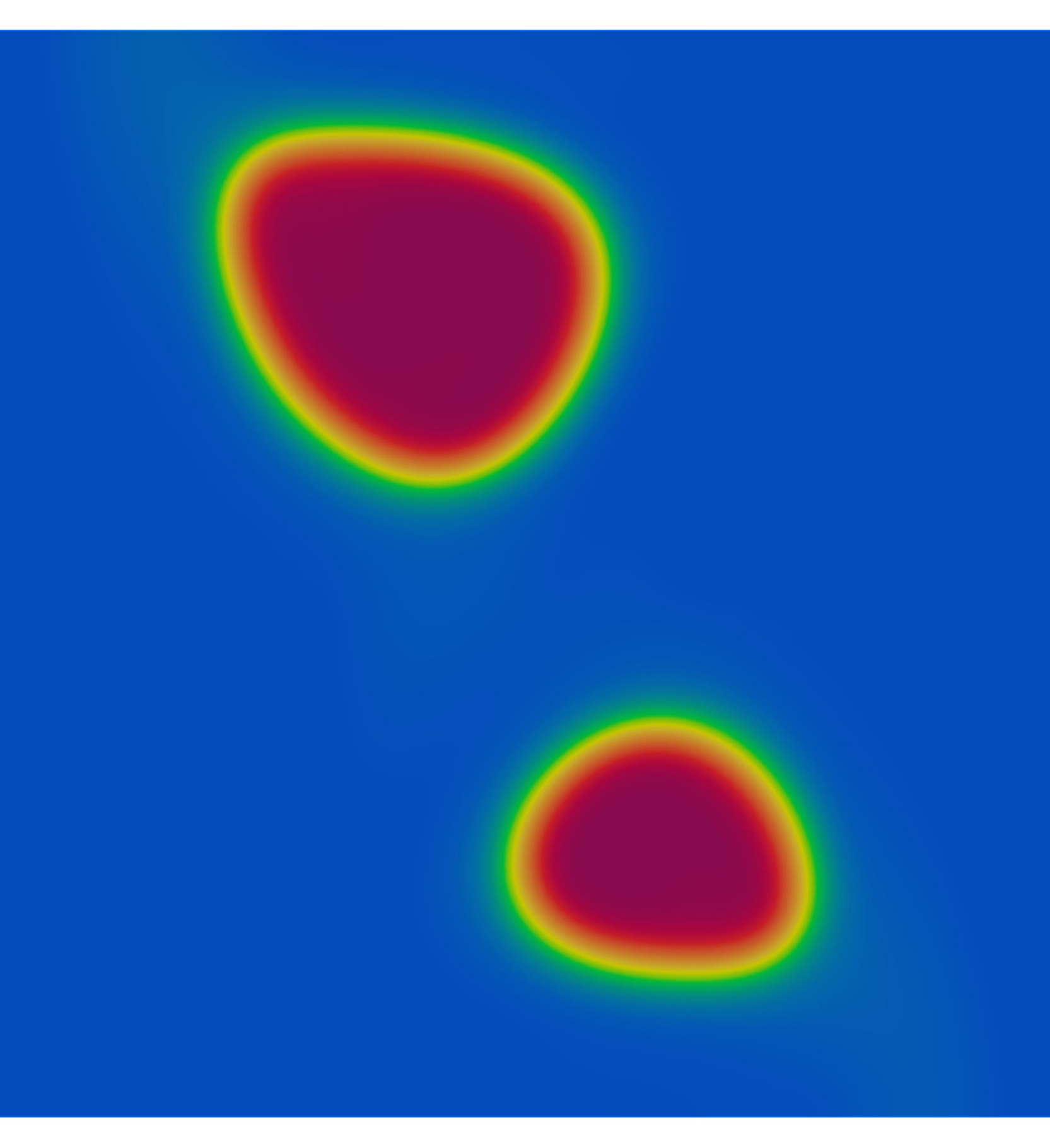}
\includegraphics[scale=0.1125]{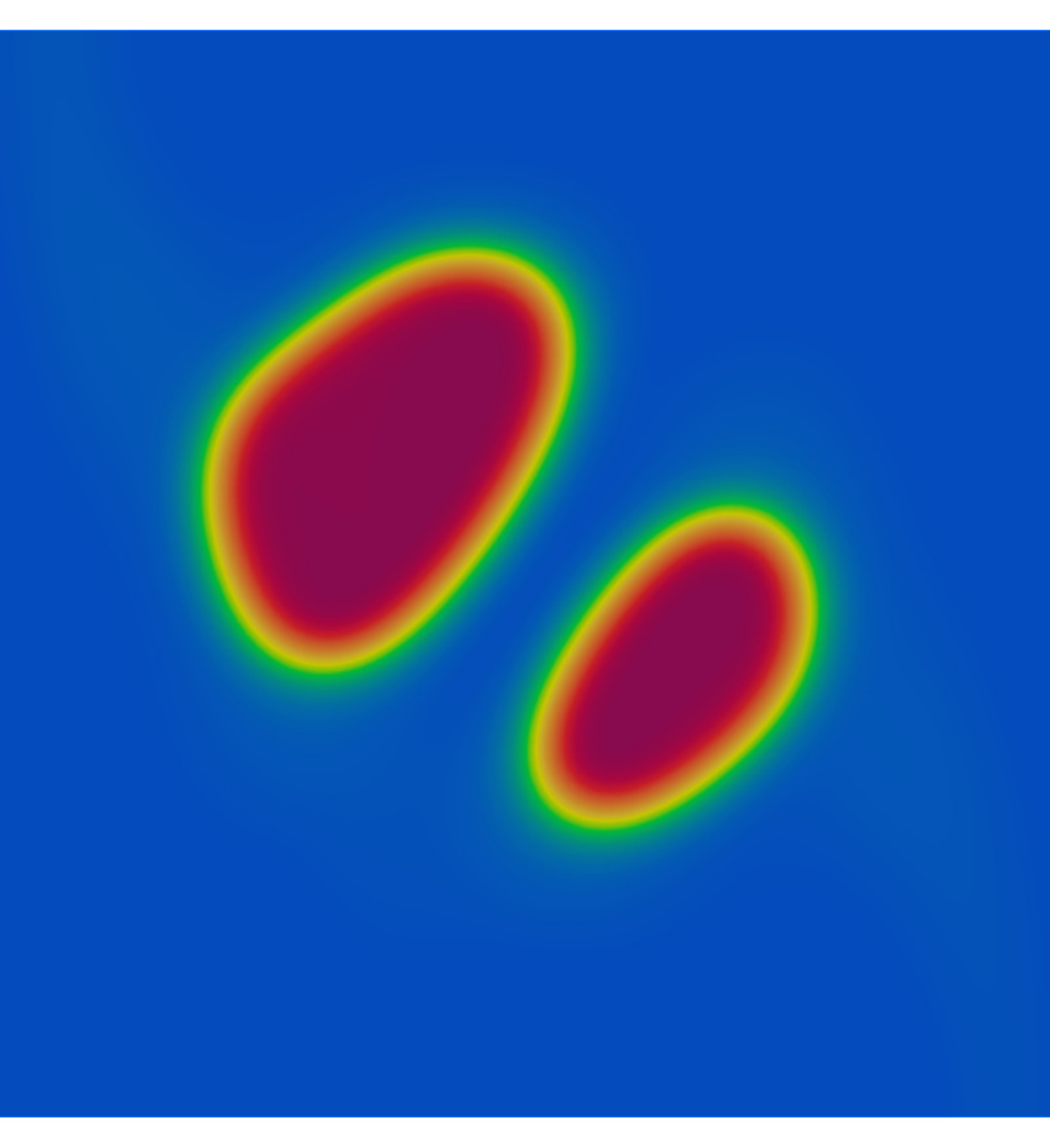}
\includegraphics[scale=0.1125]{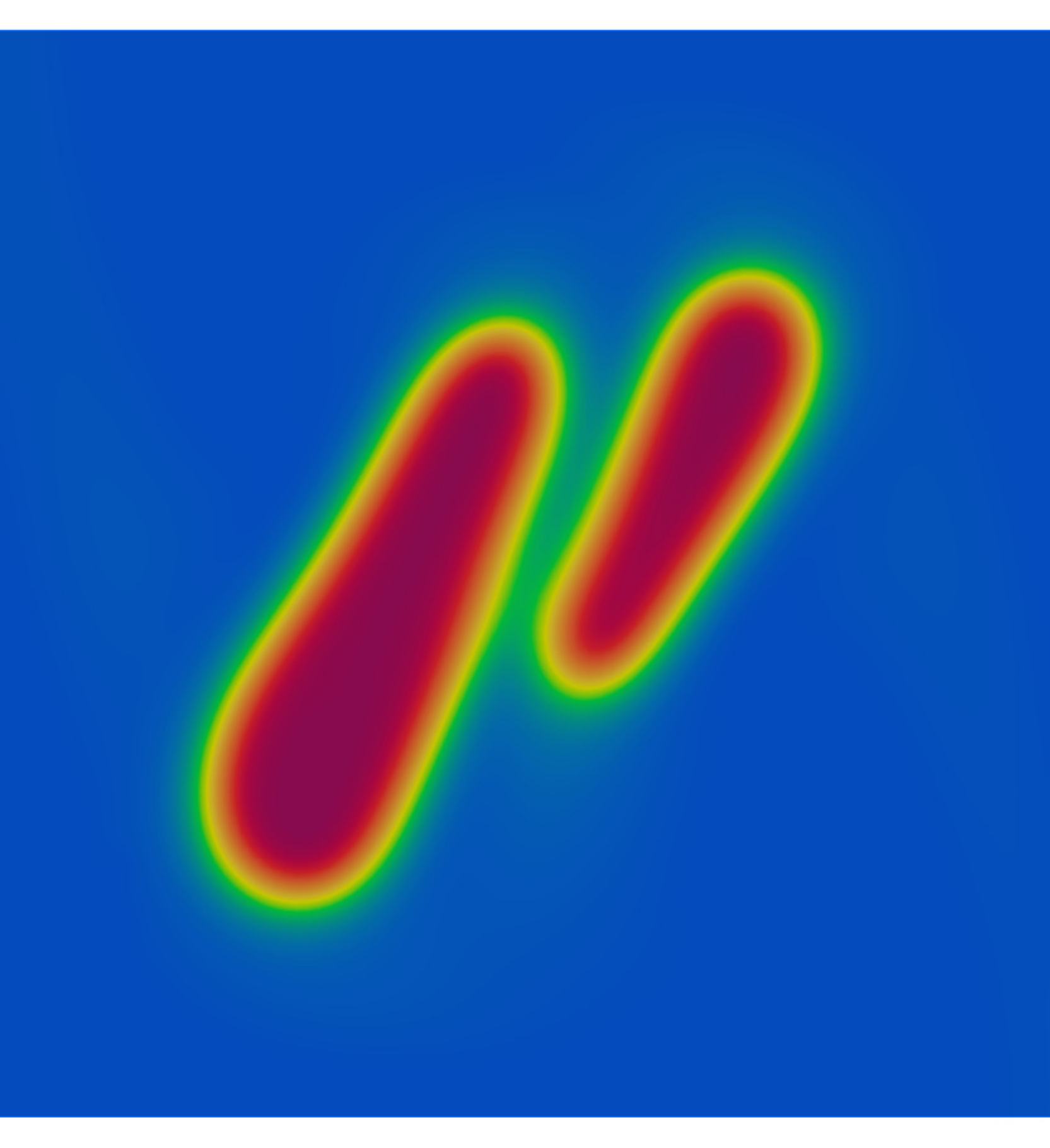}
\\
\includegraphics[scale=0.1125]{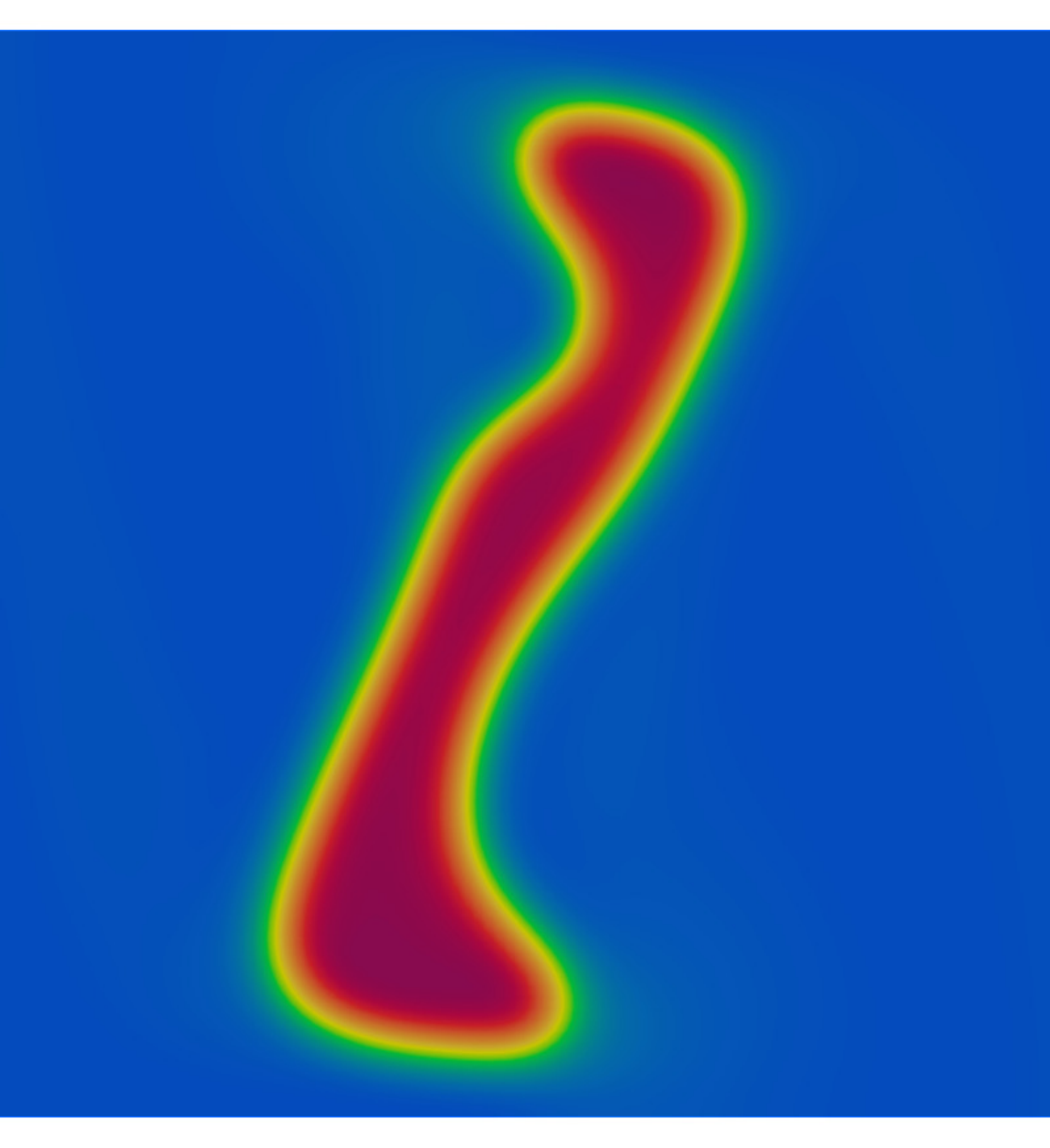}
\includegraphics[scale=0.1125]{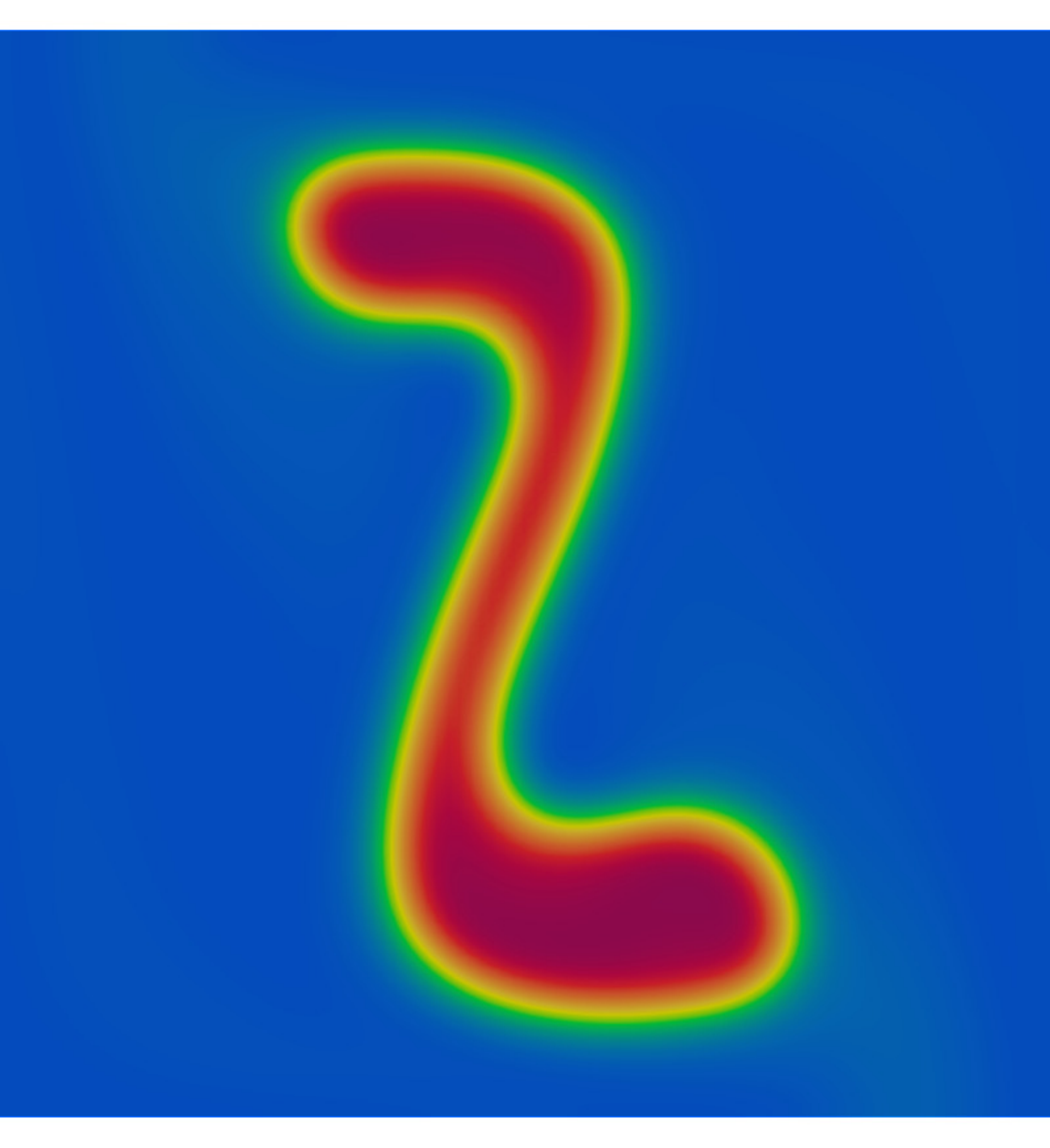}
\includegraphics[scale=0.1125]{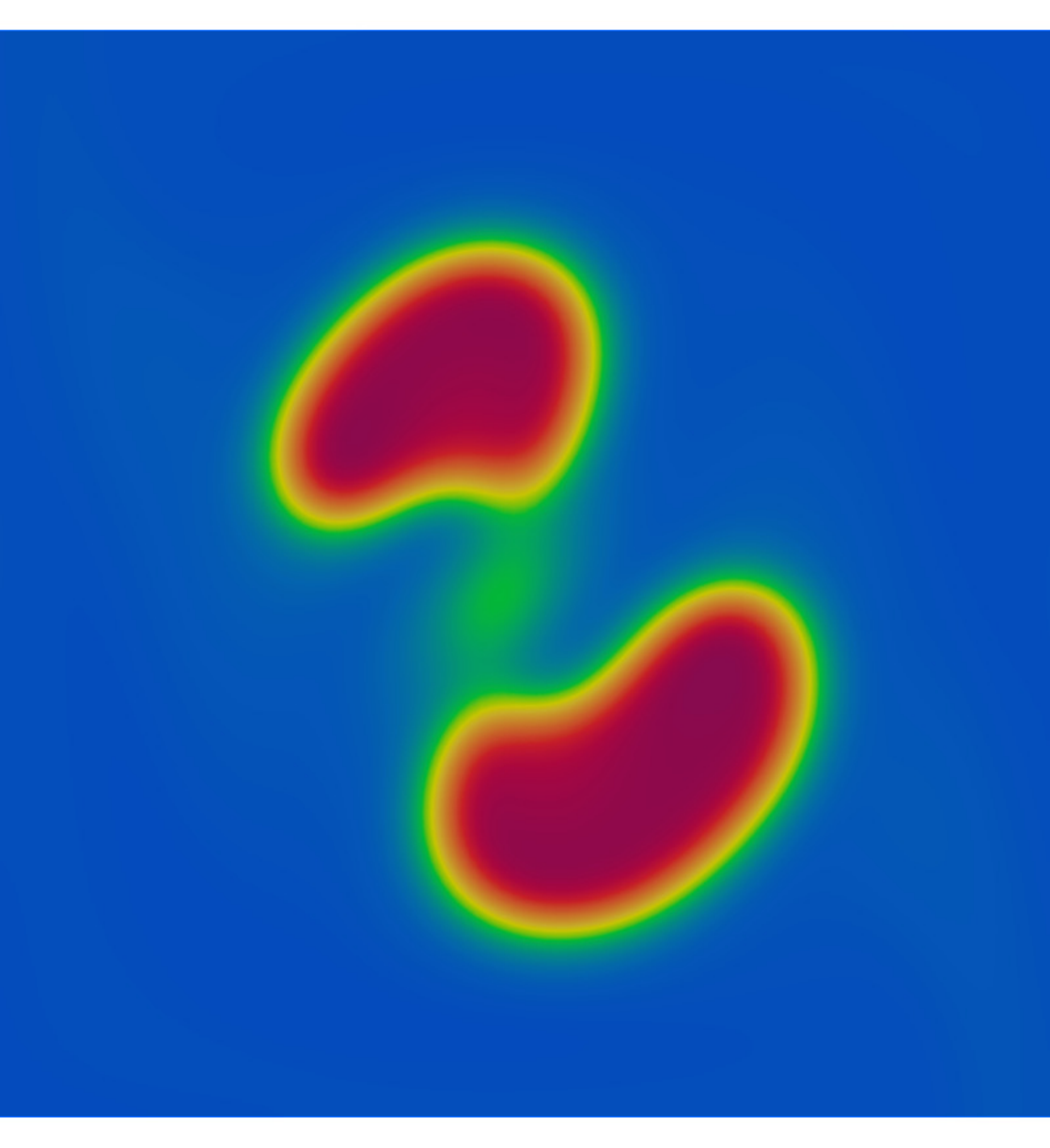}
\includegraphics[scale=0.1125]{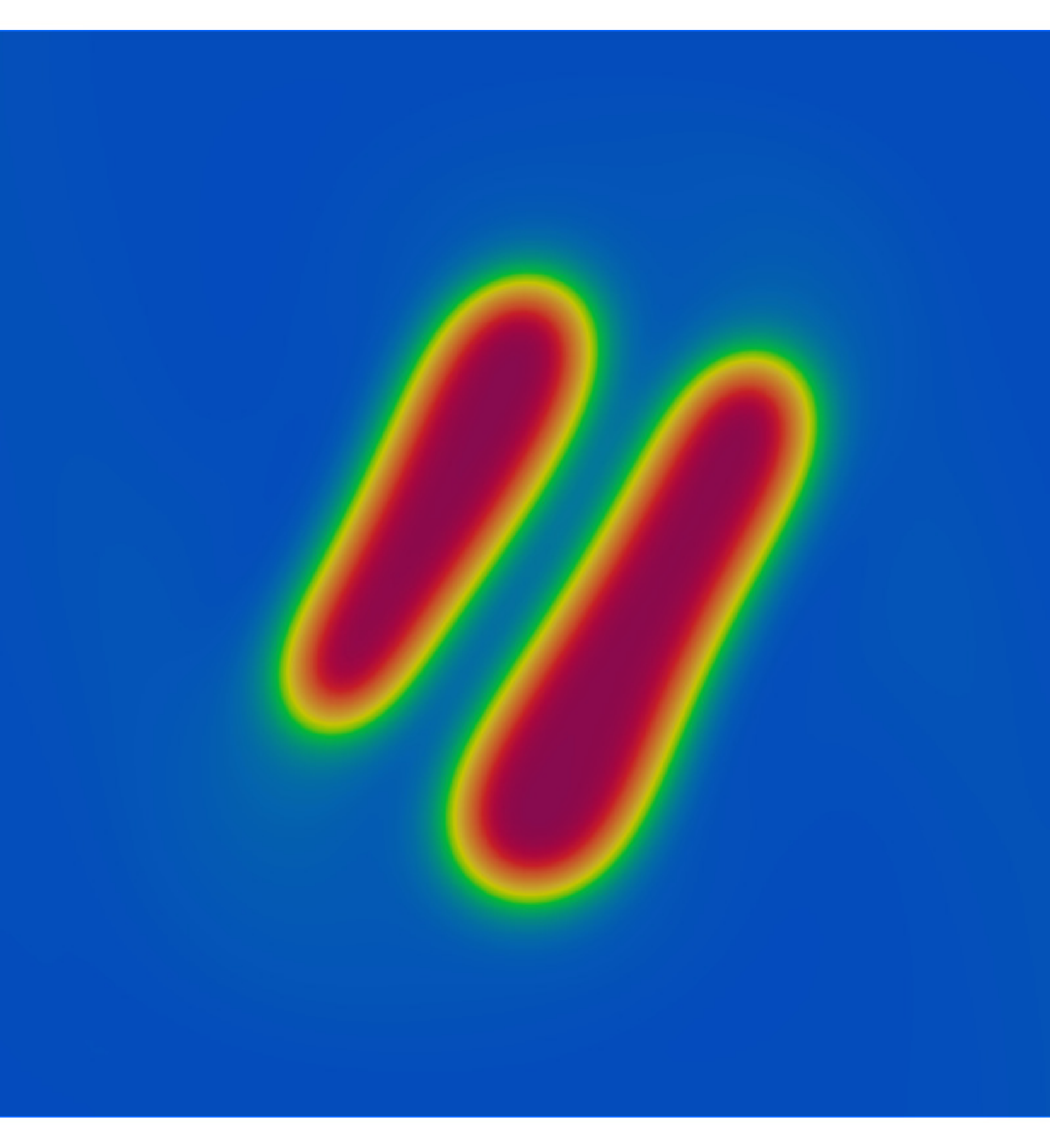}
\includegraphics[scale=0.1125]{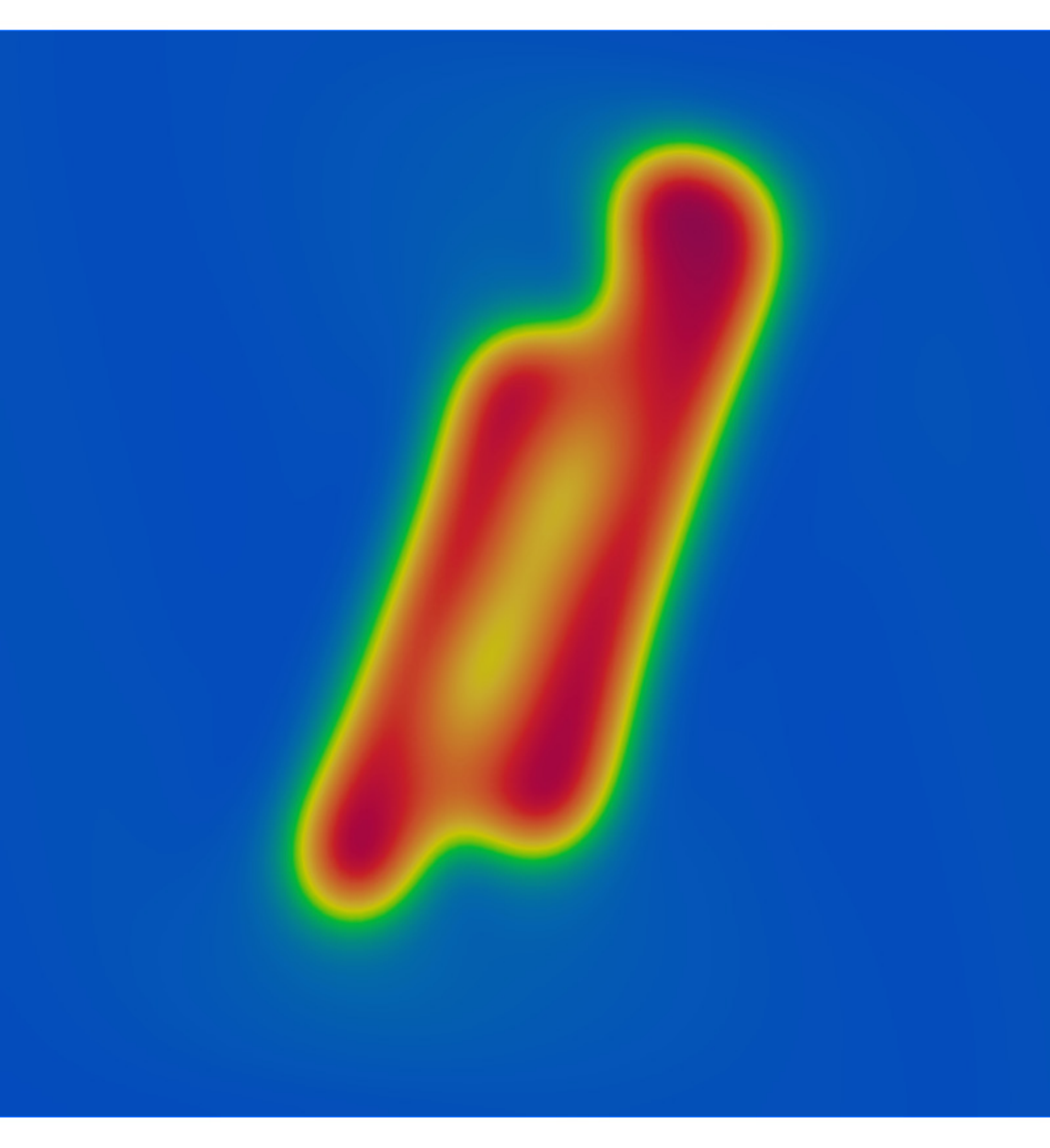}
\\
\includegraphics[scale=0.1125]{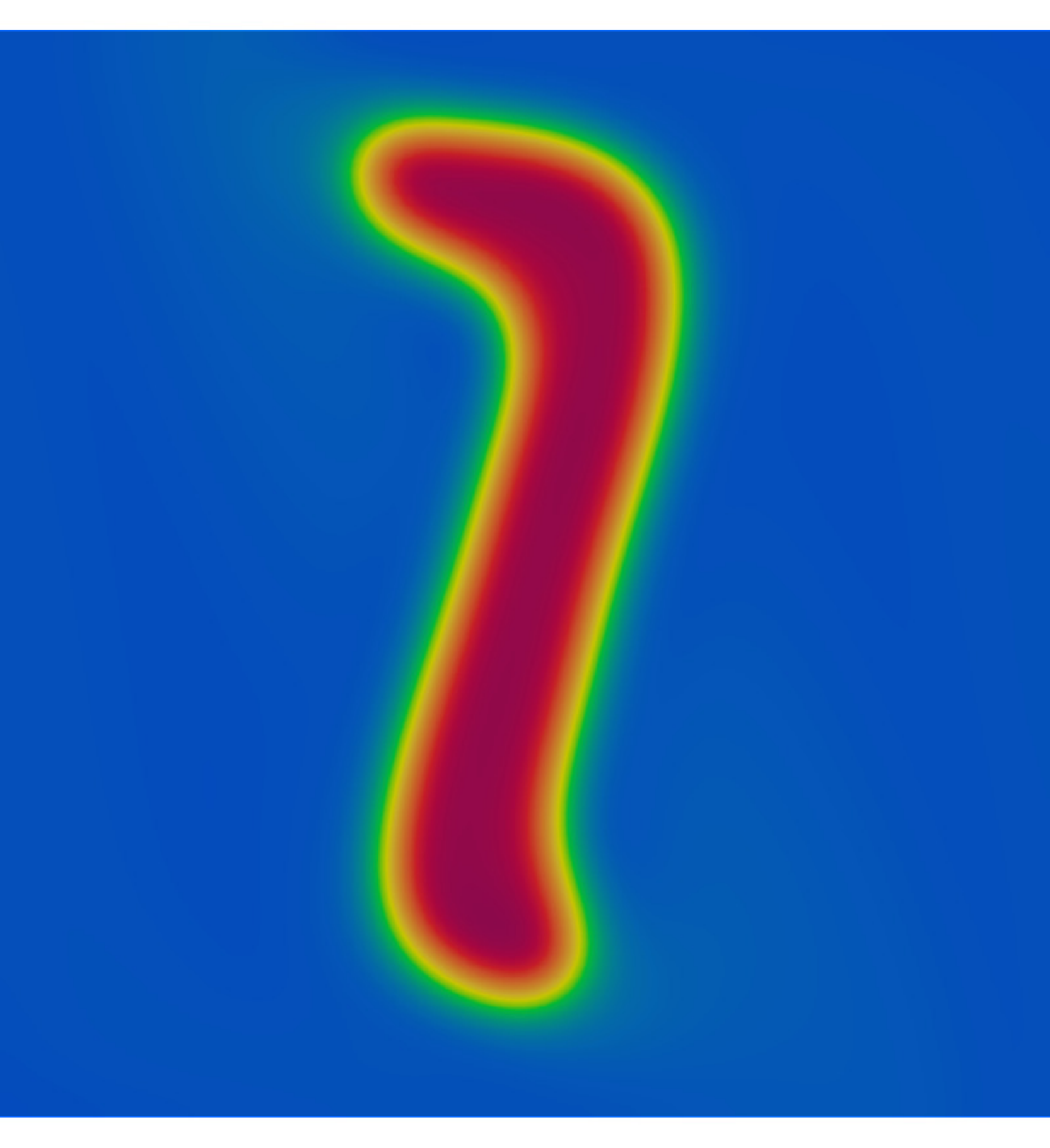}
\includegraphics[scale=0.1125]{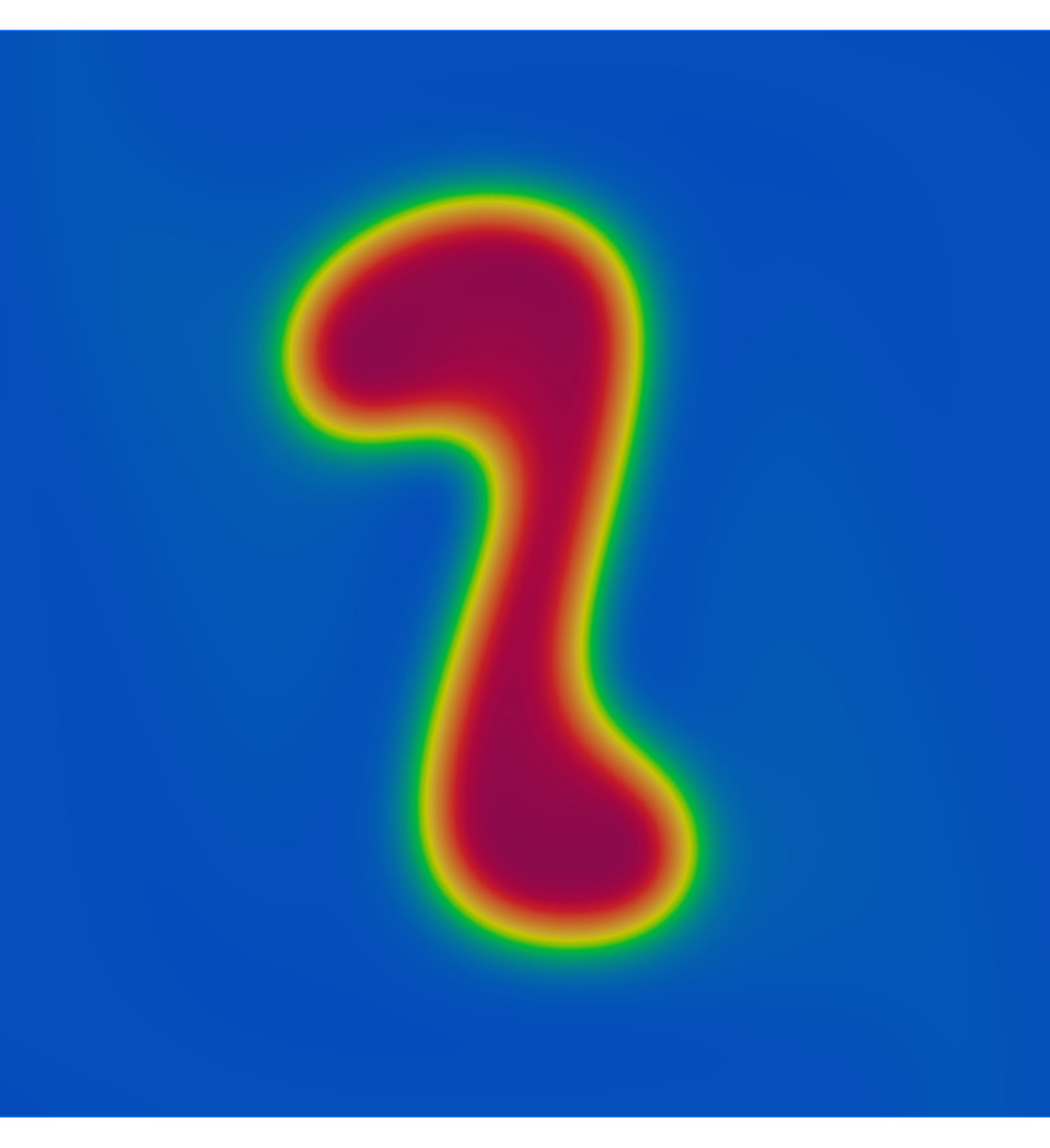}
\includegraphics[scale=0.1125]{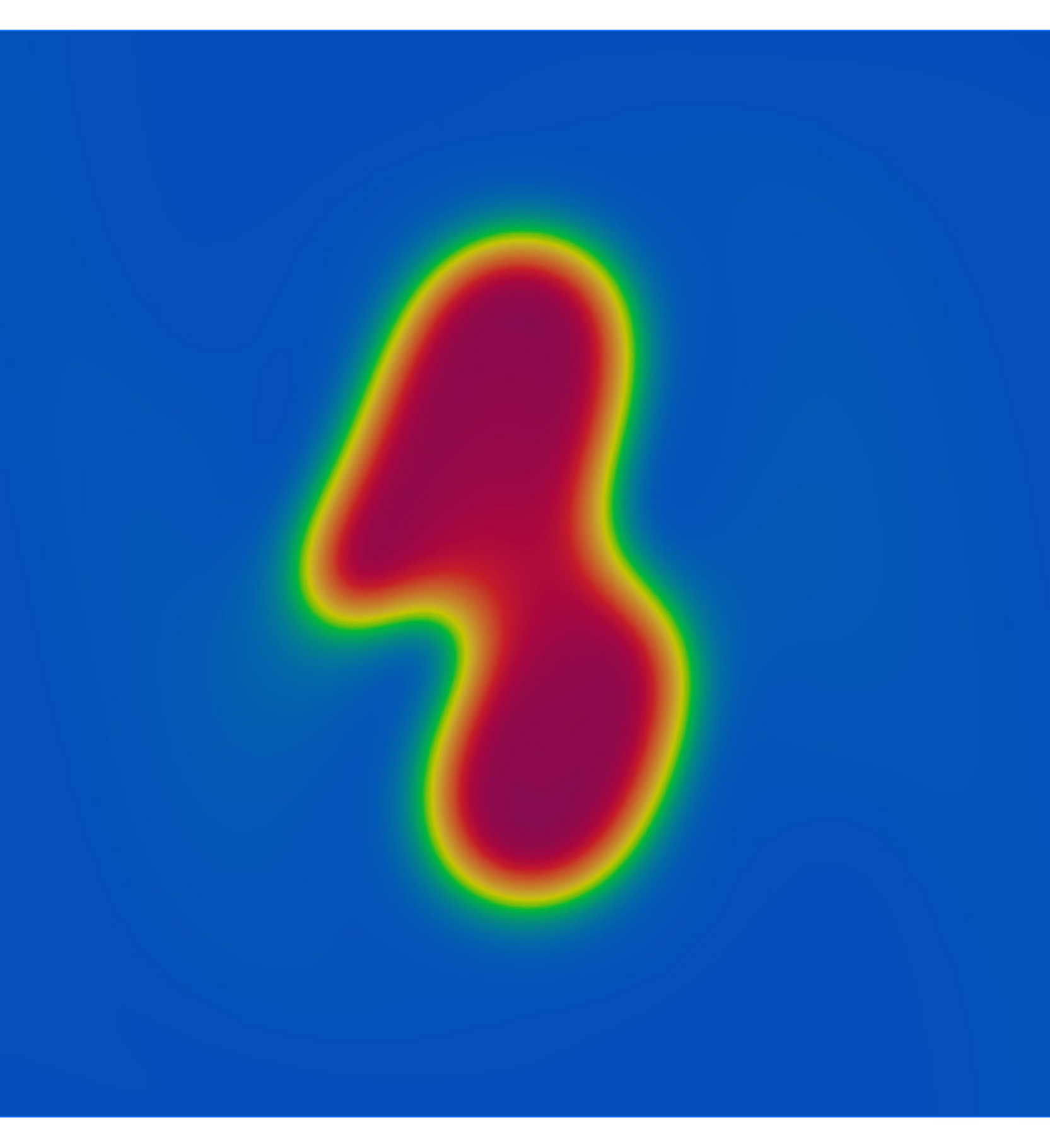}
\includegraphics[scale=0.1125]{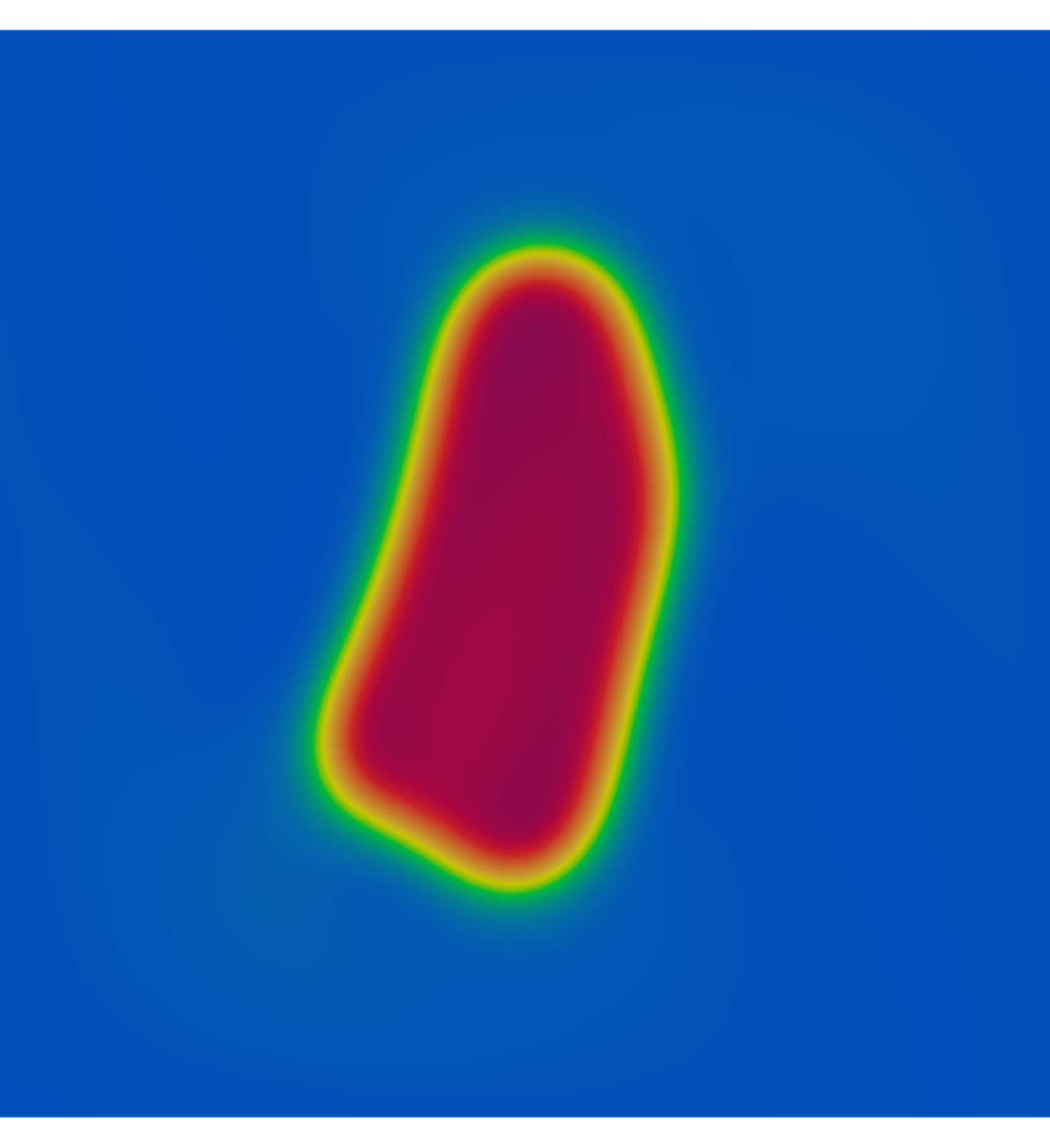}
\includegraphics[scale=0.1125]{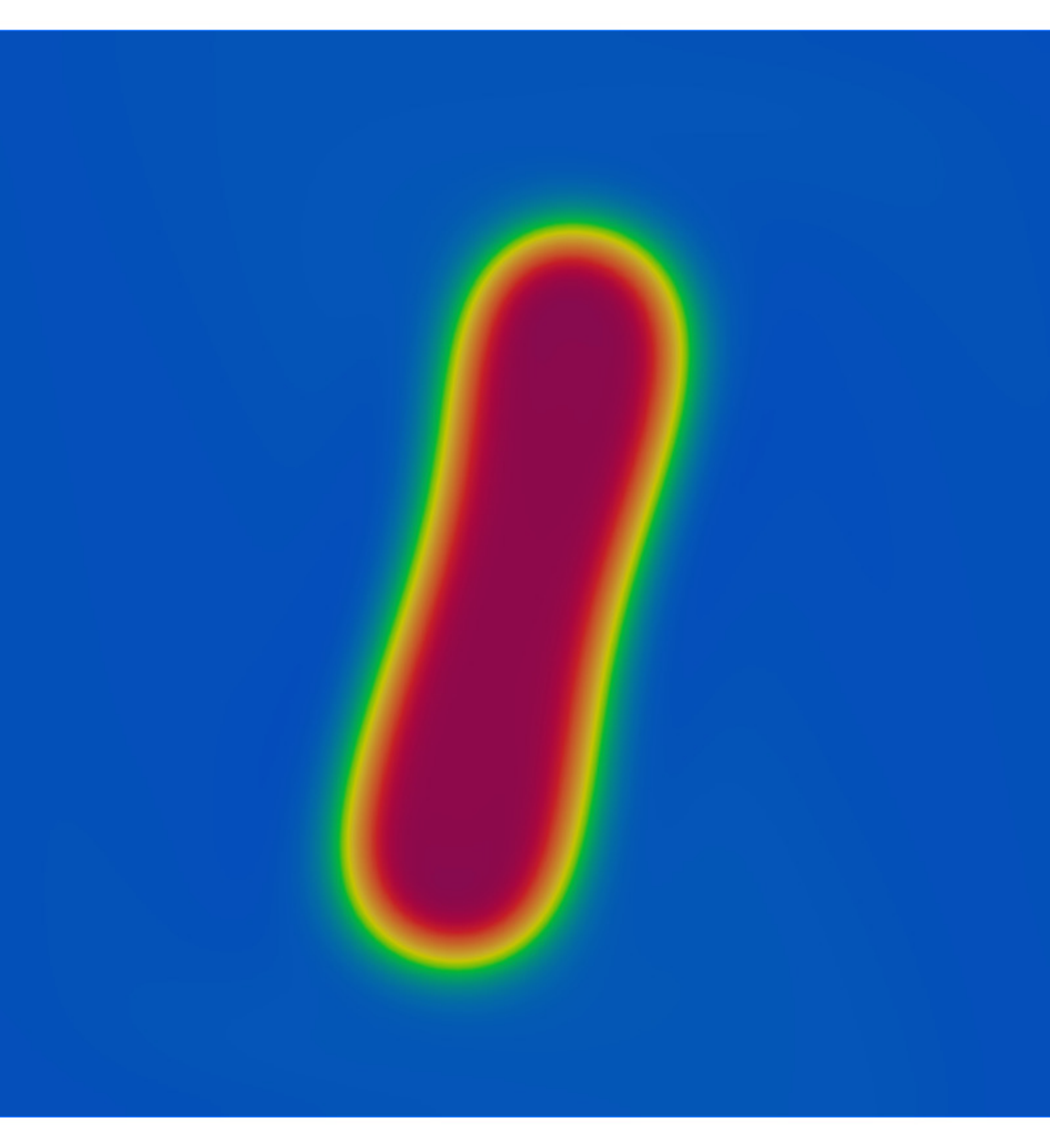}
\\
\includegraphics[scale=0.1125]{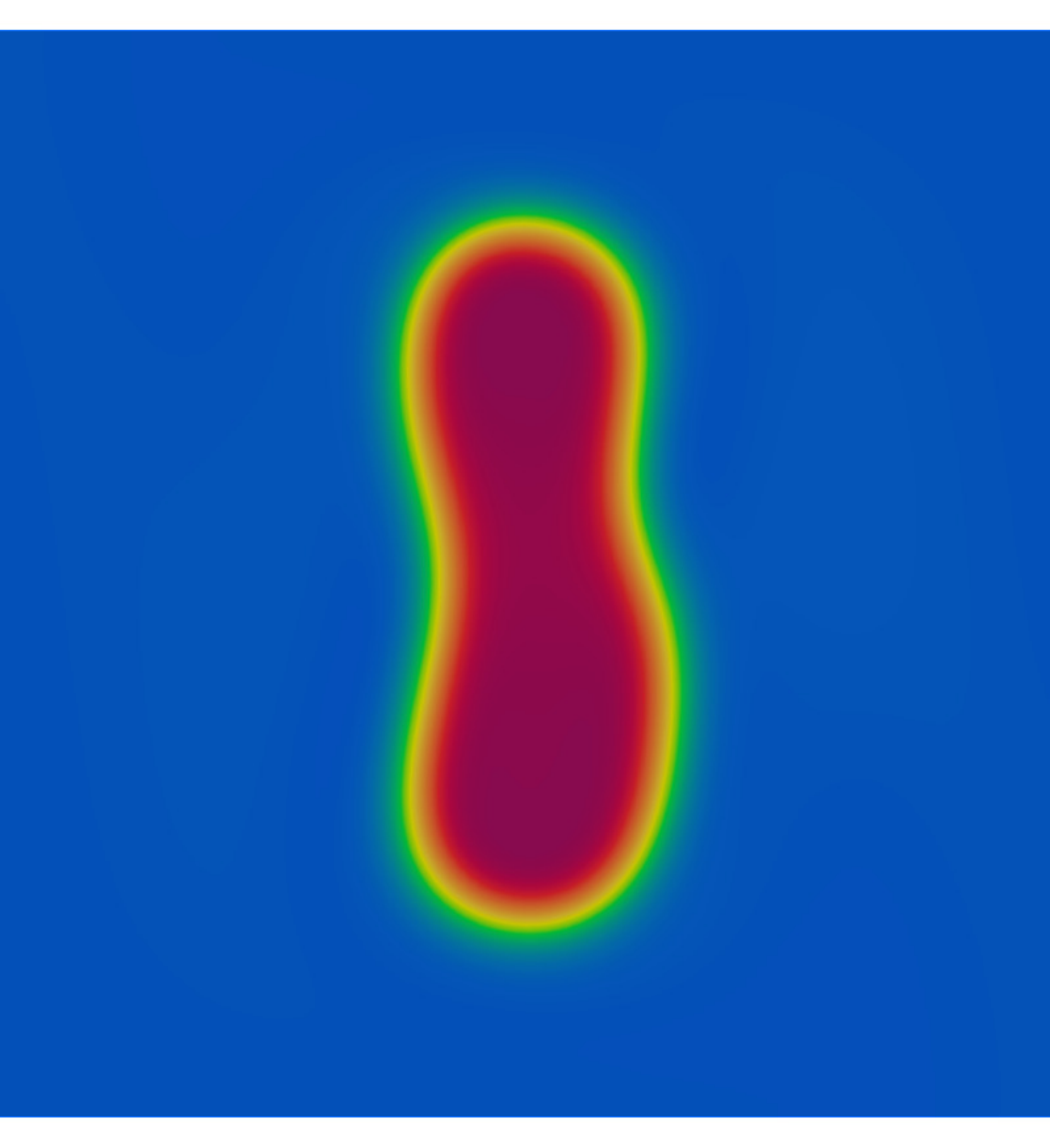}
\includegraphics[scale=0.1125]{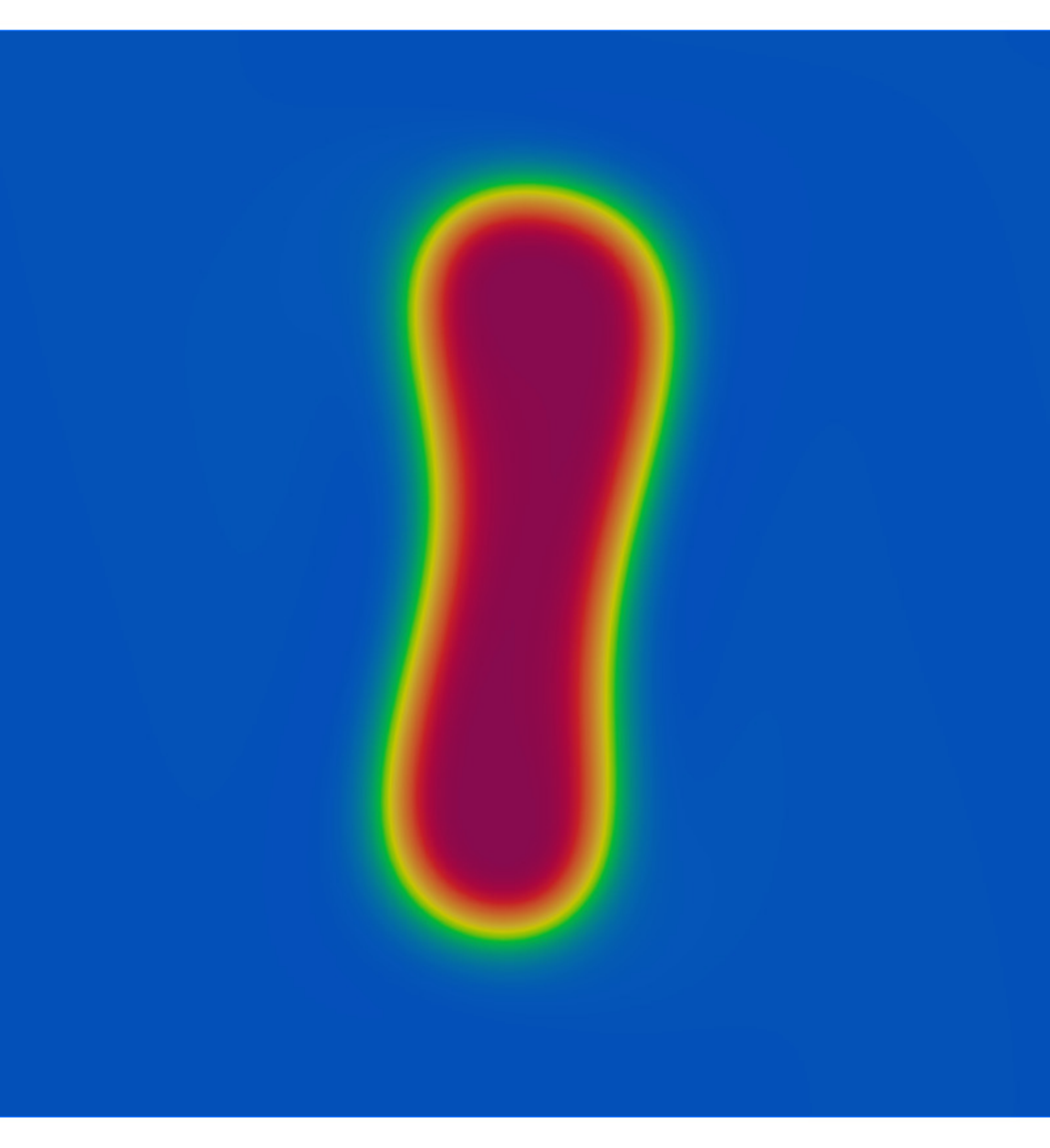}
\includegraphics[scale=0.1125]{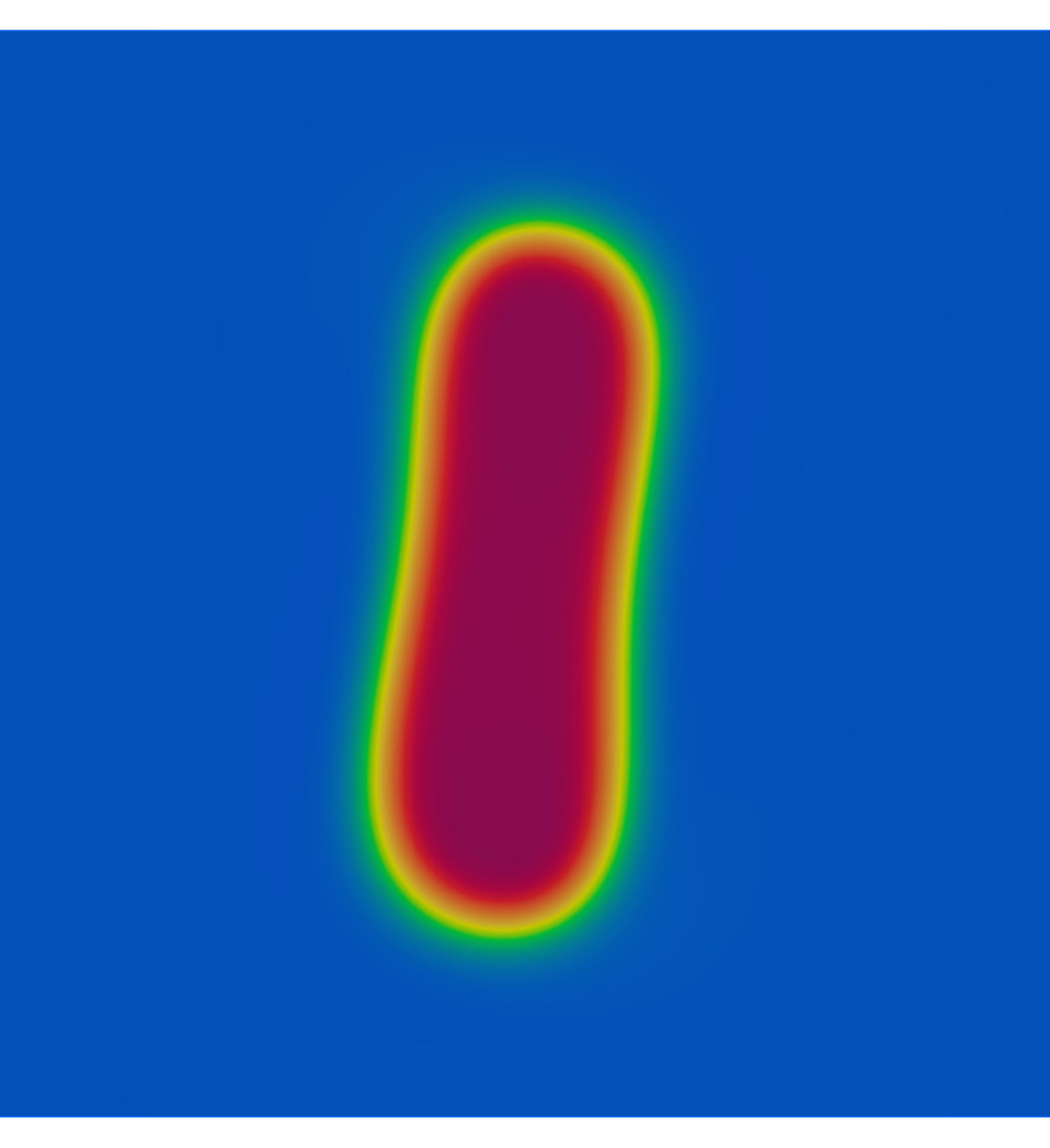}
\includegraphics[scale=0.1125]{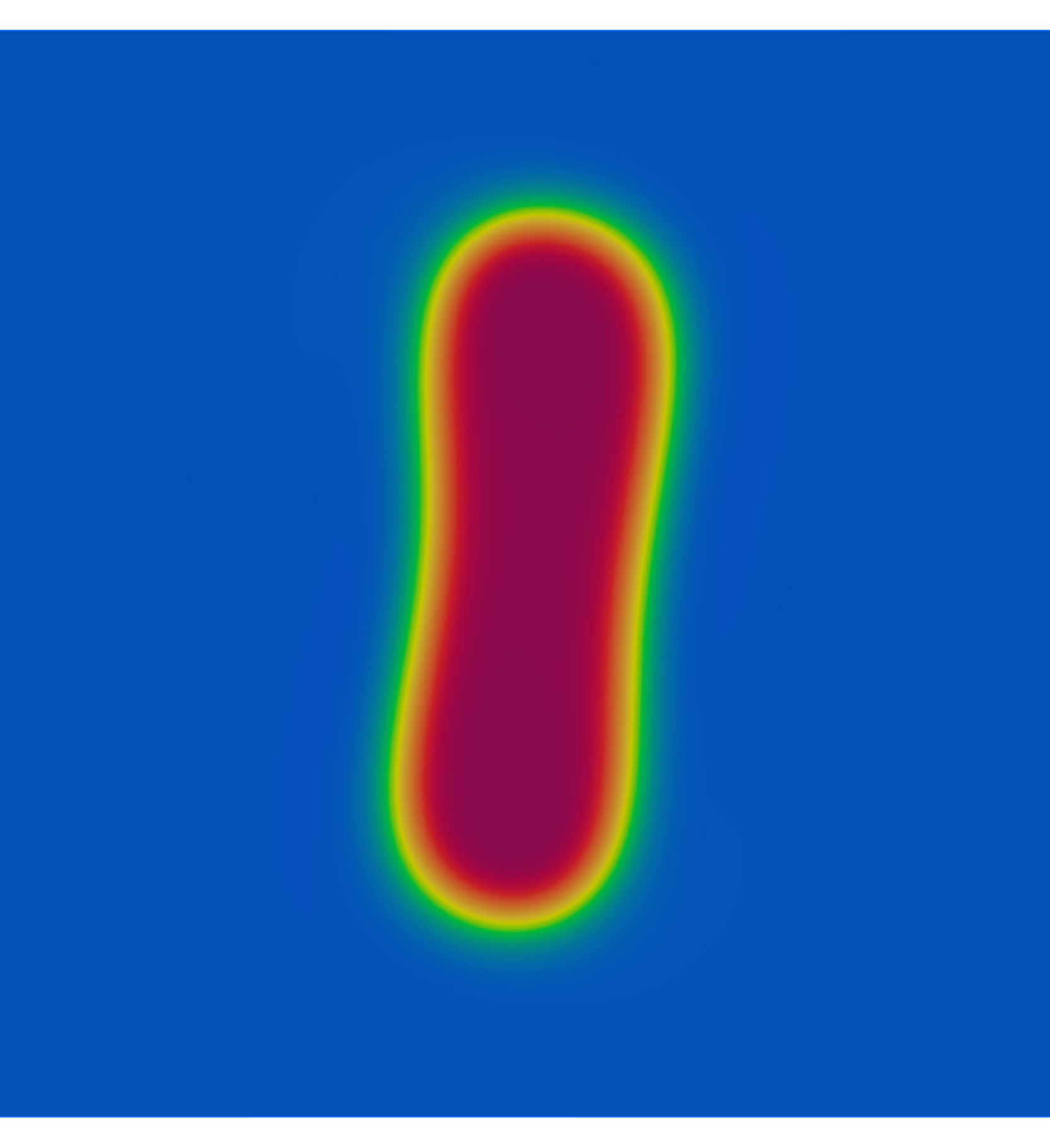}
\includegraphics[scale=0.1125]{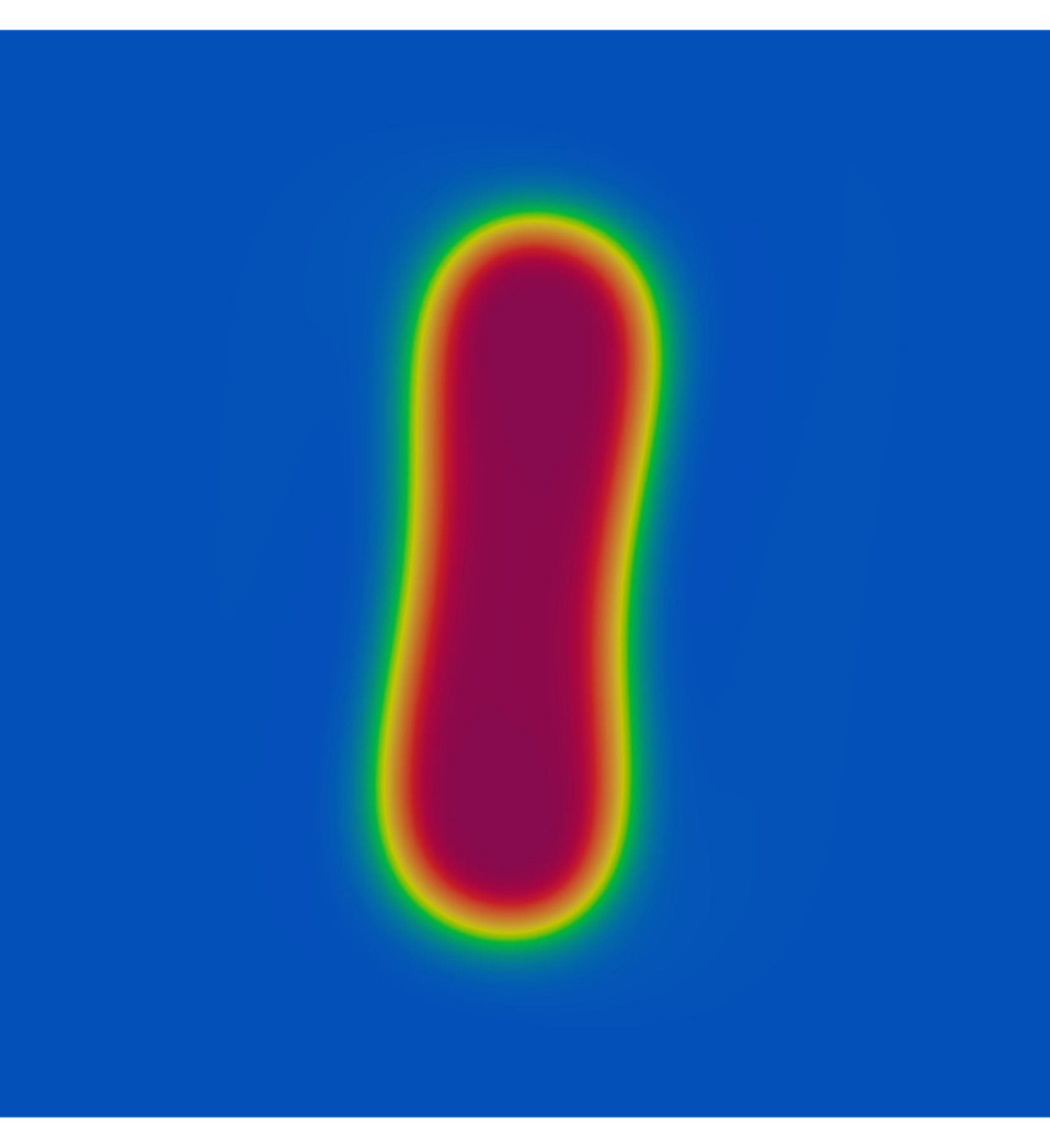}
\end{center}
\caption{Example III. Rotating fluids. Evolution in time of $\phi$ for $J_\varepsilon$-scheme at times $t=0, 0.15, 0.3, 0.45, 0.6, 0.75, 0.9, 1.05, 1.2, 1.35, 1.5, 1.65, 1.8, 1.95, 2.1, 2.5, 3.0, 3.5, 4.25$ and $5$.}\label{fig:ExIIIDynJ}
\end{figure}

\begin{figure}[H]
\begin{center}
\includegraphics[scale=0.1125]{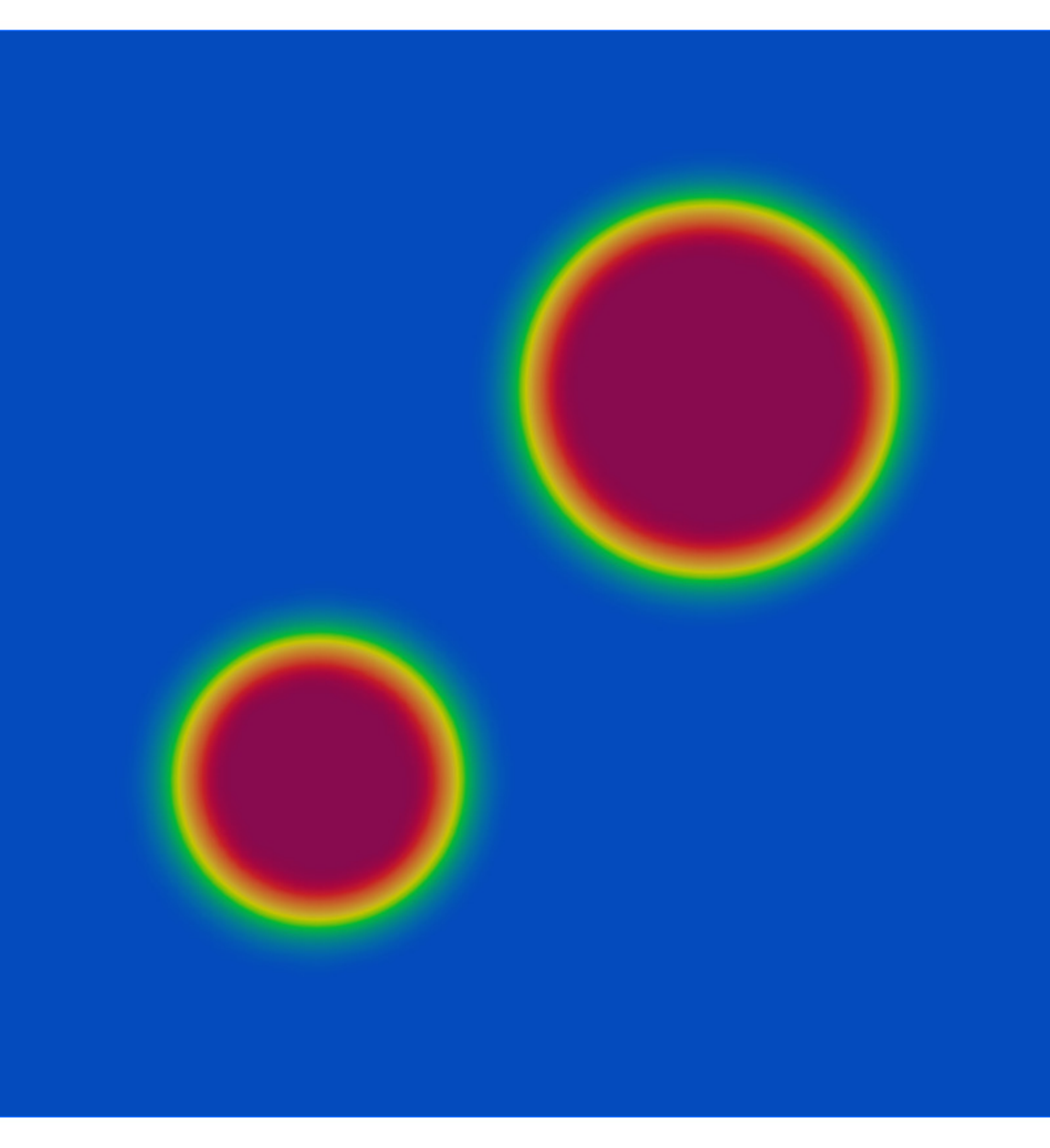}
\includegraphics[scale=0.1125]{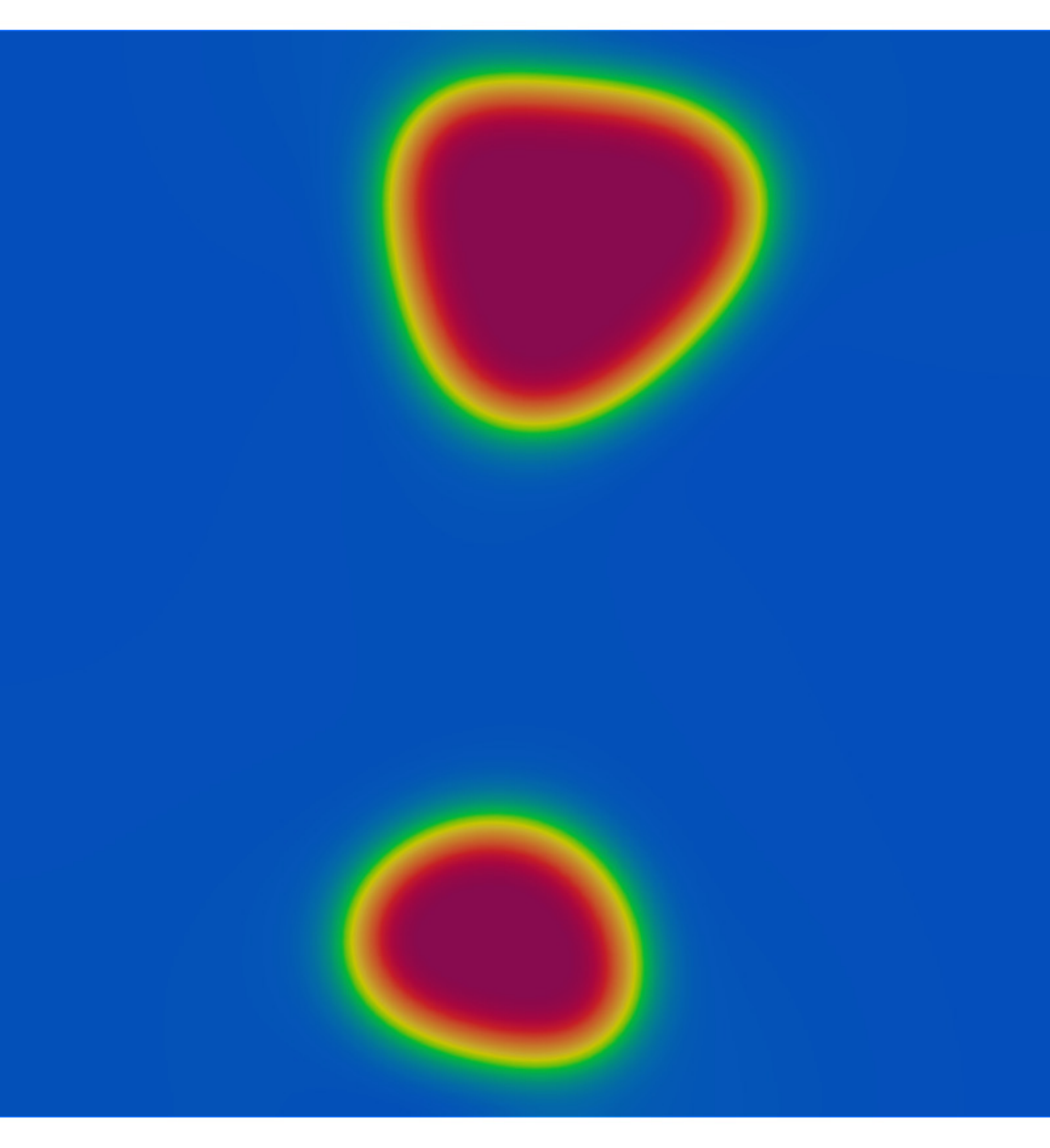}
\includegraphics[scale=0.1125]{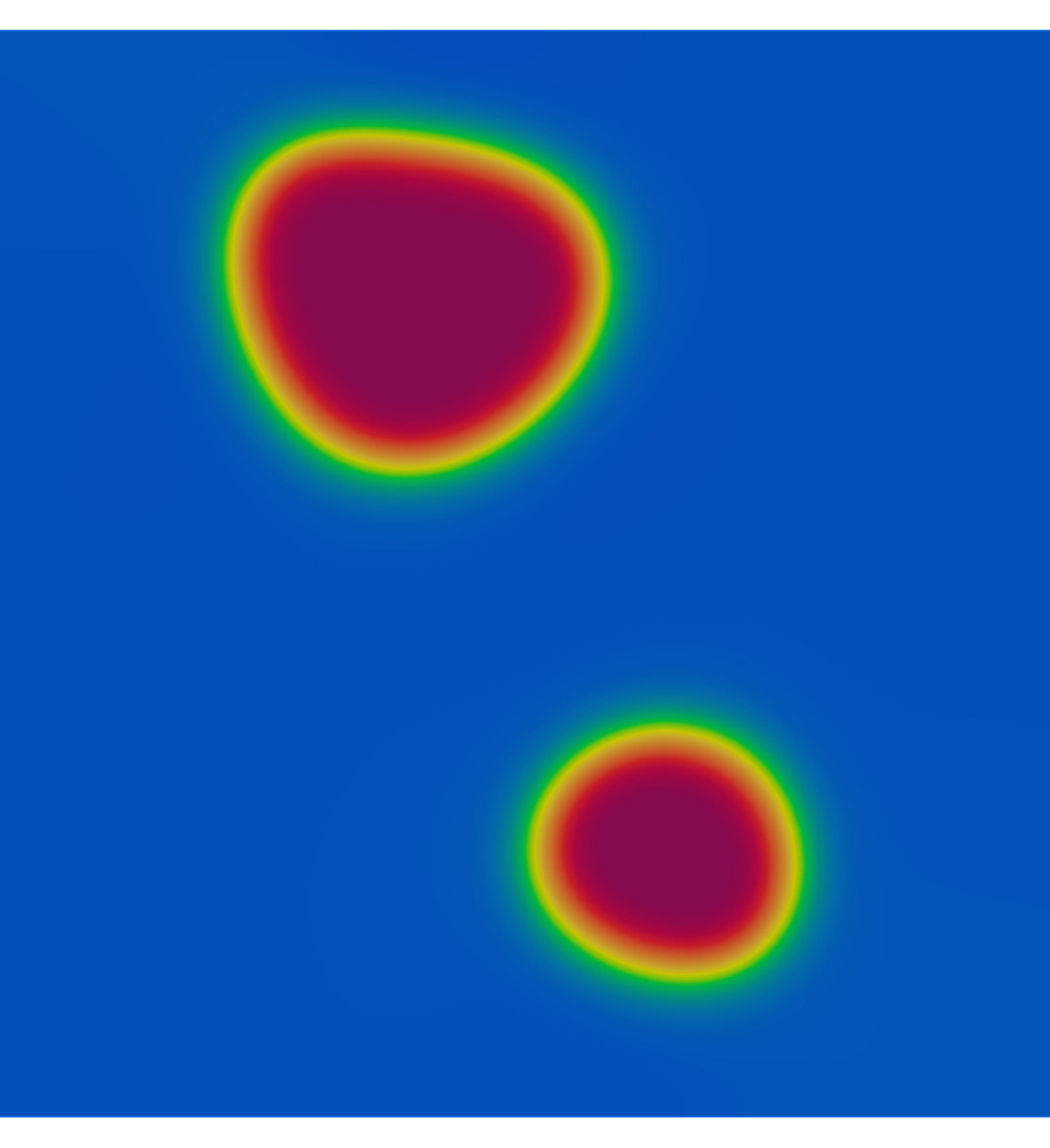}
\includegraphics[scale=0.1125]{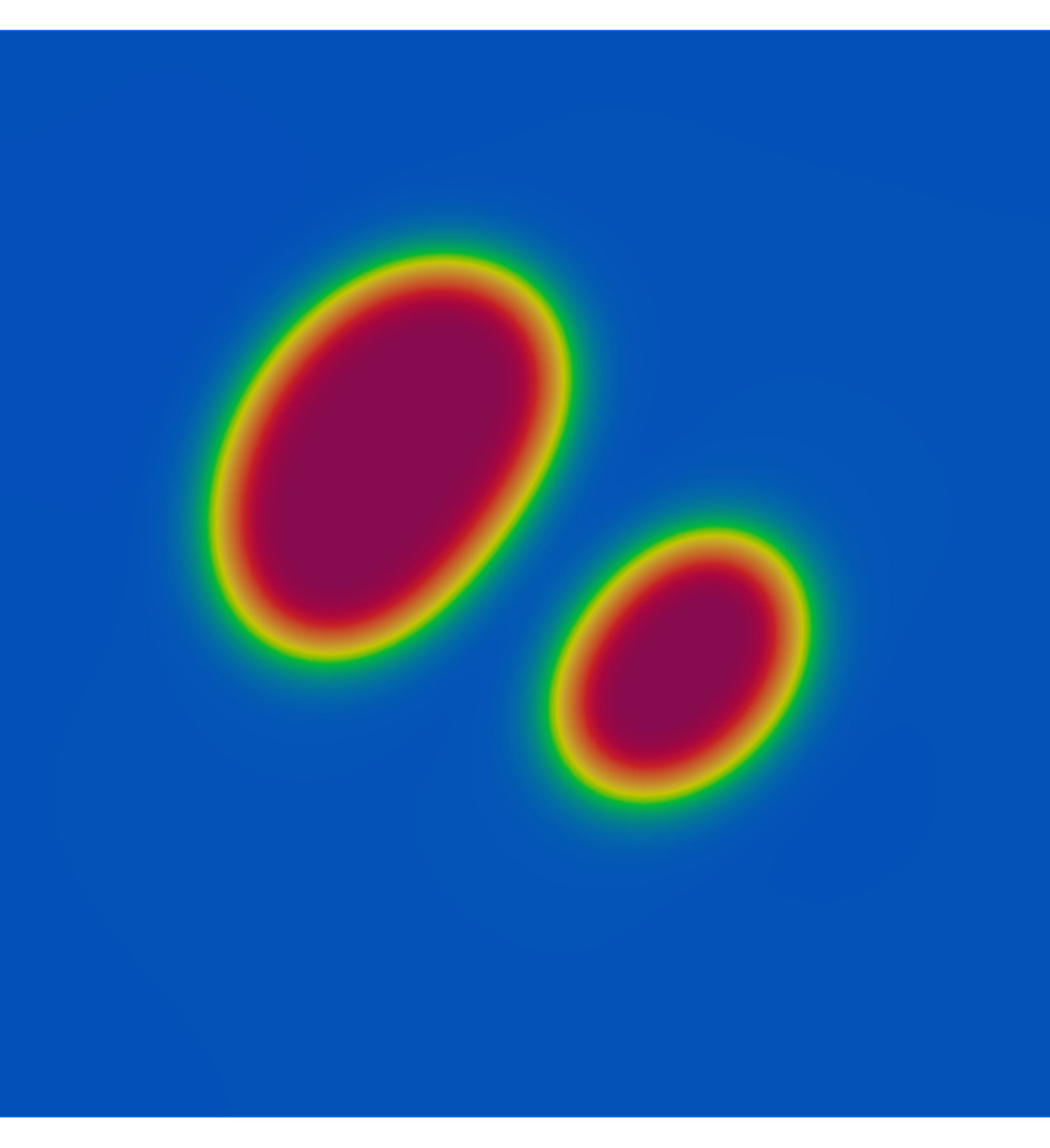}
\includegraphics[scale=0.1125]{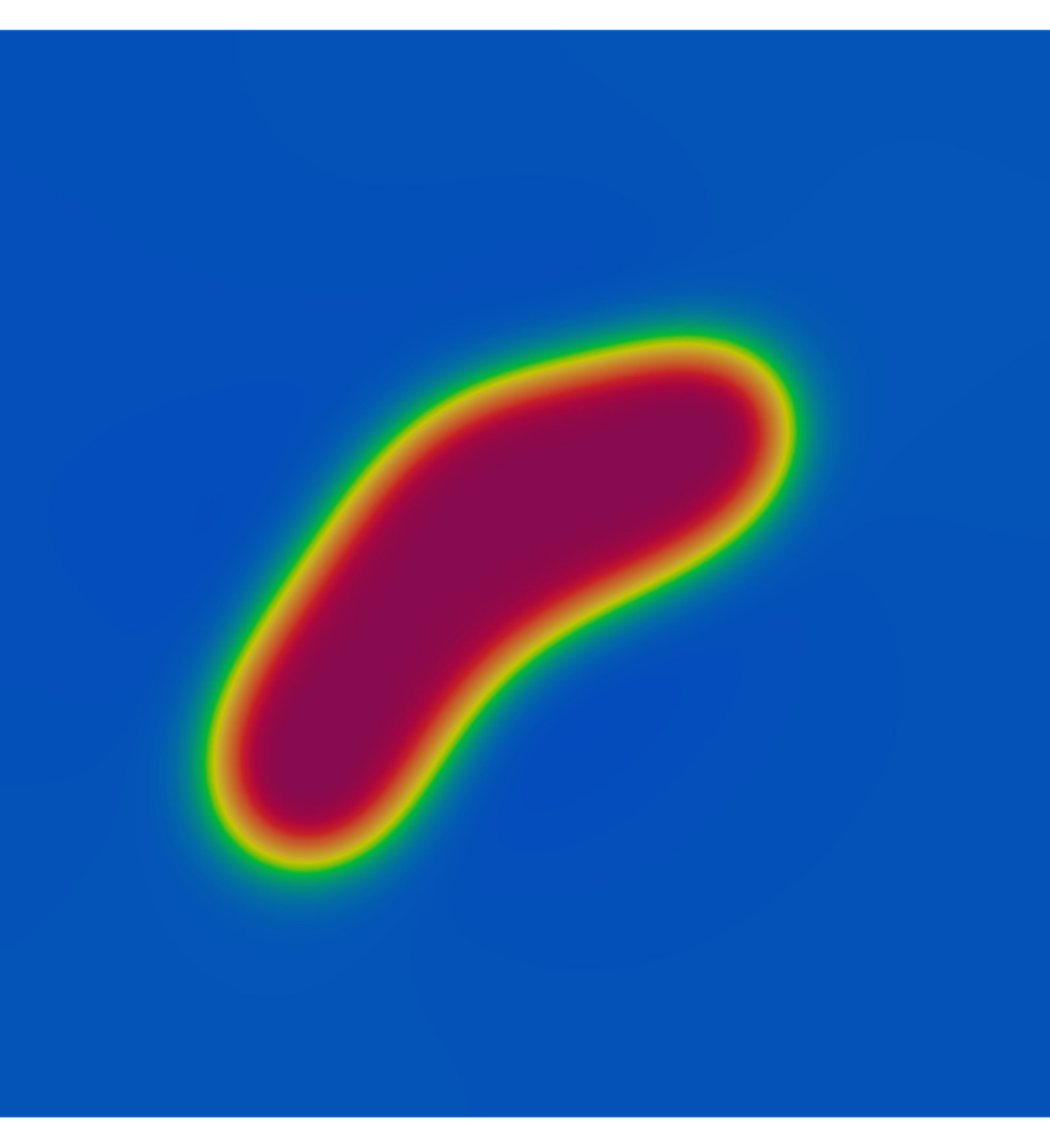}
\\
\includegraphics[scale=0.1125]{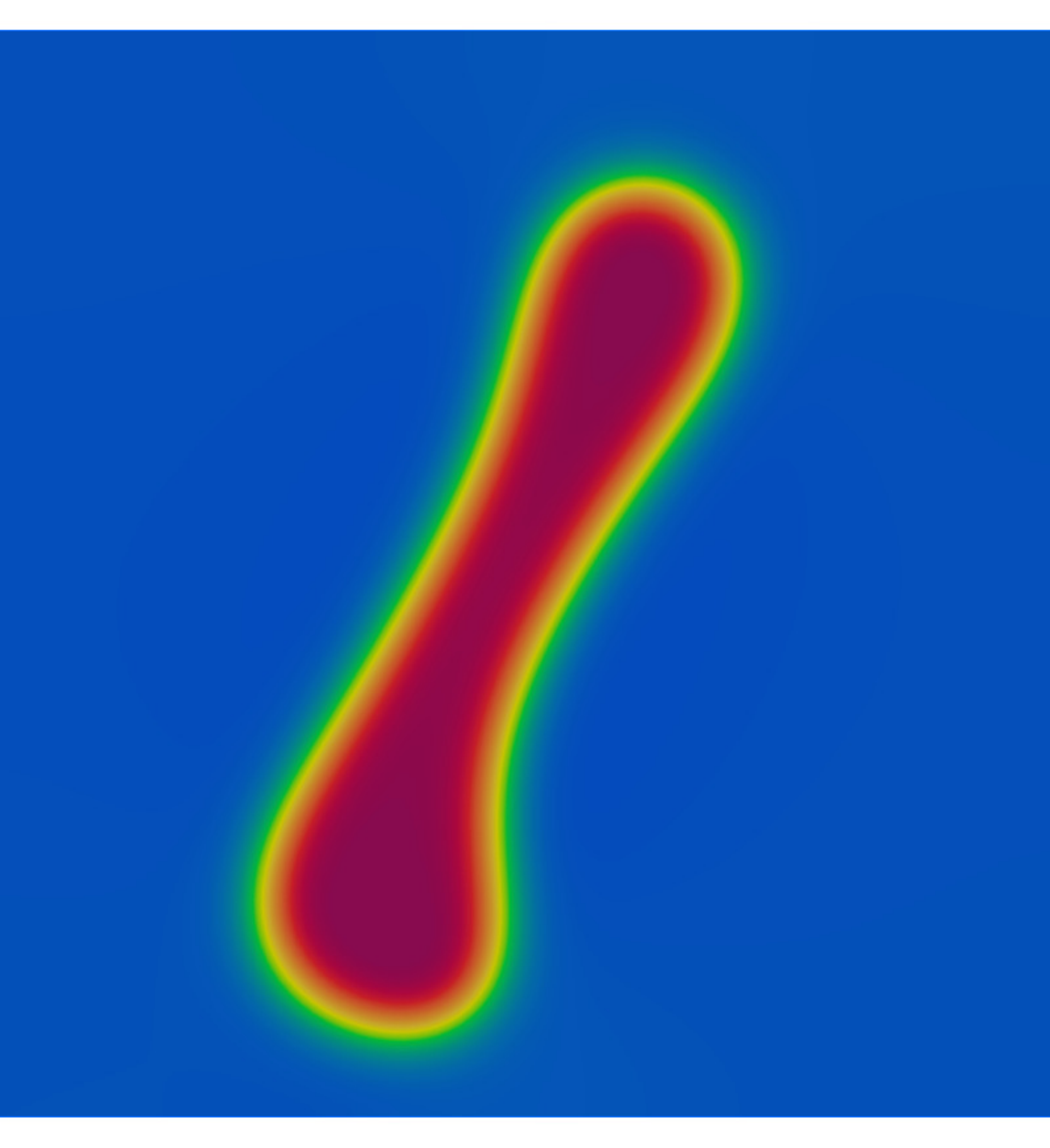}
\includegraphics[scale=0.1125]{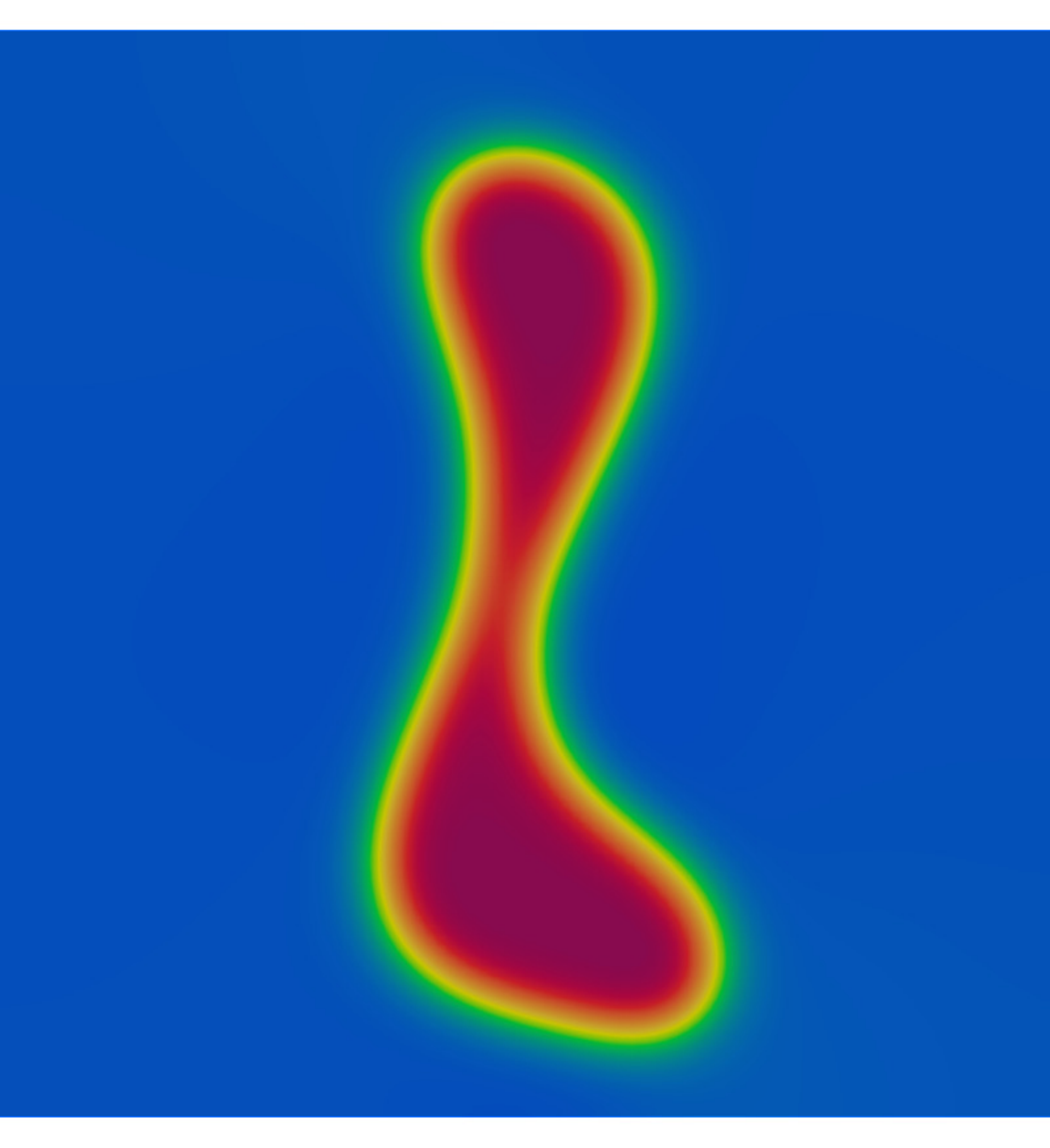}
\includegraphics[scale=0.1125]{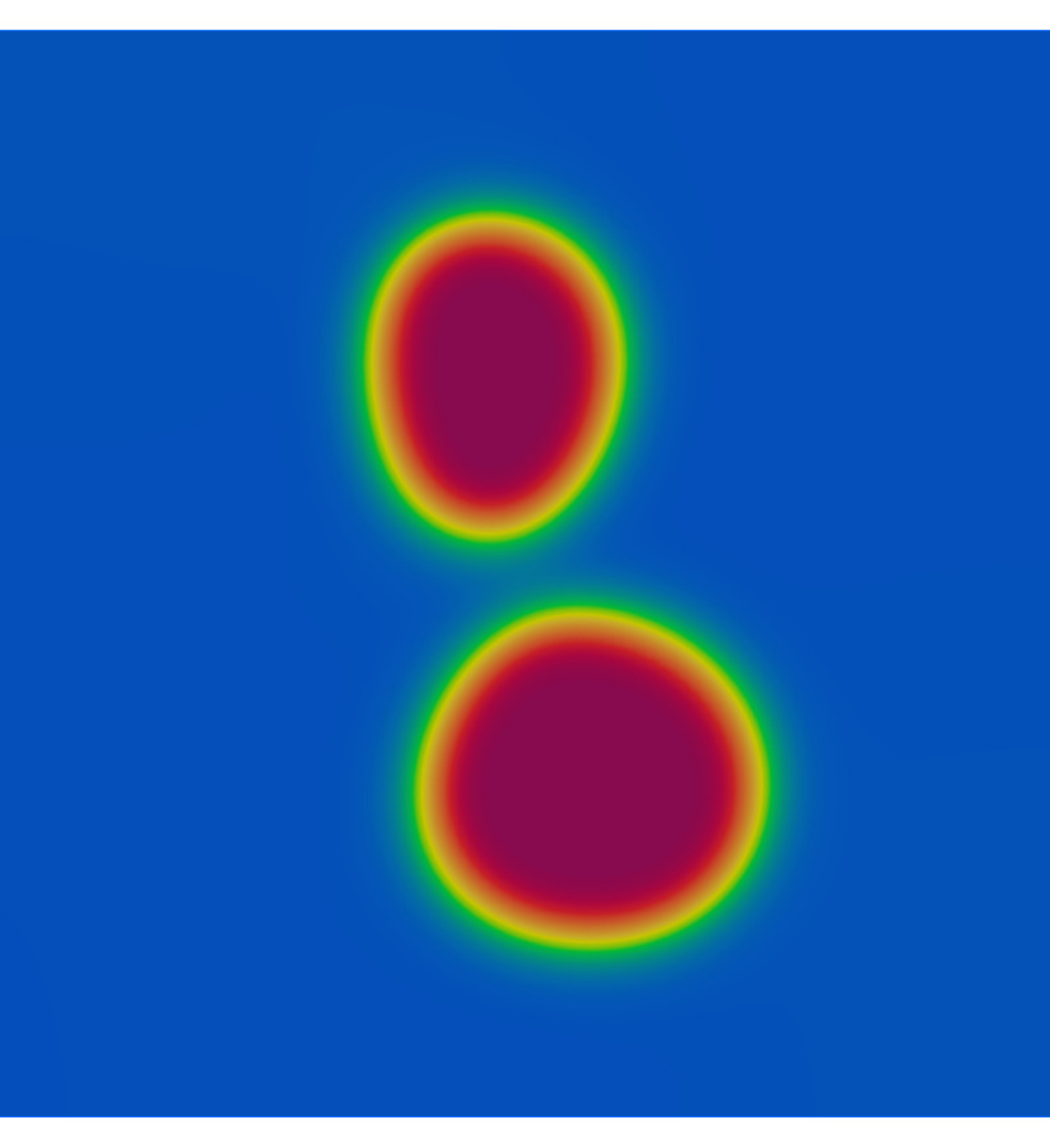}
\includegraphics[scale=0.1125]{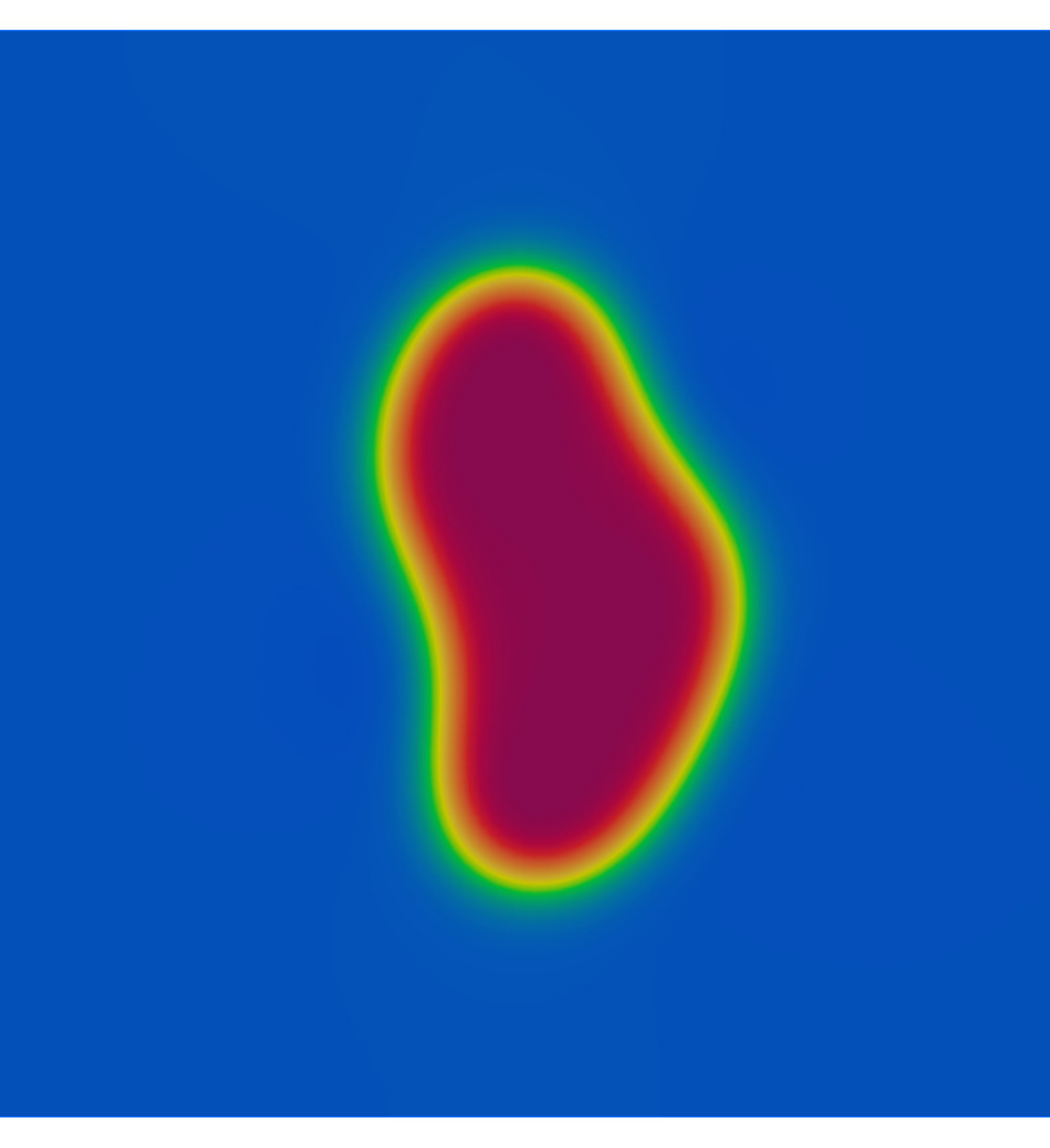}
\includegraphics[scale=0.1125]{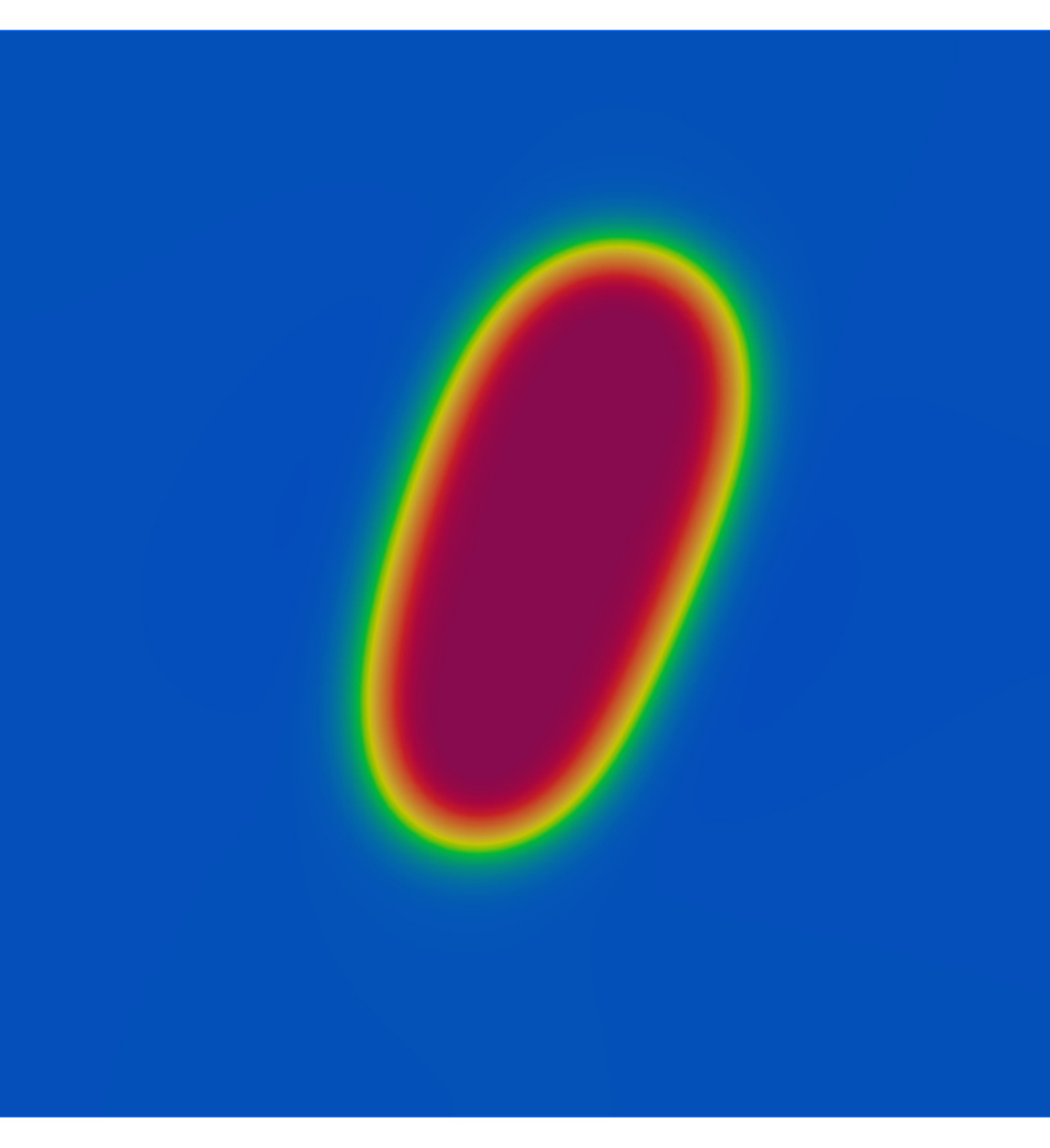}
\\
\includegraphics[scale=0.1125]{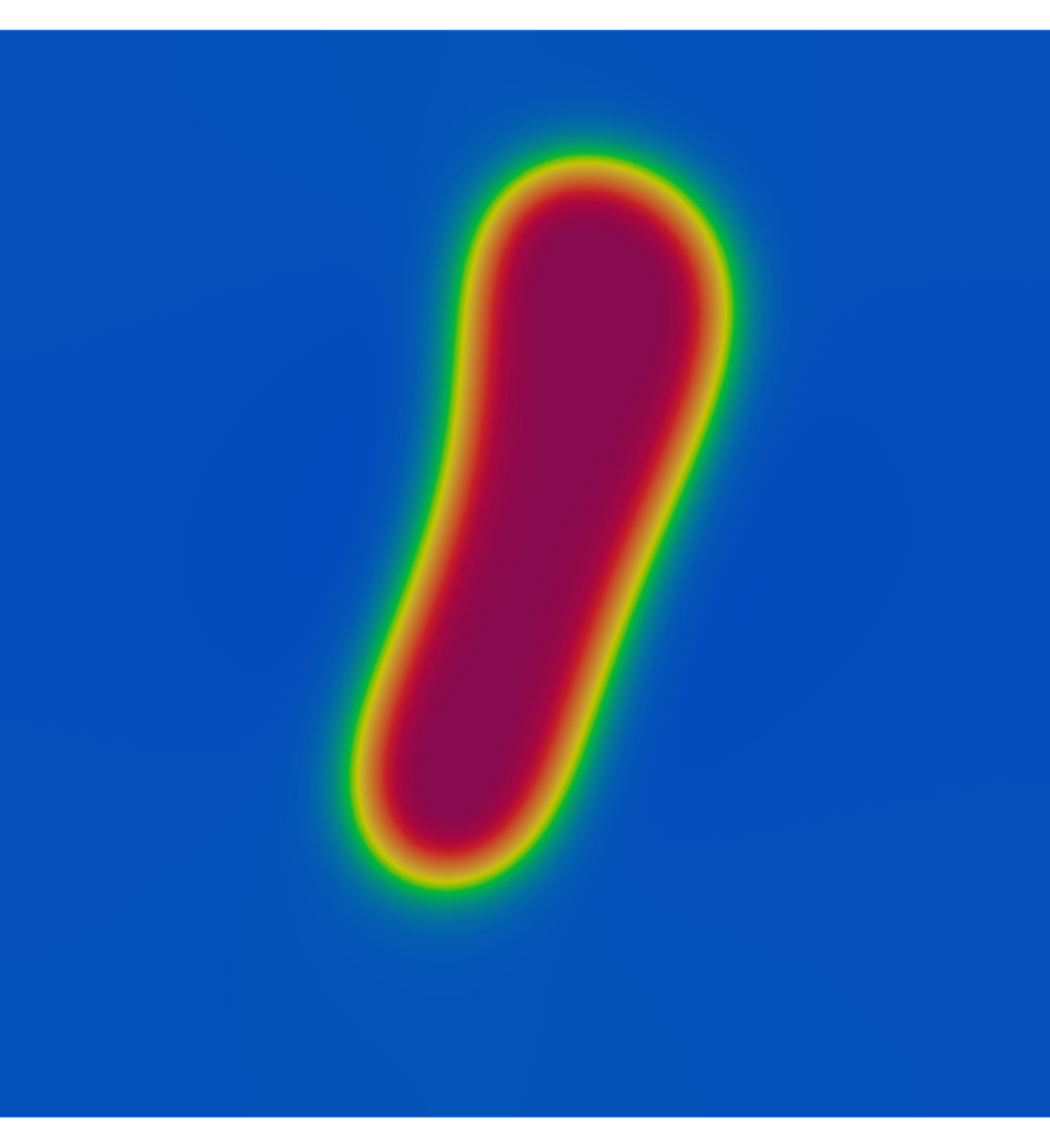}
\includegraphics[scale=0.1125]{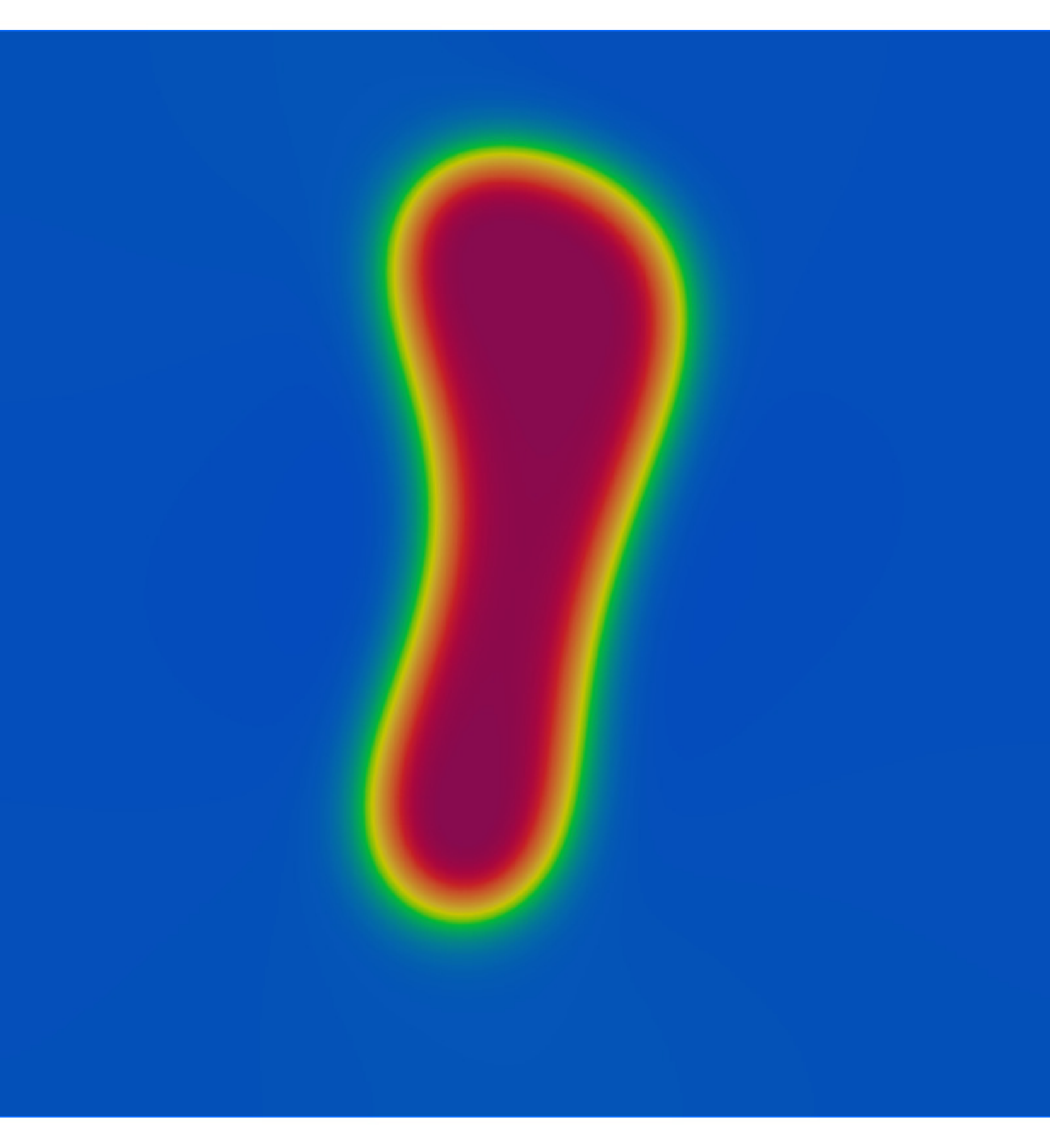}
\includegraphics[scale=0.1125]{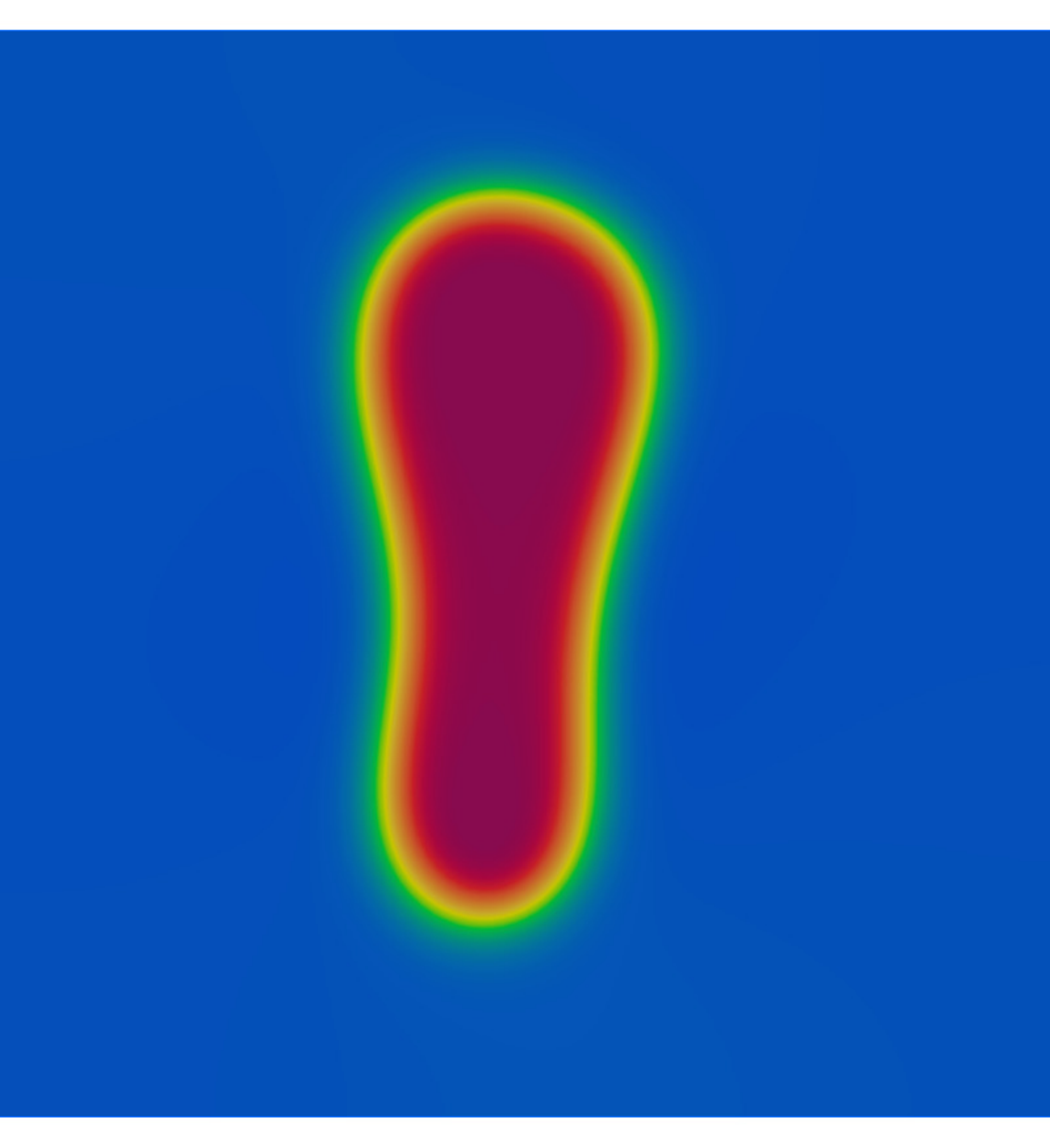}
\includegraphics[scale=0.1125]{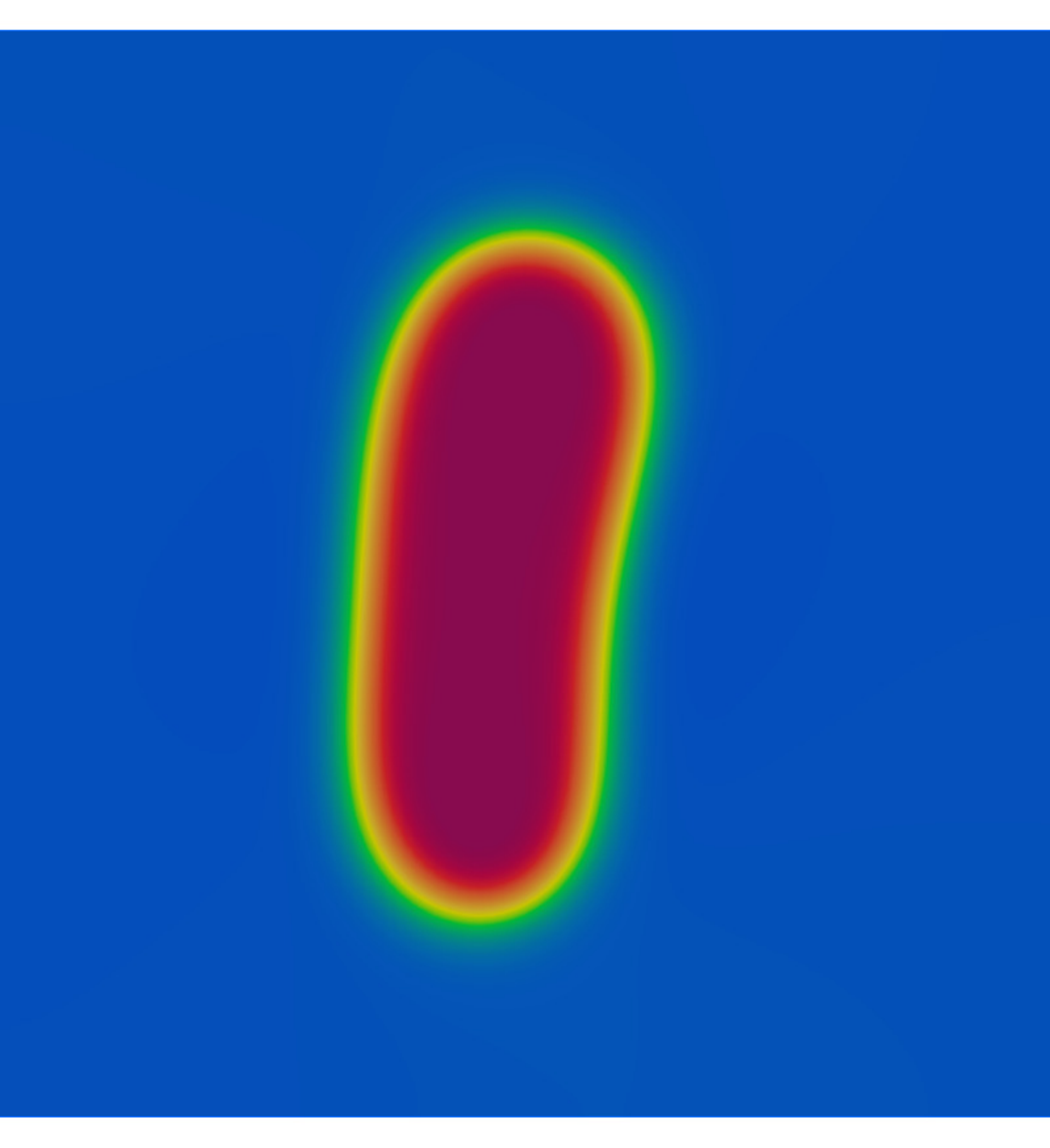}
\includegraphics[scale=0.1125]{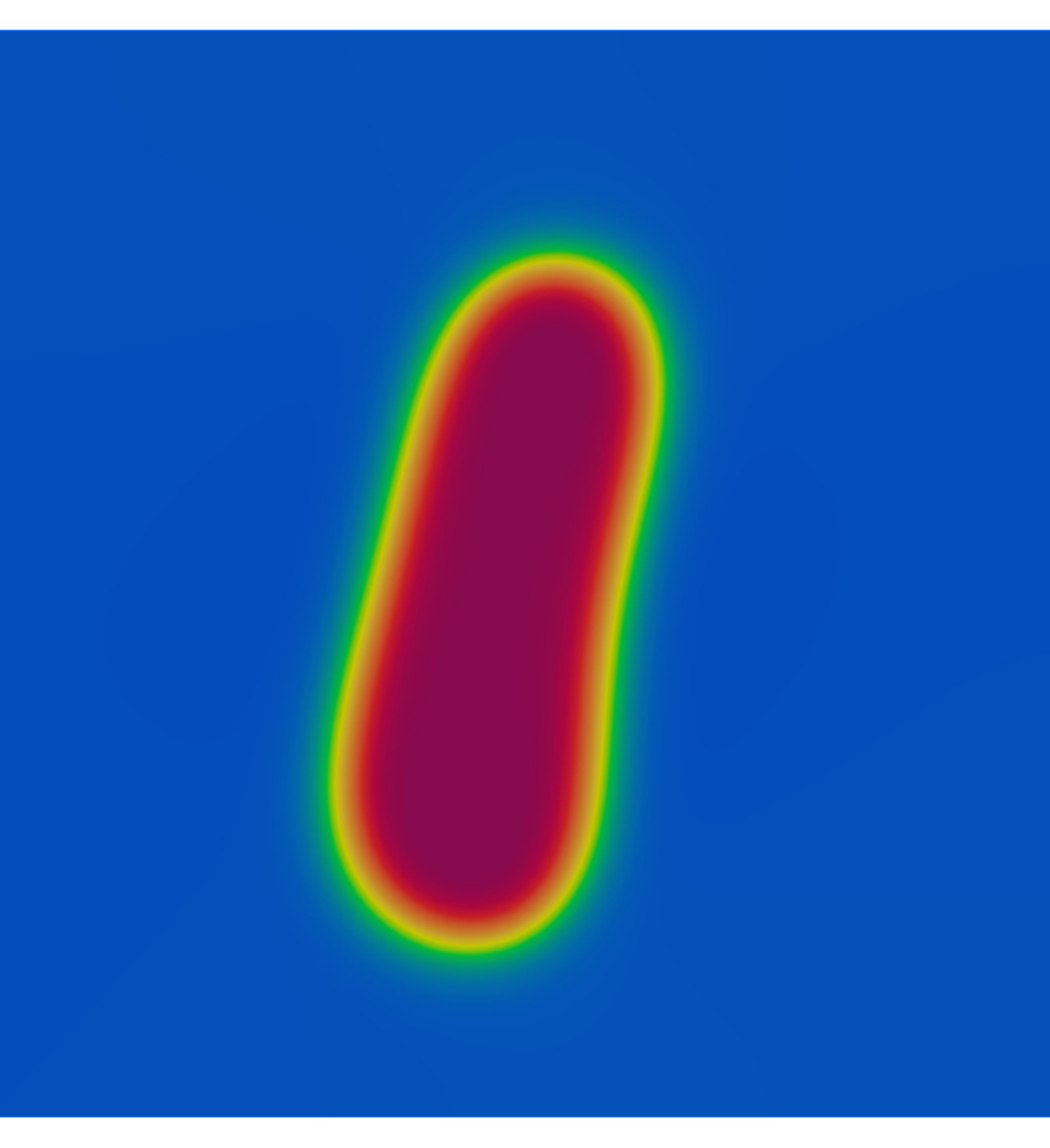}
\\
\includegraphics[scale=0.1125]{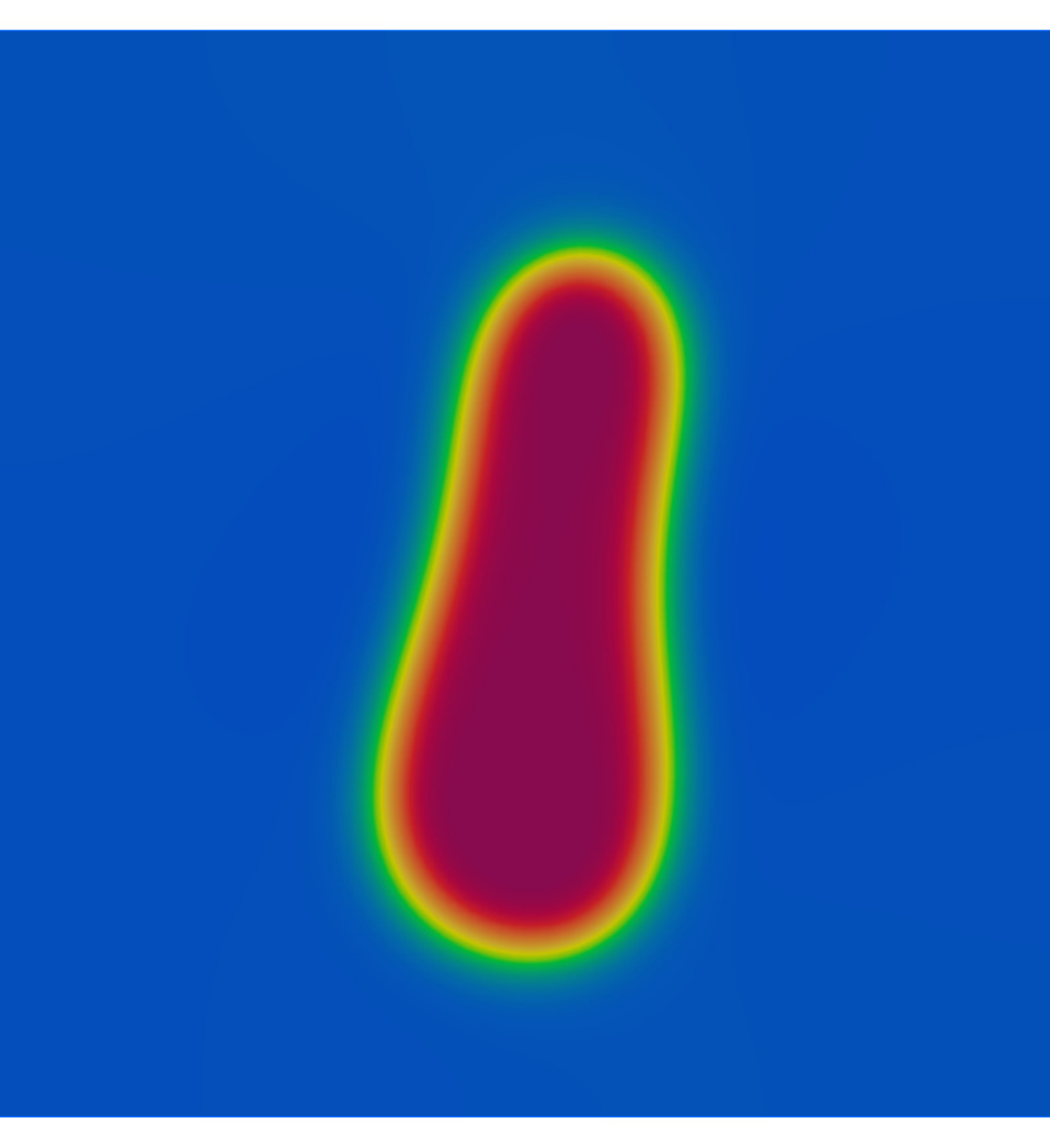}
\includegraphics[scale=0.1125]{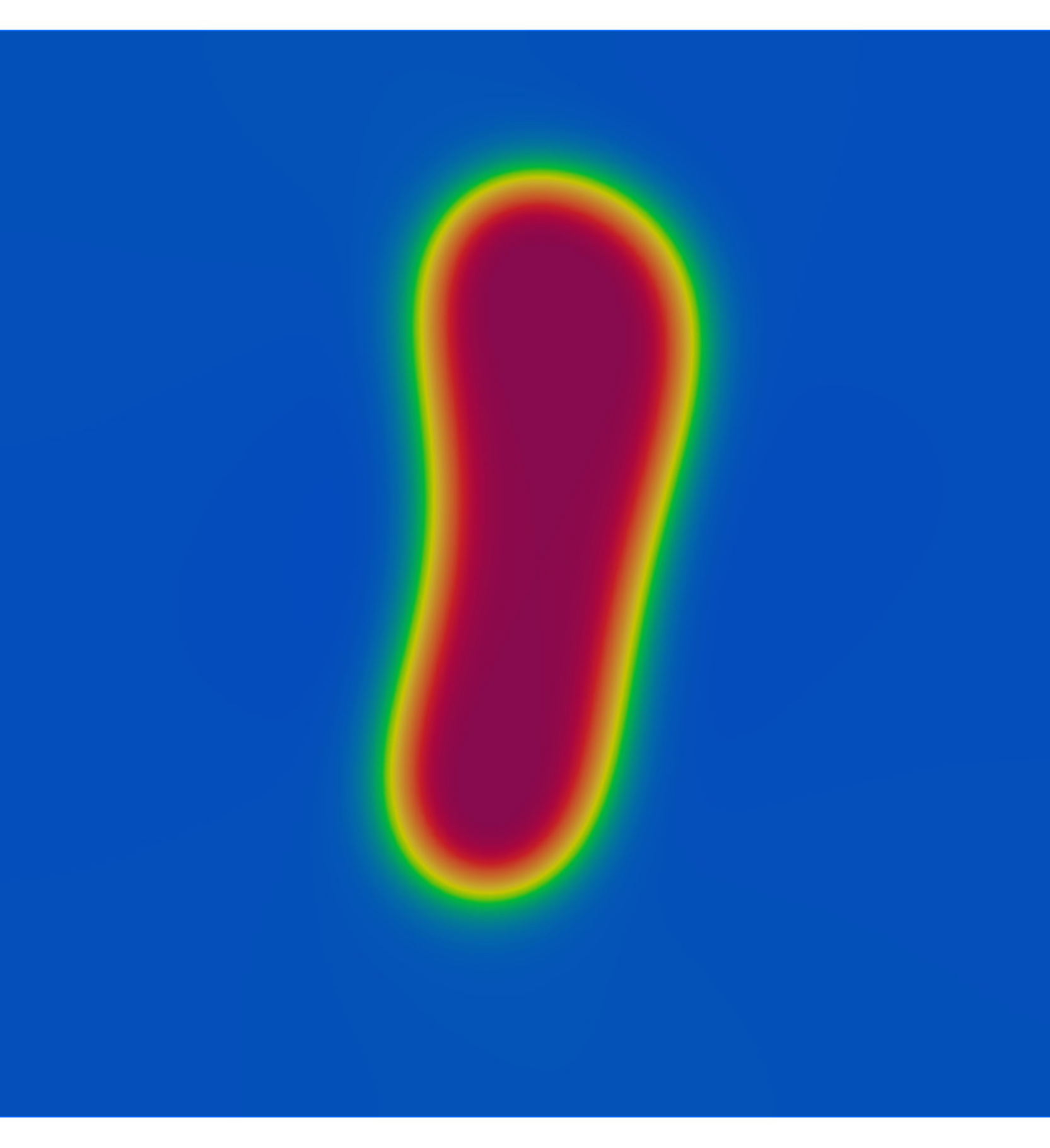}
\includegraphics[scale=0.1125]{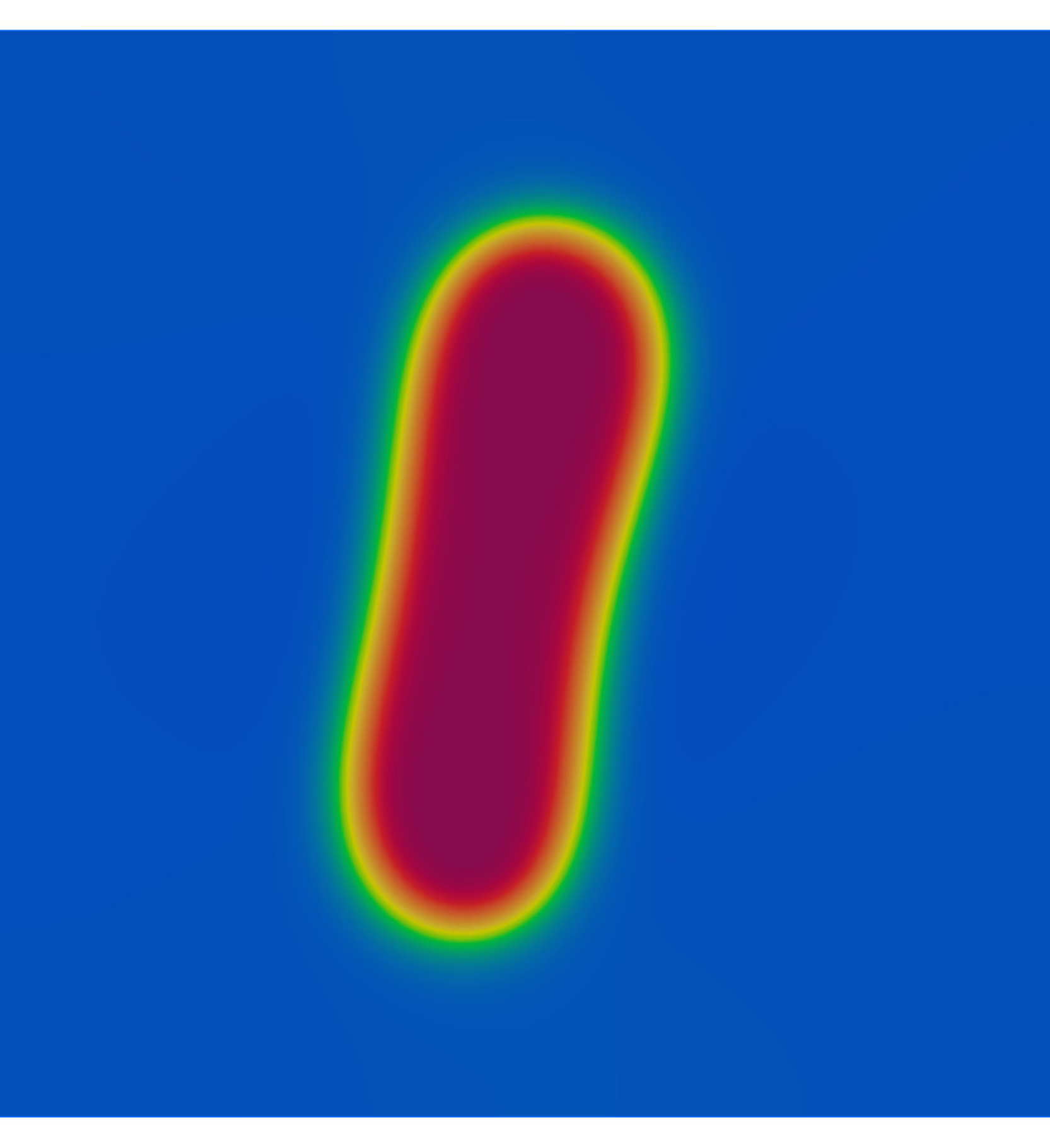}
\includegraphics[scale=0.1125]{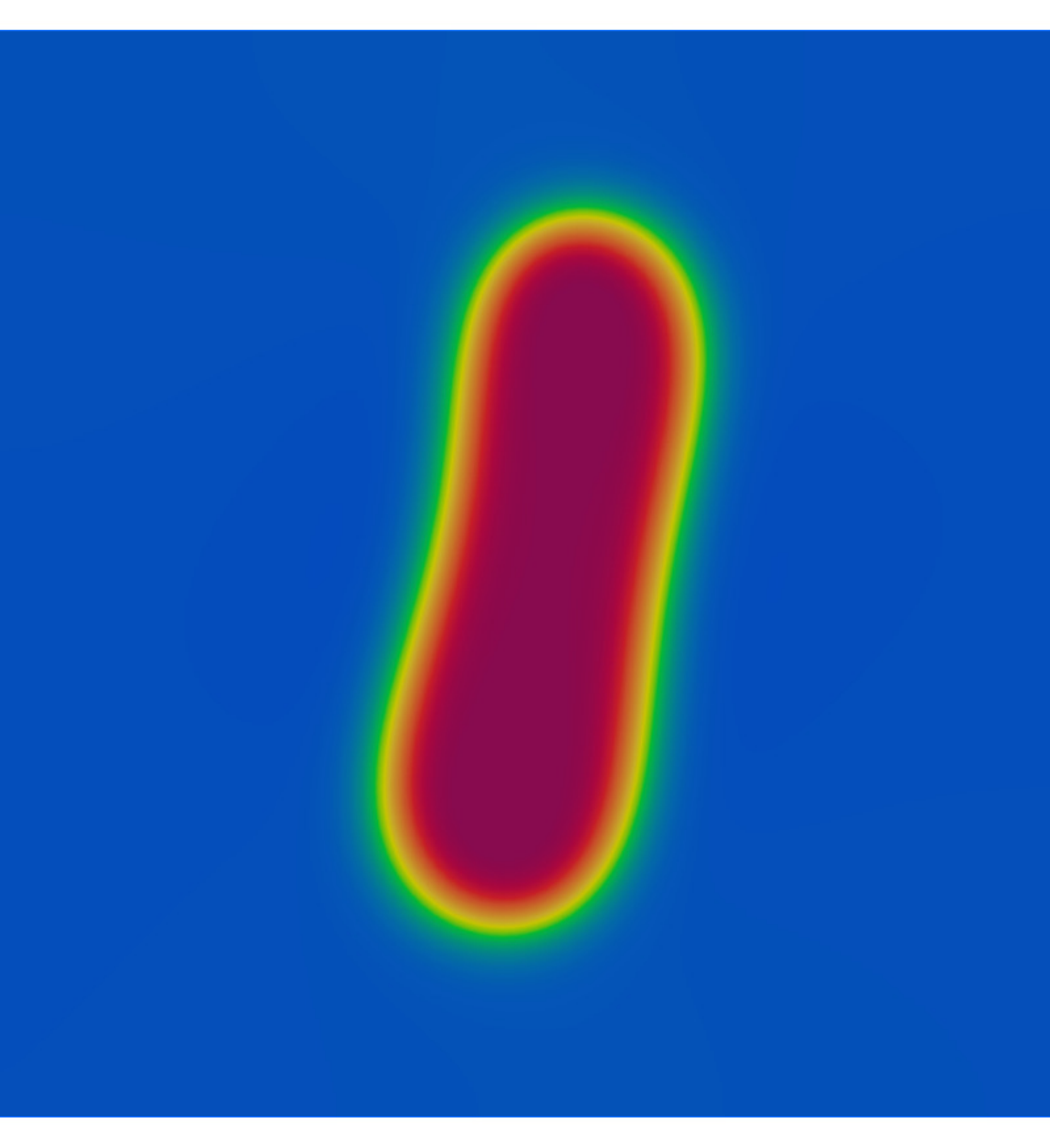}
\includegraphics[scale=0.1125]{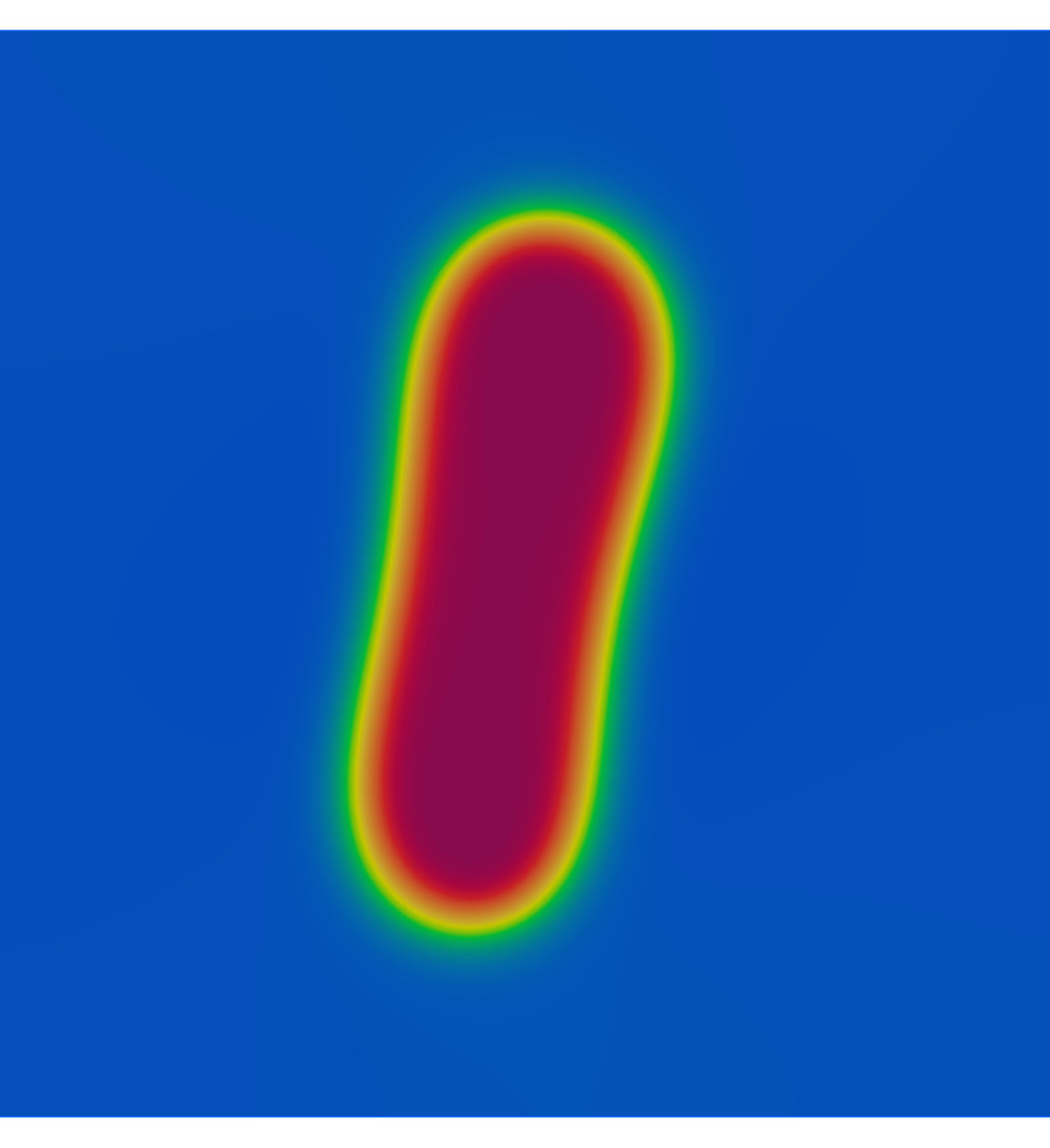}
\end{center}
\caption{Example III. Rotating fluids. Evolution in time of $\phi$ for CM-scheme at times $t=0, 0.15, 0.3, 0.45, 0.6, 0.75, 0.9, 1.05, 1.2, 1.35, 1.5, 1.65, 1.8, 1.95, 2.1, 2.5, 3.0, 3.5, 4.25$ and $5$.}\label{fig:ExIIIDynCM}
\end{figure}

We present the evolution in time of the different energies and the evolution of the volume for this example in Figure~\ref{fig:Ex3_energies}.  
As expected, the volume is conserved in the three cases but in this example the total energy is not decreasing in time due to the introduction of an external force in the system. Moreover, we present information about the evolution in time of the bounds of $\phi$ in Figure~\ref{fig:Ex3_bounds}. As expected, CM-scheme does not enforce the variable $\phi$ to remain in the interval $[0,1]$ while the $G_\varepsilon$-scheme and $J_\varepsilon$-scheme are very close to achieving it, even in this challenging setting when there is a lot of movement in the system. As in the previous examples, this is a consequence of Remarks~\ref{rem:boundG} and \ref{rem:boundJ}.

\begin{figure}[H]
\begin{center}
\includegraphics[scale=0.1125]{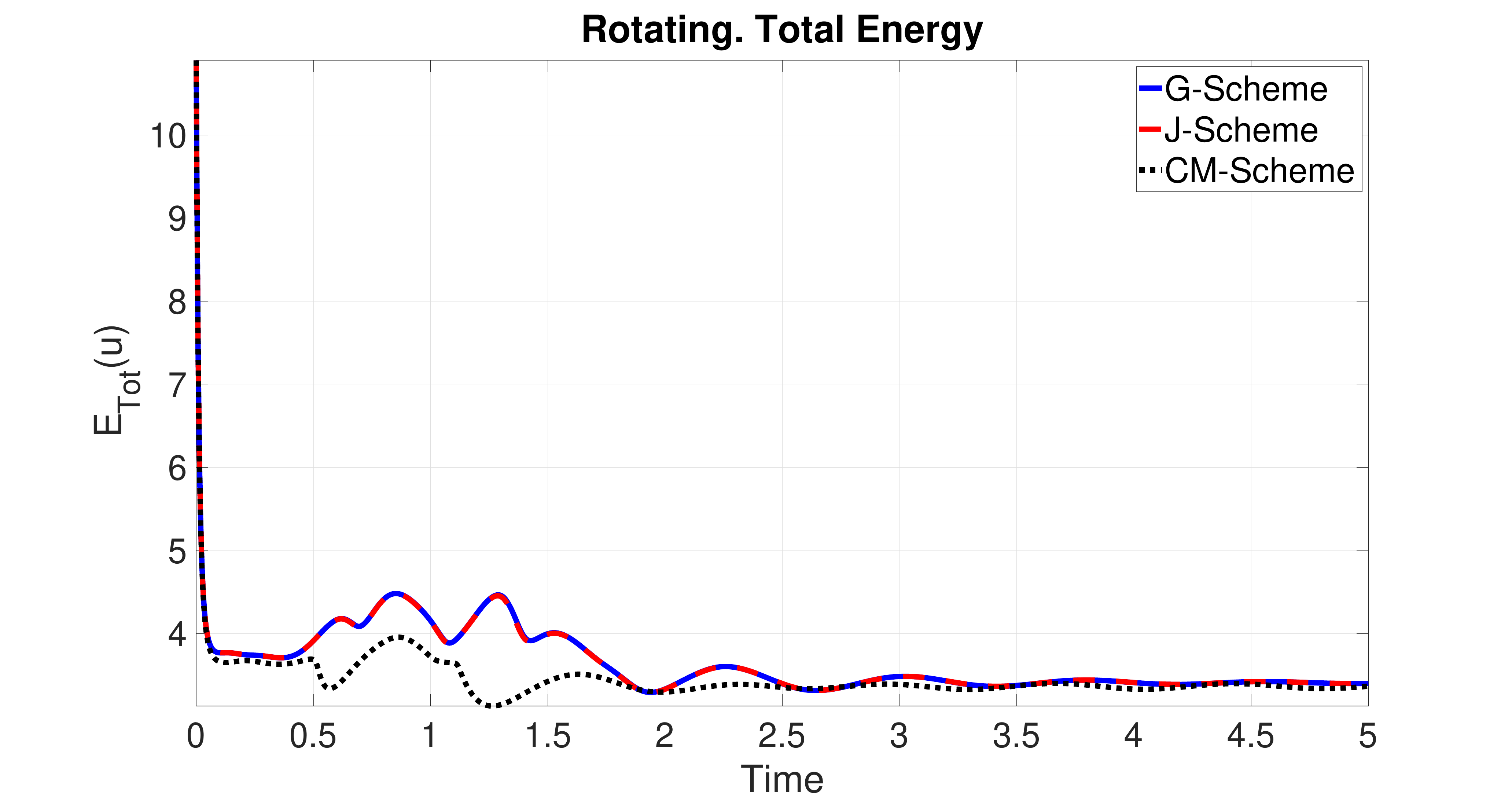}
\includegraphics[scale=0.1125]{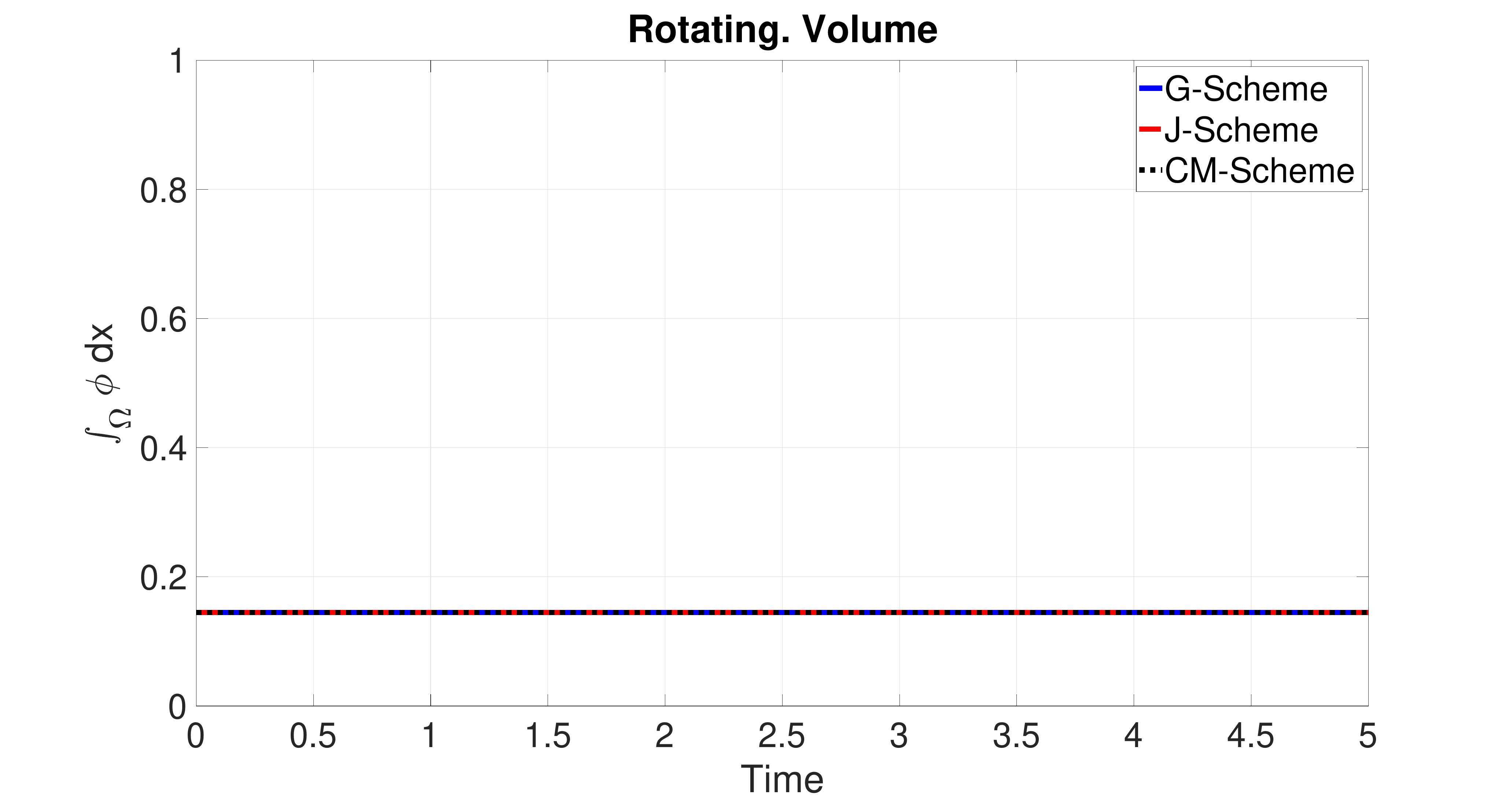}
\\
\includegraphics[scale=0.1125]{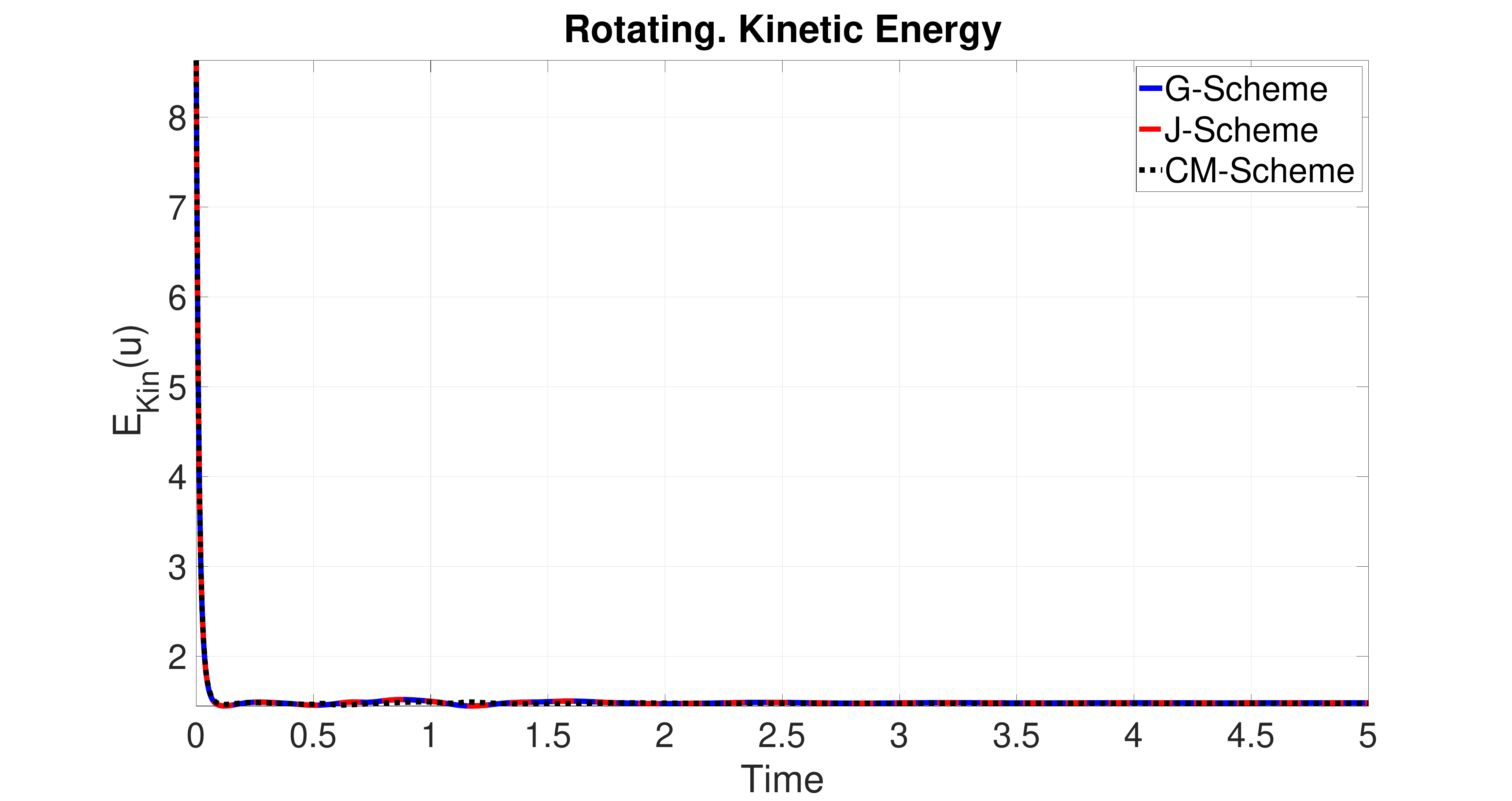}
\includegraphics[scale=0.1125]{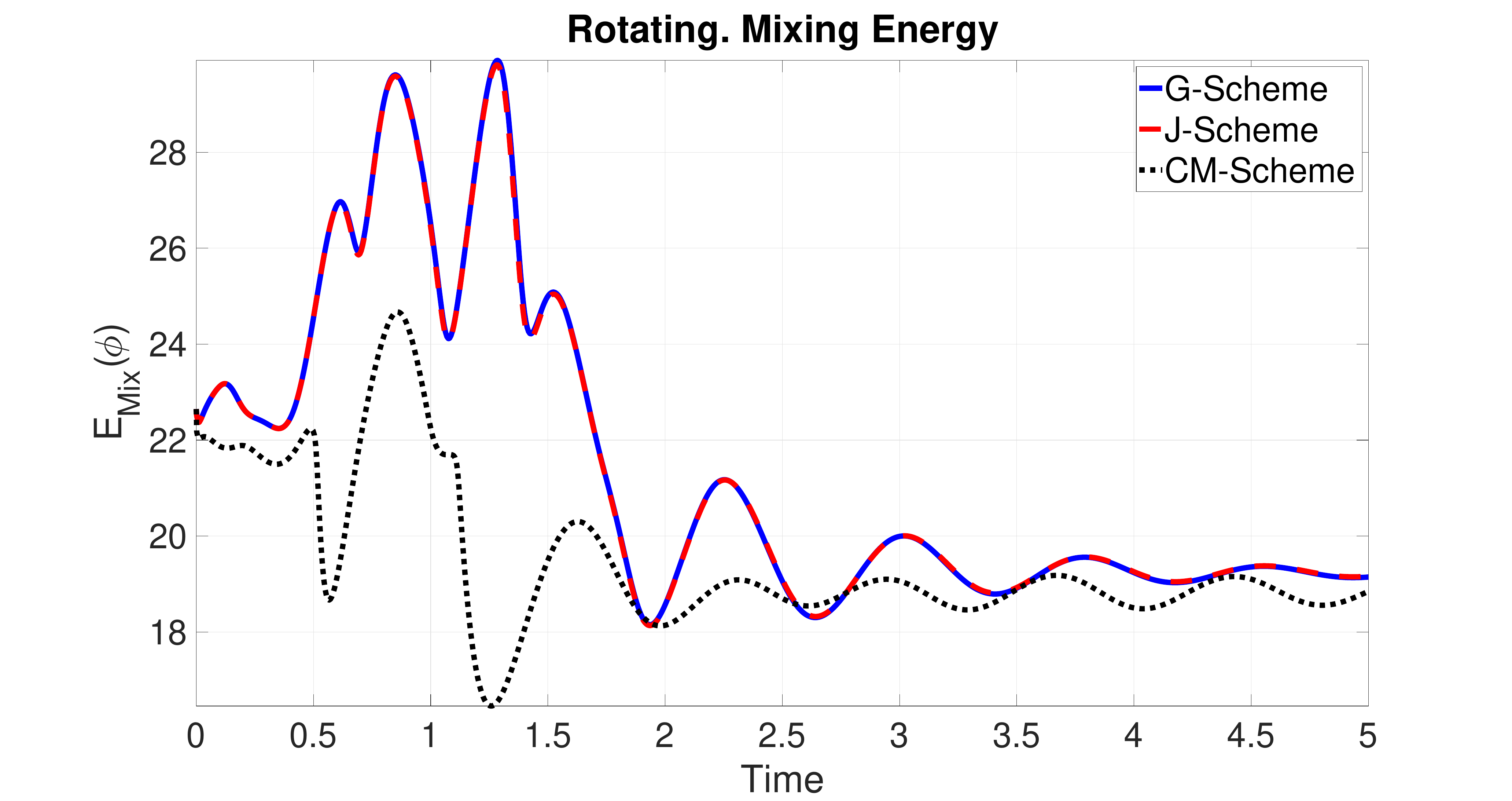}
\end{center}
\caption{Example III. Rotating fluids. Evolution of the energies and the volume of the system}\label{fig:Ex3_energies}
\end{figure}

\begin{figure}[H]
\begin{center}
\includegraphics[scale=0.1125]{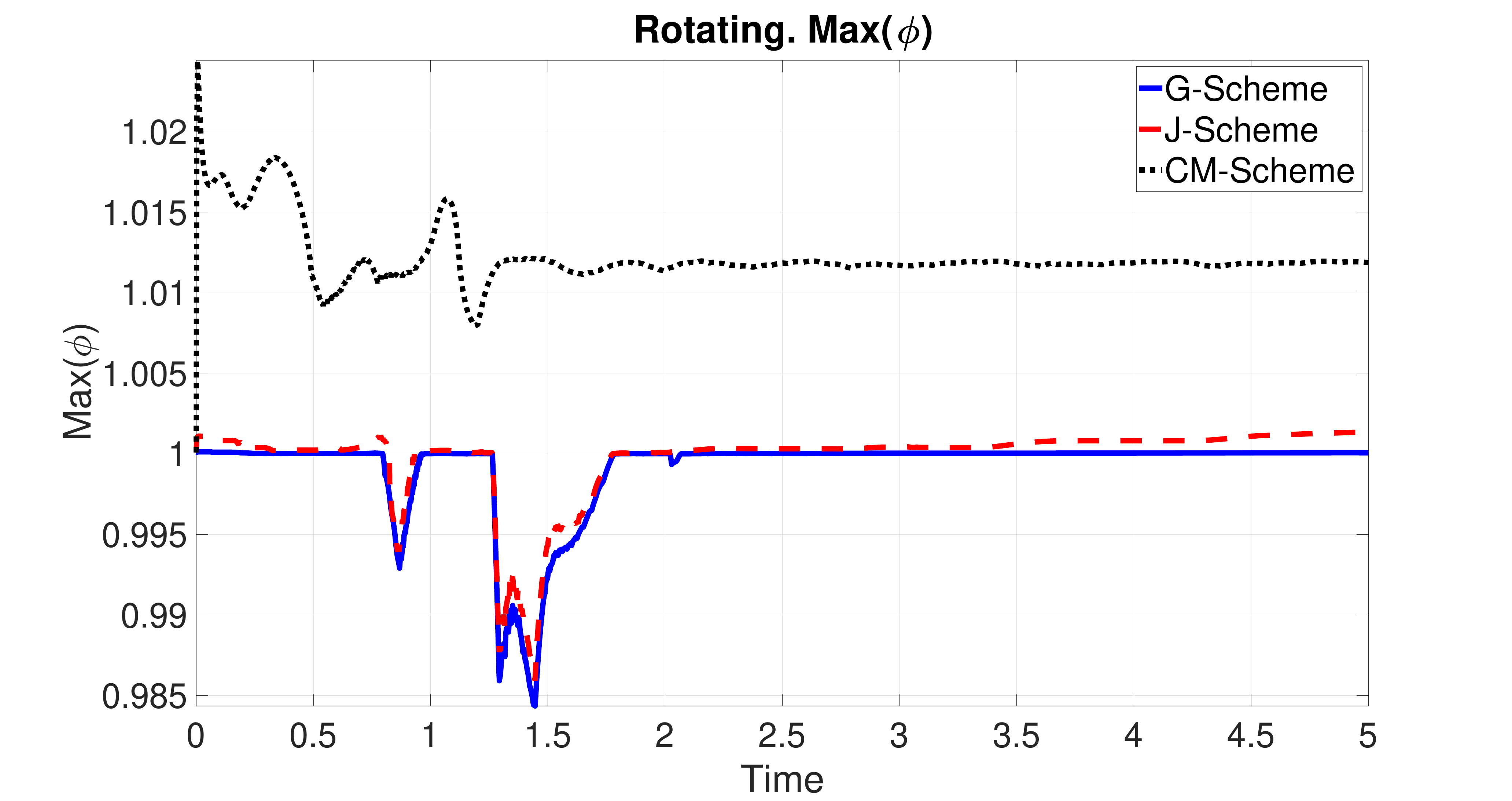}
\includegraphics[scale=0.1125]{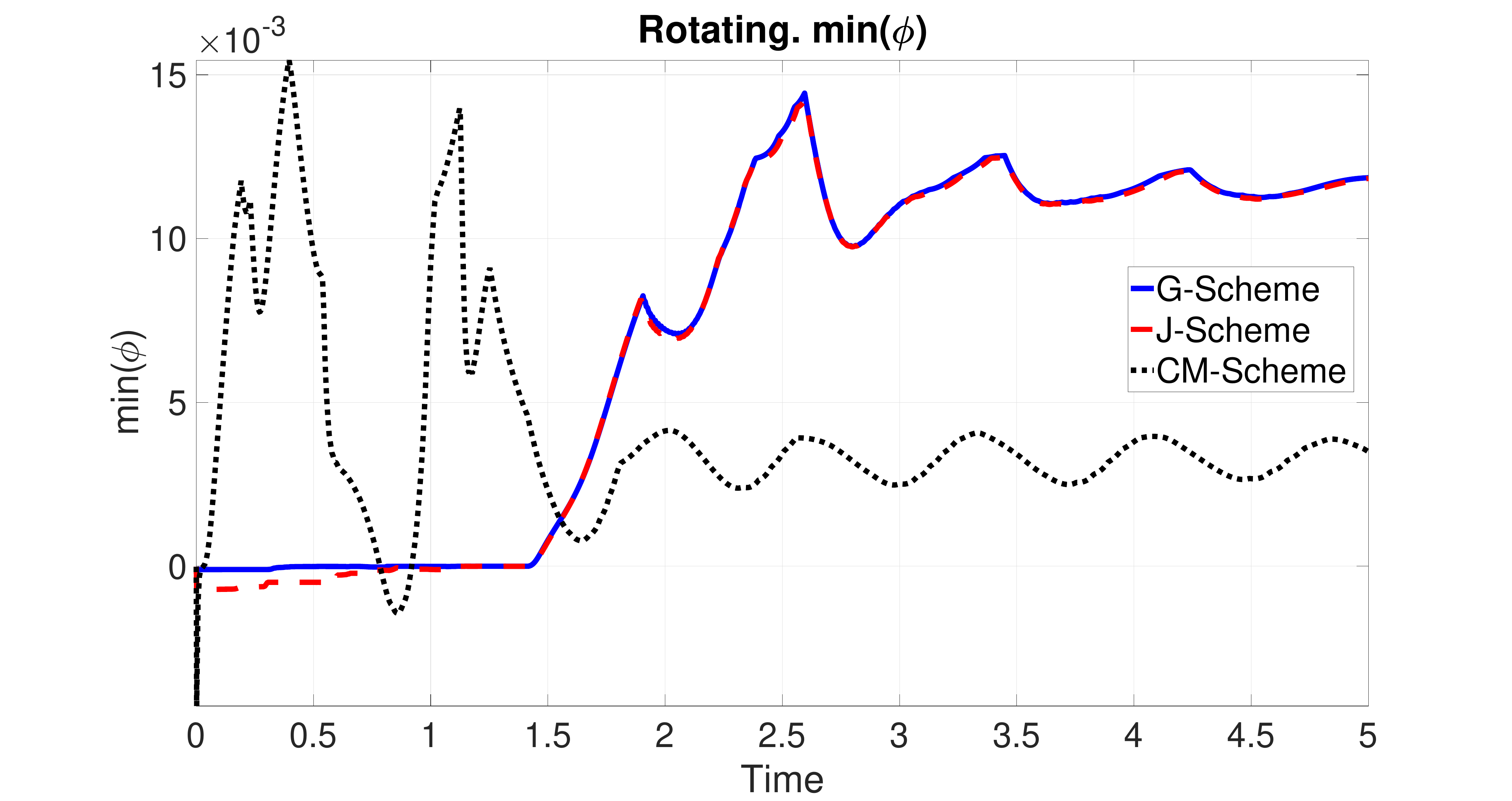}
\\
\includegraphics[scale=0.1125]{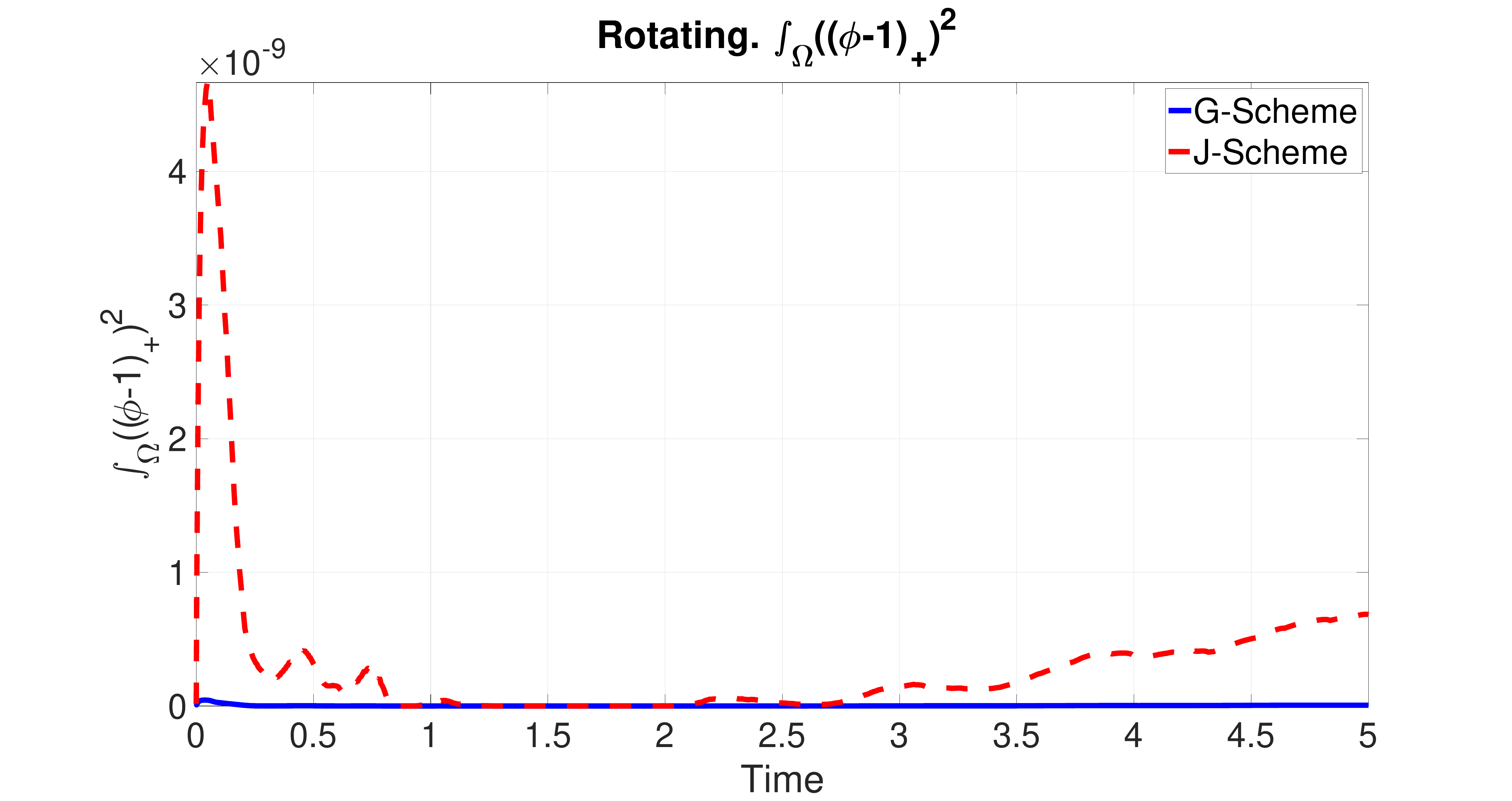}
\includegraphics[scale=0.1125]{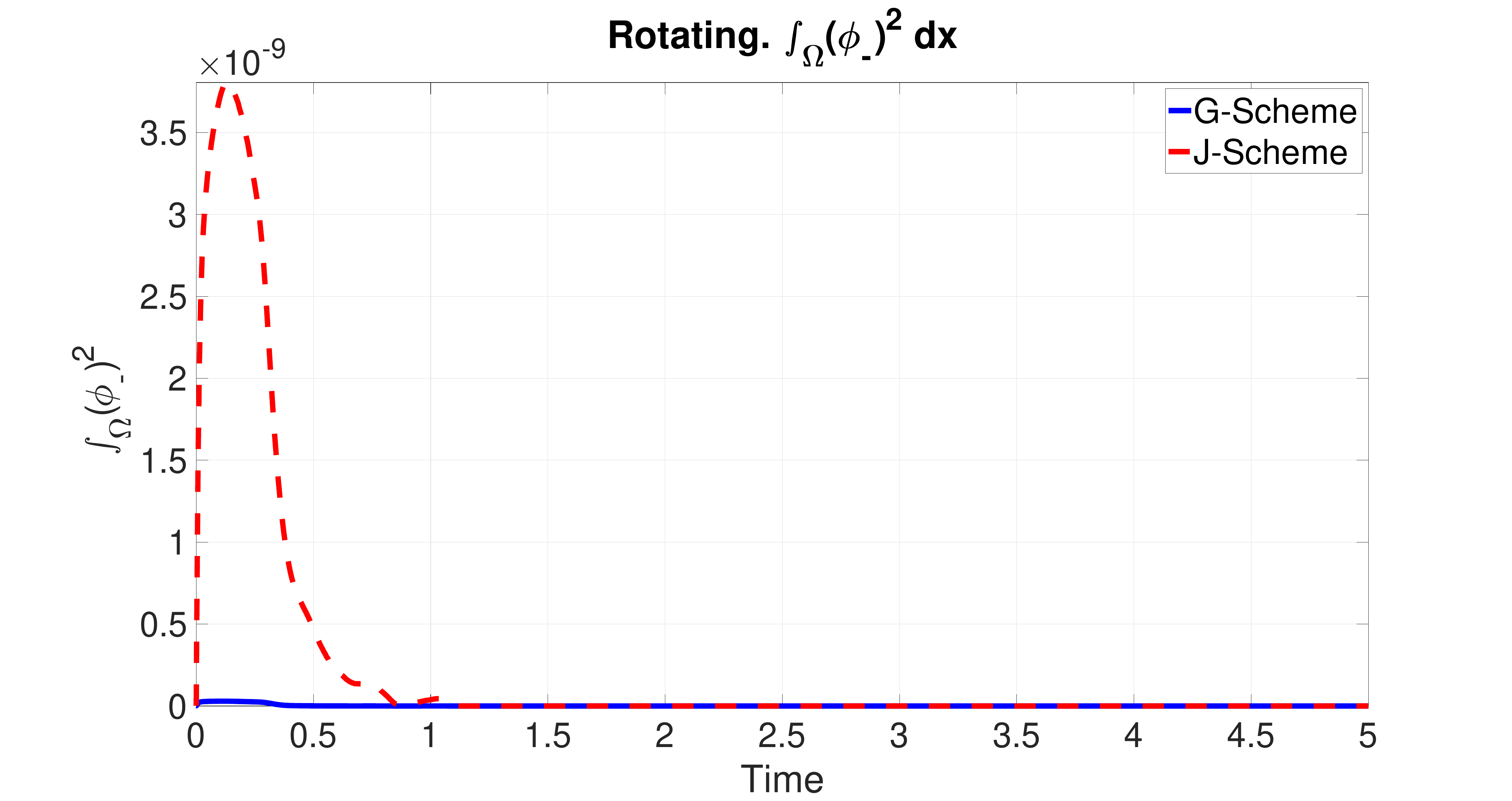}
\end{center}
\caption{Example III. Rotating fluids. Evolution of the bounds of $\phi$.}\label{fig:Ex3_bounds}
\end{figure}

\section{Conclusions}\label{sec:conclusions}

In this work we have derived two new numerical schemes to approximate the Navier-Stokes-Cahn-Hilliard system with degenerate mobility: $G_\varepsilon$-scheme and $J_\varepsilon$-scheme. In particular we show that both schemes are conservative, energy-stable and they satisfy bounds on $\phi$ that lead to approximate maximum principles. In particular, the derived bounds on the phase variable $\phi$ are associated with two different singular potentials, one for each scheme.

We have reported the results of several numerical simulations using the presented  schemes, showing that these schemes: (1) are accurate and satisfy the derived bounds for $\phi$; (2) achieve energy-stability in cases where no external forces are applied to the system; (3) are able to capture interesting dynamics in cases with external forces applied to the system, while maintaining the bounds on $\phi$. Moreover, we have compared the results with the case of considering a constant mobility term, illustrating that both models produce different dynamics.


\begin{thebibliography}{99}
%

\bibitem{Abeletal}  Abels,~H., Depner,~D. \& Garcke, H. 
2013. 
On an incompressible Navier-Stokes/Cahn-Hilliard system with degenerate mobility.
\textit{Ann. Inst. Henri Poincare (C) Anal. Non Lineaire.} \textbf{30}, 1175-1190.

\bibitem{DKR23}  Acosta-Soba,~D., Guill\'en-Gonz\'alez,~F. \& Rodr\'iguez-Galv\'an,~J.R. 
2023. 
A structure-preserving upwind DG scheme for a degenerate phase-field tumor model.
\textit{Computers and Mathematics with Applications} \textbf{152}, 317-333. 

\bibitem{DKR22}  Acosta-Soba,~D., Guill\'en-Gonz\'alez,~F. \& Rodr\'iguez-Galv\'an,~J.R. 
2023. 
An upwind DG scheme preserving the maximum principle for the convective Cahn-Hilliard model.
\textit{Numerical Algorithms} \textbf{92}, 1589-1619. 

\bibitem{paraview}  Ayachit,~U.
2015. 
The ParaView Guide: A Parallel Visualization Application, .
\textit{Kitware} ISBN 9781930934306

\bibitem{Barrett98}  Barrett,~J.W., Blowey,~J.F. \& Garcke,~H.
1998.  
Finite element approximation of a fourth order nonlinear degenerate parabolic equation.
\textit{Numer. Math.} \textbf{80}, 525-556.


\bibitem{Cahn1958} Cahn,~J.W. \& Hilliard,~J.E.
1958. 
Free energy of a nonuniform system. I. Interfacial free energy.
\textit{J. Chem. Phys.} \textbf{28},  258-267.
 
\bibitem{Chenetal19}
Chen,~W., Wang,~C., Wang,~X. \& Wise,~S.M.
2019. 
Positivity-preserving, energy stable numerical schemes for the Cahn-Hilliard equation with logarithmic potential. 
\textit{Journal of Computational Physics X} \textbf{3} 100031. 
 
\bibitem{CiarletRaviart73} Ciarlet,~P.G. \& Raviart,~P.A. 
1973. 
Maximum principle and uniform convergence for the finite element method.
\textit{Comput. Methods Appl. Mech. Engrg.} \textbf{2},  17-31.


\bibitem{Copetti92} Copetti,~M.I.M. \& Elliot,~C.M. 
1992. 
Numerical analysis of the Cahn-Hilliard equation with a logarithmic free energy.
\textit{Numer. Math.} \textbf{63},  39-65.


\bibitem{ElliotGarcke1996} Elliot,~C.M. \& Garcke,~H.
1996. 
On the Cahn-Hilliard equation with degenerate mobility.
\textit{SIAM J. Math. Anal} \textbf{27},  404-423.

\bibitem{Eyre} Eyre,~D.J.
An Unconditionally Stable One-Step Scheme for Gradient System,
\\ http://www.math.utah.edu/~eyre/research/methods/stable.ps, unpublished.

\bibitem{Frigerietal19} Frigeri,~S., Gal,~C.G., Grasselli,~M. \& Sprekels,~J. 
2019. 
Two-dimensional nonlocal Cahn-Hilliard-Navier-Stokes systems with variable viscosity, degenerate mobility and singular potential
\textit{Nonlinearity} \textbf{32}, 678-727

\bibitem{Frigerietal15} Frigeri,~S., Grasselli,~M. \& Rocca,~E. 
2015. 
A diffuse interface model for two-phase incompressible flows with non-local interactions and non-constant mobility
\textit{Nonlinearity} \textbf{28}, 1257-1293

\bibitem{Frigerietal20} Frigeri,~S., Grasselli,~M. \& Sprekels,~J. 
2015. 
Optimal Distributed Control of Two-Dimensional Nonlocal Cahn-Hilliard-Navier-Stokes Systems with Degenerate Mobility and Singular Potential
\textit{Applied Mathematics \& Optimization} \textbf{81}, 899-931

\bibitem{GiraultRaviart86} Girault,~V.,  Raviart,~P.A. 
Finite Element Methods for Navier-Stokes Equations: Theory and Algorithms, Springer-Verlag, Berlin, 1986.


\bibitem{KMAD19} Guill\'en-Gonz\'alez,~F., Rodr\'iguez-Bellido, M.A. \& Rueda-G\'omez~D.A.  
2019. 
Unconditionally energy stable fully discrete schemes for a chemo-repulsion model
\textit{Mathematics of Computation} \textbf{88} 2069-2099.


\bibitem{KMAD21} Guill\'en-Gonz\'alez,~F., Rodr\'iguez-Bellido, M.A. \& Rueda-G\'omez, D.A.; 
2021. 
A chemorepulsion model with superlinear production: analysis of the continuous problem and two approximately positive and energy-stable schemes. 
\textit{Advances in Comput. Math.} \textbf{47} 87

\bibitem{KMAD22} Guill\'en-Gonz\'alez,~F., Rodr\'iguez-Bellido, M.A. \& Rueda-G\'omez, D.A.; 
2022. 
Comparison of two finite element schemes for a chemo-repulsion system with quadratic production. 
\textit{Applied Numerical Mathematics} \textbf{173} 193-210.




\bibitem{KGCHDM} Guill\'en-Gonz\'alez,~F. \& Tierra,~G. 
2024. 
Energy-stable and boundedness preserving numerical schemes for the Cahn-Hilliard equation with degenerate mobility.
\textit{Appl. Numer. Math.} \textbf{196}  62-82.

\bibitem{GuillenTierra} Guill\'en-Gonz\'alez,~F. \& Tierra~G.  
2024. 
Finite Element numerical schemes for a chemo-attraction and consumption model.
\textit{Journal of Computational and Applied Mathematics} \textbf{441},  115676.

\bibitem{GPV96} Gurtin,~D., Polignone,~D. \&  Vi\~nals,~J. 
1996. 
Two-phase binary fluids and immiscible fluids described
by an order parameter.
\textit{Math. Models Methods Appl. Sci} \textbf{6},  815-831.

\bibitem{freefem} Hecht,~F. 
2012. 
New development in FreeFem+.
\textit{J. Numer. Math.} \textbf{20}  251-265.


\bibitem{HH77} Hohenberg,~P.P. \&  Halperin,~B.I. 
1977. 
Theory of dynamic critical phenomena.
\textit{Rev. Mod. Phys} \textbf{49}  435-479.

\bibitem{Kay2009} Kay,~D., Styles,~V. \& S\"uli E.
2009. 
Discontinuous Galerkin finite element approximation of the Cahn-Hilliard equation with convection.
\textit{SIAM J. Numer. Anal.} \textbf{47} 2660-2685.

\bibitem{LiuFrankRiviere} Liu,~C., Frank,~F. \& Rivi\`ere B.M.
2019. 
Numerical error analysis for nonsymmetric interior penalty discontinuous Galerkin method of Cahn-Hilliard equation.
\textit{Numer. Methods Partial Differential Equations} \textbf{35} 1509-1537.


\bibitem{matlab} MATLAB R2023a version 9.14.0, The MathWorks Inc., Natick, Massachusetts, 2023.

\bibitem{Tierra}  Tierra,~G. \&  Guill\'en-Gonz\'alez,~F.
2015. 
Numerical methods for solving the Cahn-Hilliard equation and its applicability to related energy-based models.
\textit{Arch. Comput. Methods Eng.} \textbf{22},  269-289.

\bibitem{XiaXuShu} Xia,~Y., Xu,~Y. \& Shu,~C.-W.
2007. 
Local discontinuous Galerkin methods for the Cahn-Hilliard
type equations
\textit{J. Comput. Phys.} \textbf{227}, 472-491.



\end{thebibliography}
\end{document}